\newcommand{\cd}[1]{\begin{equation*}{\xymatrix{#1}}\end{equation*}}
\newcommand{\cdlabel}[2]{\begin{equation}\label{#1}{\xymatrix{#2}}\end{equation}}
\newcommand{\mc}[1]{\mathcal{#1}}
\newcommand{\mb}[1]{\mathbf{#1}}
\newcommand{\co}{\colon\thinspace}
\newcommand{\bN}{\mathbb{N}}
\newcommand{\bZ}{\mathbb{Z}}
\def\into {\, {\hookrightarrow}\, }
\newcommand{\diam}{\mathrm{diam}}
\newcommand{\Top}{\mathbf{Top}}
\newcommand{\Grp}{\mathbf{Grp}}
\def\QVH {\mc{QVH}}
\DeclareMathOperator\dotnorm{\dot{\lhd}}
\DeclareMathOperator\dotsub{\dot{<}}
\newtheorem{theorem}{Theorem}[section]
\newtheorem{lemma}[theorem]{Lemma}
\newtheorem{corollary}[theorem]{Corollary}
\newtheorem{proposition}[theorem]{Proposition}
\newtheorem{observation}[theorem]{Observation}
\newtheorem{case}{Case}
\newtheorem{claim}{Claim}[theorem]
\newtheorem*{claim*}{Claim}
\theoremstyle{definition}
\newtheorem{remark}[theorem]{Remark}
\newtheorem{definition}[theorem]{Definition}
\newtheorem{example}[theorem]{Example}
\newtheorem{notation}[theorem]{Notation}
\newtheorem*{rep@theorem}{\rep@title}
\newcommand{\newreptheorem}[2]{%
\newenvironment{rep#1}[1]{%
 \def\rep@title{#2 \ref{##1}}%
 \begin{rep@theorem}\it}%
 {\end{rep@theorem}}}
\title[Malnormal Special Quotient Theorem]{An alternate proof of Wise's Malnormal Special Quotient
  Theorem}
\author{Ian Agol}
\address{
   University of California, Berkeley,
   970 Evans Hall \#3840,
   Berkeley, CA 94720-3840}
\email{ianagol@math.berkeley.edu}
\thanks{The first author is supported by NSF grant DMS-1105738, the Miller Institute, and the Simons Foundation}
\author{Daniel Groves}
\address{Department of Mathematics, Statistics, and Computer Science,
University of Illinois at Chicago,
322 Science and Engineering Offices (M/C 249),
851 S. Morgan St.,
Chicago, IL 60607-7045}
\email{groves@math.uic.edu}
\thanks{The second author is supported by NSF Grant DMS-0953794}
\author{Jason Fox Manning}
\address{Department of Mathematics, 310 Malott Hall, Cornell University, Ithaca, NY 14853}
\email{jfmanning@math.cornell.edu}
\thanks{The third author is supported by NSF Grant DMS-1104703.}
\begin{document}
\begin{abstract}
  We give an alternate proof of Wise's Malnormal Special
  Quotient Theorem (MSQT), avoiding cubical small cancellation theory.  We also show how to deduce Wise's Quasiconvex Hierarchy Theorem from the MSQT and theorems of Hsu--Wise and Haglund--Wise.
\end{abstract}

\maketitle
\setcounter{tocdepth}{1}
\tableofcontents  
\section{Introduction}

The role of {\em subgroup separability} is a central one in geometric group theory and topology.  In particular, as witnessed by Scott's Criterion \cite{Scott78}, it is intimately linked to the problem of promoting immersions to embeddings in finite covers.  After the work of Kahn and Markovic \cite{KM12}, the Virtual Haken Conjecture was reduced to proving separability of certain surface subgroups of $3$--manifold groups.  The key to the first author's proof of this separability in \cite{VH} was the notion of {\em virtually special cube complexes}, as developed by Wise and his collaborators.

In \cite{HW08} Haglund and Wise proved that if $G$ is a hyperbolic group which acts properly and cocompactly on a CAT$(0)$ cube complex then the action is virtually special if and only if all quasi-convex subgroups of $G$ are separable.  This leads to the desirability of finding criteria which show that a group acts in a virtually special (and proper cocompact) way on a CAT$(0)$ cube complex.

The results of Hsu-Wise \cite{HsuWise15} and Haglund-Wise \cite{HaglundWise12} together show that when a hyperbolic group $G$ has a {\em malnormal quasi-convex hierarchy} then it has the desired type of action.  See Section \ref{s:hierarchies} for the definition of hierarchies and Section \ref{s:SCT} for more details, in particular Theorem \ref{MQH} which combines the results of \cite{HsuWise15} and \cite{HaglundWise12}.  In the unpublished work \cite{Wise} Wise proves that hyperbolic groups with {\em quasi-convex hierarchies} also admit such actions, and hence all their quasi-convex subgroups are separable.  Wise's result is much more broadly applicable than Theorem \ref{MQH} because subgroups appearing in the hierarchy in many natural applications will not be malnormal.  See Section \ref{s:QCH} for more details and a proof of Wise's result, which appears as Theorem \ref{t:QVH} in this paper.

Subsequently, the first author \cite{VH} proved that {\em any} hyperbolic group which acts properly and cocompactly on a CAT$(0)$ cube complex acts in a virtually special way, with no further hypotheses.  One of the main purposes of this paper is to explain some of the ingredients (from \cite{Wise}) that go into this proof, so we do not assume the results from \cite{VH} here.  In fact, this paper is intended to provide an alternative account of the results needed for \cite{VH} without relying on any of the results from \cite{Wise}.  It is important to note that Wise proves various results in greater generality than is required for \cite{VH}, and in this paper we do not recover the full strength of the results in \cite{Wise}.  In particular, we work throughout with hyperbolic groups, whereas certain of Wise's results are proved for more general relatively hyperbolic groups.

As we explain in Section \ref{s:QCH} (and is made clear in \cite{Wise}) the key result for proving Wise's Theorem \ref{t:QVH} is the {\em Malnormal Special Quotient Theorem  (MSQT)}, and the main purpose of the current paper is to provide a new proof of this theorem.  In fact, we prove Theorem \ref{msqt+} below, which is a generalization of Wise's MSQT.

Our key innovation in the proof is a new kind of hierarchy for virtually special hyperbolic groups (described in Section \ref{s:vhierarchy}).  It is a consequence of work of Haglund and Wise \cite{HW08} that any virtually special hyperbolic group has a finite-index subgroup with a malnormal quasi-convex hierarchy terminating in the trivial group.  However, the MSQT requires us to `kill' certain subgroups of a malnormal family $\mc{P}$ of quasiconvex `peripheral' subgroups. 
We construct (in Theorem \ref{thm:virtualhierarchy}) a new hierarchy (of a different finite-index subgroup) which is quasiconvex but does not terminate in the trivial group.  The peripheral subgroups of the finite index subgroup are conjugate to finite index subgroups of the elements of $\mc{P}$.
The virtue of this new hierarchy is that its peripheral subgroups are elliptic at every stage of the hierarchy, and furthermore when they intersect an edge group of a splitting in an infinite set they are entirely contained in the edge group.  (See Section \ref{ss:relativehierarchies} and particularly Definition \ref{def:Pelliptic} for more details.)   Therefore (using certain results about Relatively Hyperbolic Dehn Filling -- see Section \ref{s:RHDF}) the hierarchy and the filling are operations which `commute', and we find a hierarchy of the filled group.  Using the results of Haglund-Wise and Hsu-Wise described above, we deduce that the filled group is virtually special, as required.  In Section \ref{sec:combination} we prove a Combination Theorem which (as with most combination theorems) states that certain subgroups are amalgamated free products or HNN extensions, and are also quasi-convex.  This allows us to see that the hierarchy which we build topologically has the expected algebraic structure, which in turn allows us to deduce that the hierarchy is quasi-convex.  We now provide a more detailed and technical outline of the main work in this paper.

\section{Outline}

Special cube complexes are defined in \cite[Definition 3.2]{HW08}.  Note that \cite[Proposition 3.10]{HW08} states that a special complex has a finite $A$-special cover, and many people use the term `special' to mean $A$-special.  Since we are only interested in this property virtually, the distinction is not important.  In fact, we do not need to use the definition of special cube complexes in this paper. Rather, we use the notion as a black box by quoting results, particularly those from \cite{HW08, HsuWise15, HaglundWise12}.
\begin{definition}
  We say that a group $G$ is \emph{special} if it is the fundamental
  group of a compact special cube complex.  (Other authors use the
  term \emph{compact special}.)

  A group is {\em virtually special} if it has a finite-index subgroup which is special.
\end{definition}

\begin{remark}\label{remark:vspecialgroup}
If $G$ is a hyperbolic group which acts properly and cocompactly on a CAT$(0)$
cube complex $\tilde{X}$ then $G$ 
is virtually special if and only if all quasi-convex subgroups of $G$ are separable \cite[Theorems 1.3 and 1.4]{HW08}.  Therefore,
being virtually special is a property of the (hyperbolic) group, not the particular cube complex upon which it acts properly and cocompactly.

As mentioned in the introduction, the Main Theorem from \cite{VH} shows that 
{\em any} hyperbolic group which acts properly and cocompactly on a CAT$(0)$ cube complex is virtually special.  However, in this paper we are giving an alternative account of some of the ingredients of the proof of this theorem, and so we do not assume this result from \cite{VH}.
\end{remark}

\begin{notation}
  Let $A$ be a group.
  The notation $B\dotsub A$ indicates that $B<A$ and $|A:B|<\infty$.
  Similarly, the notation $B\dotnorm A$ indicates $B\lhd A$ and $|A:B|<\infty$.
\end{notation}
\begin{definition}\label{def:dehnfill}
  Let $\mc{P} = \{P_1,\ldots,P_m\}$ be a finite collection of
  subgroups of a group $G$ (called a \emph{peripheral structure} for $G$).  A choice of subgroups (called \emph{filling kernels}) 
  $\{N_i\lhd P_i\}_{i=1}^n$ gives rise to a \emph{(Dehn) filling} of $(G,\mc{P})$:
\[ G \twoheadrightarrow G(N_1,\ldots,N_m) \]
  with kernel $\llangle  \cup_i N_i\rrangle_G$.
  A Dehn filling $G\to G(N_1,\ldots,N_M)$ is called \emph{peripherally
    finite} if $N_i\dotnorm P_i$ for all $i$.
\end{definition}

We want to apply Dehn filling to relatively hyperbolic pairs $(G,\mc{P})$ where $G$ is itself hyperbolic.  To see when we can do this, recall the definition of a malnormal collection of subgroups.
\begin{definition}
  A collection $\mc{P}$ of subgroups of $G$ is said to be \emph{malnormal} (resp. \emph{almost malnormal}) if for any $P,P'\in \mc{P}$, and $g\in G$, either $P^g\cap P'$ is trivial (resp. finite), or $P=P'$ and $g\in P$.
\end{definition}
The following well known characterization of relatively hyperbolic pairs $(G,\mc{P})$ where $G$ is hyperbolic follows from results of Bowditch and Osin.
\begin{theorem} \cite{bowditch:relhyp,osin:relhypbook} \label{thm:when hyp is RH}
Suppose $G$ is hyperbolic and $\mc{P}$ is a finite collection of subgroups of $G$.  The pair $(G,\mc{P})$ is relatively hyperbolic if and only if $\mc{P}$ is an almost malnormal collection of quasiconvex subgroups.
\end{theorem}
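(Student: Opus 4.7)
The plan is to verify both directions using the combinatorial cusped space construction of Groves--Manning together with the standard characterizations of relative hyperbolicity (Bowditch's and Osin's).

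For the forward direction, suppose $(G,\mc{P})$ is relatively hyperbolic. Almost malnormality of $\mc{P}$ is a general feature of any RH pair: in Bowditch's definition the elements of $\mc{P}$ are stabilizers of distinct parabolic points in a geometrically finite action on a proper hyperbolic space, which forces almost malnormality by an elementary dynamics argument. For quasiconvexity, I would use that the combinatorial cusped space $X=X(G,\mc{P})$ is $\delta$-hyperbolic and that, when $G$ is itself hyperbolic, $\mathrm{Cay}(G)$ is quasi-isometrically embedded in $X$. Each $P_i$ acts cocompactly on its horoball, whose boundary horosphere is the corresponding $P_i$-coset in $\mathrm{Cay}(G)$; combining the quasiconvexity of horospheres in $X$ with the hyperbolicity of $\mathrm{Cay}(G)$ via a standard fellow-traveling argument yields quasiconvexity of each $P_i$ in $G$.

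For the reverse direction, construct $X$ by attaching a combinatorial horoball along each left coset of each $P_i$ in $\mathrm{Cay}(G)$. The $G$-action on $X$ is automatically proper with cocompact action on each horoball, so the substantive task is to verify that $X$ is $\delta$-hyperbolic; combined with the identification of horoball stabilizers with peripheral subgroups this recovers Bowditch's definition of relative hyperbolicity. Triangles lying inside a single horoball are thin by the explicit horoball geometry. For triangles whose vertices lie in $\mathrm{Cay}(G)$, the two hypotheses cooperate: quasiconvexity of each $P_i$ means that whenever a geodesic in $X$ detours through a horoball its entry and exit points coarsely lie on a $\mathrm{Cay}(G)$-geodesic between nearby points of the relevant coset; almost malnormality means that distinct peripheral cosets have bounded coarse intersection, so no geodesic indefinitely shuttles between horoballs.

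The main obstacle I expect is the careful verification of $\delta$-hyperbolicity of $X$. This requires a ``bounded coset penetration'' style analysis that controls how geodesics enter and exit each horoball, how successive such detours interact, and how these compare with geodesics in $\mathrm{Cay}(G)$, with constants depending uniformly on the hyperbolicity constant of $G$, the quasiconvexity constants of the $P_i$, and the diameter bound on coarse intersections of distinct peripheral cosets coming from almost malnormality. Once this bounded-penetration property is established, hyperbolicity of $X$ reduces to the hyperbolicity of $\mathrm{Cay}(G)$ by a standard triangle-comparison argument, completing the proof.
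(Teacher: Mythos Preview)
Your approach differs substantially from the paper's, which does not attempt an argument from first principles: it simply cites Osin for the forward direction (almost malnormality from \cite[Proposition 2.36]{osin:relhypbook}, undistortedness from \cite[Lemma 5.4]{osin:relhypbook}, then observes undistorted implies quasiconvex in a hyperbolic group) and cites Bowditch \cite[Theorem 7.11]{bowditch:relhyp} for the reverse direction.

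Your forward-direction argument contains a genuine error. Horospheres are \emph{not} quasiconvex in the cusped space $X$: they are exponentially distorted, since a geodesic in $X$ between two points of a horosphere at horospherical distance $d$ descends to depth roughly $\log d$ in the horoball. So ``quasiconvexity of horospheres in $X$'' is false and cannot be used. Relatedly, the assertion that $\mathrm{Cay}(G)$ is quasi-isometrically embedded in $X$ whenever $G$ is hyperbolic is not automatic; its usual proof goes through exactly the fact you are trying to establish, namely that each $P_i$ is undistorted in $G$. The clean argument is the one the paper uses: undistortedness of peripheral subgroups is a general feature of any relatively hyperbolic pair (no hypothesis on $G$ needed), and an undistorted subgroup of a hyperbolic group is quasiconvex.

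Your reverse-direction plan is a legitimate strategy and is essentially a reproof of Bowditch's theorem in the cusped-space model, but as you yourself note, the substantive work (the bounded-penetration analysis yielding hyperbolicity of $X$) is only described, not carried out. Given that the statement is attributed to Bowditch and Osin, the paper's choice to cite rather than reprove is the appropriate one here; if you want a self-contained treatment, the cleanest route is to verify Bowditch's fineness criterion for the coned-off Cayley graph directly from almost malnormality and quasiconvexity, rather than to prove hyperbolicity of the cusped space from scratch.
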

\begin{proof}
  The forward direction follows from some much more general facts about parabolic
  subgroups of relatively hyperbolic groups:  If $(G,\mc{P})$ is any
  relatively hyperbolic pair, then the collection $\mc{P}$ is almost
  malnormal by \cite[Proposition 2.36]{osin:relhypbook}.  Moreover the
  elements of $\mc{P}$ are undistorted in $G$ 
  \cite[Lemma 5.4]{osin:relhypbook}.  
  Undistorted subgroups of a hyperbolic group are quasiconvex.

  The other direction is proved by Bowditch \cite[Theorem 7.11]{bowditch:relhyp}.
\end{proof}

Here is our main result:
\begin{theorem}[Main Theorem]\label{msqt+}
  Let $G$ be hyperbolic and virtually special, and suppose $(G,\mc{P})$ is relatively
  hyperbolic.  
  There are subgroups $\{\dot{P}_i\dotnorm P_i\}$ so that
  if $\bar{G}=G(N_1,\ldots,N_m)$ is any
  filling  with $N_i<\dot{P}_i$ and $P_i/N_i$ virtually special and
  hyperbolic 
  for all $i$, then $\bar{G}$ is hyperbolic and virtually special.
\end{theorem}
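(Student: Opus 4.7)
The plan is to deduce Theorem \ref{msqt+} by producing a finite-index subgroup of the filling $\bar G$ that carries a malnormal quasiconvex hierarchy, to which the Malnormal Quasiconvex Hierarchy Theorem \ref{MQH} can be applied. The core of the argument lives upstairs in $G$: by Theorem \ref{thm:virtualhierarchy}, I pass to a finite-index subgroup $G' \dotsub G$ together with an induced peripheral structure $\mc{P}'$ on $G'$ consisting of finite-index subgroups of conjugates of elements of $\mc{P}$, such that $G'$ admits a quasiconvex hierarchy that is $\mc{P}'$-elliptic in the sense of Definition \ref{def:Pelliptic}. The $\mc{P}'$-ellipticity is the decisive property: at every stage of the hierarchy each element of $\mc{P}'$ fixes a vertex of the Bass--Serre tree, and whenever it intersects an edge group in an infinite set it is contained in that edge group. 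This is precisely what is needed for the hierarchy and the Dehn filling to commute.

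Next, I would specify the subgroups $\dot{P}_i \dotnorm P_i$ by choosing them deep enough in each $P_i$ that the relatively hyperbolic Dehn filling theorems of Section \ref{s:RHDF} apply, not only to $(G, \mc{P})$ but also --- after pushing the filling kernels through the finite-index inclusions $P'_j \dotsub P_i^g$ --- to every stage of the hierarchy of $G'$. Given any legitimate filling $\bar G = G(N_1, \ldots, N_m)$ as in the statement, I would then argue by induction from the bottom of the hierarchy of $G'$ upward. At the base, the vertex groups are conjugates of the $P'_j$, each of finite index in some $P_i$, and their fillings are subgroups of $P_i / N_i$, hence hyperbolic and virtually special by hypothesis. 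At each higher stage, a splitting $A = B \ast_C D$ (or HNN extension) in the hierarchy of $G'$, with $C$ quasiconvex and $\mc{P}'$-elliptic, gives via the Combination Theorem of Section \ref{sec:combination} a corresponding splitting $\bar A = \bar B \ast_{\bar C} \bar D$ of the filled group, with $\bar C$ quasiconvex in $\bar A$. By induction $\bar B$ and $\bar D$ are hyperbolic and virtually special; passing to a further finite-index subgroup of $\bar A$ to make the edge group malnormal (using separability of $\bar C$, available since $\bar B$ and $\bar D$ are virtually special) and applying Theorem \ref{MQH} shows that $\bar A$ is hyperbolic and virtually special as well. Climbing to the top of the hierarchy produces a finite-index subgroup $\bar{G}' \dotsub \bar{G}$ that is hyperbolic and virtually special, and hence $\bar G$ itself has the same property.

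The principal obstacle is arranging the three ingredients to mesh. The hierarchy of $G'$ must be built so that every splitting respects the peripheral structure uniformly; this is the substantive new input of Theorem \ref{thm:virtualhierarchy}. The filling kernels $\dot{P}_i$ must be chosen once, but must suffice at every stage of the hierarchy; this requires the uniform control given by relatively hyperbolic Dehn filling applied to the nested peripheral structures appearing at each stage. Finally, the Combination Theorem must guarantee that both quasiconvexity and the actual $\ast$-decomposition structure persist through the filling, so that what we produce downstairs really is a quasiconvex hierarchy of $\bar{G}'$ rather than a merely formal graph-of-groups decomposition; this is what makes the inductive application of Theorem \ref{MQH} legitimate. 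Once these three pieces are in place, the climb up the hierarchy is essentially bookkeeping.
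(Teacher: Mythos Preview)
Your overall architecture is right --- pass to $(G',\mc{P}')$ via Theorem \ref{thm:virtualhierarchy}, push a hierarchy through the filling, and apply Theorem \ref{MQH} --- but two points need correction.

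First, you have misidentified the role of the Combination Theorem of Section \ref{sec:combination}. That theorem is a metric statement about $\delta$--hyperbolic CAT$(0)$ spaces, used \emph{upstairs} in the proof of Theorem \ref{thm:virtualhierarchy} to certify that the double-dot hierarchy of spaces is faithful and quasiconvex. It says nothing about Dehn filling. The fact that the splittings $A = B\ast_C D$ survive filling to give genuine splittings $\bar A = \bar B\ast_{\bar C}\bar D$, with $\bar C$ quasiconvex, is the content of Section \ref{sec:inducedsplittings} (specifically Lemma \ref{lem:splittingoffilling}, Lemma \ref{quasiconvexedges}, and Corollary \ref{c:filledhierarchy}), and rests on the quasiconvex Dehn filling results of Section \ref{s:RHDF}, not on Section \ref{sec:combination}.

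Second, and more seriously, your plan for recovering malnormality downstairs has a gap. You propose, at each stage of the induction, to pass to a finite-index subgroup of $\bar A$ making $\bar C$ malnormal, ``using separability of $\bar C$, available since $\bar B$ and $\bar D$ are virtually special''. But separability of $\bar C$ in the vertex groups $\bar B$ and $\bar D$ does not give separability of $\bar C$ in $\bar A = \bar B\ast_{\bar C}\bar D$; separating $\bar C$ from an element $g\in\bar A\setminus\bar C$ that mixes the two factors is exactly the hard part. The argument that does accomplish this (in Section \ref{s:QCH}) uses the MSQT itself together with height reduction, so you cannot invoke it here without circularity. The paper avoids this problem entirely: Theorem \ref{thm:virtualhierarchy} already produces a \emph{malnormal} quasiconvex fully $\mc{P}'$--elliptic hierarchy of $G'$, and Theorem \ref{thm:hierarchydescends} (via Lemma \ref{quasiconvexedges}\eqref{num:height}, i.e.\ Theorem \ref{t:height reduction}) shows that this malnormality is preserved by sufficiently long fillings. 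Thus the filled hierarchy of $\bar G'$ is malnormal from the start, and one applies Theorem \ref{MQH} once, with no further finite-index passages needed at intermediate stages. You should use the malnormality built into Theorem \ref{thm:virtualhierarchy} rather than trying to reconstruct it after filling.
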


Since finite groups are virtually special and hyperbolic, we recover the following
theorem of Wise.

\begin{corollary}[The Malnormal Special Quotient Theorem, \cite{Wise}] \label{msqt}
  Let $G$ be hyperbolic and virtually special, and suppose $(G,\mc{P})$ is relatively
  hyperbolic.  
  There are subgroups $\{\dot{P}_i\dotnorm P_i\}$ so that
  if $\bar{G}=G(N_1,\ldots,N_m)$ is any
  peripherally finite filling  with $N_i<\dot{P}_i$
  for all $i$, then $\bar{G}$ is hyperbolic and virtually special.
\end{corollary}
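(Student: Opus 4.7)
The plan is to deduce this corollary directly from Theorem \ref{msqt+}; the only gap between the two statements is that the Main Theorem permits arbitrary virtually special hyperbolic quotients $P_i/N_i$, whereas the MSQT restricts attention to peripherally finite fillings, in which $N_i \dotnorm P_i$.

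First I would take the subgroups $\{\dot{P}_i \dotnorm P_i\}$ furnished by Theorem \ref{msqt+} and use these same subgroups in the statement of the corollary. Suppose then that $\bar{G} = G(N_1,\ldots,N_m)$ is any peripherally finite filling with $N_i < \dot{P}_i$ for every $i$. Peripheral finiteness means $N_i \dotnorm P_i$, so each quotient $P_i/N_i$ is a finite group. Finite groups are trivially hyperbolic (any bounded metric space is $\delta$-hyperbolic for sufficiently large $\delta$) and are virtually special, since the trivial group is the fundamental group of a point and therefore special. Hence the hypotheses of Theorem \ref{msqt+} are met, and applying the Main Theorem yields that $\bar{G}$ is hyperbolic and virtually special, as required.

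There is no genuine obstacle here: the paragraph preceding the corollary in the paper already records precisely this deduction. All the substantive work is concentrated in the proof of Theorem \ref{msqt+}, which will form the main body of the paper; this corollary is essentially a bookkeeping remark that recovers Wise's MSQT from a strictly stronger statement.
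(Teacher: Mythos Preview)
Your proposal is correct and matches the paper's own deduction exactly: the paper simply notes that finite groups are virtually special and hyperbolic, so Corollary \ref{msqt} is an immediate specialization of Theorem \ref{msqt+}. There is nothing to add.
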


The proof of Theorem \ref{msqt+} relies on the following theorem, which is \cite[Theorem 11.2]{Wise} in Wise's manuscript.   In Section \ref{s:hierarchies} we explain the terminology, and in Section \ref{s:SCT} we recall how the result follows almost immediately from the work of Hsu-Wise \cite{HsuWise15} and Haglund-Wise \cite{HaglundWise12}.
\begin{reptheorem}{MQH}{\em (Malnormal Quasiconvex Hierarchy)}  If $G$ is a hyperbolic group with a malnormal quasiconvex hierarchy terminating in a collection of virtually special groups, then $G$ is virtually special.
\end{reptheorem}
Thus, in order to prove Theorem \ref{msqt+}, it suffices to prove that $\bar{G}$ (under suitable hypotheses on the filling) is a hyperbolic group which (virtually) has a malnormal quasiconvex hierarchy terminating in a collection of virtually special groups.  The key results for proving that this is the case are Theorems \ref{thm:virtualhierarchy} and \ref{thm:hierarchydescends} below.

\begin{notation}[Peripheral structure on a finite index subgroup] \label{subpair}
Given $G'\dotnorm G$, a peripheral structure $\mc{P}$ on $G$ induces a
peripheral structure $\mc{P'}$ on $G'$: For each $P\in \mc{P}$, let
$\mc{E}_0(P) = \{gPg^{-1}\cap G'\mid g\in G\}$, and let $\mc{E}(P)$ be
obtained from $\mc{E}_0(P)$ by choosing one element of each
$G'$--conjugacy class of subgroup.  Note that $\mc{E}(P)$ is finite,
and each element is conjugate to a finite index subgroup of $P$.  Now
let
\[ \mc{P}' = \bigsqcup_{P\in \mc{P}} \mc{E}(P).\]
We summarize all this with the notation
$(G',\mc{P}')\dotnorm (G,\mc{P})$. 

Note that $(G',\mc{P}')$ is relatively hyperbolic if and only if $(G,\mc{P})$ is.
\end{notation}
\begin{remark}
  The collection $\mc{P}'$ just defined can also be recovered from
  topology, if we keep in mind the fundamental group of a subcomplex
  $Y\subset X$ is only defined up to conjugacy in $\pi_1X$.  Suppose
  $G = \pi_1X$ and $\mc{Y}$ is a collection of subcomplexes of $X$
  whose fundamental groups correspond to $\mc{P}$.  Let $X'\to X$ be a
  regular finite-degree cover, so $\pi_1X' = G'\dotnorm G$.  Let
  $\mc{Y}'$ be the collection of preimages of elements of $\mc{Y}$ in $X'$.  Then $\mc{P}'$ is the set of fundamental groups of
  elements of $\mc{Y}'$.
\end{remark}

Our main result is obtained from the following two statements.  (See Section \ref{s:hierarchies} below for definitions of the terminology relating to hierarchies.)
\begin{theorem}[Virtual Relative Hierarchy]\label{thm:virtualhierarchy}
  Given $(G,\mc{P})$ relatively hyperbolic, with $G$ hyperbolic and virtually special, there is some $(G',\mc{P}')\dotnorm (G,\mc{P})$ which has a malnormal quasiconvex fully $\mc{P'}$-elliptic hierarchy terminating in $\mc{P}'$.
\end{theorem}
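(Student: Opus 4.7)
The plan is to combine the virtually special cube-complex structure on $G$ with separability arguments to manufacture a hierarchy that respects the peripheral structure, iteratively splitting along hyperplanes that are disjoint from peripheral subcomplexes.

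First, after passing to a finite-index subgroup, we may assume $G = \pi_1 X$ for a compact special cube complex $X$. Each peripheral subgroup $P \in \mc{P}$ is quasiconvex in $G$ by Theorem \ref{thm:when hyp is RH}, hence separable by Remark \ref{remark:vspecialgroup}. Using the canonical-completion machinery of Haglund-Wise together with separability, I would pass to a regular finite cover $X' \to X$ so that every peripheral conjugate in the induced structure of Notation \ref{subpair} is realized as the fundamental group of a compact embedded convex subcomplex, and so that these peripheral subcomplexes are pairwise disjoint. Set $G' = \pi_1 X'$ and let $\mc{P}'$ be the induced peripheral structure; then $(G',\mc{P}') \dotnorm (G,\mc{P})$.

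Next, I would build the hierarchy inductively on a complexity measuring the number of hyperplanes not contained in a peripheral subcomplex. At each stage, pass to a further finite cover containing a hyperplane $W$ disjoint from every peripheral subcomplex. Such a cover exists by the separability of both peripheral and hyperplane subgroups (both are quasiconvex, hence separable in a hyperbolic virtually special group). Splitting along $W$ yields a graph-of-groups decomposition whose edge group $\pi_1 W$ is malnormal and quasiconvex by Haglund-Wise's results on hyperplane subgroups in special cube complexes; moreover every peripheral subgroup is elliptic, since each peripheral subcomplex lies in a single complementary component of $W$. Recursing on each vertex subcomplex (with its induced peripheral structure) eventually produces a hierarchy that terminates once every remaining hyperplane is contained in a peripheral subcomplex, at which point the terminal vertex groups are precisely the elements of $\mc{P}'$.

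The main obstacle is the inductive step: producing, at each stage and in some finite cover, a hyperplane that is simultaneously disjoint from all peripheral subcomplexes, yields a splitting reducing the complexity, and carries malnormal quasiconvex edge group. The delicate input is a separability/canonical-completion argument that pushes unwanted intersections of hyperplanes with peripheral subcomplexes into a higher cover, while still preserving both the convex embedding of the peripheral subcomplexes and the special structure on the ambient complex. Arranging these covers consistently across all stages of the hierarchy --- so that the peripheral subgroups and their finite-index descendants track correctly and remain elliptic throughout --- is the heart of the argument and what distinguishes this virtual relative hierarchy from the classical terminating-in-trivial hierarchy for virtually special hyperbolic groups.
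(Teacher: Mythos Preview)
Your proposal has a genuine gap at precisely the point you flag as ``the main obstacle'': the existence, in some finite cover, of a hyperplane disjoint from all peripheral subcomplexes.  In general no such hyperplane exists, and separability cannot manufacture one.  Once the peripheral subgroups are realized by embedded locally convex subcomplexes $Z_j\subset X'$, each $Z_j$ contains edges, and every hyperplane dual to such an edge necessarily crosses $Z_j$.  Passing to a further finite cover does not help: elevations of $Z_j$ still contain elevations of those edges, and the elevated hyperplanes still cross them.  Separability lets you promote immersions to embeddings and separate cosets, but it cannot undo an intersection that is forced by the cube-complex combinatorics.  Consequently the inductive step --- ``find a hyperplane $W$ disjoint from every peripheral subcomplex'' --- typically fails at the very first stage, and the proposed termination condition (``every remaining hyperplane is contained in a peripheral subcomplex'') is essentially never met.

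The paper confronts exactly this difficulty and resolves it by \emph{not} trying to avoid the peripheral subcomplexes.  Instead it forms the \emph{augmented} complex $C$ (the mapping cylinder of superconvex immersions $\mc{Z}\to X$ representing $\mc{P}$) and cuts along \emph{augmented hyperplanes}: the union of a hyperplane $W$ with all peripheral subcomplexes it meets.  These augmented hyperplanes are not convex and need not be $\pi_1$--injective a priori, so the paper proves a metric Combination Theorem (Theorem~\ref{combotheorem}) showing that, once the hyperplanes are $\epsilon_A$--embedded and the peripheral pieces are $\epsilon_B$--embedded (achieved via QCERF), each such union is $\pi_1$--injective with uniformly quasiconvex elevation.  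The actual hierarchy is then produced all at once via the \emph{double-dot cover} construction (Section~\ref{doubledothierarchy}), not by an inductive search for disjoint hyperplanes.  Full $\mc{P}'$--ellipticity comes from the ``no accidental $\mc{B}$--loops'' statement (Proposition~\ref{noaccidents}), which relies on superconvexity of the peripheral immersions; malnormality is arranged only at the end, by a further finite cover (Theorem~\ref{t:vmalnormal}).  The upshot is that the edge groups of the hierarchy \emph{contain} the peripheral subgroups they meet rather than avoid them --- this is what ``fully $\mc{P}'$--elliptic'' means --- and that is the opposite of what your approach attempts.
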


Theorem \ref{thm:virtualhierarchy} is proved in Section \ref{sec:vhier}.

\begin{theorem}[Hierarchy Filling]\label{thm:hierarchydescends}
  Suppose $(G,\mc{P})$ is relatively hyperbolic with $G$ hyperbolic and having a malnormal quasiconvex fully $\mc{P}$-elliptic hierarchy terminating in $\mc{P}$.  For all sufficiently long fillings,
  \[ (G,\mc{P}) \longrightarrow (\bar{G},\bar{\mc{P}}), \]
with all elements of $\bar{\mc{P}}$ hyperbolic,  the group $\bar{G}$ is hyperbolic and has a malnormal quasiconvex hierarchy terminating in $\bar{\mc{P}}$.
\end{theorem}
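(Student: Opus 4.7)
The plan is to proceed by induction on the depth of the malnormal quasiconvex fully $\mc{P}$-elliptic hierarchy of $G$. In the base case the hierarchy has depth zero, so $G$ is (up to conjugacy) one of the peripheral subgroups in $\mc{P}$; the filled group $\bar{G}$ is then the corresponding element of $\bar{\mc{P}}$, which is hyperbolic by hypothesis, and the trivial hierarchy suffices.

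For the inductive step, write the top-level splitting as $G = A \ast_C B$ or $G = A \ast_C$, where $C$ is quasiconvex and almost malnormal, and the vertex groups inherit malnormal quasiconvex fully elliptic hierarchies of strictly smaller depth. The full $\mc{P}$-ellipticity assumption asserts that every $P \in \mc{P}$ is conjugate into a vertex group, and that whenever a conjugate of $P$ meets $C$ (or any conjugate of $C$) in an infinite set, the whole conjugate lies in that edge group. This is exactly what is needed to push the filling down the graph of groups: each filling kernel $N_i$ unambiguously lives in one vertex group, and when it simultaneously lives in $C$, the induced filling of $C$ agrees with the restriction from each adjacent vertex group, so the filling of the graph of groups is well-defined.

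Next, I would invoke the relatively hyperbolic Dehn filling results recalled in Section \ref{s:RHDF} to conclude that, for sufficiently long fillings, the induced fillings $\bar{A}$, $\bar{B}$, $\bar{C}$ are each hyperbolic (relative to their filled peripherals), that $\bar{C}$ injects into both $\bar{A}$ and $\bar{B}$, and that its images remain quasiconvex and almost malnormal in $\bar{A}$ and $\bar{B}$. By the inductive hypothesis applied to the shorter hierarchies on $A$ and $B$ (and on $C$), each of $\bar{A}$, $\bar{B}$, $\bar{C}$ then carries a malnormal quasiconvex hierarchy terminating in its portion of $\bar{\mc{P}}$. A Combination Theorem of the kind established in Section \ref{sec:combination} then identifies $\bar{G}$ with $\bar{A}\ast_{\bar{C}} \bar{B}$ (or the HNN analogue), shows that $\bar{G}$ is hyperbolic, and shows that $\bar{C}$ sits quasiconvexly and almost malnormally inside $\bar{G}$. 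Gluing this top-level splitting onto the hierarchies of $\bar{A}$ and $\bar{B}$ produced by induction yields the desired malnormal quasiconvex hierarchy of $\bar{G}$ terminating in $\bar{\mc{P}}$.

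The main obstacle is the Combination Theorem step: one must rule out the possibility that filling collapses the top splitting, so that $\bar{G}$ is not a proper quotient of $\bar{A}\ast_{\bar{C}} \bar{B}$, and one must propagate the quasiconvexity and almost malnormality of $\bar{C}$ from the individual vertex groups up to the amalgam $\bar{G}$. A secondary bookkeeping concern is that the phrase \emph{sufficiently long} has to be interpreted uniformly across the finitely many levels of the hierarchy, so that a single sufficiently long filling at the top of $G$ induces long enough fillings at every descendant vertex group; since the hierarchy has finite depth this amounts to finitely many constraints and can be achieved by shrinking the allowable fillings a finite number of times.
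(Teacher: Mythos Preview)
Your inductive outline is plausible in spirit, but there is a genuine gap at the key step, and the paper takes a different route that sidesteps it entirely.

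The gap is your appeal to ``a Combination Theorem of the kind established in Section~\ref{sec:combination}.''  Theorem~\ref{combotheorem} is a \emph{metric} statement about $\pi_1$--injectivity and quasiconvexity of unions of locally convex subspaces in a $\delta$--hyperbolic CAT$(0)$ space; in the paper it is used only to show that the double-dot hierarchy of Section~\ref{s:vhierarchy} is faithful and quasiconvex.  It says nothing about amalgams of abstract groups after Dehn filling, and in particular it will not give you that $\bar{G}\cong \bar{A}\ast_{\bar{C}}\bar{B}$, that $\bar{G}$ is hyperbolic, or that $\bar{C}$ is quasiconvex and malnormal \emph{in $\bar{G}$} rather than merely in $\bar{A}$ and $\bar{B}$.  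You correctly flagged this as ``the main obstacle,'' but the tool you point to does not address it.  To make the inductive approach work you would instead need an algebraic argument that the splitting survives (this is Lemma~\ref{lem:splittingoffilling}), a Bestvina--Feighn style combination for hyperbolicity, and separate arguments promoting quasiconvexity and malnormality of $\bar C$ from the vertex groups to the amalgam.

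The paper's proof avoids all of this by not inducting on depth.  It defines the \emph{filled hierarchy} $\bar{\mc H}$ globally, with the same underlying graphs as $\mc{H}$ and each edge or vertex group replaced by its induced filling (Lemma~\ref{lem:splittingoffilling} checks this is well-defined).  Then, for each edge or vertex group $A$ of $\mc H$, it applies Theorem~\ref{qcfilling} and Theorem~\ref{t:height reduction} with $H=A$ viewed as a fully relatively quasiconvex subgroup of $(G,\mc{P})$ itself.  This gives directly that $\phi(A)$ is quasiconvex in $\bar{G}$, that $\phi(A)$ equals the induced filling of $A$ (so $\bar{\mc H}$ is faithful), and that the height of $\phi(A)$ in $\bar{G}$ does not exceed that of $A$ in $G$ (so malnormal edge groups stay malnormal).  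Hyperbolicity of $\bar{G}$ follows from Theorem~\ref{thm:RHDF} together with the hypothesis that the elements of $\bar{\mc P}$ are hyperbolic.  Nothing is ever propagated from vertex groups up to $\bar{G}$; the quasiconvexity and malnormality are established in $\bar{G}$ from the start, which is what your inductive scheme was missing a mechanism for.
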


Theorem \ref{thm:hierarchydescends} is proved in Section \ref{sec:hierdescends}.

In Section \ref{sec:mainthm} we prove the main theorem from the two theorems above.
 Briefly one argues as follows: Given $(G,\mc{P})$
as in the theorem, one passes to a finite index subgroup
$(G_0,\mc{P}_0)$ satisfying the conclusion of Theorem
\ref{thm:virtualhierarchy}.  This subgroup has a malnormal quasiconvex
hierarchy terminating in peripheral subgroups.  Theorem
\ref{thm:hierarchydescends} says that any sufficiently long filling of
$(G_0,\mc{P}_0)$ still has such a hierarchy.  Now assume a filling (in addition to being sufficiently long)
has the property that $P/N$ is always virtually special, for $P\in
\mc{P}_0$ and $N$ the corresponding filling kernel.  Using the Relatively Hyperbolic Dehn Filling Theorem \ref{thm:RHDF}, these $P/N$ are precisely the peripheral subgroups of the filling.  
Using the
Malnormal Quasiconvex Hierarchy Theorem  \ref{MQH} the
filling is virtually special.  If the filling was chosen appropriately, it
gives a filling of $(G,\mc{P})$ which is therefore also virtually
special.

\subsection{Conventions}
Let $\Delta$ be a geodesic triangle with vertices $\{x,y,z\}$ in a geodesic space $X$.  The triangle determines a metric tripod $T_\Delta$, and a comparison map 
$\phi\co \Delta\to T_\Delta$, isometric on each side of $\Delta$.  The leg of $T_\Delta$ adjacent to $\phi(x)$ has length equal to the \emph{Gromov product} 
\[ (y,z)_x:= \frac{1}{2}\left( d(x,y)+d(x,z)-d(y,z) \right).\]
The triangle $\Delta$ is said to be \emph{$\delta$--thin} if $\phi^{-1}(t)$ has diameter at most $\delta$, for all $t\in T_\Delta$.
For us a space is \emph{$\delta$--hyperbolic} if all the triangles are \emph{$\delta$--thin}.  (For the relationship between this and other definitions of hyperbolicity, see \cite[Chapter III.H]{bridhaef:book}.)

\section{Hierarchies} \label{s:hierarchies}

\subsection{Hierarchies of groups}
We refer to Serre \cite{serre:trees} for more detail on the basic
theory of graphs of groups, their fundamental groups, and the
relationship with actions on trees.  All the underlying graphs of a graph of groups will be finite.  In contrast to Serre, we do not require that the maps
from edge-groups to vertex groups be injective.  Rather, in our applications a graph of groups is found from a graph of spaces via van Kampen's Theorem, and much of our work is to prove that the maps from edge-spaces to vertex-spaces are $\pi_1$-injective.

 Recall that for
Serre, a graph is a pair of sets $(V,E)$, with a fixed-point free
involution $e\mapsto \bar{e}$ on $E$ (exchanging an oriented edge with its
orientation reverse), and a ``terminus'' map
$t\co E\to V$ mapping an edge to the vertex it is oriented toward.
The ``initial'' point of an edge is $i(e) := t(\bar{e})$.

\begin{definition}
  A \emph{graph of groups} $(\Gamma,\mc{G})$ is:
  \begin{enumerate}
  \item A graph $\Gamma = (V,E)$;
  \item An assignment $\mathcal{G}\co V\sqcup E\to \Grp$ of a group to each edge and vertex of $\Gamma$, satisfying $\mathcal{G}(\bar{e})= \mathcal{G}(e)$; and
  \item For each edge $e\in E$, a homomorphism $\phi_e\co \mc{G}(e)\to \mc{G}(t(e))$.
  \end{enumerate}
The graph of groups is \emph{faithful} if all homomorphisms $\phi_e$ are injective.  
\end{definition}

\begin{remark}
  For any graph of (pointed) spaces (see Section \ref{hierarchyofspaces} below), there is a corresponding graph of groups obtained by applying the $\pi_1$ functor to the spaces and maps, and using the same underlying graph.  The groupoid of homotopy classes of paths between basepoints of vertex spaces is isomorphic to the fundamental groupoid of the corresponding graph of groups.  Similarly, the fundamental group of the graph of spaces is isomorphic to the fundamental group of the graph of groups.
\end{remark}

\begin{definition}
  A \emph{graph of groups structure for $G$} is a graph of groups $(\Gamma,\mc{G})$ together with an 
  isomorphism between $G$ and $\pi_1(\Gamma,\mc{G},v)$, the fundamental group of the graph of groups based at some vertex $v$.  The structure is \emph{degenerate} if the graph $\Gamma$ consists of a single point $v$, with $\mc{G}(v) = G$, and the isomorphism is the identity map.
  We'll say the structure is \emph{faithful} if $(\Gamma,\mc{G})$ is faithful.
\end{definition}

\begin{definition}
  A \emph{hierarchy of length $0$} for $G$ is the degenerate graph of groups structure for $G$.
  Let $n>0$.
  A \emph{hierarchy of length $n$} for $G$ is a (finite) graph of groups structure for $G$, together with a hierarchy of length $(n-1)$ for each vertex group.  
\end{definition}
Note that a hierarchy of length $1$ for $G$ is the same thing as a graph of groups structure for $G$.  Longer hierarchies can be thought of as ``multi-level graphs of groups''.
\begin{definition}[Levels of a hierarchy, terminal groups]
Let $G_v$ be a vertex group of this graph of groups structure.  We say that $G_v$ is at \emph{level $1$} of the hierarchy.  The groups at level $1$ of the hierarchy for $G_v$ are at \emph{level $2$} of the hierarchy for $G$, and so on.
If $\mc{H}$ is a hierarchy, we sometimes write ``$H\in \mc{H}$'' to mean that $H$ is at level $i$ of $\mc{H}$ for some $i$.

If a hierarchy has length $n$, the groups at level $n$ are called the \emph{terminal groups} of the hierarchy.
\end{definition}
It is convenient in this definition to allow any or all of the graphs of groups to be degenerate, though of course we are mostly interested in the case when at least some of the graphs of groups at each level are nondegenerate.

\begin{definition}
A {\em faithful} hierarchy is one for which every graph of groups occurring in the hierarchy is faithful.
\end{definition}
In a faithful hierarchy, each vertex or edge group at any level embeds in $G$, and so can be thought of as a subgroup of $G$.  In general $H\in \mc{H}$ only gives a subgroup up to conjugacy in $G$, but we can pin it down if necessary by choosing maximal trees in all the underlying graphs of the hierarchy.

\begin{definition} \hypertarget{terminating}
  Let $\mc{H}$ be a faithful hierarchy of $G$, and let $\mc{P}$ be a
  collection of subgroups of $G$.  If every terminal group of $\mc{H}$
  is conjugate to an element of $\mc{P}$, we say that the hierarchy
  \emph{terminates in $\mc{P}$}.
\end{definition}

\begin{remark}[Warning]
  Even in the faithful case, our terminology differs from Wise's in \cite{RAAGs,Wise}, in two ways.  First, Wise's hierarchies consist of a sequence of \emph{one-edge} splittings, and not general graphs of groups.  Our hierarchies could be converted to ones satisfying this property, at the expense of lengthening them.
Second, Wise usually requires the terminal groups to be trivial (sometimes virtually special), and we don't have any such requirement in general.  
\end{remark}

Sometimes hierarchies have some extra properties which we like.  Here are some of them.
\begin{definition}
  Let $\mc{H}$ be a faithful hierarchy for the group $G$.  Each $H\in \mc{H}$
  comes equipped with a graph of groups structure.  Write $\mc{E}(H)$
  for the set of edge groups of this structure.  We say the hierarchy $\mc{H}$ is
  \begin{enumerate}
  \item \emph{quasiconvex} if $G$ is finitely generated and, for all $H\in \mc{H}$, $K\in \mc{E}(H)$, $K$ is quasi-isometrically embedded in $H$; and
  \item\label{almost malnormal} \emph{(almost) malnormal} if, for all $H\in \mc{H}$, $K\in \mc{E}(H)$, $K$ is (almost) malnormal in $G$.  
  \end{enumerate}
\end{definition}
\begin{remark}[Different kinds of malnormal hierarchy]
  There are a number of reasonable choices for the definition of a malnormal hierarchy.  A stronger version might require $\mc{E}(H)$ to be a malnormal collection in $G$ (or just in $H$).  A weaker (and more common) version just requires the elements of $\mc{E}(H)$ to be malnormal in $H$ and not necessarily in $G$.  
\end{remark}

\subsection{Homotopy hierarchies of spaces}\label{hierarchyofspaces}
Hierarchies of spaces are multi-level graphs of spaces, so we start by
recalling what is meant by a graph of spaces.

\begin{definition}
  A \emph{graph of spaces} $(\Gamma, \mc{X})$ consists of the following data:
  \begin{enumerate}
  \item A graph $\Gamma=(V,E)$;
  \item an assignment of path-connected spaces $\mc{X}\co V\sqcup E\to \Top$ satisfying $\mc{X}(\bar{e})= \mc{X}(e)$;
  \item for each edge $e$ a continuous map $\psi_e\co \mc{X}(e)\to \mc{X}(t(e))$.
  \end{enumerate}
  For each $v\in V$, let $E_v = \{e\in E\mid t(e) = v\}$ be the collection of edges incident on $v$.  We combine all the maps $\psi_e$ for edges incident on $v$ to get a map
  \[ \Psi_v \co \bigsqcup_{e\in E_v} \mc{X}(e) \to \mc{X}(v). \]
  The \emph{semistar} 
   $S(v)$ is defined to be the mapping cylinder of $\Psi_v$.  The \emph{realization} $R(\Gamma,\mc{X})$ is then defined to be the union of the semistars, glued together using the identifications $\mc{X}(e)= \mc{X}(\bar{e})$.  We'll sometimes abuse language and refer to this realization as a graph of spaces. 
\end{definition}

\begin{definition}
  A \emph{graph of spaces structure} on $X$ is a homotopy equivalence $h\co R(\Gamma,\mc{X})\to X$ for some graph of spaces $(\Gamma,\mc{X})$.  The structure is \emph{degenerate} if $\Gamma$ consists of a single point labeled by $X$, and the homotopy equivalence is the identity.  
\end{definition}
\begin{remark}\label{remark:graphsandspaces}
  A graph of spaces $(\Gamma,\mc{X})$ gives rise to a graph of groups
  $(\Gamma,\mc{G})$ in a straightforward way.  Indeed, for each vertex
  or edge space $\mc{X}(x)$, one may choose a basepoint $b_x$.  The
  edge or vertex group $\mc{G}(x)$ is defined to be
  $\pi_1(\mc{X}(x),b_x)$.  The maps $\psi_e$ need not be basepoint
  preserving, so we also make a choice, for each edge $e$, of a path
  $\sigma_e$ from $\psi_e(b_e)$ to $b_{t(e)}$.  Such a path gives an
  identification of $\mc{G}(v)$ with $\pi_1(\mc{X}(v),\psi_e(b_e))$.
  Using this identification, we can define $\phi_e\co \mc{G}(e)\to
  \mc{G}(t(e))$ to be $(\psi_e)_*$.  

  The fundamental groupoid of this graph of groups \cite{higgins76}
  is isomorphic to
  the groupoid of homotopy classes of paths in $R(\Gamma,\mc{X})$ with
  endpoints in $\{b_v\mid v\mbox{ a vertex of }\Gamma\}$.

  Conversely, given any graph of groups we may build a corresponding
  graph of basepointed spaces in a straightforward way, realizing each
  edge and vertex space by a $K(\pi,1)$, and setting the maps $\psi_e$
  to be continuous pointed maps inducing the homomorphisms $\phi_e$.
\end{remark}

\begin{definition}
  A \emph{hierarchy of length $0$} for a space $X$ is the degenerate graph of spaces structure on $X$.

  A \emph{hierarchy of length $n$} for $X$ is a graph of spaces
  structure on $X$, together with a hierarchy of length $(n-1)$
  on each vertex space.

  The \emph{levels} of the structure are the collections of vertex spaces at each stage.  That is, the vertex spaces at level one are the vertex spaces of the graph of spaces structure on $X$; those at level $2$ are the vertex spaces of the spaces at level $1$, and so on.

  The vertex spaces at level $n$ of a length $n$ hierarchy are called the \emph{terminal spaces}.
\end{definition}

At each level of a hierarchy, there is a collection of homotopy
equivalences connecting the graphs of spaces at that level with the
vertex spaces of the previous level.  In the following two examples,
these homotopy equivalences are actually homeomorphisms.  Note however
that in our proof of the Virtual Hierarchy Theorem \ref{thm:virtualhierarchy} we need to consider hierarchies in which the homotopy
equivalences are \emph{not} homeomorphisms.
\begin{example}[Special Cube Complex] \label{ex:Hierarchy for cube}
  The hyperplanes of a (compact) special cube complex $X$ induce a
  hierarchy of spaces which is particularly simple.  Order the
  hyperplanes $H_1,\ldots H_n$.  Each hyperplane $H_i$ is two-sided, so the (open) cubes which intersect it give a product neighborhood $N(H_i)\cong H_i\times (-1,1)$.
  Each hyperplane $H_i$ is the edge space for a graph of spaces structure with underlying graph either an edge (if $H_i$ is separating) or a loop (otherwise).  The (one or two) components of $X\smallsetminus N(H_i)$ are the vertex spaces.
  The order allows us to use these decompositions to define a hierarchy.

  Level $0$ is the cube complex $X$; level $1$ consists of the
  components of $X\smallsetminus N(H_1)$; level $k$ consists of the
  components of $X\smallsetminus (\bigcup_{i=1}^k N(H_i))$.  The edge
  spaces at level $k$ are the components of $H_k\smallsetminus(\bigcup_{i=1}^{k-1} N(H_i))$.  
  The terminal spaces are points.
  \end{example}
\begin{example}[Haken $3$--manifold]
  Each surface in the Haken hierarchy has a product neighborhood, so
  we can regard this hierarchy as a hierarchy of spaces in an obvious
  way.  The terminal spaces are balls.
\end{example}

\begin{remark}
  Using Remark \ref{remark:graphsandspaces}, each graph of spaces
  gives rise to a graph of groups, via some choices of basepoints and
  connecting arcs.  A hierarchy of spaces therefore gives rise to a
  hierarchy of groups in a straightforward way.

There is also a general construction of a hierarchy of spaces from any
hierarchy of groups, following the last paragraph of Remark
\ref{remark:graphsandspaces}.  In this construction the ability to use
homotopy equivalences rather than homeomorphisms is very convenient.
\end{remark}

\subsection{Induced Hierarchies on subgroups}
The following result follows easily by considering the action of $H<G$ on the Bass-Serre tree corresponding to a graph of groups structure $(\Gamma,\mc{G})$ for $G$.

\begin{proposition}\label{inducedGoG}\cite{bass:covering}
  Suppose that $G$ has a graph of groups structure $(\Gamma, \mc{G})$,
  and that $H\dotnorm G$.  Then $H$ has an induced graph of groups
  structure $(\tilde{\Gamma},\mc{H})$ so that
  \begin{enumerate}
  \item Each vertex group of $(\tilde{\Gamma},\mc{H})$ is equal to $(K^g\cap H) \dotnorm K^g$ for some vertex group $K$ of $(\Gamma,\mc{G})$ and some $g \in G$.
  \item Each edge group of $(\tilde{\Gamma},\mc{H})$ is equal to $(K^g\cap H) \dotnorm K^g$ for some edge group $K$ of $(\Gamma,\mc{G})$ and some $g \in G$.
  \end{enumerate}
\end{proposition}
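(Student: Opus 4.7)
My plan is to restrict the natural $G$-action on the Bass-Serre tree of $(\Gamma,\mc{G})$ to $H$ and read off the induced graph of groups structure from the quotient. Because the paper allows non-faithful graphs of groups, I would phrase this in the graph-of-spaces language of Remark~\ref{remark:graphsandspaces} rather than by means of a simplicial Bass-Serre tree, so that the argument also handles the general case; in the faithful case it reduces to the standard computation with the Bass-Serre tree.

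First I would realize $(\Gamma,\mc{G})$ by a graph of spaces $R=R(\Gamma,\mc{X})$ whose vertex and edge spaces are $K(\pi,1)$'s, using Remark~\ref{remark:graphsandspaces}. Since $H\dotnorm G=\pi_1 R$, there is a finite regular covering $\hat{R}\to R$ with deck group $G/H$ and $\pi_1\hat{R}=H$. The cover $\hat R$ naturally inherits a graph of spaces structure whose underlying graph $\tilde\Gamma$ is a finite covering of $\Gamma$: each vertex (resp.\ edge) space of $\hat R$ is a connected component of the preimage of a vertex (resp.\ edge) space of $R$, and is itself a finite connected cover of the corresponding space downstairs. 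Applying Remark~\ref{remark:graphsandspaces} to $\hat R$ then yields a graph of groups $(\tilde\Gamma,\mc{H})$ with $\pi_1(\tilde\Gamma,\mc{H})=H$.

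Next I would identify the vertex and edge groups. Up to the basepoint and path-of-lifts choices implicit in Remark~\ref{remark:graphsandspaces}, a connected component $\widehat{\mc{X}(v)}\subset\hat R$ lying over $\mc{X}(v)$ corresponds to the subgroup $K^g\cap H$ of $G$, where $K=\mc{G}(v)$ is the vertex group downstairs and $g\in G$ records which $H$-coset of lifts was chosen. Since $H\lhd G$, for any $k\in K^g$ we have $k(K^g\cap H)k^{-1}=K^g\cap kHk^{-1}=K^g\cap H$, so $K^g\cap H\lhd K^g$; and $[K^g:K^g\cap H]\le[G:H]<\infty$, so $K^g\cap H\dotnorm K^g$. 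The same reasoning applied to edge spaces gives condition (2).

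The only point that really needs attention (rather than being a genuine obstacle) is the absence of an injectivity hypothesis on edge-to-vertex maps, which is why I would route the argument through covers of $R(\Gamma,\mc{X})$ rather than through a literal simplicial Bass-Serre tree. The bookkeeping is cleanly packaged by Bass's covering theory~\cite{bass:covering}, which is the natural citation.
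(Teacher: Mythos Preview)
Your proposal is correct and aligns with the paper's approach: the paper simply remarks that the result ``follows easily by considering the action of $H<G$ on the Bass-Serre tree'' and cites \cite{bass:covering}, without giving further detail. Your covering-space formulation via $R(\Gamma,\mc{X})$ is the topological dual of that tree argument and is, if anything, more careful about the non-faithful case than the paper bothers to be.
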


Proposition \ref{inducedGoG} has the following corollary, by induction on $n$:
\begin{corollary}\label{inducedhierarchy}
  Suppose that $G$ has a hierarchy $\mc{H}$ of length $n$, and that $H\dotnorm G$.  Then the hierarchy on $G$ induces a hierarchy $\mc{H}'$ of length $n$ on $H$.  Moreover, for each $i\in \{0,\ldots,n\}$,
  \begin{enumerate}
  \item Each vertex group at level $i$ of $\mc{H}'$ is equal to $(K^g\cap H)\dotnorm K^g$ for some vertex group $K$ at level $i$ of $\mc{H}$ and some $g\in G$.
  \item Each edge group at level $i$ of $\mc{H}'$ is equal to $(K^g\cap H)\dotnorm K^g$ for some edge group $K$ at level $i$ of $\mc{H}$ and some $g\in G$.
  \end{enumerate}
\end{corollary}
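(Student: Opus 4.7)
The statement has a transparent inductive shape, so the plan is to induct on the hierarchy length $n$, using Proposition~\ref{inducedGoG} as both the base case and the engine of the induction. When $n=1$, a hierarchy is literally a single graph of groups structure, and the corollary reduces immediately to Proposition~\ref{inducedGoG}.

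For the inductive step, suppose the result is known for hierarchies of length $n-1$, and let $\mc{H}$ be a length-$n$ hierarchy on $G$. First, apply Proposition~\ref{inducedGoG} to the top-level graph of groups structure of $\mc{H}$, producing a graph of groups structure on $H$ whose vertex and edge groups are of the form $(K^g \cap H) \dotnorm K^g$ for $K$ a level-$1$ vertex or edge group of $\mc{H}$ and some $g \in G$. This provides level $1$ of the hierarchy $\mc{H}'$ we are building on $H$.

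For the deeper levels, fix a vertex group of the newly constructed graph of groups, which has the form $K^g \cap H \dotnorm K^g$. The group $K^g$ carries the length-$(n-1)$ hierarchy obtained by conjugating by $g$ the hierarchy that $\mc{H}$ attaches to $K$. Applying the inductive hypothesis to the pair $K^g \cap H \dotnorm K^g$ gives a length-$(n-1)$ hierarchy on $K^g \cap H$ whose vertex and edge groups at level $j$ are of the form $(L^h \cap (K^g \cap H)) \dotnorm L^h$, where $L$ is a level-$j$ vertex or edge group of the hierarchy on $K^g$ (equivalently, a $g$-conjugate of a level-$(j+1)$ group of $\mc{H}$) and $h \in K^g$. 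Since $L^h \leq K^g$, the intersection simplifies to $L^h \cap (K^g \cap H) = L^h \cap H$, which is exactly the desired form at level $j+1$ of $\mc{H}'$. Stacking these length-$(n-1)$ hierarchies on top of the level-$1$ structure produces $\mc{H}'$.

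I don't expect any real obstacle in executing this plan; it is essentially bookkeeping. The only point requiring care is tracking the conjugating elements, so that a level-$(j+1)$ group of $\mc{H}'$ is correctly identified as intersecting with $H$ a \emph{$G$-conjugate}—not merely a $K^g$-conjugate—of a level-$(j+1)$ group of $\mc{H}$. This is automatic because if $h \in K^g$ and $K^g = K^{g_0}$ for some $g_0 \in G$, then $L^h$ (with $L$ at level $j$ of the hierarchy on $K^g$) is a $G$-conjugate of the original level-$(j+1)$ subgroup of $\mc{H}$, as required.
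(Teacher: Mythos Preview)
Your proposal is correct and follows exactly the approach the paper indicates: the paper's entire proof is the phrase ``by induction on $n$'' immediately following Proposition~\ref{inducedGoG}, and you have simply written out that induction with the bookkeeping on conjugating elements made explicit. The only cosmetic difference is that you start your base case at $n=1$ rather than $n=0$, but the $n=0$ case is the degenerate hierarchy and is trivial.
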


\subsection{Quasiconvex hierarchies of special groups}

\subsubsection{Quasiconvexity}

The following lemma is easy, thinking of the Cayley graph of $G$ as coarsely having the structure of a tree of spaces.
\begin{lemma}
  Suppose that $G$ has a graph of groups structure $(\Gamma,\mc{G})$.
  If the edge groups of $(\Gamma,\mc{G})$ are quasiisometrically
  embedded in $G$, then so are the vertex groups.
\end{lemma}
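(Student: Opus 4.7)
The plan is to exploit the Bass--Serre tree $T$ of the given graph of groups structure, which realizes the Cayley graph of $G$ as a coarse tree of spaces whose ``vertex spaces'' are cosets of vertex groups of $(\Gamma,\mc{G})$ and whose ``edge spaces'' are cosets of edge groups. Fix a finite generating set for $G$ that contains finite generating sets for each vertex and edge group of $(\Gamma,\mc{G})$. Let $Q$ be a uniform quasiconvexity constant that works for all (finitely many) edge groups, and let $V = \mc{G}(v_0)$ be a vertex group, identified with the stabilizer of a chosen vertex $v_0 \in T$.

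Given $g_1, g_2 \in V$, I would analyze a geodesic $\gamma$ from $g_1$ to $g_2$ in the Cayley graph of $G$ via its ``shadow'' in $T$: following $\gamma$, record the sequence of vertex spaces it enters and edge spaces it crosses. Since $g_1\cdot v_0 = g_2\cdot v_0 = v_0$, the shadow is a closed walk in $T$ based at $v_0$. In a tree, any closed walk decomposes into maximal excursions from $v_0$, and each such excursion must exit and re-enter through the same edge, since there is no other way to return. Therefore $\gamma$ breaks into sub-paths that either lie entirely in $V$ or else begin and end in a common coset $hE$ of some edge group $E$ that maps into $V$. By quasiconvexity of $E$ in $G$, each such sub-path lies in the $Q$-neighborhood of $hE$, hence in the $Q$-neighborhood of $V$, so all of $\gamma$ lies in $N_Q(V)$ and $V$ is quasiconvex in $G$.

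From quasiconvexity I would deduce the quasiisometric embedding via the standard argument: the finite set $S = V \cap B_G(2Q+1)$ generates $V$, and a $G$-geodesic of length $n$ between two elements of $V$ gives, by choosing a nearby element of $V$ at each integer time, an $S$-word of length at most $n$, proving that $|h|_V$ is bounded above by $|h|_G$; the reverse inequality is trivial. The main technical point is making the tree-of-spaces picture precise enough that excursions of $\gamma$ between vertex spaces correspond, up to bounded error, to traversals of edges in $T$; once this bookkeeping is set up, the tree property forces each excursion to have its endpoints in a common edge coset, and the quasiconvexity hypothesis on the edge groups finishes the argument.
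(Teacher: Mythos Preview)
Your approach via the Bass--Serre tree and the tree-of-spaces structure on the Cayley graph is exactly what the paper has in mind; the paper gives no detailed argument beyond that hint, so in outline you are doing the right thing.

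There is, however, a genuine slip. The hypothesis is that the edge groups are \emph{quasiisometrically embedded} in $G$, but you immediately introduce ``a uniform quasiconvexity constant $Q$'' and later invoke ``quasiconvexity of $E$ in $G$'' to conclude that each excursion sub-path lies in $N_Q(hE)$. In a general finitely generated group these are not the same thing: quasiisometrically embedded subgroups need not be quasiconvex. Your argument, as written, therefore proves the lemma only when $G$ is hyperbolic (which is the case in every application in the paper, so the damage is limited), but the lemma is stated without that assumption.

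The fix is to skip the quasiconvexity detour and argue directly for undistortedness of $V$. Each excursion of the geodesic $\gamma$ is a $G$--geodesic segment with endpoints $p,q$ in a coset $hE' \subseteq V$ (with $E'$ the image of an edge group in $V$). By the QI-embedding hypothesis, $p^{-1}q \in E'$ has $E'$--word length at most $K\,d_G(p,q)+K$, hence $V$--word length bounded the same way. Replacing each excursion by such an $E'$--word, and leaving the parts of $\gamma$ already in the vertex space alone, produces a $V$--word for $a$ of length linear in $|a|_G$ (the additive $+K$ per excursion is absorbed since the number of excursions is at most $|\gamma|/2$). This gives the desired distortion bound without ever asserting that $\gamma$ stays close to $V$.
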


\begin{corollary}
  If $\mc{H}$ is a quasiconvex hierarchy of $G$, then all the edge and
  vertex groups of $\mc{H}$ are quasiisometrically embedded in $G$.
\end{corollary}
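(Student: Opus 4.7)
The plan is to induct on the length $n$ of the hierarchy $\mc{H}$. For $n=0$ the hierarchy consists only of the degenerate graph of groups structure on $G$, so the only edge and vertex group to consider is $G$ itself, which is trivially quasiisometrically embedded in $G$.

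For the inductive step, I would first apply the preceding lemma to the level--$1$ graph of groups structure of $G$: since $\mc{H}$ is quasiconvex, the level--$1$ edge groups are by assumption quasiisometrically embedded in $G$, and the lemma then tells us each level--$1$ vertex group $G_v$ is also quasiisometrically embedded in $G$. In particular, every $G_v$ is finitely generated (being quasiisometrically embedded in the finitely generated group $G$), so its Cayley graph, and hence the notion of quasiisometric embedding into it, is well-defined.

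Next, the sub-hierarchy attached to $G_v$ is a quasiconvex hierarchy of length $n-1$ for $G_v$, so by the inductive hypothesis every edge and vertex group appearing in that sub-hierarchy is quasiisometrically embedded in $G_v$. Composition of quasiisometric embeddings is again a quasiisometric embedding, so concatenating with the inclusion $G_v \hookrightarrow G$ shows that every edge and vertex group at levels $2,\ldots,n$ of $\mc{H}$ is quasiisometrically embedded in $G$. Together with the level--$1$ conclusion this closes the induction.

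The only real content is the interplay of two upgrades: the hypothesis ``quasiconvex hierarchy'' is a per-level statement about edge groups inside their immediate ambient vertex group, whereas we want a uniform conclusion about both edge and vertex groups inside $G$. The edge-to-vertex upgrade at each level is supplied by the preceding lemma, and the ``ambient'' upgrade from $G_v$ to $G$ is supplied by composition of quasiisometric embeddings. I do not anticipate a serious obstacle beyond correctly bookkeeping these two upgrades simultaneously through the induction.
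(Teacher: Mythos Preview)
Your proof is correct and is exactly the argument the paper intends: the corollary is stated without proof immediately after the lemma, and the obvious way to derive it is induction on the length of the hierarchy, using the lemma at each level to pass from edge groups to vertex groups and then composing quasiisometric embeddings. Your bookkeeping of the two ``upgrades'' (edge-to-vertex via the lemma, and $G_v$-to-$G$ via composition) is precisely what is needed.
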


The following lemma is straightforward.
\begin{lemma}\label{inducedquasiconvex}
  If $\mc{H}$ is a quasiconvex hierarchy of $G$, and $G_0\dotnorm G$, then the induced hierarchy $\mc{H}_0$ on $G_0$ is a quasiconvex hierarchy.
\end{lemma}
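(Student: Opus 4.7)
The plan is to verify the definition of quasiconvexity for $\mc{H}_0$ by checking, for each vertex group $H_0 \in \mc{H}_0$ and each edge group $K_0 \in \mc{E}(H_0)$ of its induced graph-of-groups structure, that $K_0$ is quasi-isometrically embedded in $H_0$. To do so I would reduce to the analogous condition in $\mc{H}$ using Corollary~\ref{inducedhierarchy}, which identifies $H_0 \dotnorm H^g$ for some vertex group $H \in \mc{H}$ and $g \in G$, along with an edge group $K \in \mc{E}(H)$ and an element $h \in G$ such that $K_0 \dotnorm (K^g)^h$ and $K_0 \subset H_0$. A careful application of Proposition~\ref{inducedGoG} one level at a time (applied inside $H^g$, not all the way back to $G$) arranges that $h$ can be taken inside $H^g$, so $(K^g)^h \subset H^g$ and all the relevant containments take place within the single ambient group $H^g$.

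Next I would use the standard fact that if $B' \dotsub B$ with $B$ finitely generated, then the inclusion $B' \hookrightarrow B$ is a quasi-isometry in the respective word metrics. By the lemma immediately preceding, finite generation propagates through a quasiconvex hierarchy from the root $G$, so every vertex and edge group of $\mc{H}$ is finitely generated. Consequently both $H_0 \hookrightarrow H^g$ and $K_0 \hookrightarrow (K^g)^h$ are quasi-isometries. Meanwhile, quasiconvexity of $\mc{H}$ says $K \hookrightarrow H$ is a quasi-isometric embedding, and since conjugation by any element of $G$ is an isometry of the word metric on $G$, $(K^g)^h \hookrightarrow H^g$ is one too.

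Finally I would assemble the pieces. The composition $K_0 \hookrightarrow (K^g)^h \hookrightarrow H^g$ is a quasi-isometry followed by a quasi-isometric embedding, hence a quasi-isometric embedding; as a set-theoretic map it agrees with $K_0 \hookrightarrow H_0 \hookrightarrow H^g$, so composing with a quasi-inverse of the quasi-isometry $H_0 \hookrightarrow H^g$ yields the desired conclusion that $K_0 \hookrightarrow H_0$ is a quasi-isometric embedding. The only real obstacle is the bookkeeping needed to guarantee that $(K^g)^h$ actually sits inside $H^g$, so that all of the inclusions above live in a common ambient group with a single word metric; once Proposition~\ref{inducedGoG} is applied with this in mind, the remainder is a direct manipulation of quasi-isometry constants.
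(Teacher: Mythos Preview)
Your argument is correct and is exactly the ``straightforward'' verification the paper alludes to; the paper gives no proof at all, merely declaring the lemma straightforward. One small imprecision: when you write that ``conjugation by any element of $G$ is an isometry of the word metric on $G$,'' note that the relevant metrics here are the intrinsic word metrics on $H^g$ and $(K^g)^h$, not the ambient metric on $G$---what you actually need (and use) is that a group isomorphism is bilipschitz in word metrics, so $K\hookrightarrow H$ being a q.i.\ embedding transports to $(K^g)^h\hookrightarrow H^g$ being one.
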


\subsubsection{Malnormality}
Recall that a subgroup $H<G$ is \emph{malnormal} if $H\cap g H g^{-1}$ is trivial for all $g\notin H$, and is \emph{almost malnormal} if $H\cap g H g^{-1}$ is finite for all $g\notin H$.
The failure of (almost) malnormality of a subgroup is measured by the height.
\begin{definition}[Height]
  Let $H<G$.  The \emph{height} of $H$ in $G$ is the largest number $n\geq 0$ so that there are $n$ distinct cosets $\{g_1H,\ldots,g_n H\}$, so that the intersection of conjugates
  $\bigcap_i g_iH g_i^{-1}$ is infinite.  Thus finite groups have height $0$, infinite almost malnormal subgroups have height $1$, and so on.
\end{definition}
The following lemma is straightforward and left to the reader.
\begin{lemma} \label{heightnotup}
  Suppose $H<G$, and $G_0\dotsub G$.  Let $H_0= H\cap G_0$.  If $H$ is malnormal in $G$, then 
  $H_0$ is malnormal in $G_0$.  More generally, the height of $H_0$ in $G_0$ is at most the height of $H$ in $G$.
\end{lemma}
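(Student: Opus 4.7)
The plan is to establish the more general height statement first, from which the malnormality assertion follows with a short additional argument. Neither direction will use the finite-index hypothesis on $G_0$ in any essential way.

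For the height bound, suppose $H_0$ has height $n$ in $G_0$, witnessed by $n$ distinct cosets $g_1 H_0, \ldots, g_n H_0$ (with $g_i \in G_0$) such that
\[ K := \bigcap_{i=1}^n g_i H_0 g_i^{-1} \]
is infinite. I would then verify two things about the cosets $g_1 H, \ldots, g_n H$ in $G$. First, they are pairwise distinct: if $g_i H = g_j H$ then $g_j^{-1} g_i \in H$, and since $g_j^{-1} g_i \in G_0$, this forces $g_j^{-1} g_i \in H \cap G_0 = H_0$, contradicting $g_i H_0 \ne g_j H_0$. Second, from $H_0 \subset H$ we get $g_i H_0 g_i^{-1} \subset g_i H g_i^{-1}$ for each $i$, hence
\[ K \subset \bigcap_{i=1}^n g_i H g_i^{-1}, \]
so the latter intersection is also infinite. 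This shows the height of $H$ in $G$ is at least $n$.

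For the malnormality statement, suppose $H$ is malnormal in $G$ and let $g \in G_0$ with $g \notin H_0$. Since $H_0 = H \cap G_0$ and $g \in G_0$, we have $g \notin H$, so by malnormality of $H$ in $G$,
\[ H_0 \cap g H_0 g^{-1} \subset H \cap g H g^{-1} = \{1\}. \]
Thus $H_0$ is malnormal in $G_0$.

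There is no real obstacle here; the argument is essentially bookkeeping, and the only step that requires any care is checking that the cosets remain distinct when passed from $G_0$ to $G$, which uses exactly the identity $H_0 = H \cap G_0$. It is worth noting, as the proof shows, that the assumption $G_0 \dotsub G$ plays no role and the lemma in fact holds for arbitrary subgroups $G_0 < G$.
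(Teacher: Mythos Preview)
Your proof is correct. The paper itself leaves this lemma to the reader, so there is no argument to compare against; your write-up fills in the omitted details cleanly, including the separate verification of malnormality (which does not follow from the height bound alone, since height at most $1$ only gives almost malnormality) and the observation that the finite-index hypothesis on $G_0$ is never used.
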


\begin{definition}
  Say that $H<G$ is \emph{virtually malnormal} if there is a finite
  index subgroup $G_0<G$ so that if $H_0=H\cap G_0$, then $H_0$ is
  malnormal in $G_0$.
\end{definition}

\begin{definition}
  The subgroup $H<G$ is \emph{separable in $G$} (or just \emph{separable} when $G$ is understood) if 
  \[H = \bigcap \{G_0\mid H<G_0\dotsub G\}.\]
\end{definition}

\begin{proposition}\cite{HW08,hruskawise:packing} \label{p:qc+sepimpliesvm}
  If $G$ is hyperbolic and virtually torsion-free, and $H<G$ is
  quasiconvex and separable, then $H$ is virtually malnormal.
\end{proposition}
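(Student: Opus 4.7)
The plan is to combine the finite-height theorem of Gitik--Mitra--Rips--Sageev for quasiconvex subgroups of hyperbolic groups with the separability hypothesis, following the Hruska--Wise strategy. First I would reduce to the case $G$ is torsion-free: choose a torsion-free $\tilde G \dotnorm G$ and set $\tilde H := H\cap \tilde G$. Then $\tilde H$ is quasiconvex and separable in $\tilde G$, and a short extension of Lemma~\ref{heightnotup} shows that virtual malnormality of $\tilde H$ in $\tilde G$ implies virtual malnormality of $H$ in $G$.

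Second, since $\tilde H$ is quasiconvex in the hyperbolic group $\tilde G$, it has finite height by GMRS, and moreover the non-trivial intersections $\tilde H \cap \tilde H^g$ with $g\in \tilde G\smallsetminus \tilde H$ fall into only finitely many $\tilde H$-conjugacy classes. Let $K_1,\dots,K_m$ be representatives; each $K_i$ is quasiconvex in $\tilde G$, infinite, and (modulo an auxiliary normalizer argument below) a proper subgroup of $\tilde H$. The exceptional case $K_i = \tilde H$ arises only when the corresponding $g_i$ lies in $N_{\tilde G}(\tilde H)\smallsetminus \tilde H$, and the quasiconvexity-plus-hyperbolicity of $\tilde H$ forces $N_{\tilde G}(\tilde H)/\tilde H$ to be finite, so this case can be handled by also requiring $G_0$ to meet $N_{\tilde G}(\tilde H)$ inside $\tilde H$.

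The main step --- and the main obstacle --- is to produce a finite-index normal subgroup $G_0\lhd \tilde G$ for which $K_i\cap G_0 = \{1\}$ for every $i$ with $K_i\subsetneq \tilde H$, together with $G_0\cap N_{\tilde G}(\tilde H)\subseteq \tilde H$ in the exceptional case. Separability of $\tilde H$ alone does not formally provide the first condition; the standard upgrade uses two facts: (a) conjugates of separable subgroups are separable, so each $\tilde H^{g_i}$ is separable in $\tilde G$; (b) a Minasyan-type theorem that an intersection of finitely many quasiconvex separable subgroups of a hyperbolic group is again quasiconvex and separable. Hence each $K_i = \tilde H\cap \tilde H^{g_i}$ is separable in $\tilde G$, and a standard profinite-topology argument (combining the finitely many separability statements and passing to a normal core) extracts the desired $G_0$.

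Finally, verify malnormality directly. For $g\in G_0\smallsetminus \tilde H$, the intersection $\tilde H^g\cap \tilde H$ is either trivial, in which case $(\tilde H\cap G_0)^g\cap(\tilde H\cap G_0)=\{1\}$ automatically, or it is $\tilde H$-conjugate to some $K_i$ (necessarily proper in $\tilde H$, since $g\notin \tilde H$ and $G_0\cap N_{\tilde G}(\tilde H)\subseteq \tilde H$). Writing $\tilde H^g\cap \tilde H = hK_ih^{-1}$ with $h\in \tilde H$ and using normality of $G_0$,
\[ (\tilde H\cap G_0)^g\cap(\tilde H\cap G_0) \;\le\; \tilde H^g\cap \tilde H\cap G_0 \;=\; h(K_i\cap G_0)h^{-1} \;=\; \{1\}, \]
so $\tilde H\cap G_0$ is malnormal in $G_0$, completing the proof.
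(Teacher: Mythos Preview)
Your argument has a fatal gap at the ``main step.'' You ask for a finite-index normal subgroup $G_0\lhd \tilde G$ with $K_i\cap G_0=\{1\}$ for each $i$. But each $K_i$ is infinite (you defined the $K_i$ as representatives of the \emph{nontrivial} intersections $\tilde H\cap \tilde H^g$), and if $G_0$ has finite index in $\tilde G$ then $K_i\cap G_0$ has finite index in $K_i$, hence is infinite. No amount of separability can make an infinite subgroup meet a finite-index subgroup trivially; separability of $K_i$ only lets you separate points outside $K_i$ from $K_i$, not kill $K_i$ in finite quotients. The final computation $h(K_i\cap G_0)h^{-1}=\{1\}$ therefore never gets off the ground.

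The correct use of separability here is the Hruska--Wise one, and it targets the \emph{conjugators} rather than the intersections. Since $\tilde H$ is quasiconvex in a hyperbolic group, there are only finitely many double cosets $\tilde H g_1 \tilde H,\ldots,\tilde H g_n \tilde H$ with $g_j\notin \tilde H$ and $\tilde H\cap \tilde H^{g_j}$ infinite (this is the GMRS finite-width statement, not just finite height). Separability of $\tilde H$ gives a finite-index $G_0\le \tilde G$ containing $\tilde H$ but none of the $g_j$. Now if $g\in G_0\setminus \tilde H$ had $\tilde H\cap \tilde H^g$ infinite, then $g\in \tilde H g_j \tilde H$ for some $j$, forcing $g_j\in \tilde H^{-1} g\, \tilde H^{-1}\subseteq G_0$, a contradiction. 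Thus $\tilde H$ is almost malnormal in $G_0$, and since $\tilde G$ is torsion-free this is malnormality. This is exactly what the paper does: it cites \cite[Theorem~9.3]{hruskawise:packing} for virtual almost malnormality and then intersects with a torsion-free finite-index subgroup (via Lemma~\ref{heightnotup}) to upgrade to malnormality.
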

\begin{proof}
  Hruska and
  Wise \cite[Theorem 9.3]{hruskawise:packing} implies that a separable quasiconvex
  subgroup of a hyperbolic group is virtually \emph{almost} malnormal.
  Indeed they show that
  there is some $G_0\dotsub G$  containing $H$ so that $H$
  is almost malnormal in $G_0$.  By hypothesis, $G$ is virtually torsion-free. Let $G_1\dotsub G$ be a torsion-free
  subgroup, and let $G_2 = G_0\cap G_1$.  We then have 
  $H_2 = H\cap G_2$ malnormal in $G_2$, using Lemma \ref{heightnotup}.
\end{proof}

\begin{theorem}\cite[Theorem 1.3]{HW08} \label{qcthensep}
  Let $G$ be hyperbolic and virtually special, and let $H<G$ be quasiconvex.  Then $H$ is separable in $G$.
\end{theorem}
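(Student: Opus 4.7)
The plan is to invoke the canonical completion machinery of Haglund--Wise. First I would reduce to the case where $G$ itself is special: pick $G_0 \dotsub G$ special and set $H_0 = H \cap G_0$. Separability of $H_0$ in $G_0$ forces separability of $H$ in $G$, because $H$ is a finite union of cosets of $H_0$ and each such coset is closed in $G$ (the profinite topology on $G_0$ is the subspace topology inherited from $G$). Moreover $H_0$ is finite-index in $H$ and hence quasi-isometric to it, so $H_0$ quasi-isometrically embeds in $G_0$, which in a hyperbolic group is equivalent to being quasiconvex.

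Next I would realize $G = \pi_1 X$ for a compact special cube complex $X$ with CAT(0) universal cover $\tilde X$. Fixing a basepoint vertex $\tilde x \in \tilde X$, let $Y$ be the combinatorial convex hull of the $H$--orbit $H \cdot \tilde x$ in $\tilde X$. Since $H$ is quasiconvex and $\tilde X$ is hyperbolic, $Y$ is $H$--cocompact; being a combinatorially convex subcomplex of a CAT(0) cube complex, $Y$ is itself simply connected. The quotient $Z := Y/H$ is therefore a compact nonpositively curved cube complex with $\pi_1 Z = H$, and the inclusion $Y \hookrightarrow \tilde X$ descends to a local isometry $\iota \co Z \to X$.

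The technical heart of the argument is the canonical completion construction of \cite{HW08}: because $X$ is special, the local isometry $\iota$ factors through an embedding $Z \hookrightarrow \widehat X$ for some finite cover $\widehat X \to X$. The resulting finite-index subgroup $K := \pi_1 \widehat X < G$ contains $H$, giving one finite-index overgroup of $H$.

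To separate a specific $g \in G \smallsetminus H$, I would repeat this construction with the enlarged $H$--invariant convex subcomplex $Y_g$ defined as the combinatorial hull of $H \cdot \tilde x \cup H \cdot g\tilde x$; since $g \notin H$, the vertices $\tilde x$ and $g\tilde x$ project to distinct vertices of $Z_g := Y_g/H$. Canonical completion applied to $Z_g \to X$ produces a finite cover $\widehat X_g$ into which $Z_g$ embeds, and in the embedding the two vertices remain distinct. Any loop in $X$ representing $g$ lifts to a path from $\tilde x$ to $g\tilde x$ in $\tilde X$; its projection to $\widehat X_g$ connects two distinct vertices, so $g \notin K_g := \pi_1 \widehat X_g$. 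Intersecting the $K_g$ over all $g \in G \smallsetminus H$ realizes $H$ as the intersection of its finite-index overgroups. The main obstacle is the canonical completion itself, where specialness is used essentially to produce a finite cover into which the core $Z$ embeds.
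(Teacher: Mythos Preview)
The paper does not prove this theorem; it is simply cited from \cite{HW08} and used as a black box. Your outline is essentially the original Haglund--Wise argument: reduce to the special case, realize $H$ as the fundamental group of a compact locally convex core mapping by local isometry into $X$ (this is exactly what Lemma~\ref{lem:haglund} in the paper records), and then apply canonical completion to obtain the required finite covers separating $H$ from any given $g\notin H$. So your proposal is correct and matches the source being cited.

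One small clarification is worth making in your separation step. You want to argue that the lift of the $g$--loop to $\widehat{X}_g$ based at the image of $\tilde{x}$ ends at the image of $g\tilde{x}$, and hence is not a loop. For this you should note that the geodesic $[\tilde{x},g\tilde{x}]$ lies in $Y_g$ by convexity, so its projection to $Z_g$ followed by the embedding $Z_g\hookrightarrow\widehat{X}_g$ is a specific lift of the $g$--loop (since $Z_g\hookrightarrow\widehat{X}_g\to X$ equals the local isometry $Z_g\to X$), and its endpoints remain distinct because the embedding is injective. As written, your sentence ``its projection to $\widehat{X}_g$ connects two distinct vertices'' elides this, and without it a reader might wonder why the lift must land at the embedded copy of $g\tilde{x}$ rather than some other preimage.
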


\begin{corollary}\label{qcimpliesvm}
If $G$ is hyperbolic and virtually special, and $H < G$ is quasiconvex, then $H$ is virtually malnormal.
\end{corollary}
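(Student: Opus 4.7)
The plan is to combine the two preceding results directly. By Theorem \ref{qcthensep}, every quasiconvex subgroup $H$ of the hyperbolic virtually special group $G$ is separable. Proposition \ref{p:qc+sepimpliesvm} then yields virtual malnormality, provided we verify the hypothesis that $G$ is virtually torsion-free. So the corollary is essentially a one-line deduction once the torsion-freeness is in place.

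To handle that hypothesis, I would note that since $G$ is virtually special, it contains a finite-index subgroup $G_1$ which is the fundamental group of a compact special cube complex. Special cube complexes are nonpositively curved and hence aspherical, so their fundamental groups are torsion-free; thus $G_1\dotsub G$ is torsion-free, showing $G$ is virtually torsion-free. (Alternatively, one can invoke Haglund--Wise's embedding of $\pi_1$ of a special cube complex into a right-angled Artin group, which is torsion-free.)

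The proof then assembles as follows. First apply Theorem \ref{qcthensep} to obtain separability of $H$. Second, produce a torsion-free finite-index subgroup $G_1\dotsub G$ from virtual specialness. Third, apply Proposition \ref{p:qc+sepimpliesvm} to the pair $(H,G)$: this gives a finite-index subgroup $G_0\dotsub G$ such that $H\cap G_0$ is malnormal in $G_0$, which is exactly the definition of $H$ being virtually malnormal.

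There is no serious obstacle here; the only conceptual point worth flagging is the virtual torsion-freeness of virtually special groups, which is standard and follows from asphericity of NPC cube complexes (or from embedding into a right-angled Artin group via \cite{HW08}). Everything else is quotation of the two preceding statements.
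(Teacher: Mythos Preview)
Your proposal is correct and follows essentially the same approach as the paper: apply Theorem \ref{qcthensep} to get separability, check virtual torsion-freeness, and invoke Proposition \ref{p:qc+sepimpliesvm}. The only cosmetic difference is that the paper justifies virtual torsion-freeness via linearity of virtually special groups (which itself comes from the Haglund--Wise embedding into a RAAG), whereas you argue directly from asphericity of NPC cube complexes; both are standard and equally valid.
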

\begin{proof}
  Since $G$ is virtually special it is linear, in particular virtually
  torsion-free.  The subgroup $H$ is separable by Theorem
  \ref{qcthensep}.  We can therefore apply Proposition
  \ref{p:qc+sepimpliesvm}.
\end{proof}

\begin{theorem}\label{virtually malnormal hierarchy}
  If $\mc{H}$ is a quasiconvex hierarchy of a hyperbolic virtually special group
  $G$, then for some $G_0\dotnorm G$, the induced hierarchy $\mc{H}_0$ of $G_0$
  is a malnormal hierarchy.
\end{theorem}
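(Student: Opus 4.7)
The plan is to find a single finite-index normal subgroup $G_0 \dotnorm G$ that simultaneously witnesses virtual malnormality for every edge group appearing anywhere in $\mc{H}$, and then to verify that the induced hierarchy $\mc{H}_0$ from Corollary \ref{inducedhierarchy} is malnormal. Quasiconvexity of $\mc{H}_0$ is free from Lemma \ref{inducedquasiconvex}, so everything reduces to checking malnormality of edge groups of $\mc{H}_0$ in $G_0$.

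First I would observe that each edge group $K$ at any level of $\mc{H}$ is a quasi-isometrically embedded subgroup of its vertex group, which in turn (iterating) is quasi-isometrically embedded in $G$; hence $K$ is quasiconvex in $G$. Since $G$ is hyperbolic and virtually special, Corollary \ref{qcimpliesvm} gives a finite-index $G_K < G$ such that $K \cap G_K$ is malnormal in $G_K$. The hierarchy is finite, so there are finitely many such edge groups $K$; I would then let $G_1$ be the (finite) intersection of all these $G_K$, and let $G_0 \dotnorm G$ be the normal core of $G_1$, which is still of finite index. For any edge group $K$ of $\mc{H}$, Lemma \ref{heightnotup} applied with ambient group $G_K$ and finite-index subgroup $G_0$ gives that $K \cap G_0 = (K \cap G_K) \cap G_0$ is malnormal in $G_0$.

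Next I would unpack the induced hierarchy. By Corollary \ref{inducedhierarchy}, every edge group at level $i$ of $\mc{H}_0$ has the form $K^g \cap G_0$ for some edge group $K$ at level $i$ of $\mc{H}$ and some $g \in G$. Because $G_0$ is normal in $G$, one has
\[ K^g \cap G_0 \;=\; K^g \cap G_0^g \;=\; (K \cap G_0)^g, \]
and conjugation by $g$ restricts to an automorphism of $G_0$. Since malnormality in $G_0$ is preserved under automorphisms of $G_0$, and since $K \cap G_0$ is malnormal in $G_0$ by the previous paragraph, the edge group $K^g \cap G_0$ is malnormal in $G_0$. This holds at every level, so $\mc{H}_0$ is a malnormal hierarchy, and combined with Lemma \ref{inducedquasiconvex} it is the desired malnormal quasiconvex hierarchy of $G_0$.

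I do not expect serious obstacles: the work has been done in setting up Corollary \ref{qcimpliesvm}, Lemma \ref{heightnotup}, Corollary \ref{inducedhierarchy}, and Lemma \ref{inducedquasiconvex}. The one small point that needs care is the interaction between conjugation and malnormality for the induced edge groups, which is why I would insist from the outset that $G_0$ be normal in $G$ rather than merely of finite index; otherwise the edge groups of $\mc{H}_0$ would only be known to be malnormal in conjugates of $G_0$ rather than in $G_0$ itself.
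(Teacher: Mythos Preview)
Your proposal is correct and follows essentially the same approach as the paper: use Corollary \ref{qcimpliesvm} to get virtual malnormality for each of the finitely many edge groups, intersect and take the normal core to get a single $G_0\dotnorm G$, apply Lemma \ref{heightnotup} to keep malnormality, and then use normality of $G_0$ so that conjugation by $g\in G$ is an automorphism of $G_0$ carrying $K\cap G_0$ to $K^g\cap G_0$. You are a bit more explicit than the paper about taking the normal core and about the identity $K^g\cap G_0=(K\cap G_0)^g$, but the argument is the same.
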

\begin{proof}
  The finitely many edge groups of $\mc{H}$ are all virtually
  malnormal by Proposition \ref{qcimpliesvm}.  Lemma \ref{heightnotup} implies malnormality is
  preserved by passing to further finite index subgroups.  We may therefore
  find $G_0\dotnorm G$ so that for every edge group $H$ of $\mc{H}$,
  the intersection $H\cap G_0$ is malnormal in $G_0$.
  By Corollary \ref{inducedhierarchy}, every edge group of the induced hierarchy is of
  the form $H^g\cap G_0$ for some such edge group $H$, and some $g\in
  G$.  Conjugation by $g\in G$ gives an automorphism of $G_0$, so all
  these edge groups are malnormal in $G_0$.  In particular, any edge
  group at level $i$ is malnormal in the vertex group at level $i-1$
  which contains it, so the hierarchy $\mc{H}_0$ is a malnormal
  hierarchy.
\end{proof}

\subsection{Relative hierarchies} \label{ss:relativehierarchies}
In this paper we're particularly interested in hierarchies relative to a family of peripheral subgroups.
\begin{definition}[$\mc{P}$--elliptic hierarchy] \label{def:Pelliptic}
  Let $G$ be a group with a hierarchy $\mc{H}$, and let $\mc{P}$ be a
  family of subgroups of $G$.  Recall that any vertex group $H\in \mc{H}$ 
  comes equipped with some graph of groups structure $H\cong\pi_1(\Gamma,\mc{G},v)$. 
  We say that $\mc{H}$ is
  \emph{$\mc{P}$--elliptic} if it satisfies the following condition:

  Suppose that $H\in \mc{H}$, 
  and suppose that $P'$ is
  any $G$--conjugate of $P\in \mc{P}$ satisfying $P'\subseteq H$.  Then
  $P'$ is $H$--conjugate into a vertex group of $(\Gamma,\mc{G})$.
  
  A $\mc{P}$-elliptic hierarchy $\mc{H}$ is {\em fully ${\mc{P}}$-elliptic} if whenever $P'$ is any $G$-conjugate of $P \in \mc{P}$ and whenever $E$ is an edge group in $\mc{H}$ then either $P' \cap E$ is finite or else $P' < E$.
\end{definition}
In other words, a hierarchy is fully $\mc{P}$--elliptic if the elements of
$\mc{P}$ are never cut up by the splittings in the hierarchy.  (We think of `cutting' along edge groups, which is the algebraic consequence of cutting along subspaces of a topological space.)

Recall the notation $(G_0,\mc{P}_0)\dotnorm(G,\mc{P})$, which means that $G_0\dotnorm G$, and that the elements of $\mc{P}_0$ are representatives of all the $G_0$--conjugacy classes of subgroup of the form $P^g\cap G_0$ where $P\in \mc{P}$ and $g\in G$.  The following result is straightforward.

\begin{lemma}\label{stillPelliptic}
  Suppose that $(G_0,\mc{P}_0)\dotnorm (G,\mc{P})$ and that $G$ has a fully $\mc{P}$--elliptic hierarchy $\mc{H}$.  Then the induced hierarchy $\mc{H}_0$ of $G_0$ is fully $\mc{P}_0$--elliptic.
\end{lemma}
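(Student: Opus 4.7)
The plan is to verify the two conditions in the definition of fully $\mc{P}_0$-elliptic. By Corollary \ref{inducedhierarchy}, every edge or vertex group of $\mc{H}_0$ at level $i$ has the form $E^g\cap G_0$ or $H^g\cap G_0$, where $E$ or $H$ is an edge or vertex group of $\mc{H}$ at level $i$ and $g\in G$. Since $G_0\triangleleft G$, every $G_0$-conjugate of an element of $\mc{P}_0$ has the form $P^{g'}\cap G_0$ for some $P\in\mc{P}$ and $g'\in G$.

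The ``edge'' condition is immediate: if $E_0=E^g\cap G_0$ and $P'_0=P^{g'}\cap G_0$, then $P'_0\cap E_0=P^{g'}\cap E^g\cap G_0$; full $\mc{P}$-ellipticity of $\mc{H}$ gives that $P^{g'}\cap E^g$ is either finite (so $P'_0\cap E_0$ is finite) or equal to $P^{g'}$ (so $P'_0\subseteq E_0$).

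For $\mc{P}_0$-ellipticity, fix $H_0=H^g\cap G_0$ at level $i\geq 1$ of $\mc{H}_0$ and $P'_0=P^{g'}\cap G_0\subseteq H_0$; if $P'_0$ is finite it fixes a point of every tree, so assume it is infinite. The key subclaim is $P^{g'}\subseteq H^g$. Granted this, applying $\mc{P}$-ellipticity of $\mc{H}$ to $H\in\mc{H}$ gives a vertex $w$ of the Bass-Serre tree $T$ of $H^g$'s splitting fixed by $P^{g'}$; since $H_0\dotnorm H^g$, Proposition \ref{inducedGoG} identifies $T$ with the Bass-Serre tree of $H_0$'s induced splitting, and $P'_0\subseteq P^{g'}$ likewise fixes $w$, so $P'_0$ is $H_0$-conjugate into a vertex group of $H_0$'s structure.

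I prove the subclaim by induction on $i$. Let $T_i$ denote the Bass-Serre tree of the splitting one level up: of $G$ itself if $i=1$, and of the level $(i{-}1)$ parent $H^{(i-1),g}$ of $H^g$ if $i\geq 2$; in either case $H^g=\mathrm{Stab}(v_2)$ for some vertex $v_2$ of $T_i$. When $i=1$, $P^{g'}$ automatically acts on $T_i$; when $i\geq 2$, the inductive hypothesis applied to the parent (with $P'_0\subseteq H_0\subseteq H^g\subseteq H^{(i-1),g}$ infinite) gives $P^{g'}\subseteq H^{(i-1),g}$, so $P^{g'}$ acts on $T_i$. Now $K:=P^{g'}\cap G_0=P'_0$ is normal of finite index in $P^{g'}$ and is contained in $H^g=\mathrm{Stab}(v_2)$, so the $P^{g'}$-orbit of $v_2$ has size at most $[P^{g'}:K]<\infty$; the bounded-orbit fixed-point property of trees then supplies some $v'\in T_i$ fixed by $P^{g'}$. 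Since $K$ fixes both $v'$ and $v_2$, it fixes the geodesic $[v',v_2]$: if $v'=v_2$ then $P^{g'}\subseteq \mathrm{Stab}(v_2)=H^g$, and otherwise the edge $e$ of $T_i$ adjacent to $v_2$ on $[v',v_2]$ is fixed pointwise by $K$, its stabilizer $E$ is an edge group of $\mc{H}$ at level $i$ contained in $H^g$, and $P^{g'}\cap E\supseteq K$ is infinite, so full $\mc{P}$-ellipticity of $\mc{H}$ forces $P^{g'}\subseteq E\subseteq H^g$.

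The main obstacle is exactly the ``$v'\neq v_2$'' subcase — the situation that full $\mc{P}$-ellipticity of $\mc{H}$ is designed to handle.
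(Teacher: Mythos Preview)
Your proof is correct. The paper itself declares this lemma ``straightforward'' and gives no argument, so there is nothing to compare against; you have simply supplied the details the authors omitted.

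A couple of minor remarks on presentation. First, when you invoke full $\mc{P}$--ellipticity for an edge stabilizer $\tilde{E}$ in the Bass--Serre tree, that stabilizer is a priori only a $G$--conjugate of an edge group $E\in\mc{H}$; but since the condition is quantified over all $G$--conjugates $P'$ of $P$, one absorbs the conjugating element into $P'$ and the conclusion still holds. You clearly know this, but it is worth saying once. Second, the phrase ``edge group of $\mc{H}$ at level $i$'' is slightly nonstandard in the paper's conventions (levels are assigned to vertex groups; edge groups live in $\mc{E}(H)$ for $H$ at some level), but the meaning is unambiguous. Finally, rather than asserting that Proposition~\ref{inducedGoG} ``identifies $T$ with the Bass--Serre tree of $H_0$'s induced splitting,'' it is cleaner to say that the induced structure on $H_0$ arises from the action of $H_0\dotnorm H^g$ on the same tree $T$, so vertex groups of $H_0$'s structure are exactly $H_0$--stabilizers of vertices of $T$; this is all you actually use.

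Your observation that the $v'\neq v_2$ subcase is precisely where the \emph{fully} $\mc{P}$--elliptic hypothesis (as opposed to mere $\mc{P}$--ellipticity) is needed is a good one, and makes clear why the lemma is stated for fully $\mc{P}$--elliptic hierarchies.
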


The results in this section can be easily assembled to give:
\begin{theorem} \label{t:vmalnormal}
  Suppose that $G$ is hyperbolic and virtually special and that $(G,\mc{P})$ is relatively hyperbolic. 
  Suppose $G$ is given a quasiconvex fully $\mc{P}$--elliptic hierarchy $\mc{H}$ \hyperlink{terminating}{terminating} in $\mc{P}$. 
Then for some $(G_0,\mc{P}_0)\dotnorm (G,\mc{P})$, and any $(G',\mc{P}') \dotnorm (G,\mc{P})$ so that $G' \le G_0$, the induced hierarchy $\mc{H}'$ is a quasiconvex malnormal fully $\mc{P}'$--elliptic hierarchy \hyperlink{terminating}{terminating} in $\mc{P}'$.
\end{theorem}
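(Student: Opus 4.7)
The plan is to combine Theorem \ref{virtually malnormal hierarchy} with the three inheritance lemmas (Lemma \ref{inducedquasiconvex}, Lemma \ref{heightnotup}, Lemma \ref{stillPelliptic}) and the structural information about induced hierarchies given by Corollary \ref{inducedhierarchy}. The theorem amounts to careful bookkeeping of these earlier results rather than the introduction of a substantial new idea.

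First I would apply Theorem \ref{virtually malnormal hierarchy} to $\mc{H}$, producing $G_0 \dotnorm G$ such that the induced hierarchy $\mc{H}_0$ on $G_0$ is malnormal. Unwinding the proof of that theorem gives something slightly stronger: every edge group $E$ of the original hierarchy $\mc{H}$ satisfies $E \cap G_0$ malnormal in $G_0$, and since $G_0 \lhd G$, conjugation by any $g \in G$ restricts to an automorphism of $G_0$, so $E^g \cap G_0 = (E \cap G_0)^g$ is also malnormal in $G_0$. Let $\mc{P}_0$ be the induced peripheral structure on $G_0$, giving $(G_0, \mc{P}_0) \dotnorm (G, \mc{P})$.

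Now fix any $(G', \mc{P}') \dotnorm (G, \mc{P})$ with $G' \le G_0$; note that $G' \lhd G$ implies $G' \lhd G_0$ as well. Form the induced hierarchy $\mc{H}'$ on $G'$ using Corollary \ref{inducedhierarchy}, where each vertex or edge group at level $i$ has the form $K^g \cap G'$ for $K$ a vertex or edge group at level $i$ of $\mc{H}$ and some $g \in G$. Quasiconvexity of $\mc{H}'$ is Lemma \ref{inducedquasiconvex}, and full $\mc{P}'$--ellipticity is Lemma \ref{stillPelliptic}. For the terminating-in-$\mc{P}'$ property, each terminal group of $\mc{H}'$ is $K^g \cap G'$ where $K = P^h$ is a $G$--conjugate of some $P \in \mc{P}$, so $K^g \cap G' = P^{hg} \cap G'$ is $G'$--conjugate to an element of $\mc{P}'$ directly from the construction in Notation \ref{subpair}.

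The remaining check is malnormality. An edge group of $\mc{H}'$ has the form $E^g \cap G'$ with $E$ an edge group of $\mc{H}$ and $g \in G$. The first step showed $E^g \cap G_0$ is malnormal in $G_0$, and then Lemma \ref{heightnotup} applied to $G' \dotsub G_0$ shows $(E^g \cap G_0) \cap G' = E^g \cap G'$ is malnormal in $G'$, completing the proof. The only mildly subtle point in the whole argument is using normality of $G_0$ in $G$ to ensure conjugation by arbitrary $g \in G$ preserves the relevant malnormality in $G_0$, which is why Theorem \ref{virtually malnormal hierarchy} is stated with $G_0 \dotnorm G$ rather than merely $G_0 \dotsub G$.
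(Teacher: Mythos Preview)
Your proposal is correct and follows essentially the same approach as the paper's proof: obtain $G_0$ from Theorem \ref{virtually malnormal hierarchy}, then verify each property of $\mc{H}'$ using Lemma \ref{inducedquasiconvex} (quasiconvexity), Lemma \ref{heightnotup} (malnormality, via $G'\dotsub G_0$), Lemma \ref{stillPelliptic} (full $\mc{P}'$--ellipticity), and Corollary \ref{inducedhierarchy} (terminal groups). Your unpacking of the malnormality step---using normality of $G_0$ in $G$ to handle the conjugating element $g\in G$, then Lemma \ref{heightnotup} to pass down to $G'$---is exactly the argument implicit in the paper's terse citation of Lemma \ref{heightnotup}.
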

\begin{proof}
  By Theorem \ref{virtually malnormal hierarchy} there is a
  $(G_0,\mc{P}_0)\dotnorm (G,\mc{P})$ so that the induced hierarchy
  $\mc{H}_0$ is malnormal.  The induced hierarchy $\mc{H}'$ on $(G',\mc{P}')$ remains quasiconvex and malnormal by Lemmas \ref{inducedquasiconvex} and \ref{heightnotup}.  Lemma \ref{stillPelliptic} shows that the hierarchy is fully $\mc{P'}$-elliptic. Finally, the
  ``moreover'' part of Corollary \ref{inducedhierarchy} shows that the
  terminal groups of $\mc{H}'$ are a subset of conjugates of $\mc{P}'$.  However, since the elements of $\mc{P}'$ are always elliptic in all graphs of groups decompositions, all elements of $\mc{P}'$ must appear as conjugates of terminal vertex groups of $\mc{H}'$.
\end{proof}

The results in this section (particularly Theorem \ref{t:vmalnormal} above) are used in the proof of Theorem \ref{thm:virtualhierarchy} in Section \ref{s:vhierarchy} below.

\section{A metric combination theorem}\label{sec:combination}
In this section we prove a combination theorem in the setting of $\delta$--hyperbolic CAT$(0)$ spaces which is analogous to the Baker--Cooper combination theorem in hyperbolic manifolds \cite{BC08}.
This combination theorem is Theorem \ref{combotheorem} below.  It is worth noting that this theorem is about CAT$(0)$ spaces in general, not just about cube complexes.  Our application of Theorem \ref{combotheorem} is to show that the hierarchy of spaces that we build in the proof of Theorem \ref{thm:virtualhierarchy} in Section \ref{s:vhierarchy} is faithful and quasiconvex.

\begin{definition}
Let $R \geq 0$.
Suppose that $A\subseteq Y$ is connected.  Let $\pi_A \co Y^A\to Y$ be the cover corresponding to the image of $\pi_1(A)$ in $\pi_1(Y)$.  There is a canonical lift $L \co A \to Y^A$.  Let $\tilde{N}_R(A)$ be the $R$-neighborhood of $L(A)$ in $Y^A$.

We say that $A$ is {\em $R$-embedded in $Y$} if $\pi_A$ is injective on $\tilde{N}_R(A)$, and call the image $\pi_A(\tilde{N}_R(A))$ a \emph{tubular neighborhood} of $A$.

If $A$ is not connected, but the components of $A$ are $R$--embedded with disjoint tubular neighborhoods, we say that $A$ is \emph{$R$--embedded in $Y$}.
\end{definition}

\begin{definition}[Elevation]
  Let $W$ be connected, and let $\phi\co W\to Z$ be some map.
  If $\pi\co \hat{Z}\to Z$ is a cover, then $\phi$ may not lift to $\hat{Z}$ but there is some (minimal) cover $\pi_W\co \hat{W}\to W$ so that the composition $\phi\circ \pi_W$ lifts to a map $\hat{\phi}\co \hat{W}\to \hat{Z}$.  Such a lift (or its image)
is called an \emph{elevation} of $W$ to $\hat{Z}$.  If $\phi$ is an inclusion map, then the elevations of $W$ are just the connected components of $\pi^{-1}(W)$.

  Most of the time, we are interested in the images of elevations, and not the precise maps.  In those cases, we say two elevations are \emph{distinct} if they have different images.
\end{definition}

\begin{definition}
Suppose that $f \co {\mathbb N} \to {\mathbb N}$ is an affine function $f(n)=K n + C$.
We say that $\mc{B} = \{ B_i \}$ forms an {\em $f$-separated family of sub-complexes of $Y$} if for any two distinct elevations $U_1$ and $U_2$ of elements of $\mc{B}$ to $\tilde{Y}$ (the universal cover) and any $D \ge 0$ we have
\[	\mathrm{Diam} \left( N_D(U_1) \cap N_D(U_2) \right) \le f(D)	.	\]
\end{definition}
Note that if there are at least two distinct elevations of elements of
$\mc{B}$, then any function $f$ as in the definition must have $K\geq 1$.

The key example of an $f$--separated family is given by the following
proposition: 
\begin{proposition}\label{malnormalseparated} 
  Let $(G,\mc{H})$ be a relatively hyperbolic pair with $G$
  hyperbolic.  Suppose that $Y$ is a space with $\pi_1Y=G$ and that for each $H \in \mc{H}$ there is a
  path-connected subspace $B_H\subseteq Y$ with $\pi_1(B_H)$ 
  conjugate to $H$.  Then $\mc{B} = \{B_H \}$ is $f$--separated for
  some affine function $f$.
\end{proposition}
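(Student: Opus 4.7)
The plan is to translate the statement into one about cosets in the Cayley graph of $G$ and then apply standard properties of quasiconvex almost malnormal subgroups of hyperbolic groups. By Theorem \ref{thm:when hyp is RH}, $\mc{H}$ is an almost malnormal family of quasiconvex subgroups of $G$. Equipping $Y$ with a proper geodesic metric for which $G$ acts cocompactly on $\tilde Y$ (harmless in the applications, where $Y$ is a finite graph of compact spaces), the orbit map induces a $G$-equivariant quasi-isometry $\Phi \co \tilde Y \to \mathrm{Cay}(G,S)$, so $\tilde Y$ is $\delta$-hyperbolic for some $\delta$.

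Next I would identify elevations with cosets of conjugates. Each elevation $U$ of some $B_H$ to $\tilde Y$ is stabilised by a conjugate $gHg^{-1}$ of $H$, and because $B_H$ is cocompact under this action and there are only finitely many members of $\mc{H}$, $U$ sits at uniformly bounded Hausdorff distance from the coset $gH$, read inside $\tilde Y$ via $\Phi^{-1}$. Since each $H\in\mc{H}$ is quasiconvex in $G$, the elevations are all $\kappa$-quasiconvex in $\tilde Y$ for a uniform $\kappa$.

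The main step, and the hardest, is a uniform coarse-intersection bound in $\mathrm{Cay}(G,S)$: there exist constants $D_0,M_0$ such that for any two distinct cosets $g_1 H_1\ne g_2 H_2$ with $H_i\in\mc{H}$,
\[\mathrm{diam}\bigl(N_{D_0}(g_1 H_1)\cap N_{D_0}(g_2 H_2)\bigr)\le M_0.\]
Almost malnormality makes $g_1 H_1 g_1^{-1}\cap g_2 H_2 g_2^{-1}$ finite, so the intersection is bounded for each individual pair; the content is uniformity. One way to obtain this is by contradiction: if the bound failed, then, after translating so $g_1=1$ and using finiteness of $\mc{H}$ to fix $H_1,H_2$, a sequence of cosets $g_2^n H_2$ would have $H_1\cap N_{D_0}(g_2^n H_2)$ of unbounded diameter. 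A standard thin-quadrilateral fellow-traveling argument (long segments in the intersection remain close to both quasiconvex sets) would then force infinitely many conjugates $g_2^n H_2 (g_2^n)^{-1}$ to meet $H_1$ in an infinite set, contradicting height one (equivalently, almost malnormality). This is the ``height-one'' coarse-intersection estimate of e.g.\ Hruska--Wise \cite{hruskawise:packing}.

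Once the uniform bound is in hand, the affine growth follows from a standard hyperbolic-geometry lemma: for $\kappa$-quasiconvex subsets $A,B$ of a $\delta$-hyperbolic space with $\mathrm{diam}\bigl(N_{D_0}(A)\cap N_{D_0}(B)\bigr)\le M_0$, one has
\[\mathrm{diam}\bigl(N_D(A)\cap N_D(B)\bigr)\le M_0+2D+C\]
for $D\ge D_0$, with $C=C(\delta,\kappa)$ (the argument is the usual one of taking a midpoint of a long geodesic in the intersection, nearest-point projecting, and using $\delta$-hyperbolicity). Pulling this back through $\Phi$ and absorbing the uniform Hausdorff corrections from the second paragraph into the constants yields an affine $f(n)=Kn+C'$ bounding $\mathrm{Diam}\bigl(N_D(U_1)\cap N_D(U_2)\bigr)$ for any two distinct elevations $U_1,U_2$ of elements of $\mc{B}$ in $\tilde Y$, as required.
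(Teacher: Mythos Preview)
Your approach is essentially the paper's: reduce to cosets in the Cayley graph, establish the affine bound there via a thin-quadrilateral argument, and transfer back by an equivariant quasi-isometry. The paper organises the coset step slightly differently --- it proves your fixed-radius bound directly (only finitely many double cosets $H_1 g H_2$ meet the ball of radius $R=2\lambda+2\delta$, and for each representative the intersection $N_R(H_1)\cap N_R(cH_2)$ has finite diameter by almost malnormality, via limit sets and \cite{gmrs}), and then runs the thin-quadrilateral argument to get $\mathrm{diam}(N_t(H_1)\cap N_t(gH_2))\le 6t+M$ --- but the content is the same.

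One imprecision worth flagging: your contradiction sketch for the fixed-radius bound does not quite work as stated. A sequence of cosets $g_2^n H_2$ with $\mathrm{diam}\bigl(N_{D_0}(H_1)\cap N_{D_0}(g_2^n H_2)\bigr)\to\infty$ does not by itself force any single conjugate $g_2^n H_2 (g_2^n)^{-1}$ to meet $H_1$ in an infinite set, so there is no immediate contradiction with height one. What makes the argument go through is exactly the double-coset finiteness the paper uses: since each such coset satisfies $d(H_1,g_2^n H_2)\le 2D_0$, the $g_2^n$ fall into finitely many double cosets $H_1 g H_2$, and left/right translation by $H_1,H_2$ preserves the diameter, so some \emph{single} coset has infinite-diameter intersection. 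Your citation to \cite{hruskawise:packing} covers the result in any case, so the proof stands.
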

\begin{proof}
  Fix a Cayley graph $\Gamma$ for $G$.  This graph is
  $\delta$--hyperbolic for some $\delta\geq 0$, and the cosets $gH$
  for $H\in \mc{H}$ are uniformly $\lambda$--quasiconvex in $\Gamma$
  for some $\lambda>0$.  We may assume $\delta$ and $\lambda$ are both
  integers.

  We first prove the analogous statement for cosets of elements of
  $\mc{H}$ in $G$.  Namely, we find a number $K$ so
  that for any $H_1,H_2$ in $\mc{H}$ (not necessarily distinct), any two distinct cosets $g_1H_1$ and $g_2H_2$, and any $t\in \bN$,
  \begin{equation}\label{groupineq}
    \mathrm{diam}\left(N_t(g_1H_1)\cap N_t(g_2 H_2)\right) \leq Kt + K.
  \end{equation}
  There are finitely many pairs $H_1,H_2$ in $\mc{H}$ so it suffices
  to fix (not necessarily distinct) $H_1$ and $H_2$ and find a $K$
  which works for that pair.  Also note that it suffices to verify the
  inequality \eqref{groupineq} for $g_1 = 1$.

  To find $K$, we first observe that there are
  only finitely many double cosets $H_1 g H_2$ which intersect the
  ball of radius $R := 2\lambda + 2\delta$ about $1$.
  Let $\mc{C} = \{ c_1, \ldots , c_n \} \subset B_R(1)$ be a set of representatives of these cosets.  In case $H_1 = H_2$, we omit the representative for the double coset $H_1 = H_1H_1$ from $\mc{C}$.  
  
\begin{claim} \label{bounded diam}
For each $c \in \mc{C}$, the diameter $\diam \left( N_R(H_1) \cap N_R(cH_2) \right)$ is finite.
\end{claim}
\begin{proof}[Proof of Claim \ref{bounded diam}]
If the diameter is infinite, then the limit sets of $H_1$ and $H_2^c$ would intersect.  By 
\cite[Lemmas 2.6 and 2.7]{gmrs} there would be an infinite order element in $H_1 \cap H_2^c$, contradicting malnormality of $\mc{H}$.
\end{proof}

Let
  \[ M = \max_{c\in \mc{C}}\mathrm{diam}\left(N_R(H_1)\cap
  N_R(cH_2)\right)+2\lambda+1. \]
  
  \begin{claim} \label{claim:separated}
    Unless $H_1=H_2$ and $g\in H_1$, we have $\mathrm{diam}(N_t(H_1)\cap N_t(gH_2))\leq 6t + M$.
  \end{claim}

  \begin{proof}[Proof of Claim \ref{claim:separated}]
  Suppose not.  Then there are elements $x, y \in N_t(H_1)\cap
  N_t(gH_2)$ with $d(x,y)>6t + M$.  There are therefore elements
  $a_x,a_y\in H_1$ and $b_x,b_y\in H_2$ with 
  \[\max\{d(a_x,x),
  d(gb_x,x),d(a_y,y),d(gb_y,y)\}\leq t.\]  
  Consider a pair of $\delta$--thin triangles with vertex sets $\{a_x,gb_y,gb_x\}$ and $\{a_x,gb_y,a_y\}$. (See Figure \ref{fig:quad}.)  
  \begin{figure}[htbp]
    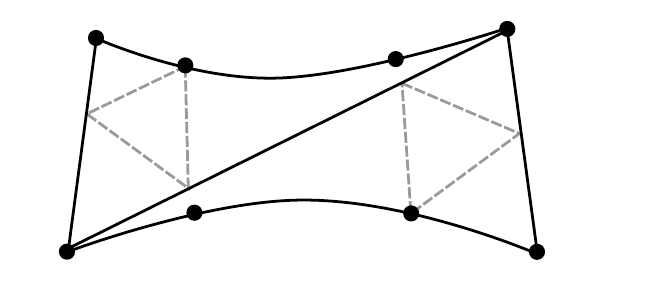
    \caption{The cosets $H_1$ and $gH_2$ must come close.}
    \label{fig:quad}
  \end{figure}
  Note that $d(a_x,gb_y)\geq d(x,y)-2t>M+4t$.  Moreover the Gromov products $(gb_x,gb_y)_{a_x}$ and $(a_x,a_y)_{gb_y}$ are each at most $2t$.  It follows that there are subsegments $[p_1,q_1]\subseteq [a_x,a_y]$ and $[p_2,q_2]\subseteq [gb_x,gb_y]$
of length at least $M$, and whose endpoints satisfy $d(p_1,p_2)\leq 2\delta$ and $d(q_1,q_2)\leq 2\delta$.  The $\lambda$--quasiconvexity of $H_1$ and $H_2$ then implies that there are
  $a_x',a_y'\in H_1$ and $b_x',b_y'\in H_2$ so that 
  \begin{equation}\label{far}
    \min\{d(a_x',a_y'),d(gb_x',gb_y')\}\geq  M-2\lambda,
  \end{equation}
  but 
  \begin{equation}\label{close}
  \max\{d(a_x',gb_x'),d(a_y',gb_y')\}\leq 2\lambda+2\delta = R.
  \end{equation}
  This gives a contradiction:  The inequality \eqref{close} implies 
  that
  $H_1gH_2$ intersects $B_R(1)$ nontrivially, so we must have $\diam(N_R(H_1)\cap N_R(gH_2))\leq M-2\lambda-1$.  On the other hand \eqref{far} implies that $\diam(N_R(H_1)\cap N_R(gH_2))\geq M-2\lambda$.
  \end{proof}

  We can therefore take 
  $K(H_1,H_2) = \max\{6,M\}$, and let $K = \max\{K(H,H')\mid H,H'\in \mc{H}\}$.

  We now derive the Proposition from the group-theoretic statement.
  Let $\tilde{Y}\to Y$ be the universal cover, and for each $i$ let
  $E_i$ be the elevation of $B_i$ to $\tilde{Y}$ which is preserved by
  $H_i$.  For some $\mu\geq 1, \epsilon\geq 0$, we can find
  equivariant maps $\phi\co \tilde{Y}\to \Gamma$ and $\psi\co
  \Gamma\to \tilde{Y}$, so that
  \begin{enumerate}
  \item $\phi$ and $\psi$ are $(\mu,\epsilon)$--quasi-isometries,
  \item $\phi$ and $\psi$ are $\epsilon$--quasi-inverses, and
  \item for each $H_i\in \mc{H}$ and each $g\in G$, the Hausdorff
    distances $d_{\mathrm{Haus}}(\phi(g B_i),gH_i)$ and
    $d_{\mathrm{Haus}}(\psi(g H_i),g B_i)$ are bounded above by $\epsilon$.
  \end{enumerate}
  Now let $t>0$, and choose elevations $U_1$ and $U_2$ of elements
  $B_{i_1}$ and $B_{i_2}\in \mc{B}$.  We have $U_1 = g_1 E_{i_1}$ and $U_2 = g_2
  E_{i_2}$.  We have $\phi(N_t(U_j))\subseteq N_{\mu t + 2\epsilon}(g_j
  H_{i_j})$ for $j=1,2$.  Thus $\phi(N_t(U_1)\cap N_t(U_2))\subseteq
  N_{\mu t + 2\epsilon}(g_1 H_1)\cap N_{\mu t + 2\epsilon}(g_2 H_2)$
  has diameter at most $K(\mu t + 2\epsilon) + K$.  Thus 
  $\psi\circ\phi(N_t(U_1)\cap N_t(U_2))$ has diameter at most 
  $\mu(K(\mu t + 2\epsilon) + K)+\epsilon$.  Since $\phi$ and $\psi$
  are $\epsilon$--quasi-inverses, we get the affine bound
  \[ \mathrm{diam}(N_t(U_1)\cap N_t(U_2))\leq \mu(K(\mu t + 2\epsilon)
  + K)+2\epsilon,\] as required.
\end{proof}

\begin{theorem} [Combination Theorem] \label{combotheorem}
Let $\delta>0$ and let $f\co \bN\to \bN$ be a nondecreasing affine function.
Suppose that $Y$ is a compact NPC space with $\delta$--hyperbolic
universal cover $\tilde{Y}$.  For $\epsilon_B$ large enough (in terms of $\delta$), and $\epsilon_A$ large enough (in terms of $\delta$, $f$,
and $\epsilon_B$), the following combination theorem holds.

Suppose
\begin{enumerate}
\item  $\mc{A} = \sqcup A_i$ and $\mc{B} = \sqcup B_j$ are embedded locally convex subsets (where the $A_i$ are the connected components of $\mc{A}$ and the $B_j$ are the connected components of $\mc{B}$);
\item $\mc{B}$ forms an $f$-separated family;
\item $\mc{A}$ is $\epsilon_A$-embedded in $Y$;
\item Each $B_j \subset \mc{B}$ is $\epsilon_B$-embedded in $Y$; and
\item $\Theta$ is a connected component of $\mc{A}\cup \mc{B}$.
\end{enumerate}
Then $\Theta$ is $\pi_1$-injective with $\lambda$-quasi-convex elevation to $\tilde{Y}$, where $\lambda$ is a function of $\delta$ only.
\end{theorem}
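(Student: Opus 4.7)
The plan is to analyze the elevation $\tilde{\Theta} \subseteq \tilde{Y}$ as a union of convex pieces and prove quasi-convexity and $\pi_1$-injectivity simultaneously. Because $Y$ is NPC, $\tilde{Y}$ is CAT(0), and each locally convex $A_i$ and $B_j$ lifts to a disjoint union of convex subsets of $\tilde{Y}$. Thus $\tilde{\Theta}$ decomposes as $\bigcup_\alpha C_\alpha$ where each $C_\alpha$ is such a convex elevation. The $\epsilon_A$-embedding of $\mc{A}$ and $\epsilon_B$-embedding of $\mc{B}$ ensure that distinct elevations of the same $A_i$ (resp.\ $B_j$) are uniformly separated in $\tilde{Y}$, while $f$-separation controls overlaps of tubular neighborhoods of distinct $B$-elevations.

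For quasi-convexity, given $x, y \in \tilde{\Theta}$ I would build a piecewise-geodesic approximating path $\gamma$ as follows: choose a minimal-length sequence of distinct pieces $C_0, \ldots, C_n$ with $x \in C_0$, $y \in C_n$, and $C_i \cap C_{i+1} \neq \emptyset$, pick $p_i \in C_{i-1} \cap C_i$, and concatenate the CAT(0) geodesic segments $[p_{i-1}, p_i]_{C_i}$. The heart of the argument is a ``no-shortcut'' estimate: $\gamma$ cannot deviate far from $[x,y]_{\tilde{Y}}$. Arguing by contradiction, $\delta$-thinness of geodesic quadrilaterals would produce two non-adjacent pieces $C_i, C_j$ (with $j > i+1$) coming $O(\delta)$-close over a long interval. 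This is ruled out by cases: two elevations of the same $A_k$ or of different $A$-components violate the $\epsilon_A$-embedding; two distinct $B$-elevations violate $f$-separation once the interval length exceeds $f(O(\delta))$; mixed $A$-$B$ cases reduce to one of the previous two by tracing how the $A$-piece and $B$-piece intersect in $Y$. Choosing $\epsilon_B$ large compared to $\delta$, then $\epsilon_A$ large compared to $\delta$, $\epsilon_B$, and the affine constants of $f$, rules out every case. Once $\gamma$ fellow-travels $[x,y]_{\tilde{Y}}$ at any bounded distance, the Morse lemma in the $\delta$-hyperbolic space $\tilde{Y}$ improves the bound to $\lambda = \lambda(\delta)$, giving the required quasi-convexity.

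For $\pi_1$-injectivity, it suffices to show the covering $\tilde{\Theta} \to \Theta$ is the universal cover, equivalently that $\tilde{\Theta}$ is simply connected. The same no-shortcut argument forbids any nontrivial cycle in the adjacency graph of $\{C_\alpha\}$, since such a cycle would give exactly the non-adjacent close-return configuration we ruled out; hence the adjacency graph is a tree. Because each $C_\alpha$ and each nonempty $C_\alpha \cap C_\beta$ is convex and so contractible, a nerve-lemma argument identifies $\tilde{\Theta}$ up to homotopy with this tree, so $\tilde{\Theta}$ is simply connected. The main obstacle is the case analysis in the no-shortcut estimate together with the calibration $\delta \ll \epsilon_B \ll f(\cdot) \ll \epsilon_A$; in particular, one must verify that the final quasi-convexity constant $\lambda$ can be extracted via the Morse lemma to depend only on $\delta$, independent of the large auxiliary constants used in the piece-by-piece construction.
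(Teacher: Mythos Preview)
Your general strategy---decompose $\tilde{\Theta}$ into convex pieces and control how they fit together---is reasonable in outline, but there are two genuine gaps, and both stem from missing the asymmetry between $\mc{A}$ and $\mc{B}$.

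First, the ``mixed $A$--$B$ case'' in your no-shortcut estimate cannot be reduced to the pure cases. The hypotheses are deliberately asymmetric: distinct $\mc{A}$--elevations are at distance $\geq 2\epsilon_A$, but distinct $\mc{B}$--elevations need not be far apart at all ($f$--separation bounds only the \emph{diameter} of their common neighborhood, and the $\epsilon_B$--embedding is only assumed componentwise). Nothing in the hypotheses prevents an $\mc{A}$--elevation from coming close to, or even meeting, a $\mc{B}$--elevation that is non-adjacent in your chain. The paper handles this by an asymmetric ``shortcutting'' construction: on each $\mc{B}$--arc it marks points $p_j,q_j$ at distance exactly $\epsilon_A/2$ from the adjacent $\mc{A}$--elevations, and replaces each $\mc{A}$--arc by the ambient geodesic $[q_{j-1},p_{j+1}]$. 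Two thin-triangle lemmas, each using $f$--separation of $\mc{B}$ against a triangle with one side on an $\mc{A}$--arc, then show that every replacement arc has length $>2\epsilon_B$ and every Gromov product at a corner is $\leq\delta$.

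Second, your claim that the Morse lemma ``improves the bound to $\lambda=\lambda(\delta)$'' is wrong as stated: the Morse constant depends on the quasi-geodesic constants, which in your construction depend on $\epsilon_A,\epsilon_B,f$. The paper's mechanism for a $\delta$--only bound is different. Once the shortcut path is a broken geodesic with all corner Gromov products $\leq\delta$ and all interior segments longer than $10\delta$, a direct lemma (their Lemma~4.4) shows it lies within Hausdorff distance $6\delta$ of the true geodesic, \emph{independent of segment lengths}. Combined with the observation that the shortcut path stays within $2\delta$ of $\tilde\Theta$, this gives $\lambda=11\delta$.

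For $\pi_1$--injectivity, your nerve argument needs the adjacency graph of pieces to be a tree, but this is not automatic from the hypotheses and is essentially what you are trying to prove. The paper instead lifts a putative kernel loop to a \emph{closed} broken geodesic in $\tilde{Y}$, applies the same shortcutting and corner estimates, and observes that a closed path satisfying the hypotheses of the broken-geodesic lemma would have to lie within $6\delta$ of a single point while containing a segment of length $>2\epsilon_B$, a contradiction.
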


In applications, the components of $\mc{B}$ are thought of as ``peripheral'' subcomplexes.  
With slightly stronger assumptions on how $\mc{A}$ and $\mc{B}$ interact, we are able to rule out ``accidental parabolics''.  
\begin{definition}\label{def:accident}
  Let $\mc{B}$ be an embedded locally convex subset of the
  nonpositively curved space $Y$, and let $\Xi\subset Y$.  We say that
  $\Xi$ has an \emph{accidental $\mc{B}$--loop} if there is a
  homotopically essential loop $\xi\subset \Xi$ so that 
  \begin{enumerate}
  \item\label{inB} $\xi$ is (freely) homotopic to a geodesic loop in $\mc{B}$, and
  \item\label{notinXi} no positive power of $\xi$ is homotopic in $\Xi$ to a geodesic loop in $\mc{B}$.
  \end{enumerate}
\end{definition}
This notion is stable under finite covers:
\begin{lemma}\label{lem:liftedaccidents}
  Let $\mc{B}$, $\Xi$, $Y$ be as in Definition \ref{def:accident}, and let $\pi\co \tilde{Y}\to
  Y$ be a finite cover.  If $\tilde{\mc{B}}= \pi^{-1}(\mc{B})$ and
  $\Xi'$ is an elevation of $\Xi$ to $\tilde{Y}$ then $\Xi$ has an
  accidental $\mc{B}$--loop if and only if $\Xi'$ has an accidental
  $\tilde{\mc{B}}$--loop.  
\end{lemma}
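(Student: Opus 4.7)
The plan is to prove both directions symmetrically using elementary covering-space theory, relying on two structural facts: first, since $\Xi'$ is an elevation of $\Xi$ (and the inclusion $\Xi \hookrightarrow Y$ has only components of $\pi^{-1}(\Xi)$ as elevations), the restriction $\pi|_{\Xi'}\colon \Xi'\to \Xi$ is a finite covering map; second, since $\pi\co\tilde Y\to Y$ is a finite cover of an NPC space, it is a local isometry, and $\tilde{\mc{B}} = \pi^{-1}(\mc{B})$ is locally convex. Consequently geodesic loops lift to geodesic loops and push forward to geodesic loops, and condition (\ref{inB}) and (\ref{notinXi}) of Definition \ref{def:accident} can be transferred along the cover.

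For the forward direction, given an accidental $\mc{B}$--loop $\xi\subset\Xi$, I would take $k\ge 1$ minimal so that $\xi^k$ lifts to a loop $\tilde\xi$ in $\Xi'$ (such $k$ exists and is at most the degree of $\Xi'\to\Xi$). Raising the given free homotopy in $Y$ from $\xi$ to a geodesic loop $\beta\subset\mc{B}$ to its $k$th power and then lifting the resulting annulus to $\tilde Y$ yields a free homotopy from $\tilde\xi$ to a geodesic loop in $\tilde{\mc{B}}$, so (\ref{inB}) holds for $\tilde\xi$. For (\ref{notinXi}), suppose some $\tilde\xi^m$ were homotopic in $\Xi'$ to a geodesic loop $\tilde\gamma\subset\tilde{\mc{B}}$; composing this homotopy with $\pi$ would exhibit $\xi^{km}$ as homotopic in $\Xi$ to the geodesic loop $\pi(\tilde\gamma)\subset\mc{B}$, contradicting (\ref{notinXi}) for $\xi$.

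For the reverse direction, I would set $\xi := \pi\circ\tilde\xi$ and verify (\ref{inB}) by pushing forward the upstairs free homotopy. For (\ref{notinXi}), suppose $H\colon S^1\times[0,1]\to\Xi$ is a homotopy from $\xi^m$ to a geodesic loop $\gamma\subset\mc{B}$. Since $\xi$ itself lifts to the loop $\tilde\xi$, the power $\xi^m$ lifts to the loop $\tilde\xi^m$; the homotopy lifting property for the finite cover $\Xi'\to\Xi$, applied starting from this loop, produces a lift $\tilde H\colon S^1\times[0,1]\to\Xi'$ whose other end $\tilde\gamma$ is automatically a loop (being the boundary of the lifted annulus) projecting to $\gamma$, hence geodesic in $\tilde{\mc{B}}$. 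This contradicts (\ref{notinXi}) for $\tilde\xi$.

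The only point that needs a brief additional check is preservation of the ``homotopically essential'' hypothesis across the cover. In both directions, a null-homotopy of the new loop in the relevant space either projects down (through $\pi$) or lifts up (by simple connectivity of $D^2$) to a null-homotopy of an appropriate power of the original loop; since the loops in question are freely homotopic in $Y$ (respectively $\tilde Y$) to essential geodesic loops in $\mc{B}$ (respectively $\tilde{\mc{B}}$) and fundamental groups of NPC spaces are torsion-free, no positive power can be null-homotopic. The main (modest) obstacle is keeping track of which space each homotopy lives in and confirming that lifts and projections of geodesic loops remain geodesic loops in the correct subspace; once that bookkeeping is done, the argument is a direct application of the homotopy lifting property.
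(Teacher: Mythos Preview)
Your proposal is correct and follows essentially the same approach as the paper: both directions use covering-space lifting and projection of free homotopies, with the key observation that an elevation of a loop covers some power of the original. You are in fact slightly more careful than the paper in the forward direction, explicitly checking condition~\eqref{notinXi} for all powers $(\tilde\xi)^m$ rather than just $m=1$, and in spelling out why the lifted or projected loops remain geodesic and essential.
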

\begin{proof}
  Let $\gamma$ be an accidental $\mc{B}$--loop in $\Xi$, and let $\gamma'$ be an elevation to $\Xi'$.  By condition \eqref{inB} of Definition \ref{def:accident}, $\gamma$ is freely homotopic to a geodesic loop in $\mc{B}$.  We can lift the homotopy to get that $\gamma'$ is freely homotopic to a geodesic loop in $\tilde{\mc B}$.  If $\gamma'$ were freely homotopic to a loop in $\tilde{\mc B}\cap \Xi'$, then we could project the homotopy to get a homotopy of $\gamma^n$ into $\mc{B}\cap \Xi$ for some $n\geq 1$, contradicting condition \eqref{notinXi} for $\xi$.

  Conversely, if $\gamma$ is an accidental $\tilde{\mc B}$--loop in $\Xi'$, we can show that $\bar\gamma =\pi(\gamma)$ is an accidental $\mc{B}$--loop in $\Xi$.  Projecting the homotopy of $\gamma$ into $\tilde{\mc B}$ gives a homotopy of $\bar\gamma$ into $\mc{B}$.  Moreover, if some positive power of $\bar\gamma$ were homotopic in $\Xi$ to a loop in $\mc{B}$ we could lift that homotopy to show some positive power of $\gamma$ was homotopic in $\Xi'$ to a loop in $\tilde{\mc B}$.  
\end{proof}

We'll also prove:
\begin{proposition}\label{noaccidents}
  Let $Y$, $\mc{A}$, and $\mc{B}$ satisfy the assumptions of the Combination
  Theorem \ref{combotheorem}.  Suppose moreover that $\mc{A}$ has no
  accidental $\mc{B}$--loops.  If $\Theta$ is a component of
  $\mc{A}\cup\mc{B}$, then $\Theta$ has no accidental $\mc{B}$--loops.
\end{proposition}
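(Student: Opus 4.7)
My plan is to argue by contradiction. Assume some essential loop $\xi \subset \Theta$ is freely homotopic in $Y$ to a geodesic loop in $\mc{B}$, yet no positive power of $\xi$ is homotopic in $\Theta$ to such a loop. Fix an elevation $\tilde\Theta$ of $\Theta$ in $\tilde Y$, set $\Pi := \mathrm{Stab}(\tilde\Theta) \cong \pi_1(\Theta)$, and let $g \in \Pi$ be the element corresponding to $\xi$. The assumption on $\xi$ yields some component $B_j \subseteq \mc{B}$ and an elevation $\tilde B$ of $B_j$ in $\tilde Y$ stabilized by $g$.

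By the Combination Theorem \ref{combotheorem}, $\Theta$ carries a $\pi_1$-injective graph-of-spaces decomposition whose vertex spaces are components of $\mc{A}\cap\Theta$ and $\mc{B}\cap\Theta$; let $T$ be its Bass--Serre tree. I would analyze the $g$-action on $T$. If $g$ fixes a vertex of $T$ of type $B_{j^*}$, then $g$ lies in a $\Pi$-conjugate of $\pi_1(B_{j^*})$, so $\xi$ is freely homotopic in $\Theta$ to a loop in $B_{j^*}\subseteq\mc{B}$, immediately contradicting the accidental assumption. If $g$ fixes a vertex of type $A_{i^*}$, then after $\Pi$-conjugation $g$ lies in $\pi_1(A_{i^*})$ and represents an essential loop $\eta \subset A_{i^*}$ freely homotopic in $Y$ to a loop in $\mc{B}$; the hypothesis that $\mc{A}$ has no accidental $\mc{B}$--loops forces some $\eta^n$ to be freely homotopic \textbf{in $A_{i^*}$} to a geodesic loop in $\mc{B}$. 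Since $A_{i^*}\subseteq\Theta$, the same homotopy exhibits $\xi^n$ as freely homotopic in $\Theta$ to a loop in $\mc{B}$, again contradicting that $\xi$ is accidental.

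The main obstacle is the remaining case in which $g$ acts hyperbolically on $T$. Its axis $\alpha \subset \tilde Y$ lies in $N_D(\tilde\Theta)\cap N_D(\tilde B)$ by quasiconvexity; as $g^n$ pushes $\alpha$ through arbitrarily many pieces of $\tilde\Theta$, a pigeonhole applied to $\Pi$-translates produces a piece $\tilde Z \subseteq \tilde\Theta$ for which $N_D(\tilde Z)\cap N_D(\tilde B)$ has arbitrarily large diameter. If $\tilde Z$ is an elevation of some $B_{j'}$, the $f$-separation hypothesis forces $\tilde Z = \tilde B$, so $\tilde B \subseteq \tilde\Theta$ and $g$ fixes the corresponding vertex of $T$, contradicting hyperbolicity. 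If $\tilde Z$ is an elevation $\tilde A$ of some $A_{i^*}$, a standard coarse-intersection argument for quasiconvex subsets of a $\delta$--hyperbolic space produces an infinite-order element $h \in \mathrm{Stab}(\tilde A)\cap\mathrm{Stab}(\tilde B)$. Applying the no-accidental-$\mc{B}$--loops hypothesis to the loop in $A_{i^*}$ represented by $h$, some power $h^n$ stabilizes an elevation $\tilde C$ of a component of $A_{i^*}\cap B_j$ contained in $\tilde A$. Then $\tilde C$ lies in a unique elevation $\tilde B''$ of $B_j$ inside $\tilde\Theta$, and $h^n$ stabilizes both $\tilde B''$ and $\tilde B$; one more appeal to $f$-separation forces $\tilde B'' = \tilde B$, so $\tilde B \subseteq \tilde\Theta$ and once again $g$ fixes a vertex of $T$ -- the desired contradiction.

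The technically delicate step is the pigeonhole extraction of a long close approach of $\alpha$ to a single piece in the hyperbolic-on-$T$ case; once that is in place, $f$--separation of $\mc{B}$ handles the $B$-type pieces cleanly, and the no-accidental-$\mc{B}$--loops hypothesis for $\mc{A}$ together with a second use of $f$--separation handles the $A$-type pieces.
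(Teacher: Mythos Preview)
Your Bass--Serre tree framing is a natural alternative to the paper's direct broken-geodesic argument, and your elliptic cases are fine.  However, the hyperbolic case has a genuine gap at the pigeonhole step.  You assert that ``a pigeonhole applied to $\Pi$--translates produces a piece $\tilde Z\subseteq\tilde\Theta$ for which $N_D(\tilde Z)\cap N_D(\tilde B)$ has arbitrarily large diameter,'' and then use this to extract an infinite-order element of $\mathrm{Stab}(\tilde A)\cap\mathrm{Stab}(\tilde B)$.  But pigeonhole alone does not deliver this.  There are only finitely many pieces per $g$--period on the $T$--axis, and since $g$ preserves $\tilde B$, all $g$--translates of a given piece have the \emph{same} coarse-intersection diameter with $\tilde B$; so you get finitely many diameters $d_1,\ldots,d_\ell$.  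Covering one period of the axis $\alpha$ by the corresponding neighborhoods yields only a lower bound of order $\tau/\ell$ on $\max_i d_i$ (where $\tau$ is the $\tilde Y$--translation length of $g$), not infinity, and not in general anything exceeding $f(D)$.  Without a single piece whose coarse intersection with $\tilde B$ is provably larger than $f(D)$, neither the $f$--separation step for $B$--type pieces nor the ``standard coarse-intersection'' step for $A$--type pieces goes through.

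What is missing is precisely the quantitative input you never invoke in Case~3: the $\epsilon_A$--embeddedness of $\mc{A}$.  The paper exploits it as follows.  One straightens $\gamma$ to a $\Theta$--geodesic and writes it as alternating $\mc{A}$-- and $\mc{B}$--syllables; successive $\mc{B}$--arcs lie in distinct $\mc{B}$--elevations, and successive $\mc{A}$--arcs lie in distinct $\mc{A}$--elevations which are at $\tilde Y$--distance $\geq 2\epsilon_A$.  One then performs the same shortcutting operation used in the proof of Theorem~\ref{combotheorem}, producing a broken geodesic with corner products $\leq\delta$ containing a subsegment of each $\mc{B}$--arc of length $\geq\epsilon_A$.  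Lemma~\ref{broken} forces this broken geodesic to lie within $8\delta$ of the elevation $\tilde\eta\subset\tilde B$.  Since at least one of those long $\mc{B}$--subsegments lies in a $\mc{B}$--elevation distinct from $\tilde B$, $f$--separation gives $\epsilon_A\leq f(8\delta)$, contradicting the standing hypothesis on $\epsilon_A$.  In Bass--Serre language: the $\epsilon_A$--embeddedness forces each $B$--type vertex on the $T$--axis to contribute a long segment close to $\tilde B$, which is exactly the quantitative fact your pigeonhole cannot supply.  If you want to salvage your approach, replace the pigeonhole by this observation; but at that point you are essentially reproducing the paper's argument inside your Case~3.
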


The remainder of this section is devoted to proving Theorem
\ref{combotheorem} and Proposition \ref{noaccidents}.

\subsection{Broken geodesics}
The following Lemma can be proven using essentially the same argument as \cite[III.H.1.13]{bridhaef:book}.
\begin{lemma}\label{broken}
  Let $l\geq 0$.
  Let $c=c_1\cdots c_n$ be a path from $p$ to $q$ in a
  $\delta$--hyperbolic geodesic space, satisfying
  \begin{enumerate}
  \item For each $i$, the subpath $c_i=[p_i,p_{i+1}]$ is geodesic;
  \item for each $i\neq 1$, the Gromov product
    $(p_{i-1},p_{i+1})_{p_i}\leq l$;
  \item for each $i\notin\{1,n\}$, the length of $c_i$ is strictly bigger than $2l+8\delta$.
  \end{enumerate}
  Then if $\gamma$ is any geodesic from $p$ to $q$, the Hausdorff
  distance between $\gamma$ and $c$ is at most $l+5\delta$.
\end{lemma}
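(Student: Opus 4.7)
The plan is to adapt the proof of Bridson--Haefliger III.H.1.13, which shows that local geodesics of sufficiently large local parameter in a $\delta$--hyperbolic space stay close to genuine geodesics. The first step is to show that $c$ (parameterized by arc length) is a $(1, 2l)$--quasi-geodesic. For two points $x \in c_i, y \in c_{i+1}$ at respective distances $a, b$ from $p_{i+1}$, the Gromov product hypothesis $(p_i, p_{i+2})_{p_{i+1}} \leq l$ combined with the monotonicity $(x, y)_w \leq (x', y')_w$ whenever $x \in [w, x'], y \in [w, y']$ (an elementary consequence of the triangle inequality) gives $(x, y)_{p_{i+1}} \leq l$, so $d(x, y) \geq a + b - 2l$. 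For points on non-adjacent pieces, the length condition $|c_j| > 2l + 8\delta$ on interior pieces prevents the bend defects from compounding: in a $\delta$--hyperbolic space, once two geodesic rays from a basepoint diverge they do not return to within $\delta$, so a long straight intermediate segment guarantees the broken path continues gaining metric distance rather than backtracking.

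The second step uses the quasi-geodesic property to bound the distance from each interior vertex $p_i$ to $\gamma$. Let $\alpha = [p, p_i]$ and $\beta = [p_i, q]$. By the stability of the quasi-geodesic structure established in step one, $\alpha$ emerges from $p_i$ along the reverse of $c_{i-1}$ and tracks $c$ within $O(\delta)$ until close to the bend at $p_{i-1}$. The length hypothesis $|c_{i-1}| > 2l + 8\delta$ therefore forces $(p, p_{i-1})_{p_i} > l + \delta$. Combined with the hypothesis $(p_{i-1}, p_{i+1})_{p_i} \leq l$ and two applications of Gromov's four-point inequality (first to $\{p, p_{i-1}, p_{i+1}\}$ based at $p_i$, yielding $(p, p_{i+1})_{p_i} \leq l + \delta$, and then to $\{p, p_{i+1}, q\}$ based at $p_i$, yielding $(p, q)_{p_i} \leq l + 2\delta$), the $\delta$--thinness of the triangle with vertices $p, p_i, q$ and sides $\alpha, \beta, \gamma$ places $p_i$ within $l + 3\delta$ of $\gamma$.

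With every vertex $p_i$ within $l + 3\delta$ of $\gamma$, each piece $c_i$ is a geodesic between two vertices close to $\gamma$, and a standard thin-quadrilateral argument (each point on one side of a geodesic quadrilateral is within $2\delta$ of the union of the other three sides) places every point of $c_i$ within $l + 5\delta$ of $\gamma$; the converse direction follows by the same reasoning applied to subarcs of $\gamma$. The main obstacle is the tight bookkeeping of constants: the length threshold $2l + 8\delta$ is calibrated precisely so each interior piece absorbs a Gromov-product defect of size $l$ at each end while leaving enough $\delta$--slack for the four-point inequality and the thin-triangle conditions, and the additive $5\delta$ in the final bound accumulates from the several applications of $\delta$--hyperbolicity in the chain (quasi-geodesic fellow-traveling, the four-point inequality, thinness of the triangle through $p_i$, and thinness of the final quadrilateral).
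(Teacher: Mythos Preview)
Your outline has the right final moves (vertices close to $\gamma$, then thin quadrilaterals to get $l+5\delta$), but Step~1 is a genuine gap, and Step~2 as written depends on it.

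The claim that $c$ is globally a $(1,2l)$--quasi-geodesic is exactly what the lemma is meant to establish (up to constants); your justification for non-adjacent pieces (``once two geodesic rays diverge they do not return'') is an intuition, not a proof. Without hyperbolicity the defects at the bends really would compound to $2l(n-1)$, so the content of the lemma is precisely that in a $\delta$--hyperbolic space they do not. You then invoke this unproved Step~1 in Step~2 to assert that the geodesic $[p,p_i]$ ``tracks $c$'' and hence that $(p,p_{i-1})_{p_i} > l+\delta$.

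The fix is to drop Step~1 entirely and make Step~2 an induction on $i$. Use the elementary identity $(p,p_i)_{p_{i-1}} + (p,p_{i-1})_{p_i} = d(p_{i-1},p_i) = |c_{i-1}|$. For $i=2$ one has $(p,p_3)_{p_2} = (p_1,p_3)_{p_2} \le l$ directly. Inductively, if $(p,p_i)_{p_{i-1}} \le l+\delta$ then $(p,p_{i-1})_{p_i} \ge |c_{i-1}| - (l+\delta) > l + 7\delta$, and your four-point step then gives $(p,p_{i+1})_{p_i} \le l+\delta$. A symmetric backward induction from $q$ gives $(q,p_{i-1})_{p_i} \le l+\delta$, and one more four-point application yields $(p,q)_{p_i} \le l+2\delta$, hence $d(p_i,\gamma) \le l+3\delta$ and the quadrilateral argument finishes as you describe.

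For comparison, the paper does not give a proof but refers to \cite[III.H.1.13]{bridhaef:book}. That argument is structurally different from yours: rather than an induction along the vertices, it takes the point $x$ on $c$ farthest from $\gamma$ and analyzes a short neighborhood of $x$ via a single thin-quadrilateral estimate. Your (repaired) induction is a perfectly good alternative and arguably tracks the constants more transparently, but it is not the argument the paper has in mind.
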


\subsection{Proof of Theorem \ref{combotheorem}}

We'll show the following assumptions on $\epsilon_A,\epsilon_B$ are sufficient.

\begin{align} \label{epsilons}
 \epsilon_B & > 50\delta \\
\label{epsilons2} \epsilon_A & >2 \max\left\{ f(\epsilon_B+2 \delta)+\delta +\epsilon_B \mbox{, }2\epsilon_B \right\}
\end{align}

Recall that given $S\subseteq Y$ connected, and $\pi\co Y'\to Y$ any cover, an \emph{elevation of $S$ to $Y'$} is a component of $\pi^{-1}(S)$.  We're particularly interested in elevations to the universal cover $\tilde{Y}$ of components of $\mc{A}$ and $\mc{B}$.
\begin{definition}
  Any elevation of a connected component of $\mc{A}$ to $\tilde{Y}$ is called an \emph{$\mc{A}$--elevation}.  Any geodesic in an $\mc{A}$--elevation is called an \emph{$\mc{A}$--arc}.  We define \emph{$\mc{B}$--elevations} and \emph{$\mc{B}$--arcs} similarly.
\end{definition}

\subsubsection{$\pi_1$--injectivity}
In this subsection we show that under the assumptions \eqref{epsilons} and \eqref{epsilons2},
the space $\Theta$ described in the statement of Theorem
\ref{combotheorem} is $\pi_1$--injective.  Notice that $\Theta$ is itself an NPC space, since it is obtained from NPC spaces by isometrically gluing together convex subspaces (see \cite[Proposition II.11.6]{bridhaef:book}).
It is worth remarking, however, that the inclusion $\Theta \into Y$ is (typically) not a local isometry, and in fact Theorem \ref{combotheorem} is obvious in the case that it is.

We argue by contradiction,
so suppose that $k\neq 1$ is in $K:=\ker(\pi_1\Theta\to \pi_1Y)$.  Let $\tilde{\Theta}$ be the universal cover of $\Theta$.  By possibly
moving basepoints, we can suppose $k$ is represented by a loop $\gamma$ whose
elevation to $\tilde{\Theta}$ is a biinfinite geodesic.  Since $k$ is in $K$, the loop $\gamma$ lifts to the universal cover $\tilde{Y}$ of $Y$.  
The loop is a concatenation of alternating $\mc{A}$--arcs and $\mc{B}$--arcs, for example
\begin{equation} \label{piecewise}
\gamma = a_1b_2\cdots a_{n-1}b_n,
\end{equation}
where each $a_i$ is an $\mc{A}$--arc, and each $b_i$ is a $\mc{B}$--arc.  We regard the expression \eqref{piecewise} as a ``cyclic word'' whose syllables are arcs.  To simplify notation below, the indices $1,\ldots,n$ should be taken to be elements of $\bZ/n\bZ$.

Since $\mc{A}$--elevations and $\mc{B}$--elevations intersect, there may be some choice in the expression \eqref{piecewise}; we always assume the expression is chosen so that the \emph{syllable length} $n$ is minimized.  
Note that $\gamma$ cannot consist of a single $\mc{A}$ or $\mc{B}$--arc, since otherwise some $\mc{A}$ or $\mc{B}$--elevation contains a nontrivial geodesic loop.
Moreover, by moving the basepoint, we can assume that the first arc is an $\mc{A}$--arc, and the last is a $\mc{B}$--arc, as in \eqref{piecewise}.  In other words the syllable length is even.
\begin{lemma}\label{lem:not2}
  The syllable length is not $2$.
\end{lemma}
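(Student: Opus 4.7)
The plan is to argue by contradiction: assume $\gamma = a_1 b_2$, so that its lift $\tilde{\gamma}$ in $\tilde{Y}$ is a bigon consisting of a single $\mc{A}$--arc $\tilde{a}_1$, contained in some $\mc{A}$--elevation $\tilde{A}$, followed by a single $\mc{B}$--arc $\tilde{b}_2$, contained in some $\mc{B}$--elevation $\tilde{B}$, with the two arcs sharing both endpoints $\tilde{p}$ and $\tilde{q}$. My goal is to show that in this situation $\gamma$ must already be null-homotopic in $\Theta$, contradicting the assumption that it represents the nontrivial element $k\in K = \ker(\pi_1\Theta\to\pi_1 Y)$.

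The main step would be to observe that both $\tilde{A}$ and $\tilde{B}$ are convex subspaces of the CAT$(0)$ space $\tilde{Y}$. Each component of $\mc{A}$ and $\mc{B}$ is a complete, locally convex subspace of the NPC space $Y$ (complete because $Y$ is compact and the component is closed), so each elevation is a complete, simply connected, locally convex subset of the CAT$(0)$ space $\tilde{Y}$, and hence convex. The uniqueness of geodesics in a CAT$(0)$ space then forces $\tilde{a}_1$ and the reverse of $\tilde{b}_2$ to be the \emph{same} geodesic from $\tilde{p}$ to $\tilde{q}$, so their common image lies in $\tilde{A}\cap\tilde{B}$.

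Projecting back to $Y$, the arcs $a_1\subseteq A_i$ and $b_2\subseteq B_j$ traverse the same path in opposite directions, with image contained in $A_i\cap B_j\subseteq \Theta$. Consequently $\gamma = a_1 b_2$ is freely homotopic to a constant loop through a straight-line homotopy along this common arc, supported entirely in $\Theta$. This contradicts $k\neq 1$ and completes the proof. I do not anticipate any real obstacle; the only point requiring mild care is verifying that the elevations of the locally convex subspaces $\mc{A},\mc{B}$ are genuinely convex in $\tilde{Y}$, which is a standard CAT$(0)$ fact.
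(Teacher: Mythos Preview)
Your argument is correct and is essentially the same as the paper's: both observe that a syllable-length-$2$ expression would give a geodesic bigon in the CAT$(0)$ space $\tilde{Y}$ with one side in a convex $\mc{A}$--elevation and the other in a convex $\mc{B}$--elevation, forcing the bigon to be degenerate. You spell out the final contradiction (that $\gamma$ is then null-homotopic in $\Theta$) more explicitly than the paper, which simply says this ``contradicts convexity,'' but the content is the same.
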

\begin{proof}
We have already noted that the syllable length is not $1$.  If the syllable length were $2$, we would have a geodesic bigon in $\tilde{Y}$, with one side in some $\mc{A}$--elevation and the other side in some $\mc{B}$--elevation.  This contradicts convexity of the $\mc{A}$ and $\mc{B}$--elevations.
\end{proof}

Since the syllable length is an even number bigger than $2$, there are
at least two $\mc{B}$--arcs (and the same number of $\mc{A}$--arcs) in
$\gamma$.  We describe a way to ``shortcut'' the $\mc{A}$--arcs and
obtain an $\epsilon_B$--local $(1,5\delta)$--quasigeodesic loop from
$\gamma$.  This will contradict Lemma \ref{broken}.

\begin{figure}[htb]
\labellist
\hair 2pt
 \pinlabel {$E_1$} [ ] at 20 228
 \pinlabel {$E_3$} [ ] at 325 256
 \pinlabel {$a_1$} [ ] at 28 173
 \pinlabel {$a_3$} [ ] at 324 228
 \pinlabel {$p_2$} [ ] at 100 225
 \pinlabel {$b_2$} [ ] at 160 235
 \pinlabel {$q_2$} [ ] at 220 250
 \pinlabel {$b_n$} [ ] at 142 50
 \pinlabel {$q_n$} [ ] at 94 85
 \pinlabel {$\alpha_1$} [ ] at 76 161
\endlabellist
\centering
\includegraphics[scale=1.0]{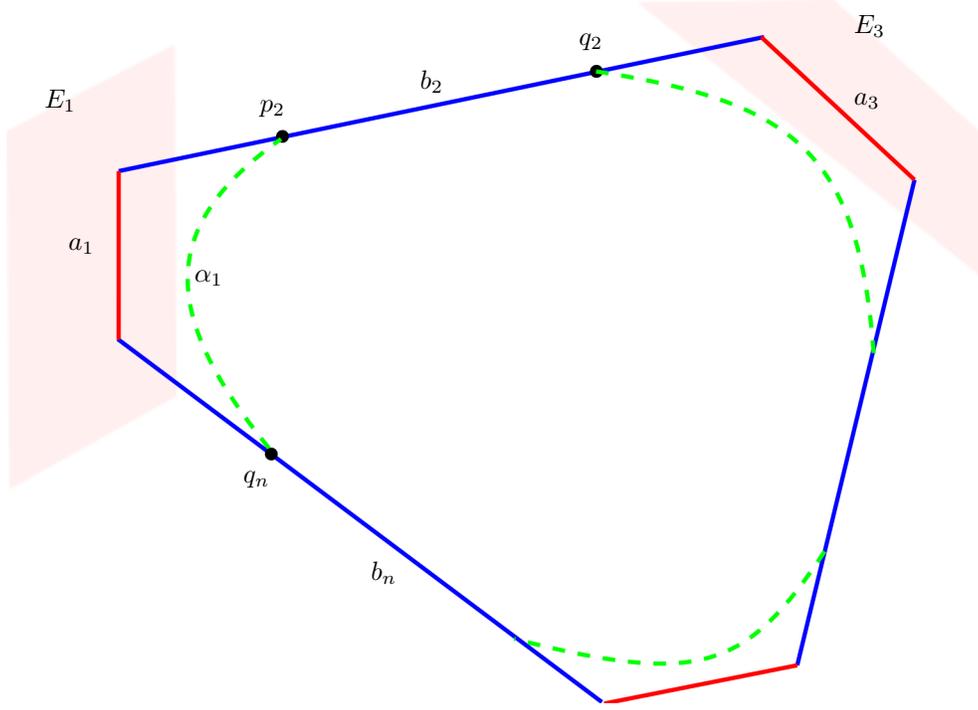}
\caption{Shortcutting around the $\mc{A}$--arcs.}
\label{fig:piecewise}
\end{figure}

Each $\mc{A}$--arc $a_j$ lies in some $\mc{A}$--elevation $E_j$.  Let $a_j$ and $a_{j+2}$ be two consecutive $\mc{A}$--syllables.  The intervening $\mc{B}$--syllable $b_{j+1}$ must leave $E_j$, or otherwise the expression \eqref{piecewise} could be shortened.  Since $E_j$ is convex it follows that $E_{j+2}\neq E_j$.  The $\epsilon_A$--embeddedness of $\mc{A}$ implies that $d(E_j,E_{j+2})\geq 2\epsilon_A$  (A similar argument shows that successive $\mc{B}$--syllables must lie in separate $\mc{B}$--elevations, though there is no useful lower bound on the distance between these elevations.)

Since the elevations $E_j$ and $E_{j+2}$ are distance at least $2\epsilon_A$ from one another, 
 there must be a point $p_{j+1}$ on $b_{j+1}$ satisfying $d(p_{j+1},E_j)=\epsilon_A/2$ and another point $q_{j+1}$ on $b_{j+1}$ satisfying
$d(q_{j+1},E_{j+2})=\epsilon_A/2$.  (See Figure \ref{fig:piecewise}.)
For each odd $j$ we choose a geodesic arc $\alpha_j$ in $\tilde{Y}$ from $q_{j-1}$ to $p_{j+1}$.  For even $j$, we let $\beta_j$ be the subarc of $b_j$ between $p_j$ and $q_j$.  Concatenating all these, we obtain a piecewise geodesic loop
\[ \sigma = \alpha_1\beta_2\cdots\alpha_{n-1}\beta_{n}. \]
Observe $|\beta_j|\geq \epsilon_A$ for each even $j$.

\begin{lemma}\label{alphabig}
 Let $j$ be odd.  Then $|\alpha_j| > 2\epsilon_B$.
\end{lemma}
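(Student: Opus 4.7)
The plan is to reduce the estimate on $|\alpha_j|$ to the observation that the endpoints of $\alpha_j$ lie on two distinct elevations of $\mc{B}$, to which $\epsilon_B$-embeddedness can then be applied.

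First I would verify that if $F_{j-1}$ is the $\mc{B}$-elevation containing $b_{j-1}$ and $F_{j+1}$ the $\mc{B}$-elevation containing $b_{j+1}$, then $F_{j-1}\neq F_{j+1}$. This is the parenthetical remark in the text just above the lemma, and it is the direct analog of the argument already carried out for successive $\mc{A}$-syllables. Concretely, assume for contradiction $F_{j-1}=F_{j+1}=F$. Let $x$ and $y$ be the endpoints of $a_j$, so $x\in E_j\cap F$ (shared endpoint of $b_{j-1}$ and $a_j$) and $y\in E_j\cap F$ (shared endpoint of $a_j$ and $b_{j+1}$). Since $F$ is convex in the CAT$(0)$ universal cover $\tilde Y$ (being an elevation of a locally convex subspace), the unique geodesic $a_j$ from $x$ to $y$ in $\tilde Y$ must actually lie in $F$. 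Consequently the concatenation $b_{j-1}a_jb_{j+1}$ is a geodesic subpath of $\gamma$ entirely contained in the single $\mc{B}$-elevation $F$, and so may be regarded as a single $\mc{B}$-arc. Replacing the three syllables $b_{j-1}, a_j, b_{j+1}$ by this single $\mc{B}$-arc produces a syllable decomposition of $\gamma$ of strictly shorter length, contradicting the minimality of $n$.

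Next I would invoke the $\epsilon_B$-embeddedness of $\mc{B}$. Since $F_{j-1}$ and $F_{j+1}$ are distinct elevations of (possibly equal, possibly different) components of $\mc{B}$, the definition of $R$-embeddedness (together with the disjointness of tubular neighborhoods across components) implies that the $\epsilon_B$-neighborhoods of $F_{j-1}$ and $F_{j+1}$ in $\tilde Y$ are disjoint, i.e.\ $d(F_{j-1},F_{j+1})>2\epsilon_B$.

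Finally, the geodesic $\alpha_j$ joins $q_{j-1}\in b_{j-1}\subseteq F_{j-1}$ to $p_{j+1}\in b_{j+1}\subseteq F_{j+1}$, so
\[
|\alpha_j|=d(q_{j-1},p_{j+1})\ \geq\ d(F_{j-1},F_{j+1})\ >\ 2\epsilon_B,
\]
as required. The only potentially subtle step is the first one; the other two are essentially unpackings of definitions. Note that we never used the Gromov product conditions or hyperbolicity of $\tilde Y$ here, only the CAT$(0)$ property (for uniqueness of geodesics and convexity of $\mc{B}$-elevations) and the separation given by $\epsilon_B$-embeddedness.
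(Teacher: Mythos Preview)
Your argument has a genuine gap in Step~2. Look carefully at hypothesis~(4) of the Combination Theorem: it says \emph{each $B_j\subset\mc{B}$ is $\epsilon_B$--embedded}, not that $\mc{B}$ itself is $\epsilon_B$--embedded. The ``disjoint tubular neighborhoods'' clause of the definition of $R$--embeddedness is part of what it means for a \emph{disconnected} set to be $R$--embedded, and that clause is invoked for $\mc{A}$ in hypothesis~(3) but deliberately \emph{not} for $\mc{B}$ in hypothesis~(4). So when $F_{j-1}$ and $F_{j+1}$ are elevations of two \emph{different} components of $\mc{B}$, nothing in the hypotheses gives you a lower bound of $2\epsilon_B$ on their distance. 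Indeed, the paper remarks parenthetically just above the lemma that ``there is no useful lower bound on the distance between these elevations.'' Your argument only goes through when $F_{j-1}$ and $F_{j+1}$ happen to be elevations of the same $B_j$; in general it fails. The only separation hypothesis available for distinct $\mc{B}$--elevations is $f$--separation, which bounds the \emph{diameter} of the overlap of neighborhoods, not the distance between the sets.

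The paper's proof therefore takes a different route that does use hyperbolicity. One supposes the midpoint $m$ of $\alpha_j$ is within $\epsilon_B$ of both endpoints and considers the thin triangle with apex $m$ and base $a_j\subset E_j$. The initial segment of length $(x,y)_m$ on $[m,x]$ lies simultaneously in an $(\epsilon_B+2\delta)$--neighborhood of $F_{j-1}$ and of $F_{j+1}$, so $f$--separation bounds its length by $f(\epsilon_B+2\delta)$. This forces $d(m,E_j)\le f(\epsilon_B+2\delta)+\delta$, whence $d(p_{j+1},E_j)\le f(\epsilon_B+2\delta)+\delta+\epsilon_B$; but $d(p_{j+1},E_j)=\epsilon_A/2$ by construction, contradicting the choice of $\epsilon_A$ in~\eqref{epsilons2}. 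So the proof really needs the interaction between $f$--separation, hyperbolicity, and the $\epsilon_A$ constraint---exactly the ingredients you said were unnecessary.
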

\begin{proof}
  As argued above, the arc $\alpha_j$ goes between points $q:=
  q_{j-1}$ and $p:= p_{j+1}$ in distinct $\mc{B}$--elevations
  $E_{j-1}$ and $E_{j+1}$.  
  Let $m$ be the midpoint of $\alpha_j$.  By way of contradiction, suppose that $d(m,p)=d(m,q)\leq \epsilon_B$.

\begin{figure}[htbp]
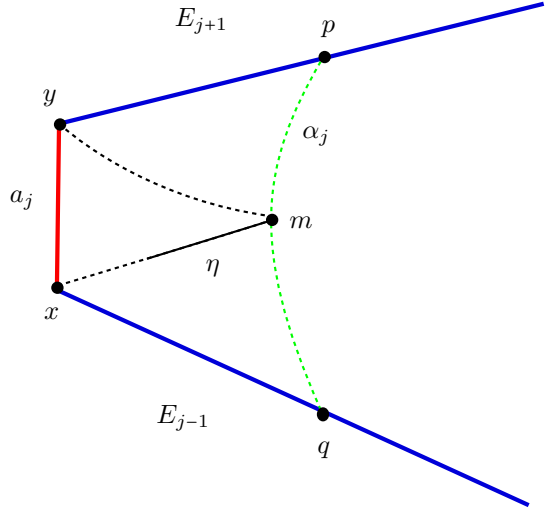
\caption{$\alpha$--arcs are long}
\label{fig:longgreen}
\end{figure}

  Consider a geodesic triangle with base $a_j$ and apex $m$ (see Figure \ref{fig:longgreen}).  Let $x\in E_j\cap E_{j-1}$ and $y\in E_j\cap E_{j+1}$ be the endpoints of $a_j$,
  and let $t = (x,y)_m$ be the Gromov product of the endpoints of $a_j$ with respect to $m$.  
  Let $\eta$ be the initial subsegment of $[m,x]$ of length $t$.  The geodesic segment $\eta$ lies in an $\epsilon_B+\delta$--neighborhood of $E_{j-1}$ and in an $\epsilon_B+2\delta$--neighborhood of $E_{j+1}$.
It follows that the length of $\eta$ is at most $f(\epsilon_B+2\delta)$.  Hence $d(m,E_j)\leq f(\epsilon_B+2\delta)+\delta$, and
\[ \epsilon_A/2 = d(p, E_j) \leq f(\epsilon_B+2\delta)+\delta + \epsilon_B, \]
a contradiction to \eqref{epsilons2}.
\end{proof}

So $\sigma$ is a piecewise geodesic, and each geodesic piece has
length at least $2\epsilon_B$.  We must now bound the Gromov products at the corners.

\begin{lemma}\label{smallcorners} 
  Let $j$ be even.  The Gromov product of $q_{j-2}$ and $q_j$ at $p_j$ is at most $\delta$.  The Gromov product of $p_j$ and $p_{j+2}$ at $q_j$ is at most $\delta$.
\end{lemma}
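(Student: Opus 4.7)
The plan is to exploit the fact that $p_j$ lies on the geodesic $b_j$ between its endpoint $y \in E_{j-1}$ (shared with $a_{j-1}$) and $q_j$, so $(y, q_j)_{p_j} = 0$. First I will apply the Gromov four-point condition at $p_j$ to $\{q_{j-2}, y, q_j\}$: among the three Gromov products $(q_{j-2}, q_j)_{p_j}$, $(q_{j-2}, y)_{p_j}$, and $(y, q_j)_{p_j} = 0$, the two smallest differ by at most $\delta$. Either $(q_{j-2}, q_j)_{p_j} \le \delta$ and we are done, or $(q_{j-2}, y)_{p_j} \le \delta$ and we must bound $(q_{j-2}, q_j)_{p_j}$ separately.

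To handle the residual case, I will assume for contradiction that $g := (q_{j-2}, q_j)_{p_j} > \delta$ and use $\delta$-thinness of the geodesic triangle with vertices $p_j, q_{j-2}, q_j$: setting $w = \alpha_{j-1}(g)$ and $w' = \beta_j(g)$ yields $d(w,w') \le \delta$. Both endpoints of $\alpha_{j-1}$ satisfy $d(\cdot, E_{j-1}) = \epsilon_A/2$, so by CAT$(0)$ convexity of the distance function to the convex set $E_{j-1}$, the entire arc $\alpha_{j-1}$ lies in $N_{\epsilon_A/2}(E_{j-1})$; hence $d(w', E_{j-1}) \le \epsilon_A/2 + \delta$. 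On the other hand, $w'$ sits on $b_j$ at arc length $L + g$ from $y$, where $L = d(y, p_j) \ge \epsilon_A/2$, and the convex function $t \mapsto d(b_j(t), E_{j-1})$ (equal to $0$ at $y$ and $\epsilon_A/2$ at $p_j$) has right derivative at least $\epsilon_A/(2L)$ past $p_j$, giving $d(w', E_{j-1}) \ge \epsilon_A/2 + g\epsilon_A/(2L)$. Comparing these two estimates yields $g \le 2L\delta/\epsilon_A$.

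The main obstacle is that this bound $g \le 2L\delta/\epsilon_A$ is not by itself $\le \delta$, since a priori $L$ need not be comparable to $\epsilon_A/2$. To close the gap I will choose $p_j$ as the \emph{first} point along $b_j$ attaining $d(\cdot, E_{j-1}) = \epsilon_A/2$, and invoke the $f$-separation of the two distinct $\mc{B}$-elevations $E_{j-2}$ (containing $q_{j-2}$) and $E_j$ (containing $b_j$): if the initial length-$g$ piece of $\alpha_{j-1}$ $\delta$-fellow-travels $b_j \subseteq E_j$, then this piece lies in $N_\delta(E_j) \cap N_{\epsilon_A/2+\delta}(E_{j-1})$, and combined with the fact that $\alpha_{j-1}$ approaches $E_{j-2}$ as it nears $q_{j-2}$, the resulting long fellow-traveling contradicts the $f$-separation bound once the constants are calibrated as in \eqref{epsilons}--\eqref{epsilons2}. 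The second half of the lemma, concerning $(p_j, p_{j+2})_{q_j}$, follows by the symmetric argument using the endpoint $y'$ of $b_j$ lying in $E_{j+1}$ in place of $y$.
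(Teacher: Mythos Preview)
Your opening move --- using that $y,p_j,q_j$ lie on the geodesic $b_j$ so that $(y,q_j)_{p_j}=0$, and then concluding from the hyperbolic inequality that at most one of $(y,q_{j-2})_{p_j}$, $(q_j,q_{j-2})_{p_j}$ can exceed $\delta$ --- is exactly the paper's reduction.  Where you diverge is in the treatment of the ``residual case'' $(y,q_{j-2})_{p_j}\le\delta$: the paper shows this case is \emph{impossible}, whereas you try to bound $(q_{j-2},q_j)_{p_j}$ directly under that hypothesis, and this is where your argument breaks.

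Your CAT$(0)$ convexity computation is fine as far as it goes: $\alpha_{j-1}\subseteq N_{\epsilon_A/2}(E_{j-1})$ and the lower bound on $d(w',E_{j-1})$ yield $g\le 2L\delta/\epsilon_A$.  But, as you yourself note, there is no control on $L=d(y,p_j)$.  Choosing $p_j$ to be the \emph{first} point along $b_j$ with $d(\cdot,E_{j-1})=\epsilon_A/2$ does not help: convexity of $t\mapsto d(b_j(t),E_{j-1})$ with value $0$ at $y$ and $\epsilon_A/2$ at $p_j$ gives no upper bound on $L$ whatsoever.  Your last-resort appeal to $f$--separation also does not work: the length--$g$ piece of $\alpha_{j-1}$ near $p_j$ lies in $N_\delta(E_j)$, while the piece near $q_{j-2}$ lies near $E_{j-2}$, but these are at \emph{opposite ends} of $\alpha_{j-1}$.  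There is no single segment shown to lie in $N_D(E_{j-2})\cap N_D(E_j)$, so the $f$--separation hypothesis gives nothing.

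The paper's fix is much simpler: just prove $(y,q_{j-2})_{p_j}>\delta$ directly.  Consider the thin quadrilateral on $\{p_j,q_{j-2},x,y\}$ where $[x,y]=a_{j-1}\subseteq E_{j-1}$ and $[x,q_{j-2}]\subseteq b_{j-2}\subseteq E_{j-2}$, $[y,p_j]\subseteq b_j\subseteq E_j$.  Comparing the Gromov products $(p_j,q_{j-2})_y$ and $(x,q_{j-2})_y$ at the corner $y$, one finds in either case a short path from $p_j$ to $E_{j-1}$ of length roughly $D+2\delta$ (plus, in one case, an extra term bounded by $f(\delta)$ via $f$--separation applied to a subsegment of the diagonal $[y,q_{j-2}]$ that fellow-travels both $E_j$ and $E_{j-2}$).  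Since $d(p_j,E_{j-1})=\epsilon_A/2$, this forces $D=(y,q_{j-2})_{p_j}>\delta$, and the residual case never arises.
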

\begin{proof}
  We prove the first assertion.  The proof of the second assertion is identical up to shuffling labels.  

  We use similar notation to that in Lemma \ref{alphabig}: $p = p_j$, $q = q_{j-2}$, and $q'=q_j$.  Let $x$ be the endpoint of $a_{j-1}$ meeting $b_{j-2}$, and let $y$ be the endpoint meeting $b_j$ (See Figure \ref{fig:smallcorners}).  
\begin{figure}[htbp]
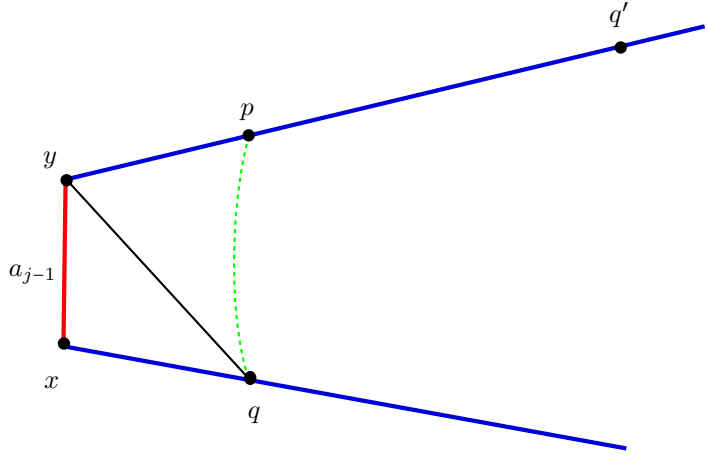
\caption{Showing Gromov products are small.}
\label{fig:smallcorners}
\end{figure}

Because the points $\{y,p,q'\}$ lie on a geodesic, at most one of the quantities $(y,q)_p$, $(q',q)_p$ is greater than $\delta$.  Indeed, consider the two $\delta$--thin triangles with vertex sets $\{p,q,y\}$ and $\{p,q,q'\}$.  Letting $M=\min\{(y,q)_p,(q',q)_p\}$, we see that there are points $p_1\in [p,y]$ and $p_2\in[p,q']$ with $d(p_i,p)=M$ for each $i$, but so that $d(p_1,p_2)\leq2 \delta$.  Since $\{y,p,q'\}$ lie on a geodesic, we have $d(p_1,p_2)=2M$, so $M\leq \delta$.  

We'll show that in fact $(y,q)_p > \delta$, from which we deduce $(q,q')_p\leq \delta$ as required.
Consider the pair of triangles with vertex sets $\{p,q,y\}$ and $\{x,y,q\}$.  There are two cases, depending on the relative sizes of the Gromov products at $y$.
Let $D = (y,q)_p$ be the quantity we are trying to bound from below.
\begin{figure}[htb]
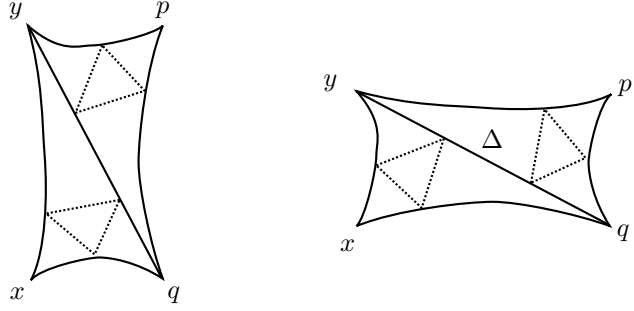
\caption{Two possibilities for the quadrilateral with vertices $(y,p,q,x)$.  Case 1 is on the left, Case 2 on the right.  The sides of the internal dashed triangles have length at most $\delta$.}
\label{fig:twocases}
\end{figure}

\begin{case} 
  $(p,q)_y\leq (x,q)_y$
\end{case}
  In this case (see the left-hand side of Figure \ref{fig:twocases}), there is a path from $p$ to a point on $a_{j-1} \subseteq E_{j-1}$ of length $D+2\delta$.  Since $d(p,E_{j-1}) = \epsilon_A/2$, this implies $D\geq \epsilon_A/2 - 2\delta >\delta$, as required.
\begin{case}
  $(p,q)_y>(x,q)_y$  
\end{case}

  (See the right-hand side of Figure \ref{fig:twocases}.)  Let $\Delta = (p,q)_y-(x,q)_y$ be the difference in the Gromov products at $y$, and note that there is a path from $p$ to a point on $a_j \subseteq E_{j-1}$ of length $D+2\delta+\Delta$.  Moreover, there is a length $\Delta$ subsegment of $[y,q]$ which is distance $\leq \delta$ from both $[y,p]\subseteq E_j$ and $[x,q]\subseteq E_{j-2}$.  But this implies that $\Delta\leq f(\delta)$.
Thus
\[ \epsilon_A/2\leq D+2\delta + \Delta \leq D + 2\delta + f(\delta), \]
  which implies $D \geq \epsilon_A/2 - (2\delta + f(\delta)) > \delta$ as required. 

\end{proof}

We now have that $\sigma$ is a broken geodesic with Gromov products in
the corners of at most $\delta$, and made of pieces of length at
least $2\epsilon_B$.  Since $\sigma$ is a loop, applying Lemma
\ref{broken} with $l=\delta$ we see that $\sigma$ must lie a Hausdorff distance at
most $6\delta$ from any point of $\sigma$.  Since $\sigma$ contains a
geodesic subsegment of length at least $2\epsilon_B > 6\delta$, this
is a contradiction. \qed  

\subsubsection{Quasiconvexity}
The argument for quasiconvexity involves similar ideas.  Let $p$ and $q$ be points in the elevation $\tilde{\Theta}$ to $\tilde{Y}$.  These are connected by a (unique) $\tilde{\Theta}$--geodesic $\gamma$ which can be written as a concatenation of $\mc{A}$--arcs and $\mc{B}$--arcs
\begin{equation}
  \gamma = s_1\cdots s_n
\end{equation}
where $n$ is minimal subject to the constraint that each $s_j$ is a geodesic arc either in a single $\mc{A}$--elevation or a single $\mc{B}$--elevation.  We call $n$ the \emph{syllable length} of $\gamma$.  

Consider the first interior $\mc{B}$ syllable of $\gamma$, which will be $s_2$ or $s_3$, and the last interior $\mc{B}$ syllable which is $s_{n-2}$ or $s_{n-1}$. 
Then we may write $\gamma = \alpha \theta \omega$, where $\alpha$ and $\omega$ consist of at most 2 syllables, and $\theta$ has at most $n-2$ syllables which begin and end with $\mc{B}$ syllables. Let $p'$ and $q'$ be the initial and terminal points of $\theta$. 
As in the proof of $\pi_1$--injectivity, we modify $\theta$ to a path $\sigma$ avoiding the $\mc{A}$--syllables.  This path will not be in $\tilde{\Theta}$, but will be a quasigeodesic which is a controlled distance away from $\tilde{\Theta}$, and we then apply quasigeodesic stability to find that $[p',q']$ is not far from $\sigma$.

Let $[p',q']$ be the geodesic joining $p'$ and $q'$ in $\tilde{Y}$. If we can show that $[p',q']$ is uniformly bounded distance from $\tilde{\Theta}$, then it follows that the geodesic $[p,q]\subset \tilde{Y}$ will be a bounded distance from $\tilde{\Theta}$, since the polygonal arc $\alpha [p',q'] \omega$ is made of at most 5 segments, so that $[p,q]\subset \mathcal{N}_{3\delta}(\alpha [p',q'] \omega)$, and therefore a bounded distance from $\tilde{\Theta}$.  

Let $J = \{j_0,j_0+2,\ldots,j_{\mathrm{max}}-2,j_{\mathrm{max}}\}$ be the set of indices of $\mc{B}$ syllables $s_j$ of $\theta$.  Thus $j_0\in \{2,3\}$, and $j_{\mathrm{max}}\in \{n-2,n-1\}$.  For each $j\in J$, the $\mc{B}$--arc $s_j$ travels between distinct $\mc{A}$--elevations $E_{j-1}$ and $E_{j+1}$, which are distance at least $2\epsilon_A$ away from one another.  There are therefore points $p_j$ and $q_j$ on $s_j$ so that $d(p_j,E_{j-1})=\epsilon_A/2$ and $d(q_j,E_{j-1})= \epsilon_A/2$, and the subsegment $[p_j,q_j]$ has length at least $\epsilon_A$.

We now let $\sigma$ be a broken geodesic with ordered vertices $(p',q_{j_0},\ldots,p_{j_\mathrm{max}},q')$. (See Figure \ref{fig:quasigeodesic}.)

\begin{figure}[htb]
\labellist
\small\hair 2pt
 \pinlabel {$p$} [ ] at 6 7
 \pinlabel {$p'$} [ ] at 27 50
 \pinlabel {$q_3$} [ ] at 72 69
 \pinlabel {$p_5$} [ ] at 156 89
 \pinlabel {$q_5$} [ ] at 190 91
 \pinlabel {$p_7$} [ ] at 275 74
 \pinlabel {$q'$} [ ] at 306 64
 \pinlabel {$q$} [ ] at 346 6
 \pinlabel {$[p,q]$} at 184 10
 \pinlabel {$[p',q']$} at 160  60
\endlabellist
\centering
\includegraphics[scale=1.0]{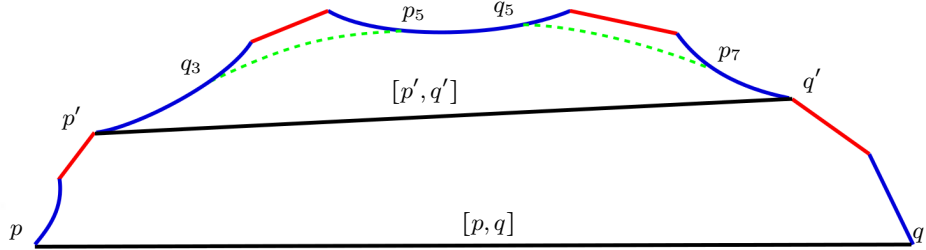}
\caption{Modifying a subpath of $\gamma$ to a quasigeodesic $\sigma$ between $p'$ and $q'$.  Note that the blue segments at the beginning and end of $\gamma$ may be absent, possibly changing the numbering.}
\label{fig:quasigeodesic}
\end{figure}

\begin{lemma}\label{sigmaclosetogamma}
  The path $\sigma$ is contained in a $2\delta$--neighborhood of $\gamma$ (and hence of $\tilde{\Theta}$).
\end{lemma}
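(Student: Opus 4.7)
The plan is to split the broken geodesic $\sigma$ into two types of segments and bound each by an easy $\delta$--hyperbolic argument. Recall that $\sigma$ has ordered vertices $p', q_{j_0}, p_{j_0+2}, q_{j_0+2}, \ldots, p_{j_{\max}}, q'$. The segments $[p', q_{j_0}]$, the $[p_j, q_j]$ for intermediate indices $j \in J$, and $[p_{j_{\max}}, q']$ are all subsegments of $\mc{B}$--arcs of $\gamma$, so they lie inside $\gamma$ and nothing needs to be said about them. The remaining segments of $\sigma$ are the shortcut geodesics $[q_j, p_{j+2}]$ in $\tilde{Y}$ (for consecutive $j, j+2 \in J$) that leap across the intervening $\mc{A}$--arc $s_{j+1}$, and these are the ones requiring work.

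For each such shortcut, I would let $x$ denote the common endpoint of $s_j$ and $s_{j+1}$ and $y$ the common endpoint of $s_{j+1}$ and $s_{j+2}$, and form the geodesic quadrilateral $Q$ in $\tilde{Y}$ with ordered vertices $q_j, x, y, p_{j+2}$. Local convexity of $\mc{A}$ and $\mc{B}$ in $Y$ implies that their elevations to $\tilde{Y}$ are convex, so the three sides $[q_j, x] \subseteq s_j$, $[x, y] = s_{j+1}$, and $[y, p_{j+2}] \subseteq s_{j+2}$ of $Q$ are honest $\tilde{Y}$--geodesics (as well as subarcs of $\gamma$); the fourth side is the shortcut $[q_j, p_{j+2}]$ itself.

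To finish I would invoke the standard fact that a geodesic quadrilateral in a $\delta$--hyperbolic space is $2\delta$--thin: inserting a diagonal decomposes $Q$ into two $\delta$--thin triangles, and a short side-chase then shows that each side is contained in the $2\delta$--neighborhood of the union of the other three. Applied to $Q$, this gives
\[ [q_j, p_{j+2}] \;\subseteq\; N_{2\delta}\!\left([q_j, x] \cup [x, y] \cup [y, p_{j+2}]\right) \;\subseteq\; N_{2\delta}(\gamma). \]
Combined with the trivial case of the $\gamma$--subsegments, this proves $\sigma \subseteq N_{2\delta}(\gamma) \subseteq N_{2\delta}(\tilde\Theta)$.

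I do not expect a genuine obstacle: the preceding setup (the careful choice of the $p_j, q_j$ and the verification that consecutive $\mc{A}$--elevations $E_{j-1}, E_{j+1}$ are widely separated) has already done all of the real geometric work, and the lemma itself reduces to the off-the-shelf $2\delta$--thinness of geodesic quadrilaterals in a $\delta$--hyperbolic space.
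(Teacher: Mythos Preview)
Your proposal is correct and is essentially the same argument as the paper's, just spelled out in more detail. The paper's proof says exactly this: $\sigma$ consists of subarcs of $\gamma$ together with shortcut geodesics, each of which forms one side of a geodesic triangle or quadrilateral whose remaining sides lie in $\gamma$, so $\delta$--hyperbolicity (via $2\delta$--thinness of quadrilaterals) gives the bound immediately.
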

\begin{proof}
See Figure \ref{fig:quasigeodesic}.  The path $\sigma$ is built from two kinds of sub-arcs.  The first are paths already in $\gamma$.  The second, pictured as dotted green arcs in the Figure, are geodesics.  These geodesics naturally lie in geodesic triangles or quadrilaterals, the remaining edges of which are in $\gamma$.  Using $\delta$-hyperbolicity, the result is immediate.
\end{proof}

By the same argument used to prove Lemma \ref{smallcorners}, the path $\sigma$ has Gromov products in the corners bounded by $\delta$. 

We may apply Lemma \ref{broken} with $l=\delta$ to see that
$\sigma$ has Hausdorff distance at most $6\delta$ from $[p',q']$. 
Lemma \ref{sigmaclosetogamma} then implies that
$[p',q']$ lies in an $8\delta$--neighborhood of $\tilde{\Theta}$. Then $[p,q]$ lies in an $11\delta$ neighborhood of $\tilde{\Theta}$
as observed above, since it lies in a $3\delta$ neighborhood of $\alpha [p',q']\omega$, which is a polygonal arc made of at most
5 geodesic segments, all of which but $[p',q']$ lie in $\tilde{\Theta}$.

We conclude that
$\tilde{\Theta}$ is $11\delta$--quasiconvex. \qed

\subsection{Proof of Proposition \ref{noaccidents}}
As in the proof of Theorem \ref{combotheorem}, we assume 
\begin{align*}
 \epsilon_B & > 50\delta \mbox{, and }\\
 \epsilon_A & >2 \max\left\{ f(\epsilon_B+2 \delta)+\delta +\epsilon_B \mbox{, }2\epsilon_B \right\}.
\end{align*}

We argue by contradiction, supposing that there is an essential
$\Theta$--loop $\gamma$ which is freely homotopic in $Y$ to an essential loop
$\eta$ in some component $B$ of $\mc{B}$, but that no positive power of $\gamma$ is homotopic in $\Theta$ to a loop in $\mc{B}$.
Adjusting the loops by a homotopy, we may assume that $\eta$ is geodesic and that $\gamma$ is $\Theta$--geodesic, and can be written as a sequence of $\mc{A}$--syllables $a_i$ and $\mc{B}$--syllables $b_i$; for example
\[ \gamma = a_1 b_2\cdots a_{n-1} b_n, \]
where $n$ is the syllable length of $\gamma$, and the expression is
chosen to minimize this length in the $\Theta$--homotopy class of
$\gamma$.
\begin{lemma}
  The syllable length of $\gamma$ is not $1$.
\end{lemma}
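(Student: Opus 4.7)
The plan is to argue by contradiction and split into the two possible types of single syllable: either $\gamma$ is a single $\mc{A}$--syllable or a single $\mc{B}$--syllable.

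The $\mc{B}$--syllable case I would dispatch immediately: if $\gamma$ lies in a single component of $\mc{B}$, then $\gamma$ itself (viewed as $\gamma^1$) is already a loop in $\mc{B}$ contained in $\Theta$, so it is trivially homotopic in $\Theta$ to a loop in $\mc{B}$. This directly contradicts the standing hypothesis that no positive power of $\gamma$ has this property.

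The $\mc{A}$--syllable case is where the hypothesis of Proposition~\ref{noaccidents}---that $\mc{A}$ has no accidental $\mc{B}$--loops---actually enters. Suppose $\gamma$ lies in a single component $A$ of $\mc{A}$. I would first observe that since $A\subseteq\Theta$, any null-homotopy of $\gamma$ in $A$ would extend to one in $\Theta$, so essentiality of $\gamma$ in $\Theta$ promotes to essentiality in $A$, and hence in $\mc{A}$. Together with the given free homotopy in $Y$ between $\gamma$ and the geodesic loop $\eta\subseteq\mc{B}$, this verifies condition~\eqref{inB} of Definition~\ref{def:accident} with $\xi=\gamma$ and $\Xi=\mc{A}$. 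The no-accidental-loops hypothesis then forces condition~\eqref{notinXi} to fail: some positive power $\gamma^k$ is homotopic in $\mc{A}$ to a geodesic loop in $\mc{B}$. Because $\mc{A}\subseteq\Theta$, the same homotopy exhibits $\gamma^k$ as homotopic in $\Theta$ to a loop in $\mc{B}$, once again contradicting the standing hypothesis. I do not expect a substantial obstacle; the argument is a formal unpacking of the definitions, and the only point worth care is the essentiality promotion from $\Theta$ down to the single $\mc{A}$--component $A$, which must be made before invoking the no-accidental-loops hypothesis.
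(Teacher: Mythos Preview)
Your proof is correct and follows the same approach as the paper's. The paper's own argument is a two-sentence sketch---it simply says the $\mc{A}$--case is ruled out by the no-accidental-$\mc{B}$--loops hypothesis and the $\mc{B}$--case by the standing assumption---whereas you have carefully unpacked why each case yields a contradiction, including the observation that essentiality of $\gamma$ in $\Theta$ promotes to essentiality in the component $A$ before invoking Definition~\ref{def:accident}.
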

\begin{proof}
  If the syllable length of $\gamma$ is $1$, then there are two possibilities, $\gamma\subseteq \mc{A}$ or $\gamma\subseteq\mc{B}$.  The first case is ruled out by the hypothesis that $\mc{A}$ has no accidental $\mc{B}$--loops, the second by assumption that $\gamma$ is not homotopic in $\Theta$ to a loop in $\mc{B}$.
\end{proof}

We suppose therefore that the syllable length of $\gamma$ is greater than $1$, so there is at least one $\mc{B}$--syllable, and at least one $\mc{A}$--syllable.  We can lift the homotopy between $\gamma$ and $\eta$ to the universal cover $\tilde{Y}$, obtaining elevations $\tilde{\gamma}$ and $\tilde{\eta}$ which fellow-travel one another (see Figure \ref{fig:strip}).
\begin{figure}[htb]
\labellist
\small\hair 2pt
 \pinlabel {$\tilde{\eta}$} [ ] at 209 117
 \pinlabel {$\tilde{\gamma}$} [ ] at 300 150
\endlabellist
\centering
\includegraphics[scale=1.0]{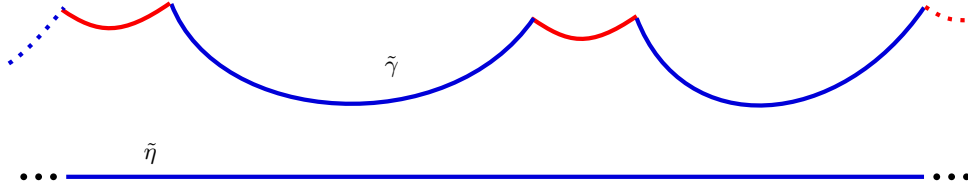}
\caption{Fellow-travelling lifts of $\eta$ and $\gamma$.  $\mc{A}$--arcs are red; $\mc{B}$--arcs are blue.}
\label{fig:strip}
\end{figure}

\begin{lemma}\label{successivedistinct}
  Successive $\mc{B}$--arcs of $\tilde{\gamma}$ are contained in distinct $\mc{B}$--elevations.
\end{lemma}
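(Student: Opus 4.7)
I would argue by contradiction: suppose that two consecutive $\mc{B}$--arcs of $\tilde{\gamma}$, say $b_j$ and $b_{j+2}$, both lie in a common $\mc{B}$--elevation $E$ of $\tilde{Y}$. Then the intervening $\mc{A}$--arc $a_{j+1}$ has both endpoints in $E$. Since $E$ is the elevation to the CAT(0) universal cover $\tilde{Y}$ of a connected locally convex subspace of the NPC space $Y$, a standard Cartan--Hadamard argument shows that $E$ is globally convex in $\tilde{Y}$. The $\tilde{Y}$--geodesic $a_{j+1}$ between two points of $E$ is therefore entirely contained in $E$.

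Hence the entire concatenation $b_j\cdot a_{j+1}\cdot b_{j+2}$ lies inside $E$. The key step is to show this concatenation is itself a single $\tilde{Y}$--geodesic, that is, a single $\mc{B}$--arc. Let $B=\pi(E)$ be the image of $E$ under the universal covering map $\pi\co\tilde{Y}\to Y$; then $B$ is a component of $\mc{B}$, sitting inside $\Theta$, and the restriction $\pi|_E\co E\to B$ is a local isometry of NPC spaces. Projecting $b_j\cdot a_{j+1}\cdot b_{j+2}$ downstairs, we obtain a subpath of $\gamma$ of the same total length lying in $B\subseteq\Theta$. Since $\gamma$ is a $\Theta$--geodesic, any subpath is length-minimizing in $\Theta$, and in particular length-minimizing within the locally convex subspace $B$. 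Lifting this minimality statement back to $E$, the concatenation $b_j\cdot a_{j+1}\cdot b_{j+2}$ is the $E$--geodesic between its endpoints, which by the convexity of $E$ in $\tilde{Y}$ coincides with the $\tilde{Y}$--geodesic between those endpoints.

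We may therefore re-parse $\tilde{\gamma}$ by replacing the three consecutive syllables $b_j$, $a_{j+1}$, $b_{j+2}$ with the single merged $\mc{B}$--arc in $E$ joining their extreme endpoints. The resulting decomposition still alternates between $\mc{A}$-- and $\mc{B}$--arcs (the flanking syllables $a_{j-1}$ and $a_{j+3}$, which are present since $\tilde{\gamma}$ is biinfinite, are $\mc{A}$--arcs), and has syllable length $n-2$. This contradicts the minimality of the syllable length of $\gamma$ in its $\Theta$--homotopy class, completing the proof. The only mildly delicate technical points are the global convexity of $\mc{B}$--elevations in $\tilde{Y}$ and the fact that $\pi|_E$ is a local isometry, and both are standard consequences of $Y$ being NPC with $B$ locally convex; I don't anticipate any serious obstacle beyond these.
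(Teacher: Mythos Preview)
Your proposal is correct and follows the same approach as the paper: assume two successive $\mc{B}$--arcs lie in a common elevation $E$, use convexity of $E$ in $\tilde{Y}$ to conclude the intervening $\mc{A}$--arc also lies in $E$, and then contradict minimality of the syllable length. The paper's version is terser --- it simply says that once $b_j\,a_{j+1}\,b_{j+2}$ all lie in $E$, the loop $\gamma$ is homotopic in $\Theta$ to a loop with one fewer $\mc{A}$--syllable --- while you go further and argue that the merged path is itself a $\tilde{Y}$--geodesic (so that the same $\gamma$ can be reparsed).

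One small imprecision to flag: you write ``since $\gamma$ is a $\Theta$--geodesic, any subpath is length-minimizing in $\Theta$'', but $\gamma$ is a \emph{closed} geodesic, and a subarc longer than half the loop need not be globally length-minimizing downstairs. The fix is easy: work in the universal cover $\tilde{\Theta}$ (where the lift of $\gamma$ is a biinfinite geodesic and every finite subpath \emph{is} length-minimizing), or simply note that the concatenation is a local geodesic in the CAT(0) space $E$, hence a global geodesic there. Alternatively, you can sidestep the issue entirely as the paper does, by homotoping within $B\subseteq\Theta$ to replace the three syllables by the $E$--geodesic between their extreme endpoints, without insisting this already equals the original path.
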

\begin{proof}
  Suppose that two successive $\mc{B}$--arcs are contained in the same $\mc{B}$--elevation $E$.  Since the elevation is convex, the intervening $\mc{A}$--arc is also contained in $E$.  It follows that $\gamma$ is homotopic in $\Theta$ to a loop with one fewer $\mc{A}$--syllable, contradicting the minimality of the syllable length.
\end{proof}

We can perform the same ``shortcutting'' operation as in the proof of Theorem \ref{combotheorem}, modifying $\tilde{\gamma}$ to a piecewise geodesic $\sigma$ which contains long ($\geq\epsilon_A$) subsegments of the $\mc{B}$--arcs of $\gamma$, and which has Gromov products bounded by $\delta$.
Using Lemma \ref{broken} and $\delta$--hyperbolicity we can see that the broken geodesic $\sigma$ lies within Hausdorff distance $8\delta$ of $\tilde{\eta}$.  
The broken geodesic $\sigma$ contains a subsegment of each $\mc{B}$--arc of $\tilde{\gamma}$ of length at least $\epsilon_A$.  The geodesic $\tilde{\eta}$ is contained in a single $\mc{B}$--elevation.
By Lemma \ref{successivedistinct}, not all the $\mc{B}$--arcs of $\tilde{\gamma}$ are contained in the same $\mc{B}$--elevation as $\tilde{\eta}$.  
We deduce
$ \epsilon_A \leq f(8\delta) $, which contradicts the hypothesis on $\epsilon_A$.
This completes the proof of Proposition \ref{noaccidents}.\qed

\section{Fully $\mc{P}$-elliptic hierarchies} \label{s:vhierarchy}

In this section, we prove Theorem \ref{thm:virtualhierarchy}.  First recall the statement.

\begin{reptheorem}{thm:virtualhierarchy}
  Given $(G,\mc{P})$ relatively hyperbolic, with $G$ hyperbolic and virtually special, there is some $(G',\mc{P}')\dotnorm (G,\mc{P})$ which has a malnormal quasiconvex fully $\mc{P'}$-elliptic hierarchy terminating in $\mc{P}'$.
\end{reptheorem}

We describe a fairly general procedure for generating
relative hierarchies \emph{of spaces} from cube complex pairs.  Then
we talk about how to ensure such a hierarchy is faithful,
quasiconvex, and malnormal, by passing to carefully chosen finite covers.

\subsection{Augmented cube complexes}
We first establish some terminology useful for dealing with cube complex pairs.

Let $X$ be a NPC cube complex, and $\mc{Z} = \bigsqcup_{i=1}^n Z_i$ a union of connected NPC cube complexes which admits a locally isometric immersion $\Phi = \bigsqcup_{i=1}^n\phi_i\co \mc{Z}\to X$.  We'll call $(X,\mc{Z})$ a \emph{cube complex pair}.
Then the mapping cylinder of $\Phi$,
\begin{equation} 
C_\Phi = \sfrac{\left.X\sqcup \left(\mc{Z}\times [0,1]\right)\right.}{\{ (z,1)\sim \Phi(z) \}}
\end{equation}
naturally has the structure of an NPC cube complex.  
Call such a complex the \emph{augmented cube complex} based on the pair $(X,\mc{Z})$.

There are canonical inclusions $X\hookrightarrow C_\Phi$ and
$\mc{Z}\hookrightarrow C_\Phi$ (this second via $z\mapsto (z,0)$).
The subset $X\subseteq C_\Phi$ is a deformation retract of $C_\Phi$.
The components $Z_i$ of $\mc{Z}\subseteq C_\Phi$ are referred to
as \emph{peripheral subcomplexes}.  The hyperplanes $Z_i\times
\{\frac{1}{2}\}$ are called \emph{peripheral hyperplanes}, and all
other hyperplanes of $C_\Phi$ are called 
\emph{non-peripheral hyperplanes}.

Any cover $\tilde{X}\to X$ also gives rise to a cover $C_{\tilde{\Phi}}\to
C_\Phi$, where $\tilde\Phi\co \tilde{\mc{Z}}\to \tilde{X}$ is made up of
all the elevations of the maps $\phi_i$.  

\begin{definition}[Augmented Hyperplanes]
  Let $\Phi\co \mc{Z}\to X$ be a locally isometric immersion of NPC
  cube complexes, and let $C = C_{\Phi}$ be the augmented complex.
  Let $W$ be a non-peripheral hyperplane of $C$.  The \emph{augmented
    hyperplane} $A(W)$ is the component of $W\cup \mc{Z}$ containing
  $W$.
\end{definition}
In other words the augmented hyperplane $A(W)$ is the union of $W$
with any components of $\mc{Z}$ which meet $W$.  In general, augmented
hyperplanes are not particularly well behaved.  They are not convex
subsets of the augmented complex, are not locally separating, and may
not be $\pi_1$--injective.  We will nonetheless be able to find
situations in which they give a faithful hierarchy.

\subsection{Double-dot hierarchy}\label{doubledothierarchy}
In this subsection we start with a NPC cube complex $X$, together with
some collection of subcomplexes $\mc{Z}$, and construct a particular
hierarchy on a space homotopic to a ``generalized double cover'' of
$X$.  The stages of the hierarchy (though not its terminal spaces) depend on an ordering of the hyperplanes of $X$.
In the rest of the paper, the hierarchy is referred to as the
\emph{double dot hierarchy for the pair $(X,\mc{Z})$}.

\begin{definition}\label{ddotcover} (cf. \cite[Construction 9.1]{Wise}) 
  Let $X$ be any cube complex.  Any hyperplane $W\subset X$ gives rise
  to a map from $i_W\co \pi_1X\to \bZ/2$, measuring the mod-$2$
  intersection between a loop and the hyperplane.  Let $\mc{W}$ be the set of embedded, $2$--sided, nonseparating hyperplanes in $X$.
  The
  \emph{double-dot cover} $\ddot{X}\to X$ is the cover corresponding
  to the kernel of the map 
  \[h_\mc{W} = \oplus i_W\co \pi_1X\to \bigoplus_{W\in\mc{W}}\bZ/2.\]
\end{definition}

Given any augmented complex $C_\Phi$, we now describe a ``relative''
hierarchy of spaces structure on the double-dot cover using
the augmented hyperplanes of $C_\Phi$.  An example of an augmented cube complex is shown in Figure \ref{fig:doubledot1}, together with its double-dot cover.  Parts of the double-dot hierarchy for this complex are shown in Figures \ref{fig:hierarchy1} and \ref{fig:hierarchy2}.

\begin{figure}[htb]
\centering
\includegraphics[width=\textwidth]{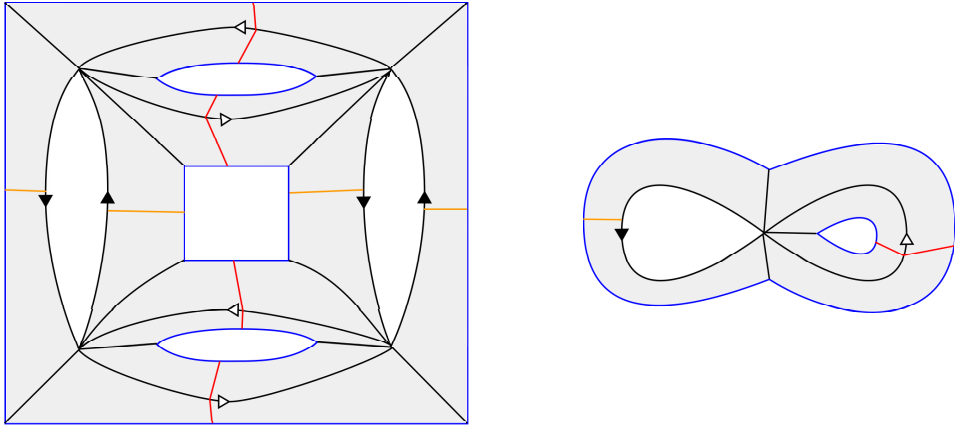}
\caption{An example of an augmented complex (at right) and its double-dot cover.  $X$ is a figure-eight, and $\mc{Z}$ consists of two circles.  There are two nonseparating hyperplanes, red and orange.}
\label{fig:doubledot1}
\end{figure}

Let $C = C_\Phi$ be an augmented cube complex, where $\Phi\co
\mc{Z}\to X$ is a locally isometric immersion of cube complexes.  Let
$\mc{W}$ be the set of embedded, $2$--sided, nonseparating hyperplanes
of $C$.  
Choose an order $(W_1,\ldots,W_n)$ for these
hyperplanes.  
For $i\in \{1,\ldots,n\}$, let $A_i$ be the augmented hyperplane $A(W_i)$.

Let $c\co \ddot{C}\to C$ be the double-dot cover, and let
$\ddot{\mc{Z}}= c^{-1}(\mc{Z})$.  Fix a basepoint $\tilde{p}\in
\ddot{C}$ so that $p = c(\tilde{p})$ lies in the complement of
$\bigcup\mc{W}$.  
The complementary components of
$c^{-1}\left(\bigcup\mc{W}\right)$ are labeled by elements of
$\bigoplus_{\mc{W}}(\bZ/2)$.  
For  $\mathbf{t}\in
\bigoplus_{\mc{W}}(\bZ/2)$, let $V_\mathbf{t}$ be the closure of the
union of components labeled by $\mathbf{t}$.  Note that $V_\mathbf{t}$ need not be connected.
The terminal vertex spaces of our
hierarchy are components of $V_\mathbf{t}\cup\ddot{\mc{Z}}$, where $\mathbf{t}$ ranges over $\bigoplus_{\mc{W}}(\bZ/2)$.

The order on $\mc{W}$ determines a hierarchy of spaces $\mc{H}$ as follows:
For each $i$, let $\mc{W}_i = \{W_1,\ldots,W_i\}$, and let $M_i = \bigoplus_{\mc{W}_i}\bZ/2$.  The complementary components of $\cup\mc{W}_i$ are labeled by elements of $M_i$.
For each $\mathbf{t}\in M_i$, let $K_\mathbf{t}$ be the closure of the part labeled $\mathbf{t}$.
We define the \emph{$\mathbf{t}$--vertex spaces} to be those components of $K_\mathbf{t}\cup \ddot{\mc{Z}}$ which intersect $K_\mathbf{t}$.  The vertex spaces at level $i$ are the $\mathbf{t}$--vertex spaces, for $\mathbf{t}$ ranging over $M_i$.  

The edge spaces at level $i$ are (some of the) components of pairwise intersections of vertex spaces.  We now describe these intersections.

\begin{figure}[htb]
\centering
\includegraphics[width=\textwidth]{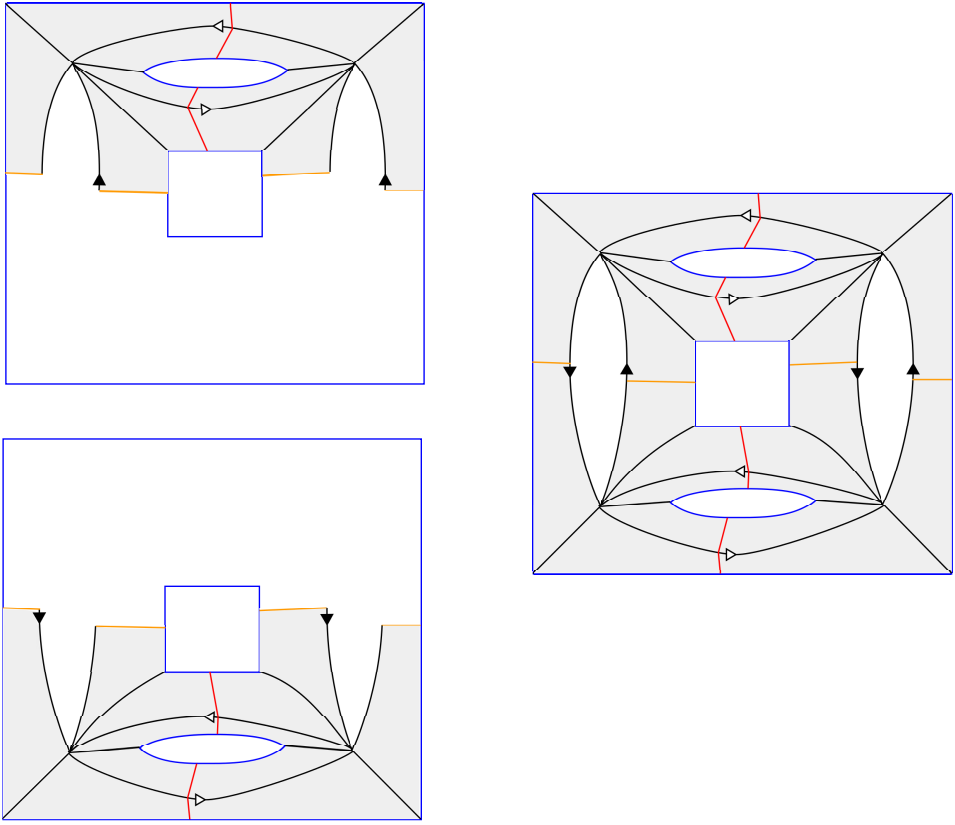}
\caption{Level $1$ of a double-dot hierarchy, cutting along the elevated orange augmented hyperplane.  There are two edge spaces, each consisting of two elevations of the orange hyperplane together with an elevation of a blue peripheral circle.}
\label{fig:hierarchy1}
\end{figure}
\begin{figure}[htb]
\centering
\includegraphics[width=\textwidth]{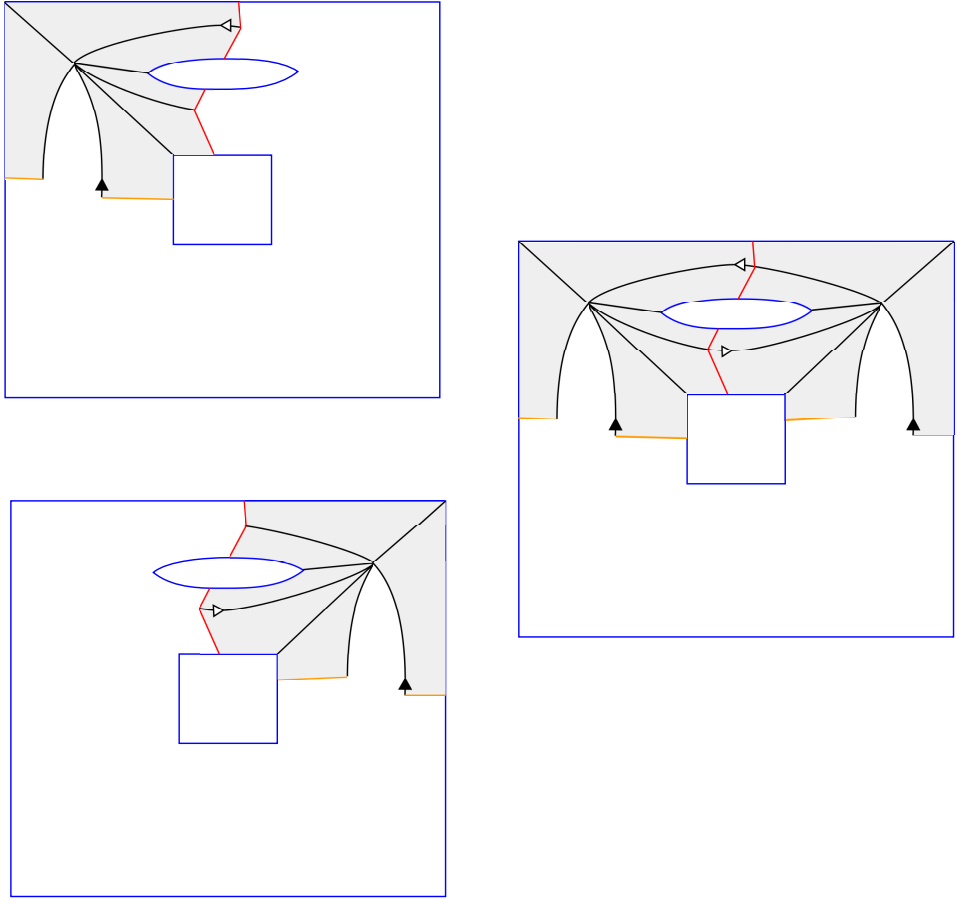}
\caption{Level $2$ of a double-dot hierarchy, cutting one of the vertex spaces along the elevated red augmented hyperplane.  There is a single edge space.}
\label{fig:hierarchy2}
\end{figure}

\begin{definition}
  Let $A$ be the closure of a component of $c^{-1}(W_i)\setminus \bigcup_{j<i} c^{-1}(W_j)$ in $\ddot{C}$.  We call $A$ a \emph{partly-cut-up elevation of $W_i$}.
\end{definition}

The following lemma is immediate from the construction.
\begin{lemma}
  Any two vertex spaces at level $i$ are either disjoint or intersect
  in a union of peripheral complexes and disjoint partly-cut-up
  elevations of $W_i$.
\end{lemma}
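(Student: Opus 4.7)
The plan is to expand the intersection $V\cap V'$ set-theoretically and match the pieces to the terms in the claim. Suppose $V$ is a connected component of $K_{\mathbf t}\cup\ddot{\mc Z}$ meeting $K_{\mathbf t}$, and $V'$ is a component of $K_{\mathbf t'}\cup\ddot{\mc Z}$ meeting $K_{\mathbf t'}$, for some $\mathbf t,\mathbf t'\in M_i$. First I would use distributivity to write
\[
V\cap V'\;\subseteq\;(K_{\mathbf t}\cup\ddot{\mc Z})\cap(K_{\mathbf t'}\cup\ddot{\mc Z})\;=\;(K_{\mathbf t}\cap K_{\mathbf t'})\cup\ddot{\mc Z}.
\]
The portion of $V\cap V'$ landing in $\ddot{\mc Z}$ is a union of connected components of $\ddot{\mc Z}$, i.e.\ a union of peripheral subcomplexes; this accounts for the ``peripheral complexes'' piece of the conclusion.

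Next I would analyze $K_{\mathbf t}\cap K_{\mathbf t'}$. Both sets are closures of unions of connected components of $\ddot C\setminus c^{-1}(\mc W_i)$, so their intersection sits inside $c^{-1}(\mc W_i)=\bigcup_{j\le i}c^{-1}(W_j)$. If $\mathbf t=\mathbf t'$, distinct components of the single set $K_{\mathbf t}\cup\ddot{\mc Z}$ are disjoint, and the claim follows from the peripheral part alone. Otherwise, $V$ and $V'$ appear in the same graph-of-spaces structure at level $i$ --- namely, the one obtained from a common level $(i-1)$ parent $U$ (a component of $K_{\mathbf s}\cup\ddot{\mc Z}$, $\mathbf s\in M_{i-1}$) by cutting along $c^{-1}(W_i)\cap U$. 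Then $\mathbf t$ and $\mathbf t'$ agree on coordinates $1,\dots,i-1$ and differ only at coordinate $i$. Because crossing $c^{-1}(W_j)$ at a generic point flips only the $j$-th coordinate of the label, a point of $K_{\mathbf t}\cap K_{\mathbf t'}$ lying in the interior of a single $c^{-1}(W_j)$ forces $j=i$; so $K_{\mathbf t}\cap K_{\mathbf t'}$ is contained in the portion of $c^{-1}(W_i)$ inside the parent region $K_{\mathbf s}$. By definition this is a union of partly-cut-up elevations of $W_i$, and these elevations are pairwise disjoint since they are the distinct components of $c^{-1}(W_i)\setminus\bigcup_{j<i}c^{-1}(W_j)$.

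The main obstacle --- such as it is, since the author flags the lemma as ``immediate from the construction'' --- lies not in any deep calculation but in reading ``any two vertex spaces at level $i$'' in the correct way. The statement concerns two vertex spaces appearing in a common graph-of-spaces structure at level $i$, each inherited from a fixed level $(i-1)$ parent via the $W_i$-cut; this is what forces the label indices $\mathbf t,\mathbf t'\in M_i$ to agree on their first $i-1$ coordinates and restricts the relevant hyperplane to $W_i$. With that reading fixed, the above set-theoretic expansion together with the one-step case analysis yields the description of $V\cap V'$ as a union of peripheral subcomplexes and disjoint partly-cut-up elevations of $W_i$.
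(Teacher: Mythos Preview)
Your proposal is correct and matches the paper's approach; the paper declares the lemma ``immediate from the construction'' and gives no further argument, so your set-theoretic expansion is exactly the unpacking intended. Your observation that the statement must be read as concerning two vertex spaces sharing a common level-$(i-1)$ parent (so that $\mathbf t,\mathbf t'\in M_i$ differ only in the $i$-th coordinate) is both correct and necessary: without it, $K_{\mathbf t}\cap K_{\mathbf t'}$ could land in $c^{-1}(W_j)$ for $j<i$ rather than in partly-cut-up elevations of $W_i$, and indeed the paper only ever uses the lemma in the same-parent situation described immediately afterward.
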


We now describe the graph of spaces $(\Gamma,\mc{X})$ at level $i$, associated to a
vertex space $V$ at level $i-1$.  The space $V$ is a $\mb{t}$--vertex
space, for some $\mb{t}\in M_{i-1}$.  Under the canonical projection
$M_i\to M_{i-1}$, the fiber above $\mb{t}$ consists of two elements
$\{\mb{t}^+,\mb{t}^-\}$, so $V$ is a union of $\mb{t}^+$--vertex
spaces $\mc{V}^+=\{V_1^+,\ldots,V_p^+\}$ and $\mb{t}^-$--vertex spaces $\mc{V}^- = \{V_1^-,\ldots V_m^-\}$.  These form the vertex spaces of
the graph of spaces associated to $V$.  No two $\mb{t}^+$--vertex
spaces intersect nontrivially, and neither do any $\mb{t}^-$--vertex
spaces.  Thus the incidence graph of the set of vertex spaces is bipartite.  We modify the incidence graph slightly to a graph $\Gamma$ by repeating
edges representing multiple components of intersection.  Thus the unoriented edges of $\Gamma$ are in bijective correspondence with the components of 
\[ \left(\bigcup \mc{V}^+\right)\cap \left(\bigcup\mc{V}^-\right).\]
The edge-space-to-vertex-space maps are the inclusion maps, and the homotopy equivalence
$ R(\Gamma,\mc{X}) \to V $
is given in the obvious way, by inclusion on the vertex spaces, and
projection of the mapping cylinders to their images.

\subsection{Terminal spaces}
In general, the hierarchy described in Section \ref{doubledothierarchy} above may
not be faithful, and even if it is, may not be quasiconvex, malnormal,
or have reasonable terminal spaces.  However, if every hyperplane of $X$ is embedded, nonseparating and two-sided, the terminal spaces are particularly nice:
\begin{lemma}\label{lem:terminalspaces}
Suppose that the elements of $\mc{Z}$ are embedded and locally isometrically embedded in $X$.  Let $C$ be the augmented cube complex coming from the pair
  $(X,\mc{Z})$, and let $\ddot{C}$ be the double-dot cover.  Let
  $\ddot{\mc{Z}}$ be the union of the peripheral subcomplexes of
  $\ddot{C}$.  Suppose that every hyperplane of $X$ is embedded, $2$--sided, and nonseparating.  

  Let $Y$ be a terminal space of the double-dot hierarchy.  Then $Y$ has a graph of spaces structure $(\Gamma,\mc{Y})$, where 
  \begin{enumerate}
  \item $\Gamma$ is bipartite, with red and black vertices;
  \item if $v$ is a black vertex, $\mc{Y}(v)$ is contractible;
  \item if $v$ is a red vertex, $\mc{Y}(v)$ is a component of $\ddot{\mc{Z}}$; and
  \item every edge space is contractible.
  \end{enumerate}
\end{lemma}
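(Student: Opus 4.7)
The plan is to read off the graph of spaces structure directly from the cubical structure of $C$. The key observation is that under the hypothesis that every hyperplane of $X$ is embedded, $2$--sided, and nonseparating, these properties pass (via the mapping cylinder construction) to every non-peripheral hyperplane of $C$, so $\mc{W}$ comprises precisely the non-peripheral hyperplanes.

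First I would identify the components of $C\smallsetminus\bigcup\mc{W}$. As in any cube complex, cutting along all hyperplanes of $X$ decomposes $X$ into (closed) corner-stars of vertices, each of which is contractible. Correspondingly, in $C$ each complementary component is a \emph{vertex-corner}: for a vertex $v$ of $X$,
\[ V_v \;=\; \mathrm{star}_X(v) \;\cup\; \bigsqcup_{z\in \Phi^{-1}(v)} \bigl(\mathrm{star}_{\mc{Z}}(z)\times [0,1]\bigr), \]
glued at the $\{1\}$--ends via $\Phi$. As a mapping cylinder onto a contractible target, $V_v$ is contractible, and covers of contractible spaces are contractible, so every component of $V_\mathbf{t}$ is contractible in $\ddot{C}$ as well.

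Next I would set up the graph of spaces structure on a terminal space $Y$, a component of $V_\mathbf{t}\cup\ddot{\mc{Z}}$. The black vertex spaces are the components of $V_\mathbf{t}$ contained in $Y$ (contractible by the previous step), and the red vertex spaces are the components of $\ddot{\mc{Z}}$ contained in $Y$. Because distinct components of $V_\mathbf{t}$ (resp.\ of $\ddot{\mc{Z}}$) are disjoint, every nontrivial intersection is black--red and $\Gamma$ is bipartite, with edges in bijection with the connected components of such intersections. The main computation is to identify these intersection components: for a lifted vertex-corner $\tilde{V}_v\subset\ddot{C}$ and a red component $Z\subseteq\ddot{\mc{Z}}$, the intersection $\tilde{V}_v\cap Z$ is the disjoint union of the corner-stars $\mathrm{star}_Z(\tilde{z})$, as $\tilde{z}$ ranges over lifts to $Z$ of the preimages of $v$ under $\Phi$; each such corner-star is contractible. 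Since all edge-to-vertex maps are inclusions, the realization $R(\Gamma,\mc{Y})$ is canonically identified with $Y$.

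The principal technical subtlety lies in the first step: confirming that the mapping cylinder construction preserves the hypotheses on hyperplanes (so every non-peripheral hyperplane lies in $\mc{W}$) and that the combinatorial picture of vertex-corners in $\ddot{C}$ really decomposes into a disjoint union of contractible pieces. Once this bookkeeping with the cube complex structure of $C_\Phi$ is completed, the remaining statements follow directly from the definitions.
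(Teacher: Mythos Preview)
Your proposal is correct and follows essentially the same idea as the paper. Both arguments begin by observing that the hypotheses force $\mc{W}$ to comprise \emph{all} non-peripheral hyperplanes of $C$, so that the pieces $V_{\mathbf t}$ are built from vertex-corners (cubical polyhedra) of $X$ together with attached cylinder pieces, each of which is contractible. The only cosmetic difference is where the bipartite cut is made: the paper obtains $(\Gamma,\mc{Y})$ by cutting $Y$ along the (remaining parts of the) \emph{peripheral} hyperplanes $Z_i\times\{\tfrac12\}$, so that its edge spaces are cubical polyhedra sitting inside those hyperplanes, whereas you read off the graph of spaces from the overlap $V_{\mathbf t}\cap\ddot{\mc Z}$ at level $\{0\}$, so that your edge spaces are corner-stars inside $\ddot{\mc Z}$ itself. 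These two decompositions are related by sliding along the $[0,1]$ factor and yield homotopy-equivalent graphs of spaces with the same underlying bipartite graph; the paper's version has the minor advantage that the vertex spaces are genuinely disjoint, while yours has the advantage that the red vertex spaces are literally components of $\ddot{\mc Z}$ rather than merely homotopy equivalent to them.
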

\begin{proof}
The hypotheses imply that the set of hyperplanes $\mc{W}$ from Definition \ref{ddotcover} is the entire collection of non-peripheral hyperplanes of $C$.  

After cutting along each of the $W_i$ in turn (for varying $i$), what remains are:  parts of $X$ cut along all non-peripheral hyperplanes, joined (via parts of the cylinders in the trivial mapping cylinder of the augmentation) to components of $\ddot{\mc{Z}}$.  The graph of spaces decomposition of $Y$ is obtained by cutting along the (remaining parts of the) peripheral hyperplanes.  Since we have now cut along \emph{all} the hyperplanes of $\ddot{C}$, the remaining parts which do not intersect the peripheral sub-complexes are cubical polyhedra (in the language of \cite{VH}) and are contractible.  The other parts are homotopy equivalent to components of $\ddot{\mc{Z}}$.  Finally, the edge spaces are cubical polyhedra in the peripheral hyperplanes, and hence are also contractible.
\end{proof}

\begin{corollary}\label{cor:terminalgroups}
  With the same assumptions as Lemma \ref{lem:terminalspaces}, the fundamental group of a terminal space is a free product $(\ast_{i=1}^p G_i)\ast F$ where $F$ is finitely generated free, and each $G_i$ is the fundamental group of some component of $\ddot{\mc{Z}}$.
\end{corollary}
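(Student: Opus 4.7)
The plan is simply to apply Lemma \ref{lem:terminalspaces} and then read off the fundamental group from the bipartite graph of spaces structure it provides.

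Given a terminal space $Y$, Lemma \ref{lem:terminalspaces} gives a graph of spaces structure $(\Gamma,\mc{Y})$ whose underlying graph $\Gamma$ is bipartite, whose black vertex spaces are contractible, whose red vertex spaces are components of $\ddot{\mc{Z}}$, and all of whose edge spaces are contractible. Passing to the associated graph of groups via Remark \ref{remark:graphsandspaces}, the first observation is that because every edge space is contractible, every edge group is trivial. A standard application of van Kampen's theorem (or of Bass--Serre theory) to a graph of groups with trivial edge groups yields
\[ \pi_1(Y) \;\cong\; \left(\ast_{v\in V(\Gamma)}\pi_1(\mc{Y}(v))\right) \ast F_{r}, \]
where $r$ is the first Betti number of the underlying graph $\Gamma$.

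Next I would read off the vertex contributions using the bipartite structure. Black vertices contribute trivial factors since their vertex spaces are contractible. Red vertices contribute precisely the fundamental groups $G_i$ of those components of $\ddot{\mc{Z}}$ which appear in $Y$; if there are $p$ such components, this gives the factor $\ast_{i=1}^p G_i$. Because $X$ is compact, so is $C$; the set $\mc{W}$ of nonseparating hyperplanes is finite, hence $\ddot{C}\to C$ is a finite cover, $\ddot{C}$ is compact, and each terminal space $Y$ has a finite graph of spaces decomposition. In particular $\Gamma$ is a finite graph, so $F := F_r$ is finitely generated, completing the identification $\pi_1(Y) \cong (\ast_{i=1}^p G_i)\ast F$.

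There is no real obstacle here; this is a direct computation from the previous lemma. The only small subtlety is book-keeping: ensuring that each component of $\ddot{\mc{Z}}$ that appears genuinely does so as a red vertex space (rather than being absorbed into some larger contractible piece), which is guaranteed by item (3) of Lemma \ref{lem:terminalspaces}, and noting that $p$ counts only those components of $\ddot{\mc{Z}}$ meeting the particular $V_{\mathbf{t}}$ underlying $Y$, not all components globally.
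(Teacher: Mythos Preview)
Your argument is correct and is exactly the intended one: the paper states the corollary without proof precisely because it follows immediately from the graph of spaces structure of Lemma~\ref{lem:terminalspaces} via van Kampen's theorem, with trivial edge groups forcing a free product decomposition and the contractible black vertex spaces contributing nothing. Your remark on compactness to ensure $F$ is finitely generated is the only point needing comment, and you handle it correctly.
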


\subsection{Finding a faithful hierarchy}
In order to turn the topological statements of the last subsection
into group theoretic statements, we must find conditions under which
the double-dot hierarchy is faithful.  This uses the combination
theorem from Section \ref{sec:combination}.

\subsubsection{Lemmas about $R$--embedded subsets}
The following lemma is straightforward.
\begin{lemma}\label{lem:coverstillembedded}
  Suppose that $A$ is $R$--embedded in $B$.  Let $\pi\co \tilde{B}\to B$ be a finite cover, and let $\tilde{A} = \pi^{-1}(A)$.  Then $\tilde{A}$ is $R$--embedded in $\tilde{B}$.
\end{lemma}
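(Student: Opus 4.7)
The plan is to work in the universal cover $\hat{B}$ of $B$, which is also the universal cover of $\tilde{B}$ (since $\tilde{B}\to B$ is a finite cover, hence $\pi_1(\tilde{B})$ sits as a finite-index subgroup of $\pi_1(B)$). The key observation is that the preimage of $A$ in $\hat{B}$ coincides with the preimage of $\tilde{A}$ in $\hat{B}$, since $\tilde{A}=\pi^{-1}(A)$ and both preimages equal the preimage of $A$ under the composition $\hat{B}\to\tilde{B}\to B$.

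First I would reformulate the $R$-embedded condition in terms of the universal cover. For a connected component $A_0\subseteq A$, choose a lift $\hat{A}_0\subseteq\hat{B}$; then its stabilizer in $\pi_1(B)$ is conjugate to the image of $\pi_1(A_0)$. The cover $B^{A_0}$ is the quotient $\hat{B}/\mathrm{Stab}(\hat{A}_0)$, the canonical lift $L(A_0)$ is the image of $\hat{A}_0$, and $\tilde{N}_R(L(A_0))$ is the image of $N_R(\hat{A}_0)$. Injectivity of $\pi_{A_0}$ on $\tilde{N}_R(L(A_0))$ translates to: for every $g\in\pi_1(B)$, $g N_R(\hat{A}_0)\cap N_R(\hat{A}_0)\neq\emptyset$ implies $g\in\mathrm{Stab}(\hat{A}_0)$. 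Combined with the disjoint-tubular-neighborhood condition across components, the full statement "$A$ is $R$-embedded in $B$" becomes: any two distinct components of the preimage of $A$ in $\hat{B}$ have disjoint $R$-neighborhoods.

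Now apply the same reformulation for $\tilde{A}$ in $\tilde{B}$ (with $\hat{B}$ playing the role of universal cover and $\pi_1(\tilde{B})$ the deck group). The condition "$\tilde{A}$ is $R$-embedded in $\tilde{B}$" is equivalent to: any two distinct components of the preimage of $\tilde{A}$ in $\hat{B}$ have disjoint $R$-neighborhoods in $\hat{B}$. But this preimage is, set-theoretically, exactly the preimage of $A$ in $\hat{B}$, and the components and their $R$-neighborhoods are literally the same subsets. So the condition for $A$ in $B$ implies (in fact is logically stronger than, since it quantifies over all of $\pi_1(B)$ rather than only over $\pi_1(\tilde{B})$) the condition for $\tilde{A}$ in $\tilde{B}$.

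The only point requiring care is the bookkeeping between components: a single component of $A$ can lift to several distinct components of $\tilde{A}$, and these could a priori be confused with different components of $A$. However at the level of $\hat{B}$ this disappears, because the collection of components of the preimage depends only on the set $A=\pi(\tilde{A})$, not on how we organize them as components upstairs or downstairs. With that observation the proof reduces to the universal-cover reformulation described above, which is what makes the lemma routine.
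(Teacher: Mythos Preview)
Your argument is correct. The paper omits the proof entirely, calling the lemma ``straightforward,'' so there is nothing to compare against; your universal-cover reformulation (that $A$ is $R$--embedded in $B$ exactly when distinct components of the preimage of $A$ in the universal cover $\hat{B}$ have disjoint $R$--neighborhoods) is a clean way to make the statement evident, and in fact shows the result holds for arbitrary (not just finite) covers and is an if-and-only-if.
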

We'll need to know that certain intersections of $R$--embedded sets are $R$--embedded (Lemma \ref{lem:embeddedintersection}).  First we need to understand the fundamental groups of those intersections.
\begin{lemma}\label{lem:pi1intersection}
  Let $A$ and $B$ be locally convex subsets of the NPC space $C$.  Let $I$ be a component of $A\cap B$, and let $p\in I$.  Then $\pi_1(I,p)=\pi_1(A,p)\cap \pi_1(B,p)$.
\end{lemma}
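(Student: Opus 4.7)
The plan is to work in the universal cover $\pi \co \tilde C \to C$, exploiting the standard fact that locally convex subspaces of an NPC space have convex (hence simply connected) components in $\tilde C$. Fix a lift $\tilde p$ of $p$, and let $\tilde A$, $\tilde B$ denote the components of $\pi^{-1}(A)$ and $\pi^{-1}(B)$ that contain $\tilde p$. Each is convex in the CAT$(0)$ space $\tilde C$, so the restriction $\pi|_{\tilde A}$ is the universal cover of $A$; under this identification, $\pi_1(A,p)$ is exactly the subgroup of $\pi_1(C,p)$ stabilising $\tilde A$, and similarly for $B$. This gives both $\pi_1$-injectivity of the inclusions $A, B \hookrightarrow C$ and a convenient description of $\pi_1(A,p) \cap \pi_1(B,p)$ as the $\pi_1(C,p)$-stabiliser of $\tilde A \cap \tilde B$.

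Set $\tilde I := \tilde A \cap \tilde B$. As the intersection of two convex subsets of a CAT$(0)$ space, $\tilde I$ is itself convex, in particular connected, so its image $\pi(\tilde I)$ is a connected subset of $A \cap B$ containing $p$ and hence contained in the component $I$. The containment $\pi_1(I,p) \subseteq \pi_1(A,p) \cap \pi_1(B,p)$ is immediate. For the reverse containment, let $\gamma \in \pi_1(A,p) \cap \pi_1(B,p)$. Then $\gamma$ preserves both $\tilde A$ and $\tilde B$, hence $\gamma \cdot \tilde p \in \tilde I$. The geodesic in $\tilde C$ from $\tilde p$ to $\gamma \cdot \tilde p$ lies in $\tilde I$ by convexity, and projects to a loop based at $p$ lying entirely in $I$ and representing $\gamma$ in $\pi_1(C,p)$. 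Thus $\gamma$ lies in the image of $\pi_1(I,p) \to \pi_1(C,p)$.

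To interpret this as an equality of subgroups of $\pi_1(C,p)$, I would verify that $\pi_1(I,p) \to \pi_1(C,p)$ is injective. For this, observe first that $\pi(\tilde I) = I$: any $q \in I$ is joined to $p$ by a path in $I \subseteq A \cap B$, and its unique lift to $\tilde C$ starting at $\tilde p$ must agree both with its lift to $\tilde A$ (which is the universal cover of $A$) and with its lift to $\tilde B$, so it stays in $\tilde I$. Since $\tilde I$ is convex (and therefore simply connected), $\pi|_{\tilde I}\co \tilde I \to I$ is a covering map with simply connected total space, i.e.\ the universal cover of $I$. Hence $\pi_1(I,p)$ is identified with the stabiliser of $\tilde I$, which is literally a subgroup of $\pi_1(C,p)$ and equals $\pi_1(A,p) \cap \pi_1(B,p)$ by the stabiliser description above.

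The main obstacle is not any single deep step but rather the bookkeeping: one must invoke the NPC lifting principle for locally convex subspaces at exactly the right places and carefully match up deck-transformation stabilisers with images of $\pi_1$. Once these standard facts are in place, the geodesic-in-$\tilde I$ argument does essentially all the work.
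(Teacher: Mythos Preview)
Your proof is correct and uses essentially the same geometric input as the paper's: uniqueness of geodesics in the CAT$(0)$ universal cover forces a loop representing $\gamma\in\pi_1(A,p)\cap\pi_1(B,p)$ to lie in $A\cap B$. The paper compresses this into two lines by representing $\gamma$ as based geodesic loops $\gamma_A\subseteq A$ and $\gamma_B\subseteq B$ and observing that a discrepancy would lift to a nondegenerate geodesic bigon in $\tilde C$, so $\gamma_A=\gamma_B\subseteq I$; your version unpacks the same idea through stabilisers of the convex lifts $\tilde A,\tilde B$ and the geodesic from $\tilde p$ to $\gamma\cdot\tilde p$, and is more explicit about why $\pi_1(I,p)\to\pi_1(C,p)$ is injective.
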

\begin{proof}
  If $\sigma\in \left(\pi_1(A,p)\cap \pi_1(B,p)\right)\setminus \pi_1(I,p)$, then represent $\sigma$ as a based geodesic loop $\gamma_A$ in $A$ and as another such loop $\gamma_B$ in $B$.  Unless they coincide, the homotopy between them lifts to a nondegenerate geodesic bigon in the CAT$(0)$ universal cover of $C$.  Therefore $\gamma_A = \gamma_B \subseteq A\cap B$.
\end{proof}
\begin{lemma}\label{lem:embeddedintersection}
  Let $R>0$, and suppose $A$ and $B$ are $R$--embedded locally convex subsets of the NPC space $C$.  Then $A\cap B$ is $R$--embedded.
\end{lemma}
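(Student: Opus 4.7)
The plan is to pass to the CAT(0) universal cover $\tilde{C}$ of $C$, where elevations of locally convex subsets become globally convex. Because intersections of convex subsets of a CAT(0) space are convex, each connected component of $\pi^{-1}(A\cap B)\subseteq\tilde{C}$ will turn out to be exactly one intersection $\tilde{A}_\alpha\cap\tilde{B}_\beta$ of an elevation of $A$ with an elevation of $B$. From this picture, $R$-embeddedness of $A\cap B$ will drop out of a case analysis on the index pairs.

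First I would translate the condition. Suppose $S\subseteq C$ is locally convex. Then $S$ is $R$-embedded in $C$ if and only if (i) each component of $S$ is $\pi_1$-injective in $C$, and (ii) any two distinct elevations $\tilde{S},\tilde{S}'$ of components of $S$ in $\tilde{C}$ satisfy $d_{\tilde{C}}(\tilde{S},\tilde{S}')>2R$. The forward direction follows by observing that injectivity of $\pi_S\co C^S\to C$ on $\tilde{N}_R(L_S(S))$ prohibits any $\pi_1(C)$-translate of $N_R(\tilde{S}_0)$ by an element outside the stabilizer $\pi_1(S)$ of $\tilde{S}_0$ from meeting $N_R(\tilde{S}_0)$; the disjoint tubular neighborhoods clause gives the analogous statement for elevations of different components. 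The reverse direction is the same argument run backwards.

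Next I would identify the components of $\pi^{-1}(A\cap B)$ in $\tilde{C}$. Each elevation $\tilde{A}_\alpha$ of a component of $A$ is convex in $\tilde{C}$, and likewise for each $\tilde{B}_\beta$. Distinct elevations $\tilde{A}_\alpha,\tilde{A}_{\alpha'}$ are disjoint (being distinct components of $\pi^{-1}(A)$), and similarly for the $\tilde{B}_\beta$. Hence each nonempty $\tilde{A}_\alpha\cap\tilde{B}_\beta$ is convex (and in particular connected), and any connected component of $\pi^{-1}(A\cap B)$ must lie in a single $\tilde{A}_\alpha$ and a single $\tilde{B}_\beta$, so it is exactly such an intersection. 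For the $\pi_1$-injectivity clause: at any basepoint $p$ in a component $I$ of $A\cap B$, Lemma~\ref{lem:pi1intersection} gives $\pi_1(I,p)=\pi_1(A,p)\cap\pi_1(B,p)$, realized as the stabilizer in $\pi_1(C)$ of the corresponding convex elevation $\tilde{A}_\alpha\cap\tilde{B}_\beta$.

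Finally, given two distinct components $\tilde{A}_\alpha\cap\tilde{B}_\beta$ and $\tilde{A}_{\alpha'}\cap\tilde{B}_{\beta'}$ of $\pi^{-1}(A\cap B)$, either $\alpha\ne\alpha'$ or $\beta\ne\beta'$. In the first case, $R$-embeddedness of $A$ in $C$ gives $d_{\tilde{C}}(\tilde{A}_\alpha,\tilde{A}_{\alpha'})>2R$ and hence $d_{\tilde{C}}(\tilde{A}_\alpha\cap\tilde{B}_\beta,\tilde{A}_{\alpha'}\cap\tilde{B}_{\beta'})>2R$; the second case is symmetric using $B$. Applying the reformulation of the first step in reverse, this yields that $A\cap B$ is $R$-embedded. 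The hard part will be the setup of the universal cover dictionary, simultaneously accounting for distinct elevations of one component (the classical $R$-embedding condition) and elevations of distinct components (the disjoint tubular neighborhoods clause); once this dictionary is in place, the case analysis on $(\alpha,\beta)$ is immediate.
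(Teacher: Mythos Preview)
Your argument is correct and, in fact, a bit more streamlined than the paper's. Both proofs rest on Lemma~\ref{lem:pi1intersection} and on the convexity of elevations in the CAT$(0)$ universal cover, but the organization differs. The paper splits the work into two halves: first it shows each component $I$ of $A\cap B$ is $R$--embedded by chasing a diagram of intermediate covers $C^I\to C^{A_0}, C^{B_0}\to C$ and arriving at a loop that lifts to $C^{A_0}$ and $C^{B_0}$ but not to $C^I$; then, separately, it shows distinct components of $A\cap B$ are at least $2R$ apart via a geodesic bigon argument. You instead reformulate $R$--embeddedness once and for all as a statement about distances between distinct elevations in $\tilde C$, identify each elevation of a component of $A\cap B$ with a convex intersection $\tilde A_\alpha\cap\tilde B_\beta$, and then handle both of the paper's cases uniformly by the single observation that distinct pairs $(\alpha,\beta)$ force $\alpha\neq\alpha'$ or $\beta\neq\beta'$.

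What your route buys is uniformity: the intermediate covers $C^I,C^{A_0},C^{B_0}$ never appear, and the two halves of the paper's proof collapse into one sentence. The cost is that you must justify the reformulation (your first paragraph), which is straightforward but does require checking that for locally convex $S$ the $R$--embedding condition on $C^S$ unwinds to the pairwise-separation condition on elevations in $\tilde C$. One small point: with the usual convention of open $R$--neighborhoods, the correct inequality is $d_{\tilde C}(\tilde S,\tilde S')\geq 2R$ rather than strict; this matches the paper's ``length strictly less than $2R$'' contradiction and does not affect your argument.
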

\begin{proof}
  We first show that components of $A\cap B$ are $R$--embedded, and then show they are further than $2R$ from one another.

  Let $I$ be a component of $A\cap B$.  Then $I$ is contained in $A_0\cap B_0$, where $A_0$ is a connected component of $A$ and $B_0$ is a connected component of $B$.
  Choosing $p\in I$ determines a diagram of covers
\cd{ & C^{I}\ar[dl]_{\phi_{A_0}}\ar[dd]^{\pi_{I}}\ar[dr]^{\phi_{B_0}} & \\
     C^{A_0}\ar[dr]_{\pi_{A_0}} & & C^{B_0}\ar[dl]^{\pi_{B_0}} \\
     & C &}
with canonical inclusions $N_R({A_0})\subseteq C^{A_0}$, $N_R({B_0})\subseteq C^{B_0}$, and $N_R(I)\subseteq C^I$.  We suppose by contradiction that there are two points $x\neq y$ in $N_R(I)$ so that $\pi_I(x)=\pi_I(y)$.  Since $\phi_{A_0}(N_R(I))\subseteq N_R({A_0})$ and $A$ is $R$--embedded, it must be the case that $\phi_{A_0}(x) = \phi_{A_0}(y)$.  Similarly $\phi_{B_0}(x)=\phi_{B_0}(y)$.  Let $\sigma_x$ be an arc joining $x$ to the lifted basepoint $\tilde{p}$ in $I$, and let $\sigma_y$ join $y$ to $\tilde{p}$.  Then $\gamma = \pi_I(\bar{\sigma_x}\sigma_y)$ is a loop in $C$ based at $p$ which lifts to both $C^{A_0}$ and $C^{B_0}$, but not to $C^I$, contradicting Lemma \ref{lem:pi1intersection}.

  Now suppose that $I_1$ and $I_2$ are two different components of $A\cap B$.  
  By way of contradiction, suppose $\sigma$ is an arc joining $I_1$ to $I_2$ of length strictly less than $2R$.  
  For $i=1,2$, 
  let $A_i$ be the connected component of $A$ containing $I_i$, and let $B_i$ be the connected component of $B$ containing $I_i$.  Since $A$ is $R$--embedded and $\sigma$ has length less than $2R$, we must have $A_2=A_1$; similarly $B_2=B_1$.  Again using $R$--embeddedness, $\sigma\cup A_1$ lifts to $C^{A_1}$.  It follows that $\sigma$ is homotopic rel endpoints to an arc $\tau_A$ in $A$.  Since $A$ is locally convex, we may choose $\tau_A$ to be geodesic.  Similarly, $\sigma$ is homotopic rel endpoints to a geodesic $\tau_B$ in $B$.  But this gives rise to a nondegenerate geodesic bigon in the universal cover $\tilde{C}$, contradicting the CAT$(0)$ inequality.
\end{proof}

\subsubsection{Criterion for the double-dot hierarchy to be faithful}

We are particularly interested in subspaces coming from cutting a cube complex up along walls.
Let $Y$ be an NPC cube complex, and let $\ddot{Y}\to Y$ be the double-dot cover.  Let $W$ be a nonseparating $2$--sided embedded hyperplane of $Y$, and let $\ddot{W}$ be the preimage of $W$ under the double-dot cover.  Then $\ddot{W}$ cuts $Y$ into two (not necessarily connected) parts, $V^+$ and $V^-$.
\begin{lemma}\label{lem:embeddedhalves}
  If $W$ is $R$--embedded then $V^+$ is $R$--embedded, as is $V^-$.
\end{lemma}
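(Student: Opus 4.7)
The plan is to reduce $R$--embeddedness to a statement about elevations in the universal cover $\tilde{Y}$, and then exploit the combinatorics of the double-dot cover. First I would note the useful reformulation that a locally convex subset $A \subseteq \ddot{Y}$ is $R$--embedded in $\ddot{Y}$ if and only if no two distinct elevations of components of $A$ to $\tilde{Y}$ have intersecting (closed) $R$--neighborhoods. Applying Lemma \ref{lem:coverstillembedded} to the embedded nonseparating hyperplane $W \subseteq Y$ then gives that $\ddot{W}$ is $R$--embedded in $\ddot{Y}$; equivalently, the elevations of $\ddot{W}$ in $\tilde{Y}$ form a collection of pairwise disjoint two-sided hyperplanes whose $R$--neighborhoods are pairwise disjoint.

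Next I would analyze the structure of $V^+$. Since $\ddot{Y}$ is the double-dot cover of $Y$, every loop in $\ddot{Y}$ has zero mod-$2$ intersection number with $\ddot{W}$, so $\ddot{W}$ genuinely separates $\ddot{Y}$ into $V^+$ and $V^-$. Lifting to $\tilde{Y}$, each elevation of $V^+$ is the closure of a connected component of the complement of all elevations of $\ddot{W}$, hence an intersection of closed halfspaces determined by these (convex) hyperplanes, and therefore convex. The key combinatorial observation is that two distinct elevations $\tilde{V}^+_a, \tilde{V}^+_b$ of (components of) $V^+$ must be separated in $\tilde{Y}$ by at least two elevations of $\ddot{W}$. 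Indeed, a combinatorial geodesic in $\tilde{Y}$ joining them crosses each separating hyperplane exactly once; its projection to $\ddot{Y}$ has both endpoints in $V^+$, so its mod-$2$ intersection number with $\ddot{W}$ is zero, forcing the number of separating elevations to be even, and hence, by distinctness of $\tilde{V}^+_a, \tilde{V}^+_b$, at least two.

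To finish, suppose for contradiction that $V^+$ is not $R$--embedded in $\ddot{Y}$. Then there exist distinct elevations $\tilde{V}^+_a, \tilde{V}^+_b$ of components of $V^+$ and a point $\tilde{p} \in N_R(\tilde{V}^+_a) \cap N_R(\tilde{V}^+_b)$. Choose distinct elevations $\tilde{W}_0, \tilde{W}_1$ of $\ddot{W}$, each of which separates $\tilde{V}^+_a$ from $\tilde{V}^+_b$. For each $i \in \{0,1\}$, the point $\tilde{p}$ lies in one of the two closed halfspaces of $\tilde{W}_i$, and hence on the opposite side of $\tilde{W}_i$ from at least one of $\tilde{V}^+_a, \tilde{V}^+_b$; the geodesic from $\tilde{p}$ to a nearest point of that elevation has length at most $R$ and must cross $\tilde{W}_i$, giving $d(\tilde{p}, \tilde{W}_i) \le R$. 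Therefore $\tilde{p} \in N_R(\tilde{W}_0) \cap N_R(\tilde{W}_1)$, contradicting the $R$--embeddedness of $\ddot{W}$ established in the first paragraph. The same argument applies to $V^-$. The main subtlety is the parity argument forcing distinct positive elevations to be separated by at least two hyperplanes, which is precisely where the special structure of the double-dot cover enters the proof.
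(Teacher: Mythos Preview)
Your proof is correct and follows the same overall strategy as the paper: apply Lemma~\ref{lem:coverstillembedded} to deduce that $\ddot{W}$ is $R$--embedded in $\ddot{Y}$, and then conclude that $V^+$ is $R$--embedded. The paper's proof is a single sentence (``Each component of $V^+$ is bounded by a collection of elevations of $W$. Lemma~\ref{lem:coverstillembedded} implies that the union of these elevations is $R$--embedded, so $V^+$ is as well''), leaving the deduction implicit; your argument supplies the missing details by passing to the universal cover and using the parity observation that distinct $+$--regions are separated by at least two hyperplane elevations. This parity step is genuinely needed to get the contradiction with $R$--embeddedness of $\ddot{W}$, so your expansion is not just cosmetic.
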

\begin{proof}
  Each component of $V^+$ is bounded by a collection of elevations of $W$.  Lemma \ref{lem:coverstillembedded} implies that the union of these elevations is $R$--embedded, so $V^+$ is as well.
\end{proof}

Combining Lemmas \ref{lem:embeddedhalves} and \ref{lem:embeddedintersection} we obtain:
\begin{corollary}\label{lem:embeddedchunks}
  Let $\mc{W}$ be a collection of $2$--sided, embedded, nonseparating, $R$--embedded hyperplanes in a cube complex $Y$, and let $\mb{t}\in \oplus_{\mc{W}}\bZ/2$.  If $V_{\mb{t}}$ is that part of the double-dot cover labeled by $\mb{t}$ then $V_{\mb{t}}$ is $R$--embeddded.
\end{corollary}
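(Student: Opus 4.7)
The plan is to realize $V_{\mb{t}}$ as a finite intersection of $R$--embedded half-spaces and then invoke the two preceding lemmas. Concretely, for each $W\in\mc{W}$, let $V_W^+$ and $V_W^-$ denote the two halves into which $\ddot{W}$ (the preimage in the double-dot cover) cuts $\ddot{Y}$, as in the setup of Lemma \ref{lem:embeddedhalves}. The labeling $\mb{t}\in\bigoplus_{\mc{W}}\bZ/2$ records, for each $W$, which side of $\ddot{W}$ the label lies on; call the corresponding choice $\epsilon_W(\mb{t})\in\{+,-\}$. Then directly from the definition of $V_{\mb{t}}$,
\[ V_{\mb{t}} \;=\; \bigcap_{W\in\mc{W}} V_W^{\epsilon_W(\mb{t})}. \]

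The first step is to observe that each factor $V_W^{\epsilon_W(\mb{t})}$ is $R$--embedded in $\ddot{Y}$: this is exactly Lemma \ref{lem:embeddedhalves}, which requires only that $W$ be $2$--sided, embedded, nonseparating, and $R$--embedded, all of which are in our hypotheses. The second step is to note that each $V_W^{\epsilon_W(\mb{t})}$ is locally convex in $\ddot{Y}$: it is bounded by the disjoint union of elevations of $W$, which are locally convex hyperplanes in the NPC complex $\ddot{Y}$, so in the CAT$(0)$ universal cover of $\ddot{Y}$ each component of its preimage is an intersection of convex half-spaces.

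With both conditions in place, I would iterate Lemma \ref{lem:embeddedintersection} over the finite collection $\mc{W}$. At each step one takes the intersection of the current $R$--embedded locally convex set with the next $R$--embedded locally convex half-space $V_W^{\epsilon_W(\mb{t})}$; the lemma produces an $R$--embedded set, and local convexity passes to the intersection since in a CAT$(0)$ space the intersection of convex sets is convex. After finitely many steps one obtains that $V_{\mb{t}}$ is $R$--embedded in $\ddot{Y}$, as required.

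The only mild subtlety (and the closest thing to an obstacle) is bookkeeping: Lemma \ref{lem:embeddedintersection} is stated for two subsets, so to iterate it I need to confirm that local convexity is inherited by each intermediate intersection, and that $\mc{W}$ can be treated as finite for this argument (which is automatic in the situations where the double-dot hierarchy is used, since cutting along only finitely many hyperplanes contributes to the label $\mb{t}$). Apart from this bookkeeping, the proof is a one-line combination of Lemmas \ref{lem:embeddedhalves} and \ref{lem:embeddedintersection}.
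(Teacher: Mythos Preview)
Your proposal is correct and follows exactly the approach the paper intends: the paper simply states that the corollary is obtained by ``combining Lemmas \ref{lem:embeddedhalves} and \ref{lem:embeddedintersection}'' without spelling out the details, and you have filled in precisely those details --- write $V_{\mb{t}}$ as the intersection of the half-spaces $V_W^{\epsilon_W(\mb{t})}$, apply Lemma \ref{lem:embeddedhalves} to each, and then iterate Lemma \ref{lem:embeddedintersection}. Your remarks on the bookkeeping (local convexity of the intermediate intersections and finiteness of $\mc{W}$) are appropriate and correctly handled.
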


\begin{definition}
  Let $\phi\co Z\to X$ be a locally isometric immersion.  We say that
  $\phi$ (or sometimes $Z$) is \emph{superconvex} if whenever
  $\tilde{Z}$ is an elevation of $Z$ to the universal cover of $X$,
  then $\tilde{Z}$ contains every biinfinite geodesic which lies in a bounded
  neighborhood of $\tilde{Z}$. 
  \end{definition}
\begin{lemma}\label{lem:haglund}\cite{Haglund08} (cf \cite{SageevWise})
  Let $G$ be hyperbolic and the fundamental group of an NPC cube complex $X$.  Let $H<G$ be a quasiconvex subgroup.  Then there is a superconvex immersion $\phi\co Z\to X$ with $\phi_*(\pi_1(Z)) = H$.
\end{lemma}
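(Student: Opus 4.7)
The plan is to realize $Z$ as the quotient $H\backslash \widetilde{Z}$ of an $H$--invariant, locally convex subcomplex $\widetilde{Z}\subseteq \widetilde{X}$ obtained as a suitably thick combinatorial convex hull of an $H$--orbit.

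Fix a basepoint $x_0\in \widetilde{X}$ and set $O:=H\cdot x_0$. Since $G$ is hyperbolic and acts geometrically on $\widetilde{X}$, the space $\widetilde{X}$ is $\delta$--hyperbolic for some $\delta\geq 0$, and quasiconvexity of $H<G$ gives a $\lambda$ so that $O$ is $\lambda$--quasiconvex in $\widetilde{X}$. Let $\Lambda H\subseteq \partial\widetilde{X}$ be the limit set of $H$; by standard hyperbolic geometry (stability of quasigeodesics plus $\lambda$--quasiconvexity of $O$), there is a constant $D=D(\delta,\lambda)$ such that every biinfinite geodesic of $\widetilde{X}$ with both endpoints in $\Lambda H$ lies in the $D$--neighborhood of $O$.

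Now pick $R$ larger than $D$, and larger than the constants needed to guarantee that the combinatorial convex hull of the combinatorial $R$--neighborhood of a $\lambda$--quasiconvex set in a $\delta$--hyperbolic NPC cube complex stays within a bounded neighborhood of that set. Let $\widetilde{Z}$ be Haglund's combinatorial convex hull of the combinatorial $R$--neighborhood of $O$ in $\widetilde{X}$ (see \cite{Haglund08}); this is an $H$--invariant locally convex subcomplex. A packing argument in $\delta$--hyperbolic space shows $\widetilde{Z}$ is contained in a uniform neighborhood of $O$, and hence $H$ acts cocompactly on $\widetilde{Z}$. Setting $Z:=H\backslash\widetilde{Z}$, the inclusion $\widetilde{Z}\hookrightarrow \widetilde{X}$ descends to a locally isometric immersion $\phi\co Z\to X$ with $\phi_\ast(\pi_1Z)=H$.

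It remains to verify superconvexity. Let $\widetilde{Z}'$ be any elevation of $Z$ in $\widetilde{X}$; it is an $H'$--translate of $\widetilde{Z}$ for some $H'$ conjugate to $H$, so it suffices to work with $\widetilde{Z}$ itself. Suppose $\widetilde{\gamma}$ is a biinfinite geodesic in $\widetilde{X}$ contained in the $k$--neighborhood of $\widetilde{Z}$ for some $k\geq 0$. Since $\widetilde{Z}$ is $H$--cocompact, both endpoints of $\widetilde{\gamma}$ in $\partial\widetilde{X}$ are accumulation points of the $H$--orbit and hence lie in $\Lambda H$. By the choice of $D$ we conclude $\widetilde{\gamma}\subseteq N_D(O)\subseteq \widetilde{Z}$, since $R>D$. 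Thus $\phi$ is superconvex.

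The main obstacle is the technical step of constructing the locally convex subcomplex $\widetilde{Z}$ with good control over how far it strays from $O$: one needs that the cubical combinatorial convex hull of a bounded neighborhood of a quasiconvex set in a hyperbolic NPC cube complex itself lies in a bounded neighborhood of that set, so that cocompactness of the $H$--action survives. This is precisely the content of Haglund's construction \cite{Haglund08} (compare also \cite{SageevWise}), and is the reason we cite those works rather than redo the argument here.
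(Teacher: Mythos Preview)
Your proof is correct and follows essentially the same approach as the paper: both arguments use that biinfinite geodesics with endpoints in $\Lambda_H$ lie in a uniform neighborhood of an $H$--orbit, take a combinatorial convex hull (via Haglund's Theorem H) of a sufficiently thickened orbit to obtain an $H$--cocompact convex subcomplex, and set $Z$ to be the quotient.  The paper phrases the thickening as choosing a compact $K$ with $H\cdot K$ containing all such geodesics, while you take an $R$--neighborhood with $R>D$; these are the same idea, and your explicit verification of superconvexity is just what the paper leaves implicit.
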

\begin{proof}
Let $\Lambda_H$ be the limit set of $H$ in $\partial G = \partial X$.  By quasi-convexity and $\delta$-hyperbolicity, any geodesic joining elements of $\partial H$ stays uniformly close to $H$.  Thus, there is a compact set $K$ in $\tilde{X}$ so that $H.K$ contains all such geodesics. It follows by $\delta$-hyperbolicity of $\tilde{X}$ that quasi-convexity and combinatorial quasi-convexity (in the sense of \cite[Definition 2.24]{Haglund08}) are the same notion.  Therefore, by \cite[Theorem H]{Haglund08} the combinatorial convex hull $Y$ of $H.K$ is $H$-cocompact.  We take $Z = Y/H$.
\end{proof}

\begin{definition}
  Let $(X,\mc{Z})$ be a cube complex pair, so that the immersion
  $\Phi\co \mc{Z}\to X$ is superconvex on each component of $\mc{Z}$.
  Then $(X,\mc{Z})$ is a \emph{superconvex pair}.
\end{definition}

In order to ensure our edge spaces don't have any accidental parabolics, we need the following lemma:

\begin{lemma}\label{lem:noaccidents}
  Let $(X,\mc{Z})$ be a superconvex pair, with each component of $\mc{Z}$ embedded, and let $C$ be the corresponding augmented cube complex.  For $n\geq 1$ let $\{W_1,\ldots,W_n\}$ be a collection of embedded, $2$--sided, nonseparating hyperplanes of $C$.  Let $P$ be a component of $W_n\setminus\bigcup_{i<n}W_i$.  Then $P$ has no accidental $\mc{Z}$--loops.
\end{lemma}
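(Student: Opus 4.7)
\emph{Proof plan.} The plan is to argue directly in the universal cover $\tilde{C}$, leveraging the superconvexity of $\mc{Z}$. Suppose for contradiction that $\xi\subset P$ is an essential loop, freely homotopic in $C$ to a geodesic loop $\eta$ in some component $Z_j$ of $\mc{Z}$, yet no positive power of $\xi$ is homotopic in $P$ to a loop in $\mc{Z}$. Fix an elevation $\tilde{W}_n$ of $W_n$ in $\tilde{C}$ and a lift of $\xi$ into the corresponding elevation $\tilde{P}\subset\tilde{W}_n$, and let $g\in\pi_1 C$ be the deck transformation representing $\xi$ and stabilizing $\tilde{P}$. The free homotopy to $\eta$ tells us that $g$ is conjugate in $\pi_1 C$ to an element of the stabilizer of some elevation of $Z_j$, so after replacing that elevation by a translate we obtain an elevation $\tilde{Z}\subset\tilde{C}$ of $Z_j$ on which $g$ acts.

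The next step is to place the axis of $g$ inside $\tilde{P}\cap\tilde{Z}$. The hyperplane $\tilde{W}_n$ is convex in $\tilde{C}$, and inside $\tilde{W}_n$ the region $\tilde{P}$ is cut out as an intersection of halfspaces bounded by those subcomplexes $\tilde{W}_i\cap\tilde{W}_n$ (for $i<n$ and for elevations $\tilde{W}_i$ that cross $\tilde{W}_n$); this makes $\tilde{P}$ convex in $\tilde{W}_n$, and hence in $\tilde{C}$. Both $\tilde{W}_n$ and $\tilde{P}$ are $g$-invariant, so the axis of $g$ is contained in $\tilde{P}$. Similarly, any $\langle g\rangle$-orbit in $\tilde{Z}$ stays in $\tilde{Z}$, and a standard $\delta$-hyperbolic argument shows the axis of $g$ lies in a bounded neighborhood of $\tilde{Z}$. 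Superconvexity of the immersion $\mc{Z}\to X$ then forces the axis to lie entirely in $\tilde{Z}$.

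Finally, projecting the axis of $g$ to $C$ produces a closed geodesic contained in $P\cap Z_j\subset P\cap\mc{Z}$. This geodesic represents a conjugate of $g^k$ in $\pi_1 P$ for some integer $k\geq 1$ (with $k=1$ unless $g$ fails to generate the full stabilizer of the axis), which means $\xi^k$ is freely homotopic in $P$ to a geodesic loop in $\mc{Z}$ --- contradicting condition (\ref{notinXi}) of Definition \ref{def:accident}. The main technical obstacle will be verifying the convexity of $\tilde{P}$ inside $\tilde{W}_n$: this requires knowing that each elevation of an earlier hyperplane $W_i$ either meets $\tilde{W}_n$ in a genuine sub-hyperplane of $\tilde{W}_n$ or is disjoint from it, a standard fact for $\mathrm{CAT}(0)$ cube complexes. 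Once that convexity is established, the superconvexity axis argument and the descent to $C$ are routine.
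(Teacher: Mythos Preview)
Your convexity argument for $\tilde P$ inside $\tilde W_n$ is fine, and the idea of locating an axis of $g$ in $\tilde P$ is reasonable.  The gap is in the step ``Superconvexity of the immersion $\mc{Z}\to X$ then forces the axis to lie entirely in $\tilde Z$.''  Superconvexity, as defined here, concerns elevations of $Z_j$ to the universal cover of $X$, not of $C$.  In $\tilde C$ the peripheral subcomplex $\tilde Z$ (the elevation of $Z_j\times\{0\}$) is simply \emph{not} superconvex: the mapping-cylinder direction produces a product $\tilde Z\times[0,1]$, and for instance any geodesic in $\tilde Z\times\{\tfrac12\}$ lies in a bounded neighborhood of $\tilde Z$ without lying in $\tilde Z$.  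So from ``the axis lies in a bounded neighborhood of $\tilde Z$'' you cannot conclude it lies in $\tilde Z$, and hence you cannot conclude that its projection lands in $P\cap Z_j$.

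The paper's proof confronts exactly this issue.  It first establishes the product fact $P\cap(Z_j\times[0,1])=(P\cap Z_j)\times[0,1]$, and then does a short case analysis on a geodesic representative $\alpha\subset P$: either $\alpha$ avoids $X$ entirely (and then the product structure lets one slide $\alpha$ within $P$ down to $Z_j\times\{0\}$), or $\alpha$ lies entirely in $X$ (a quick argument with the $[0,1]$--coordinate rules out the mixed case), and then one may legitimately invoke superconvexity in $\tilde X$ to place $\tilde\alpha$ in the elevation of $\phi_j(Z_j)$, after which the same product structure gives the homotopy in $P$ into $Z_j$.  Your axis approach can be repaired along these lines, but you will need that product observation and the dichotomy between the cylinder part and $X$; the superconvexity hypothesis alone, applied in $\tilde C$, is not enough.
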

\begin{proof}
  Recall that the augmented space $C$ is the mapping cylinder of a
  locally isometric immersion $\Phi$ from $\mc{Z} = \bigsqcup_{j=1}^k
  Z_j$ to $X$, where each $Z_j$ is a connected NPC cube complex.  Let
  $\phi_j = \Phi|Z_j$.  Consider a cube $\tau$ of $C$ which
  intersects, but isn't contained in $X$.  This cube can be identified
  with $\sigma\times[0,1]$ where $\sigma=\sigma\times\{0\}$ is a cube
  of $Z$, and $\sigma\times\{1\} = \Phi(\sigma)$.  We observe that
  under this identification, $P\cap \tau = (P\cap \sigma)\times[0,1]$.
  Since $Z_j$ is embedded, we get a copy of 
  $Z_j\times[0,1]\subseteq C$, and the foregoing argument shows that
  the intersection of $P$ with this product is exactly $(P\cap Z_j\times\{0\})\times[0,1]$.

  Now, let $\alpha\subseteq P$ be a geodesic loop which is homotopic
  (in $C$) to a geodesic loop $\alpha'$ in $Z_j\subseteq \mc{Z}$.  
  
  Suppose first that $\alpha$ is contained in $C\setminus X$.  It is
  then possible to homotope $\alpha$ along the product structure into
  $\mc{Z}$.  Since the intersection of $P$ with the product structure
  is itself a product, this homotopy takes place inside $P$, so
  $\alpha$ is not an accidental $\mc{Z}$--loop.
  
  We may therefore assume that $\alpha$ intersects $X$.  We claim that $\alpha$ must be contained in $X$.  Suppose not.  By reparameterizing the geodesic loop $\alpha\co [0,L]\to C$, we may suppose that $\alpha(0)=\alpha(L)\in X$, but that $\alpha(t)\notin X$ for small positive $t$.  Let $R$ be the smallest positive number so that $\alpha(R)\in X$, and let $\beta= \alpha|_{[0,R]}$.  Then $\beta$ has image in some $Z_j\times[0,1]$, so we may write $\beta(t) = (z(t),s(t))$ for $z(t)\in Z_j$, $s(t)\in [0,1]$.   The projection $s(t)$ must also be locally geodesic, so the quantity $s'(t)$ is constant.  Since $s(0)=s(R) = 1$, we have $s'(t)=0$ for all $t\in [0,R]$, contradicting the supposition that $\alpha(t)\notin X$ for small positive $t$.

  Lifting to the universal cover, we obtain a biinfinite geodesic
  $\tilde{\alpha}\subseteq \tilde{X}$ which lies in a bounded
  neighborhood of the image of some elevation of $\phi_j\co Z_j\to X$.  Since $\phi_j$ is
  superconvex, $\tilde{\alpha}$ actually lies in this elevation.  But
  this means that $\alpha$ is in $P\cap \phi_j(Z_j)= P\cap
  (Z_j\times\{1\})$.  Since $P$ intersects $Z_j\times[0,1]$ in a
  product, $\alpha$ is homotopic in $P$ into $Z_j$, and therefore
  into $\mc{Z}$.  Thus $\alpha$ is not an accidental
  $\mc{Z}$--loop.
\end{proof}

\begin{theorem}\label{goodhierarchy}
  Suppose that $(X,\mc{Z})$ is an NPC cube complex pair, and suppose
  that $C$ is the associated augmented complex.  Suppose that the
  universal cover of $C$ is $\delta$--hyperbolic, and that there exists some function $f$ so that the family $\mc{Z}$ is $f$--separated.  Let $\epsilon_A$ and $\epsilon_B$ be constants sufficient for the Combination Theorem \ref{combotheorem}.
\begin{enumerate}
  \item\label{good:qc}  Suppose that each nonseparating embedded $2$--sided hyperplane of
  $C$ is $\epsilon_A$--embedded, and that each component $Z$ of
  $\mc{Z}$ is $\epsilon_B$--embedded.  Then the double-dot hierarchy
  of $\ddot{C}$ is faithful, and the corresponding hierarchy of fundamental
  groups is quasiconvex.

  \item\label{good:noaccidents} Suppose further that the pair $(X,\mc{Z})$ is superconvex.  Then the edge spaces of the double-dot hierarchy have no accidental $\ddot{\mc{Z}}$--loops.
\end{enumerate}
\end{theorem}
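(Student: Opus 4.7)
My plan is to prove both parts by induction on the level $i$ of the hierarchy, applying the Combination Theorem \ref{combotheorem} (for part (1)) and Proposition \ref{noaccidents} (for part (2)) at each stage. The ambient NPC space for each application will be $\ddot{C}$, whose universal cover coincides with $\tilde{C}$ and is therefore $\delta$-hyperbolic by assumption.

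For part \eqref{good:qc}, at level $i$ a vertex space is a component of $K_{\mb{t}}\cup\ddot{\mc{Z}}$ (meeting $K_{\mb{t}}$) for some $\mb{t}\in M_i$. I would apply the Combination Theorem with $\mc{A}=K_{\mb{t}}$ and $\mc{B}=\ddot{\mc{Z}}$: the set $K_{\mb{t}}$ is locally convex (being a union of closed complementary regions to locally convex hyperplanes), it is $\epsilon_A$-embedded by Corollary \ref{lem:embeddedchunks}, each component of $\ddot{\mc{Z}}$ is $\epsilon_B$-embedded by Lemma \ref{lem:coverstillembedded}, and $f$-separatedness of $\mc{Z}$ in $C$ transfers to $\ddot{\mc{Z}}$ in $\ddot{C}$ since the elevations to $\tilde{\ddot{C}}=\tilde{C}$ are in bijection with elevations of $\mc{Z}$. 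The same argument handles edge spaces: take $\mc{A}$ to be the appropriate union of partly-cut-up elevations of $W_i$, each of which is the intersection of $c^{-1}(W_i)$ with the half-spaces determined by some $\mb{t}\in M_{i-1}$, and is therefore $\epsilon_A$-embedded by Lemmas \ref{lem:embeddedhalves} and \ref{lem:embeddedintersection}. The Combination Theorem then yields $\pi_1$-injectivity and quasi-isometric embedding of each vertex and edge space into $\ddot{C}$, so the hierarchy is faithful and the corresponding hierarchy of fundamental groups is quasiconvex.

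For part \eqref{good:noaccidents}, I would first apply Lemma \ref{lem:noaccidents} to conclude that any partly-cut-up elevation $P$ of $W_i$ in $C$ has no accidental $\mc{Z}$-loops, and then use Lemma \ref{lem:liftedaccidents} to conclude that its preimage in $\ddot{C}$ has no accidental $\ddot{\mc{Z}}$-loops. Since each edge space at level $i$ is a connected component of $\mc{A}\cup\mc{B}$, where $\mc{A}$ is such a partly-cut-up elevation (in $\ddot{C}$) and $\mc{B}=\ddot{\mc{Z}}$, Proposition \ref{noaccidents} will transfer the absence of accidental $\ddot{\mc{Z}}$-loops from $\mc{A}$ to the edge space itself.

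I expect the main obstacle to be the careful bookkeeping of embeddedness constants through successive cuttings: at each new level one must verify that the sets playing the roles of $\mc{A}$ and $\mc{B}$ in the Combination Theorem retain the $\epsilon_A$- and $\epsilon_B$-embeddedness hypothesized in the theorem. Lemmas \ref{lem:coverstillembedded}, \ref{lem:embeddedhalves}, and \ref{lem:embeddedintersection} are packaged precisely for this inductive step, so once the various intersections are correctly identified the required hypotheses of the Combination Theorem should follow cleanly.
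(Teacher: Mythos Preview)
Your proposal is correct and follows essentially the same approach as the paper. The only minor differences are cosmetic: the paper observes that it suffices to treat the edge spaces alone (since $\pi_1$--injectivity of an edge space in $\ddot{C}$ immediately forces injectivity of the edge-to-vertex inclusion), so it does not separately invoke the Combination Theorem for vertex spaces; and the paper writes $\mc{A} = V_{\mb{t}} \cap \ddot{W_i}$ directly (using Corollary~\ref{lem:embeddedchunks} and Lemma~\ref{lem:embeddedintersection}) rather than phrasing the argument as an induction on the level~$i$.
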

\begin{proof}
  For the first conclusion, it suffices to show all the edge spaces are $\pi_1$--injective, with
  quasi-convex elevations to the universal cover $\tilde{C}$ of $C$.

  Let $\{W_1,\ldots,W_n\}$ be the ordered list of embedded,
  nonseparating, $2$--sided hyperplanes in $C$.  Let $V$ be a vertex
  space of the hierarchy at level $i-1$.  Then the non-peripheral part
  of $V$ is marked by some $\mb{t}\in M_{i-1} =
  \bigoplus_{\{W_1,\ldots,W_{i-1}\}}\bZ/2$.  Let $V_{\mb{t}}$ be the
  part of $\ddot{C}$ marked by $\mb{t}$.  By Corollary \ref{lem:embeddedchunks}, the set $V_{\mb{t}}$ is $\epsilon_A$--embedded.

  The vertex space $V$ is a
  component of $V_{\mb{t}}\cup \ddot{\mc{Z}}$.  Let
  $\{\mb{t}^+,\mb{t}^-\}$ be the fiber above $\mb{t}$ under the
  projection $M_i\to M_{i-1}$.

  Let $E$ be an edge space at level $i$ of the hierarchy.  Then $E$ is
  a component of the intersection of two vertex spaces $V^+$ and
  $V^-$, marked by $\mb{t}^+$ and $\mb{t}^-$, respectively.  Let $\ddot{W_i}$ be the preimage of $W_i$ in the double-dot cover $\ddot{C}$.  By Lemma \ref{lem:coverstillembedded}, $\ddot{W_i}$ is $\epsilon_A$--embedded.  

  Let
  $\mc{A} = V_{\mb{t}}\cap \ddot{W_i}$.  By Lemma \ref{lem:embeddedintersection}, $\mc{A}$ is $\epsilon_A$--embedded.

  Let $\mc{B} = \ddot{\mc{Z}}\subseteq \ddot{C}$.  Then $E$
  is a component of $\mc{A}\cup \mc{B}$.  
  The first conclusion
  of the theorem follows from the
  Combination Theorem \ref{combotheorem} for $\mc{A}$, $\mc{B}$, and setting $\Theta=E$.

  For the second conclusion we argue as follows.  At level $i$ of the hierarchy the edge spaces are components of $\mc{A}\cup\mc{B}$ where the components of $\mc{A}$ are elevations of components of $W_i\setminus \bigcup_{j<i}W_j$. 
  Lemmas \ref{lem:noaccidents} and \ref{lem:liftedaccidents} imply that the components of $\mc{A}$ have no accidental $\mc{B}$--loops.  We can therefore apply Proposition \ref{noaccidents} to deduce that $E$ has no accidental $\ddot{\mc{Z}}$--loops.
\end{proof}

\subsection{Scott's Criterion and separability}

We briefly recall Scott's Criterion \cite{Scott78} about separability as it is used multiple times in the proof of Theorem \ref{thm:virtualhierarchy} as an application of separability.  We ignore basepoints in what follows since doing this does not cause any ambiguity or confusion.

Suppose that $X$ is a connected complex and that $H$ is a subgroup of $\pi_1(X)$.  Let $X^H$ be the cover of $X$ corresponding to $H$.  Scott's Criterion states that $H$ is separable in $\pi_1(X)$ if and only if 
the following condition holds:

For every finite sub-complex $\Delta$ of $X^H$ there exists an intermediate covering
\[	X^H \to X_\Delta \to	X	\]
so that $X_\Delta \to X$ is a finite-degree covering and $\Delta$ embeds in $X_\Delta$.

We use this (often implicitly) to promote certain immersions to embeddings in finite covers, and for other purposes that are made clear by this criterion.

\subsection{Proof of Theorem \ref{thm:virtualhierarchy}} \label{sec:vhier}
In the following we make repeated use of Scott's criterion, together with 
the fact (referred to as \emph{QCERF})
that quasiconvex subgroups of virtually special groups are separable (Theorem \ref{qcthensep}).

\medskip\noindent\textbf{Step 1:}
We start with a relatively hyperbolic pair $(G,\mc{P})$ so that $G$ is
hyperbolic and virtually special.  Since $G$ is virtually special,
there is some $(G_0,\mc{P}_0)\dotnorm (G,\mc{P})$ (see Notation \ref{subpair})
so that $G_0 = \pi_1 X_0$, where $X_0$ is
a special cube complex.  
\begin{lemma}\label{nontrivialsplittings}
  If $X_0$ is a compact special cube complex, then some isometrically embedded subcomplex $Y$ of $X_0$ is homotopy equivalent to $X_0$ and has the property that every hyperplane of $Y$ gives a nontrivial splitting of $G_0 = \pi_1 X_0$.
\end{lemma}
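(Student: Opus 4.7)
The plan is to let $\mc{Y}$ denote the set of locally convex subcomplexes $Z \subseteq X_0$ for which the inclusion $Z \hookrightarrow X_0$ induces a $\pi_1$-isomorphism, and to take $Y$ to be a minimal element of $\mc{Y}$ (with respect to inclusion, or just to number of cubes). This set is nonempty since $X_0 \in \mc{Y}$, and is finite because $X_0$ has finitely many subcomplexes, so a minimal $Y$ exists. Because NPC cube complexes are aspherical, a $\pi_1$-isomorphism $Y \hookrightarrow X_0$ between two such complexes is a weak equivalence, hence a homotopy equivalence by Whitehead's theorem, so the second condition of the lemma holds.

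For the third condition, let $W$ be a hyperplane of $Y$. If $W$ is nonseparating, the induced splitting of $G_0 = \pi_1 Y$ is an HNN extension; its stable letter maps the base vertex $A$ of the Bass--Serre tree to $tA \neq A$ (since $t \notin A$), so there is no global fixed point and the splitting is nontrivial. If $W$ is separating, write $Y = A \cup_W B$ with $\pi_1 Y = \pi_1 A *_{\pi_1 W} \pi_1 B$; a trivial splitting means, after relabeling, $\pi_1 W = \pi_1 A$, so that $\pi_1 Y = \pi_1 B$. I would then define $Y'$ to be the subcomplex of $Y$ consisting of all cubes contained in $\bar{B}$ and disjoint from $W$, and derive a contradiction by showing $Y' \in \mc{Y}$ with $Y' \subsetneq Y$. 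Proper containment holds because the carrier cubes of $W$ are excluded from $Y'$, and the identity $\pi_1 Y' = \pi_1 Y$ follows because $\partial_{-}N(W)$ (the outer $B$-side boundary of the carrier) lies in $Y'$, and $\bar{B}$ deformation retracts onto $Y'$ by collapsing the $B$-side half-carrier onto $\partial_{-}N(W)$; thus $\pi_1 Y' = \pi_1 \bar{B} = \pi_1 B = \pi_1 Y$.

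The main obstacle I anticipate is verifying that $Y'$ is locally convex in $Y$ (and hence, by composition, in $X_0$). At a vertex $v \in Y'$, the sub-link is obtained from $\mathrm{lk}(v, Y)$ by deleting the link-vertices coming from edges at $v$ that cross $W$ or lie on the $A$-side. Fullness of this sub-link reduces to the claim that any cube of $Y$ at $v$ whose every edge is in $Y'$ must itself lie in $Y'$: such a cube has no edge perpendicular to $W$ and no edge leaving $\bar{B}$, so it crosses neither $W$ nor into the $A$-side, and is therefore contained in $\bar{B}$ and disjoint from $W$ by definition. Once this link-fullness check is carried out carefully, $Y' \in \mc{Y}$ and $Y' \subsetneq Y$ contradict minimality of $Y$, completing the proof.
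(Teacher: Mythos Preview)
Your proposal is correct and takes essentially the same approach as the paper: both argue by descent on the number of cubes, removing the ``far side'' of any hyperplane whose induced splitting is trivial (the paper phrases this as an iterative procedure, you phrase it as choosing a minimal element of $\mc{Y}$ and deriving a contradiction). Your extra care in verifying local convexity of $Y'$ is a detail the paper leaves implicit, and your link-fullness argument for it is correct.
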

\begin{proof}
  Since $X_0$ is special, every hyperplane is embedded and two-sided.  Thus each hyperplane gives some one-edge splitting of $G_0$.  Suppose $W$ is a hyperplane giving a trivial splitting of $G_0$, with open cubical neighborhood $N(W)$.  Since the group-theoretic splitting is trivial, $X_0\setminus N(W)$ must have two components, at least one of which is homotopy equivalent to $X_0$.  Let $X'$ be a component of $X_0\setminus N(W)$ homotopy equivalent to $X_0$.  The total number of cubes (of all dimensions) in $X'$ is less than the total number of cubes in $X_0$.  Thus we can only perform this procedure finitely many times, eventually arriving at a subcomplex which every hyperplane splits nontrivially.
\end{proof}

We may therefore suppose (possibly replacing $X_0$ by a
subcomplex)
every hyperplane of
$X_0$ gives a nontrivial splitting of $G_0$.  Any finite cover of
$X_0$ therefore has the same property.

Let $H<G_0$ be the fundamental group of a hyperplane $W$.  Since $W$ is convex in the (Gromov hyperbolic and CAT$(0)$) universal cover of $X_0$, $H$ is a quasiconvex subgroup.  
Using QCERF we can pass to a finite cover in which 
every elevation of $W$ is nonseparating.  (For the idea, see \cite[Lemma 2.2]{Lubotzky96}.)
There is therefore a finite regular cover $X_1\to X_0$ in which every hyperplane is nonseparating.  

On the level of
fundamental groups we have $G_1 = \pi_1X_1\dotnorm G_0$ with an induced peripheral structure 
$(G_1,\mc{P}_1)\dotnorm (G_0,\mc{P}_0)$ (again as in Notation \ref{subpair}).
We can suppose further that
$(G_1,\mc{P}_1)\dotnorm (G,\mc{P})$ by passing to a further finite
index subgroup if necessary.

\medskip\noindent\textbf{Step 2:} The subgroups $\mc{P}_1$ are quasiconvex in
$G_1$ so we can represent them by superconvex locally isometric immersions 
$\{Z(P)\stackrel{\phi_P}{\longrightarrow} X_1\mid P\in \mc{P}_1\}$ 
(Lemma \ref{lem:haglund}).  
Setting $\mc{Z}_1 = \bigcup_{\mc{P}_1} Z(P)$ we have a
superconvex cube complex pair $(X_1,\mc{Z}_1)$, and we can form the
associated augmented complex $C_1$.  The following easy lemma ensures that finite covers of $C_1$ are of the same form.
\begin{lemma}
  Any regular finite cover $C'$ of $C_1$ has the properties:
  \begin{enumerate}
  \item $C'$ is the augmented complex for a superconvex pair $(X',\mc{Z}')$ with 
  $$(G',\mc{P}')=\left(\pi_1(X'),\left\{\pi_1(Z)\mid Z\mbox{ a component of }\mc{Z}'\right\}\right)\dotnorm (G_1,\mc{P}_1)$$ and
  \item Every nonperipheral hyperplane of $C'$ is nonseparating.
  \end{enumerate}
\end{lemma}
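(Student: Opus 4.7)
The plan is to verify each property separately by pulling back the structure of the pair along the finite cover $p\co C'\to C_1$.

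For property (1), set $X':= p^{-1}(X_1)$ and $\mc{Z}':= p^{-1}(\mc{Z}_1)$. The mapping cylinder description $C_1 = X_1 \cup (\mc{Z}_1\times[0,1])/\sim$ is a purely combinatorial/topological structure that is preserved under covers, so $C'$ is naturally the augmented cube complex of the pair $(X',\mc{Z}')$, with the locally isometric immersion $\mc{Z}'\to X'$ obtained by restricting $p$. To check that this pair is superconvex, observe that the universal cover of $X'$ coincides with the universal cover of $X_1$, and every elevation of a component of $\mc{Z}'$ to $\tilde{X'}$ is also an elevation of a component of $\mc{Z}_1$; superconvexity is therefore inherited from $(X_1,\mc{Z}_1)$. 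The identification of the peripheral structure $\mc{P}'$ on $G':=\pi_1(C')\dotnorm G_1$ with the induced structure from Notation \ref{subpair} is then exactly the topological description given in the Remark following that Notation: the components of $\mc{Z}'$ realize representatives of the $G'$-conjugacy classes of subgroups $P^g\cap G'$ for $P\in\mc{P}_1$ and $g\in G_1$.

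For property (2), each non-peripheral hyperplane of $C'$ is an elevation of a non-peripheral hyperplane $W$ of $C_1$, and via the homotopy equivalence $X_1\hookrightarrow C_1$ these correspond to hyperplanes of $X_1$, all of which were arranged to be non-separating in Step 1. The key idea is to use the \emph{signed} (integer-valued) intersection number rather than its mod-$2$ reduction: since $W$ is two-sided, signed intersection with $W$ defines a homomorphism $\chi_W\co\pi_1(C_1)\to\bZ$ which is non-trivial precisely when $W$ is non-separating. The restriction of $\chi_W$ to the finite-index normal subgroup $G' = \pi_1(C')$ remains non-trivial (any non-trivial homomorphism to $\bZ$ restricts non-trivially to a finite-index subgroup). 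Writing $\tilde{W}_1,\ldots,\tilde{W}_k$ for the elevations of $W$ to $C'$, the decomposition $\chi_W|_{G'} = \sum_i \chi_{\tilde{W}_i}$ then forces some $\chi_{\tilde{W}_i}$ to be non-trivial, so at least one elevation is non-separating. Because the cover is regular, the deck group acts transitively on the elevations and intertwines the $\chi_{\tilde{W}_i}$ via automorphisms of $G'$; hence either all or none of the $\chi_{\tilde{W}_i}$ are non-trivial, so every non-peripheral hyperplane of $C'$ is non-separating.

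The main obstacle is property (2): the analogous statement using only mod-$2$ intersection numbers can genuinely fail in even-degree covers, where the $\bZ/2$-classes of the $\tilde{W}_i$ might all vanish. The use of the $\bZ$-valued intersection (which is available precisely because our hyperplanes are two-sided) combined with the regularity of the cover is what makes the induction go through.
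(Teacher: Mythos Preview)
Your proof is correct. The paper actually omits the proof entirely, calling the lemma ``easy'' and moving on, so there is no argument to compare against. Your verification of property~(1) via pullback of the mapping-cylinder structure and the topological remark following Notation~\ref{subpair} is exactly the kind of routine check the authors have in mind.

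For property~(2), your argument via the $\bZ$--valued intersection homomorphism $\chi_W$ is clean and correct, and your closing observation is on point: the mod--$2$ intersection $i_W$ used elsewhere in the paper (Definition~\ref{ddotcover}) can genuinely die on a finite-index subgroup, so the integral version together with regularity is what is needed here. One small remark: when you invoke the deck group to pass from ``some $\chi_{\tilde W_i}$ nontrivial'' to ``all $\chi_{\tilde W_i}$ nontrivial,'' the deck transformation may reverse the transverse orientation, so strictly $\chi_{\tilde W_j} = \pm\,\chi_{\tilde W_i}\circ\sigma_*^{-1}$; this does not affect nontriviality, but it is worth saying explicitly.
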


\medskip\noindent\textbf{Step 3:}  
We fix some parameters.  Since $G$ is a hyperbolic group, the
universal cover $\tilde{C}$ of $C_1$ is $\delta$--hyperbolic for some
$\delta$.  Since $(G,\mc{P})$ is relatively hyperbolic, so is
$(G_1,\mc{P}_1)$.  In particular $\mc{P}_1$ is a malnormal collection.
Proposition \ref{malnormalseparated} then implies that the family of elevations of components of
$\mc{Z}_1$ to $\tilde{C}$ is $f$--separated for some affine $f$.
We let $\epsilon_A$ and $\epsilon_B$ be constants which are sufficiently large for the Combination Theorem \ref{combotheorem} to work.  (For example we may take $\epsilon_B = 50\delta+1$ and $\epsilon_A = 2\max\{f(\epsilon_B+2\delta)+\delta+\epsilon_B,2\epsilon_B\}+1$.)

Fundamental groups of hyperplanes and of peripheral complexes are quasiconvex in $G$.
Using QCERF we may pass to a finite regular cover $C_2\to C_1$ associated to a pair $(X_2,\mc{Z}_2)$ so that:
\begin{enumerate}
\item Every non-peripheral hyperplane is $\epsilon_A$--embedded in $C_2$; and
\item every peripheral subcomplex is $\epsilon_B$--embedded in $C_2$.
\end{enumerate}

\medskip\noindent\textbf{Step 4:}  
Let $\ddot{G}_2 = \pi_1(\ddot{C}_2)$, and let 
\[\ddot{\mc{P}}_2 = \{ \pi_1\tilde{Z}<\pi_1(\ddot{C}_2)\mid \tilde{Z}\mbox{ an elevation of a component }Z\subseteq\mc{Z}_2\mbox{ to }\ddot{C}_2\}.\]
We now can show the double-dot hierarchy $\mc{H}$ on $\ddot{C}_2$ is faithful and quasiconvex using Theorem \ref{goodhierarchy}.\eqref{good:qc}.  Since the peripheral subcomplexes are never cut up, this hierarchy is $\ddot{\mc{P}}_2$--elliptic.
  Since the pair $(X_2,\mc{Z}_2)$ is superconvex, Theorem \ref{goodhierarchy}.\eqref{good:noaccidents} implies that the edge spaces of the hierarchy have no accidental $\ddot{\mc{Z}}_2$--loops.  This implies that the hierarchy is \emph{fully} $\ddot{\mc{P}}_2$--elliptic.

The cube complex $C_2$ is the augmented complex associated to a pair $(X_2,\mc{Z}_2)$ so that every hyperplane of $X_2$ is embedded, $2$--sided, and nonseparating (from Step 1), so Corollary \ref{cor:terminalgroups} applies.  We deduce that the hierarchy of groups on $\ddot{G}_2$ terminates in free products of free groups and elements of $\ddot{\mc{P}}_2$.

\medskip\noindent\textbf{Step 5:}  
Pass to a further finite index subgroup $(G_3,\mc{P}_3)\dotnorm (\ddot{G}_2,\ddot{\mc{P}}_2)$ so the hierarchy induced by $\mc{H}$ is a malnormal hierarchy.  This can be done by Theorem \ref{t:vmalnormal}.

The terminal groups of the induced hierarchy are finite index in the terminal groups of $\mc{H}$.  In particular, they are
free products of free groups and elements of $\mc{P}_3$.  We can then continue the hierarchy (quasiconvexly, malnormally, and fully $\mc{P}_3$--elliptically) to one which terminates in $\mc{P}_3$.
\qed

\section{The (Malnormal) Special Combination Theorem}\label{s:SCT}
The Malnormal Quasiconvex Hierarchy Theorem of Wise is essential to our proof of Theorem \ref{thm:hierarchydescends}.  We explain how the result follows from the work in \cite{HsuWise15} and \cite{HaglundWise12}.
\begin{theorem}[Malnormal Quasiconvex Hierarchy]\label{MQH} \cite[Theorem 11.2]{Wise}
  If $G$ is a hyperbolic group with a malnormal quasiconvex hierarchy terminating in a collection of virtually special groups, then $G$ is virtually special.
\end{theorem}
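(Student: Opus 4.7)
The plan is to proceed by induction on the length $n$ of the hierarchy $\mc{H}$. For $n=0$ the hierarchy is degenerate, so $G$ coincides with its unique (terminal) vertex group and is virtually special by hypothesis.

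For the inductive step, assume the result for hierarchies of length at most $n-1$, and let $\mc{H}$ have length $n$ for the hyperbolic group $G$. The level-$1$ graph of groups structure writes $G \cong \pi_1(\Gamma,\mc{G},v)$ with edge groups that are both quasiconvex and malnormal in $G$, and with each vertex group $G_v$ equipped with a malnormal quasiconvex hierarchy of length $n-1$ terminating in virtually special groups. Since the edge groups are quasi-isometrically embedded in $G$, the vertex groups are as well (by the lemma on vertex-group quasi-isometric embedding earlier in the excerpt); in particular each $G_v$ is quasiconvex in $G$, hence hyperbolic. The inductive hypothesis applies to each $G_v$, so every vertex group is virtually special.

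At this stage $G$ is a hyperbolic group which splits as a finite graph of virtually special hyperbolic groups over quasiconvex edge groups that are malnormal in $G$. The Hsu--Wise cubulation theorem \cite{HsuWise15} produces a proper cocompact action of $G$ on a CAT$(0)$ cube complex under precisely these hypotheses, and the Haglund--Wise special combination theorem \cite{HaglundWise12} then promotes the combination of virtually special vertex groups with malnormal quasiconvex edge groups to virtual specialness of $G$. This closes the induction.

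The main technical point to be managed is that the input theorems are most cleanly formulated for a single one-edge splitting (amalgamated product or HNN extension), whereas the level-$1$ graph of groups for $G$ may have many edges. To reduce to the one-edge setting one collapses a spanning tree of $\Gamma$ and then iteratively peels off a single edge at a time, verifying at each stage that the edge subgroup in question is malnormal in the current ambient group and remains quasiconvex in it. Once this bookkeeping is arranged, successive application of \cite{HsuWise15,HaglundWise12} along the refinement yields the desired virtually special structure on $G$, and is the only delicate point in what is otherwise a direct induction.
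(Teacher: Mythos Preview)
Your proposal is correct and follows essentially the same approach as the paper: induct on hierarchy length to reduce to a single level (the paper packages this as the Malnormal Special Combination Theorem~\ref{sct}), then induct on the number of edges to reduce to a one-edge splitting, and finally apply \cite{HsuWise15} for the cubulation and \cite{HaglundWise12} for virtual specialness. The paper is slightly more explicit at the final step: it uses \cite{Haglund08} to realize the edge and vertex groups by convex cocompact subcomplexes of the Hsu--Wise cubulation, assembles these into an NPC graph of spaces in which the edge space is an honest hyperplane, and then invokes \cite[Theorem~8.5]{HaglundWise12}; your sketch elides this construction but correctly identifies the ingredients.
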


Theorem \ref{MQH} follows from the following (malnormal version of the) Special Combination Theorem, by induction on the length of the hierarchy.  

\begin{theorem}[Malnormal Special Combination]\label{sct} \cite{HsuWise15} $+$ \cite{HaglundWise12}
  Suppose that $G$ is a hyperbolic group and that $G = \pi_1(\Gamma,\mc{G})$ for some faithful graph of groups $(\Gamma,\mc{G})$ where
\begin{enumerate}
\item\label{eq:malnormal qc} The edge groups of $(\Gamma,\mc{G})$ are malnormal and quasiconvex in $G$; and
\item The vertex groups of $(\Gamma,\mc{G})$ are virtually special.
\end{enumerate}
Then $G$ is virtually special.
\end{theorem}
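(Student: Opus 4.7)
I would deduce Theorem \ref{sct} by combining the two cited results in the obvious way: first use Hsu--Wise to \emph{cubulate} $G$ in a way that respects the graph-of-groups structure, then invoke the Haglund--Wise special combination theorem to upgrade the cubulation to a virtually special one.

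The first step is to produce a proper, cocompact action of $G$ on a CAT$(0)$ cube complex $\tilde{X}$ compatible with $(\Gamma,\mc{G})$. Each vertex group $G_v$ is virtually special, so in particular acts properly and cocompactly on a CAT$(0)$ cube complex $\tilde{Y}_v$. The edge groups of $(\Gamma,\mc{G})$ are malnormal and quasiconvex in $G$, and since the hierarchy is faithful they are also quasiconvex in each adjacent vertex group. Under precisely these hypotheses, the main theorem of Hsu--Wise \cite{HsuWise15} (``cubulating malnormal amalgams'') constructs walls on $G$ extending compatible wall structures on the $G_v$, yielding a proper cocompact action $G \curvearrowright \tilde{X}$ in which each $G_v$ stabilizes and acts properly cocompactly on a convex subcomplex $\tilde{X}_v \subseteq \tilde{X}$.

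The second step is to apply the Haglund--Wise Special Combination Theorem \cite{HaglundWise12}. That theorem takes as input a hyperbolic group acting properly cocompactly on a CAT$(0)$ cube complex, equipped with a finite collection of convex cocompactly stabilized subcomplexes which (i) cover the cube complex $G$--equivariantly in a tree-like way coming from a splitting, (ii) have virtually special quotients, and (iii) are glued along quasiconvex subgroups. Conditions (i) and (iii) hold because the cubulation from Hsu--Wise was built precisely to reflect $(\Gamma,\mc{G})$; condition (ii) holds because $G_v$ is virtually special by hypothesis and hence its action on the convex subcomplex $\tilde{X}_v$ is virtually special (virtual specialness is a property of the hyperbolic group, not of the particular proper cocompact cubulation, by Remark \ref{remark:vspecialgroup}). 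The conclusion is that $G$ itself is virtually special.

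The main technical obstacle, and the only nontrivial content to verify, is matching the output of Hsu--Wise to the input of Haglund--Wise: one must check that the cube complex produced in Step 1 actually realizes the splitting $(\Gamma,\mc{G})$ in the sense demanded by the special combination theorem (in particular, that the vertex subcomplexes $\tilde{X}_v$ are convex, that their stabilizers are exactly the $G_v$, and that the relative structure of edge subcomplexes inside them records the incidence in $(\Gamma,\mc{G})$). Once this bookkeeping is in place, everything else is a direct quotation of the two cited results, and the theorem follows without further argument.
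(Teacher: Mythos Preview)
Your overall strategy---cubulate via Hsu--Wise, then combine via Haglund--Wise---matches the paper's, but the execution differs in two instructive ways. First, the paper reduces to a \emph{one-edge} splitting by induction on the number of edges of $\Gamma$; this is needed because Hsu--Wise's main theorem is stated for amalgams and HNN extensions, not arbitrary graphs of groups. Second, and more importantly, the paper does \emph{not} attempt the ``bookkeeping'' you identify as the main obstacle---verifying that the Hsu--Wise cubulation itself has convex vertex subcomplexes in the right configuration. Instead, the paper uses Hsu--Wise only to get \emph{some} proper cocompact cubulation $X$ of $G$, then invokes Haglund's convex-core theorem to extract convex cocompact subcomplexes $X(A)$, $X(B)$, $X(C)$ for the vertex and edge groups, and finally \emph{builds a new} NPC cube complex $Z$ as a graph of spaces with quotients $Y(A)$, $Y(B)$ as vertex spaces and $Y(C)$ as edge space. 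In this $Z$ the edge space is literally a hyperplane, so Haglund--Wise's Theorem~8.5 (which asks for a hyperplane whose complement is virtually special) applies directly.

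Your formulation of Haglund--Wise's hypothesis---a tree-like covering by convex subcomplexes---is not quite what \cite[Theorem 8.5]{HaglundWise12} says; that theorem wants an actual hyperplane. The paper's rebuild-$Z$ maneuver neatly converts the output of Step~1 into exactly this input, avoiding the compatibility check you flagged. So your plan is correct in outline, but the paper's route is cleaner: rather than proving the Hsu--Wise complex has the right structure, it discards that complex (keeping only the convex cores) and assembles one that visibly does.
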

\begin{remark}
In Section \ref{s:QCH} below we remark how the Malnormal Special Quotient Theorem and Dehn filling can be used to remove the condition of malnormality from \eqref{eq:malnormal qc} above in order to obtain another, much more powerful result of Wise from \cite{Wise}.
\end{remark}

\begin{proof}[Proof of Theorem \ref{sct}]
The result follows from the case that $\Gamma$ has a single edge by induction on the number of edges in $\Gamma$.  We therefore assume that $\Gamma$ is a one-edge splitting.

\cite[Main Theorem 8.1]{HsuWise15} implies that $G$ acts properly
 and cocompactly on a CAT$(0)$ cube complex $X$.

\setcounter{case}{0}

  \begin{case}
   $G = A*_C B$.
  \end{case}

 Since $C$ is quasiconvex, there is a $C$--cocompact convex
  subcomplex $X(C)\subseteq X$ \cite[Theorem 2.28]{Haglund08}.  Denote the quotient by the $C$--action $Y(C) = C\backslash X(C)$.
  Since $A$ and $B$ are quasiconvex, there are similarly an $A$--cocompact subcomplex $X(A)$ and a $B$--cocompact subcomplex $X(B)$ with quotients $Y(A) = A\backslash X(A)$ and $Y(B) = B\backslash X(B)$.  By enlarging $X(A)$ and $X(B)$ if necessary, we may assume their intersection contains $X(C)$.

  The inclusions of $X(C)$ into $X(A)$ and $X(B)$ induce local isometries
  $f\co Y(C) \to Y(A)$ and $g\co Y(C)\to Y(B)$.
  
  These give the necessary data to build a graph of spaces $Z$ with
  underlying graph a single edge.  It is easily verified that $Z$ is a
  non-positively curved cube complex, and the edge space $Y(C)$ is a separating
  hyperplane.  Since $A$ and $B$ are assumed to be virtually special
  groups, the vertex spaces $Y(A)$ and $Y(B)$ must be virtually
  special cube complexes, as explained in Remark \ref{remark:vspecialgroup}.
  Since $\pi_1(Y(C)) = C$ is malnormal in
  $G$, we may apply \cite[Theorem 8.5]{HaglundWise12} to conclude that $Z$ is
  virtually special.
  
\begin{case}
   $G = A*_C$.
\end{case}
  This case is similar, but the graph of spaces construction is slightly different.
  We suppose that $C<A$, and choose $t\in G$, $\phi\co C\to A$ so that $G$ has the presentation:
\[ \langle A,t \mid c = t\phi(c)t^{-1}\mbox{, }c\in C\rangle.\]
  Let $C' = \phi(C)$.  Choose some $C$--cocompact convex subcomplex
  $X(C)\subseteq X$.  Note that $X(C'):=t^{-1}X(C)$ is
  $C'$--cocompact.  
  Since $C$ and $C'$ are both subgroups of the
  quasiconvex subgroup $A$, we can choose an $A$--cocompact convex
  subcomplex $X(A)\subseteq X$ containing both $X(C)$ and $X(C')$.

  We now form the quotient spaces $Y(A) = A\backslash X(A)$,
  $Y(C)=C\backslash X(C)$, and $Y(C')= C'\backslash X(C')$.  Let
  $\iota\co Y(C)\to Y(A)$ and $\iota'\co Y(C')\to Y(A)$ be the maps
  induced by the inclusions in $X$.  The translation $x\mapsto
  t^{-1}x$ on $X$ restricts to a homeomorphism $X(C)\to X(C')$, which
  descends to a homeomorphism $\tau\co Y(C)\to Y(C')$.  We now have
  the data to build a graph of spaces with a single vertex space
  $Y(A)$ and a single edge space $Y(C)$.  The two maps of the edge
  space to $Y(A)$ are $\iota$ and $\iota'\circ \tau$, both local isometries.

  The space $Z$ thus constructed is a non-positively curved cube complex whose
  fundamental group is isomorphic to $G$.  Moreover, it contains a
  hyperplane $H$ with fundamental group equal to $C$, and
  $Z\smallsetminus N(H)$ is virtually special.  We may apply
  \cite[Theorem 8.5]{HaglundWise12} to conclude that $Z$ is virtually special.
\end{proof}
\begin{remark}
  The preceding argument does not use the assumption that the edge
  groups of the hierarchy are malnormal in $G$, but only the ``local''
  information that they are malnormal in the next level up.
\end{remark}

\section{Relative hyperbolicity and Dehn filling} \label{s:RHDF}
In this section we state the group theoretic Dehn filling results
needed to prove Theorem \ref{thm:hierarchydescends}.
\subsection{Group theoretic Dehn filling}
We first recall the definition of group theoretic Dehn filling (Definition \ref{def:dehnfill}).
From a group pair $(G,\mc{P})$ and a collection 
$\{N_i\lhd P_i\mid P_i\in \mc{P}\}$,
we obtain a quotient of $G$ and a filling map
\begin{equation}\label{eq:filling}
\pi \co G \to G(N_1,\ldots,N_m) = G/\llangle \cup_i N_i\rrangle .
\end{equation}

\begin{definition}
  Let $(G,\mc{P})$ be relatively hyperbolic.
  A statement is true \emph{for all sufficiently long fillings} if
  there is a finite set $B\subset G\smallsetminus\{1\}$ so that the
  statement holds for all fillings as in \eqref{eq:filling}
  with
  $B\cap(\cup_i N_i)=\emptyset$.
\end{definition}

The basic group theoretic Dehn filling result is the following (but see \cite{DGO} for an interesting generalization).
\begin{theorem}[Relatively hyperbolic Dehn filling \cite{osin:peripheral}, cf. \cite{rhds}] \label{thm:RHDF}
Let $G$ be a group and $\mc{P} = \{ P_1, \ldots , P_m \}$ a collection of subgroups so that $(G,\mc{P})$ is relatively hyperbolic.  Let $F \subset G$ be finite.  For all sufficiently long fillings
\[	\phi \co G \to \bar{G} := G(N_1,\ldots,N_m)	;	\]
\begin{enumerate}
\item $\ker(\phi|_{P_i}) = N_i$ for $i = 1,\ldots,m$;
\item $(\bar{G},\{ \phi(P_1),\ldots , \phi(P_m))$ is relatively hyperbolic; and
\item $\phi|_F$ is injective.
\end{enumerate}
\end{theorem}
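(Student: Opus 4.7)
The plan is to follow the cusped space approach of Groves--Manning. First, construct the cusped space $X = X(G,\mc{P})$ by attaching a combinatorial horoball $\mc{H}(gP_i)$ to each left coset $gP_i$ inside the Cayley graph of $G$ (with respect to a finite generating set together with the peripheral subgroups). By the Groves--Manning characterization (equivalent to Osin's), the relative hyperbolicity of $(G,\mc{P})$ is the same as saying that $X$ is a $\delta$--hyperbolic, fine, proper-ish graph on which $G$ acts with the horoball stabilizers exactly the conjugates of the $P_i$.

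Next, for a prospective filling with kernels $\{N_i \lhd P_i\}$, I would build the \emph{filled cusped space} $\bar X$ by taking, for each horoball $\mc{H}(gP_i)$, the quotient of the horoball by the action of the conjugate $gN_ig^{-1}$ and equivariantly gluing these quotients back together. The key technical lemma -- and the main obstacle -- is to show that for all sufficiently long fillings (that is, once no nontrivial element of any $N_i$ has horoball-translation length less than some constant depending on $\delta$), the quotient space $\bar X$ remains $\delta'$--hyperbolic for some $\delta'$ depending only on $\delta$. This is proved by a linking/local-to-global argument: one shows that geodesic triangles in $\bar X$ lift to ``nearly geodesic'' piecewise--geodesic diagrams in $X$, with the only non-geodesic pieces occurring inside quotient horoballs where the filling has introduced short loops. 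A careful geometric analysis in the combinatorial horoball, using the fact that distances in horoballs grow logarithmically, shows that if the shortest element killed in each $P_i$ is sufficiently long in the word metric of $P_i$, then these short loops are far enough from the thick part that they cannot disrupt $\delta'$--thinness.

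Once $\bar X$ is known to be hyperbolic, part (2) follows: the action of $\bar G$ on $\bar X$ has peripheral stabilizers equal to the images $\phi(P_i) = P_i/N_i$, verifying relative hyperbolicity of $(\bar G, \{\phi(P_i)\})$ via the Groves--Manning/Bowditch criterion. For part (1), one argues that a loop in $\mc{H}(P_i)$ that bounds a disc in $\bar X$ either already bounds in $X$ (so represents an element of $N_i$ by equivariance of the quotient), or else descends to a diagram that can be filled inside the single quotient horoball, which again forces its boundary label into $N_i$; conversely $N_i \subseteq \ker(\phi|_{P_i})$ is automatic. Part (3) is then easy: a finite set $F \subset G \smallsetminus \{1\}$ determines a finite collection of geodesic loops in $X$; choosing the filling kernels to avoid a sufficiently large ball in $G$ ensures no element of $F$ can be trivialized by the relators we add, so $\phi|_F$ is injective.

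The main obstacle, as indicated, is the hyperbolicity of $\bar X$. Making the local-to-global argument quantitative requires tracking constants through the combinatorial horoball geometry and verifying a version of the Cartan--Hadamard theorem in the metric (not Riemannian) setting, as in the papers \cite{osin:peripheral} and \cite{rhds}. Once hyperbolicity of $\bar X$ is in hand, the three conclusions follow by relatively formal arguments using the geometry of the $\bar G$--action on $\bar X$ and the structure of horoball stabilizers.
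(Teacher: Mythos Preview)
The paper does not prove Theorem~\ref{thm:RHDF} at all; it is simply quoted from the literature, with the attribution \cite{osin:peripheral} (Osin) and the alternative \cite{rhds} (Groves--Manning). There is therefore nothing in the paper to compare your argument against directly.

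That said, your sketch is recognizably the cusped-space strategy of \cite{rhds}, and the broad outline---build $X$, form the partial quotient $\bar X$, prove $\bar X$ is hyperbolic for long fillings, then read off (1)--(3)---is correct. A couple of technical slips: the cusped space $X$ is a \emph{proper} $\delta$--hyperbolic graph, not a fine one; fineness belongs to the coned-off Cayley graph in the Bowditch/Farb picture, which is a different model. Also, the horoballs are glued to the ordinary Cayley graph $\Gamma(G,S)$ for a finite $S$, not to the relative Cayley graph. Your description of how hyperbolicity of $\bar X$ is established is a reasonable high-level summary, though in \cite{rhds} the argument goes through a Dehn-presentation/small-cancellation analysis rather than a direct Cartan--Hadamard style local-to-global principle. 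Osin's original proof in \cite{osin:peripheral} takes a rather different route via van Kampen diagrams over the relative presentation, so if you want to match the primary citation you would need to rework the argument substantially.
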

 
\subsection{Quasi-convex Dehn filling results}
In this subsection we recall the definition of relatively quasiconvex subgroup and the statements relating quasiconvexity to Dehn fillings.  
\begin{definition}
Let $(G,\mc{P})$ be a relatively hyperbolic group (relatively) generated by a
finite subset $S\subseteq G$.  The \emph{relative Cayley graph}
$\hat{\Gamma}=\Gamma(G,S\sqcup (\bigcup \mc{P}))$ is then a $\delta$--hyperbolic, fine graph \cite{bowditch:relhyp,osin:relhypbook}.
A subgroup $H<G$ is said to be \emph{relatively quasiconvex} in $(G,\mc{P})$ if there is a constant $\lambda$ so that whenever $v\in G$ lies on a $\hat{\Gamma}$--geodesic with endpoints in $H$, then $d_S(v,H)\leq \lambda$.
\end{definition}

\begin{remark}  
There are many equivalent notions of relatively quasiconvex subgroups -- \cite{HruskaQC}(cf. \cite{HruskaQCerratum}), \cite{agm1}, \cite{MM-P}.
\end{remark}

The following result follows immediately from \cite[Theorem 1.5]{HruskaQC} or \cite[Theorem 1.1]{MartinezQC}.

\begin{proposition}\label{prop:QCisRQC}
Suppose that $G$ is hyperbolic, that $H$ is quasiconvex in $G$ and that $(G,\mc{P})$ is relatively hyperbolic.  Then $H$ is a relatively quasiconvex subgroup of $G$.
\end{proposition}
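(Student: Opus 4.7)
The plan is to verify the definition of relative quasiconvexity directly, exploiting both the $\Gamma$-quasiconvexity of $H$ and the fact, from Theorem \ref{thm:when hyp is RH}, that each $P\in\mc{P}$ is quasi-isometrically embedded in $\Gamma=\Gamma(G,S)$. Fix $h_1,h_2\in H$ and a $\hat\Gamma$-geodesic $\hat\gamma$ between them; the goal is to bound $d_S(v,H)$ uniformly for every vertex $v$ of $\hat\gamma$.

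First I would ``unfold'' $\hat\gamma$ into a path $\tilde\gamma$ in $\Gamma$ with the same vertex set by replacing each peripheral edge (joining $gp$ to $gq$ with $p,q\in P_i$) by an $S$-geodesic from $gp$ to $gq$. The key observation is that since $\hat\gamma$ is a genuine geodesic in $\hat\Gamma$, it enters each peripheral coset in at most one contiguous stretch, and within any such stretch it uses a single peripheral edge (else $\hat\gamma$ could be shortened using the direct peripheral edge between the first and last vertex in that coset). Combined with the quasi-isometric embedding of each $P_i$ and a standard bounded coset penetration argument, this shows that the unfolded path $\tilde\gamma$ is a uniform $(K,C)$-quasigeodesic in the $\delta$-hyperbolic space $\Gamma$.

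Now fix a $\Gamma$-geodesic $\gamma$ from $h_1$ to $h_2$. By $\Gamma$-quasiconvexity of $H$, we have $\gamma\subset N_R(H)$ for some uniform $R$. By stability of quasigeodesics in the $\delta$-hyperbolic space $\Gamma$, $\tilde\gamma$ lies in a uniform $d_S$-neighborhood of $\gamma$, and hence in $N_\lambda(H)$ for some uniform $\lambda$. Since $v$ is a vertex of $\tilde\gamma$, this yields $d_S(v,H)\le\lambda$, which is precisely the defining property of relative quasiconvexity.

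The main obstacle is the quasigeodesic step for $\tilde\gamma$: a priori the unfolded path could be arbitrarily inefficient in $\Gamma$ if the relative geodesic repeatedly entered the same peripheral coset, so the argument crucially uses that $\hat\gamma$ is a genuine geodesic (not merely a quasigeodesic) in $\hat\Gamma$, together with the quasi-isometric embedding of each peripheral subgroup to control the $d_S$-length contributed by each replaced peripheral edge. Once this is in place the remainder is a routine application of stability of quasigeodesics in $\delta$-hyperbolic spaces, which is ultimately what makes this proposition an ``immediate'' consequence of the equivalent characterizations of relative quasiconvexity in \cite{HruskaQC} and \cite{MartinezQC}.
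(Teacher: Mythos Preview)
The paper gives no argument at all for this proposition; it simply cites \cite[Theorem 1.5]{HruskaQC} and \cite[Theorem 1.1]{MartinezQC}, which provide equivalent characterizations of relative quasiconvexity (one of which, applied with the trivial peripheral structure on the hyperbolic group $G$, reduces to ordinary quasiconvexity).  Your proposal, by contrast, attempts to verify the definition directly, and in doing so it essentially unpacks the content of those cited theorems in the special case at hand.

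Your outline is correct in spirit, and the overall strategy---unfold $\hat\gamma$ to a path $\tilde\gamma$ in $\Gamma$, argue that $\tilde\gamma$ stays close to a $\Gamma$--geodesic $\gamma$, and then invoke the $\Gamma$--quasiconvexity of $H$---is sound.  Two comments are worth making.  First, the intermediate claim that $\tilde\gamma$ is a uniform $(K,C)$--quasigeodesic in $\Gamma$ is stronger than you need: for the conclusion it suffices that the \emph{vertices} of $\hat\gamma$ lie in a uniform $d_S$--neighborhood of $\gamma$, and this follows more directly from bounded coset penetration applied to the pair $(\hat\gamma,\gamma)$ (recall that a $\Gamma$--geodesic is automatically a relative quasigeodesic).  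Second, the phrase ``a standard bounded coset penetration argument'' is doing real work here and is not quite a proof; spelling it out amounts to re-proving a piece of the equivalence theorems in \cite{HruskaQC,MartinezQC}.  Your remark that one ``crucially'' needs $\hat\gamma$ to be a genuine geodesic rather than a quasigeodesic is also slightly misleading: BCP applies to relative quasigeodesics as well, and the contiguity of coset visits is a convenience rather than the heart of the matter.

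In short: your approach is a legitimate direct proof and is more informative than the paper's bare citation, but the quasigeodesic step you flag as the ``main obstacle'' is precisely the nontrivial content that the paper outsources to \cite{HruskaQC} and \cite{MartinezQC}, and your sketch of it would need to be fleshed out to stand on its own.
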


\begin{theorem}\label{relqcthenrelhyp} \cite{HruskaQC}
A relatively quasiconvex subgroup $H$ of a relatively hyperbolic group $(G,\mc{P})$  has a (finite) collection of peripheral subgroups $\mc{D}$ with respect to which it is relatively hyperbolic.  Moreover, the peripheral subgroups $\mc{D}$ of $H$ are conjugate into elements of $\mc{P}$.
\end{theorem}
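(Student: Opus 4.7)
The plan is to follow Hruska's geometric approach via Bowditch's definition of relative hyperbolicity, working inside the relative Cayley graph $\hat{\Gamma}=\Gamma(G,S\sqcup(\bigcup\mc{P}))$, which we know is hyperbolic and fine. First I would define the candidate collection $\mc{D}$: for each $P\in \mc{P}$ and $g\in G$ such that $H\cap gPg^{-1}$ is infinite, record the subgroup $H\cap gPg^{-1}$, and then let $\mc{D}$ be a set of $H$-conjugacy class representatives of these subgroups. Because each $P\in\mc{P}$ is the vertex stabilizer of a distinguished ``parabolic'' vertex $v_P$ of $\hat{\Gamma}$, each such infinite intersection corresponds to a parabolic vertex $gv_P$ whose $H$-stabilizer is infinite.

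The first technical step is to show $\mc{D}$ is \emph{finite} up to $H$-conjugacy. Let $H$ act on $\hat{\Gamma}$ by restriction. Choose a basepoint $x_0\in H$, and let $\mc{V}_\infty$ be the set of parabolic vertices $v$ of $\hat{\Gamma}$ with $|\mathrm{Stab}_H(v)|=\infty$. Any $v\in \mc{V}_\infty$ has two distinct $H$-translates of some element of infinite order fixing it, so in particular it lies on a bi-infinite $\hat{\Gamma}$-geodesic with endpoints in $H$. Relative quasiconvexity then places $v$ within bounded $S$-distance of $H$; hence after quotienting by $H$, the parabolic vertices arising from infinite intersections project into a bounded neighborhood of the basepoint. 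Fineness of $\hat{\Gamma}$ (only finitely many edges of $\hat{\Gamma}$ in any bounded $S$-ball, up to the action of edge stabilizers) then bounds the number of $H$-orbits on $\mc{V}_\infty$, which is exactly what finiteness of $\mc{D}$ demands.

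Next I would establish the relative hyperbolicity of $(H,\mc{D})$ using Bowditch's criterion: it suffices to exhibit an action of $H$ on a connected, hyperbolic, fine graph with finitely many orbits of edges and with finite edge stabilizers and the elements of $\mc{D}$ as the infinite vertex stabilizers. The natural candidate is the subgraph $\hat{\Gamma}_H\subset\hat{\Gamma}$ obtained as the full subgraph on $H\cdot(\{x_0\}\cup\mc{V}_\infty)$, then coned off over each parabolic vertex. Hyperbolicity is inherited from $\hat{\Gamma}$, and fineness descends to subgraphs. For cocompactness on edges, one uses relative quasiconvexity: any edge in $\hat{\Gamma}_H$ (corresponding to an $S$-edge near $H$, or to a parabolic edge at a vertex $gv_P$ with $H\cap gPg^{-1}$ infinite) lies within a bounded $S$-distance of $H$, so fineness produces only finitely many $H$-orbits. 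Finite edge stabilizers follow from the corresponding property for $G$ on $\hat{\Gamma}$. The infinite vertex stabilizers at parabolic vertices are precisely the subgroups $H\cap gPg^{-1}$, which up to $H$-conjugacy are the elements of $\mc{D}$; this simultaneously gives the ``moreover'' part.

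The main obstacle will be the finiteness of $\mc{D}$: translating the combinatorial fact that $v$ lies on a $\hat{\Gamma}$-geodesic between points of $H$ into a bound of the form $d_S(v,H)\le\lambda'$ (rather than $\hat{d}(v,H)\le\lambda$) requires care, because the parabolic vertex $v$ itself contributes no $S$-length to the geodesic. I would handle this by noting that if $\mathrm{Stab}_H(v)$ is infinite then we may choose two geodesic rays from $v$ in distinct $\mathrm{Stab}_H(v)$-orbits, and the concatenation through $v$ remains a geodesic; then perturbing near $v$ and applying quasiconvexity of $H$ (together with the fact that $H\cap\mathrm{Stab}_G(v)$ is within bounded $S$-distance of any deep point of $\mathrm{Stab}_G(v)\cdot x_0$) yields the required $S$-distance bound. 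With that in hand the fineness counting argument closes out the theorem.
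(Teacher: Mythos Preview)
The paper does not actually prove this theorem; it is stated with a bare citation to \cite{HruskaQC} and used as a black box. So there is no ``paper's own proof'' to compare against.

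Your proposal is a reasonable outline of Hruska's argument, but there is a confusion of models that would need to be cleaned up before it could stand on its own. You repeatedly refer to ``parabolic vertices'' $v_P$ of $\hat{\Gamma}$ whose stabilizer is $P$, but in the relative Cayley graph as defined in this paper, $\hat{\Gamma}=\Gamma(G,S\sqcup(\bigcup\mc{P}))$ has vertex set $G$, and the left $G$-action has trivial vertex stabilizers. The graph you have in mind is either the Farb coned-off Cayley graph (where one adds a cone vertex for each left coset $gP$) or Bowditch's fine hyperbolic graph; those are quasi-isometric to $\hat{\Gamma}$ but are not the same object. Your Bowditch-criterion step (exhibiting a fine hyperbolic $H$-graph with the right stabilizers) really wants the coned model, while the quasiconvexity hypothesis as stated in this paper lives in the uncoled model. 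The passage between them is routine but must be made explicit, and your sentence ``Any $v\in\mc{V}_\infty$\ldots lies on a bi-infinite $\hat{\Gamma}$-geodesic with endpoints in $H$'' does not parse in the paper's $\hat{\Gamma}$.

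Once the model is fixed, the skeleton is right: finiteness of $\mc{D}$ comes from bounding the $S$-distance from $H$ to any coset $gP$ with $|H\cap gPg^{-1}|=\infty$ (your last paragraph identifies the genuine subtlety here), and then Bowditch's criterion applied to an $H$-invariant subgraph gives relative hyperbolicity together with the ``moreover'' clause.
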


\begin{remark}\label{rk:modifyD} 
  By modifying $\mc{D}$ if necessary, we can always suppose 
  \begin{enumerate}
  \item each element of $\mc{D}$ is
  infinite, and
  \item every infinite intersection of $H$ with a
  conjugate of some $P\in \mc{P}$ is conjugate in $H$ to an element of
  $\mc{D}$.
  \end{enumerate}
\end{remark}

\begin{definition}
Let $(G,\mc{P})$ be relatively hyperbolic.  A relatively quasiconvex subgroup $H$ of $G$ is {\em fully quasiconvex} if for any $P \in \mc{P}$ and any $g \in G$ the subgroup $H \cap P^g$ is either finite or else has finite-index in $P^g$.
\end{definition}

\begin{definition}[$H$--fillings] \label{def:hfilling}  
  Let $G$ be hyperbolic relative to $\mc{P}=\{P_i,\ldots,P_m\}$, and let $H<G$.  
  We say that a filling $G\to G(N_1,\ldots,N_m)$ is an
  \emph{$H$--filling} if, whenever $gP_ig^{-1}\cap H$ is infinite, for
  $P_i\in \mc{P}$ and $g\in G$, it follows that $gN_ig^{-1}\subseteq H$.  
\end{definition}
\begin{definition}[Induced filling]\label{def:inducedfilling}
  If $H$ is relatively quasiconvex in $(G,\mc{P})$ then Theorem
  \ref{relqcthenrelhyp} gives $H$ a peripheral structure $\mc{D}$ so
  that $(H,\mc{D})$ is relatively hyperbolic.  We suppose $\mc{D}$ satisfies the conditions of Remark \ref{rk:modifyD}.
  Consider an $H$--filling
\[ \pi \co G\to G(N_1,\ldots,N_m).\]
  Let $D_j\in \mc{D}$.
  There is some $P_i\in \mc{P}$ and some $g\in G$ with $g^{-1}D_j
  g\subseteq P_i$.  We define $K_j = g N_i g^{-1}$.  (Although $D_j$
  determines $P_i$ it doesn't quite determine $g$.  Note however that
  since $N_i$ is normal in $P_i$, the group $K_j$ is independent of the
  conjugating element $g$.)  Because $\pi$ is an $H$--filling, we have
  $K_j\lhd D_j$, so these groups determine a filling of $H$
\[ \pi_H\co H\to H(K_1,\ldots,K_n),\]
  called the \emph{induced filling of $H$}.
\end{definition}

The next theorem summarizes results from \cite{agm1}, where $G$ is
assumed to be torsion-free and a slightly different definition of
$H$--filling is used.  As explained in \cite[Appendix B]{MM-P} (cf. \cite[Appendix]{VH}), the torsion-free
assumption is unnecessary, so long as we use
Definition \ref{def:hfilling}.

\begin{theorem}\label{qcfilling}
  Let $(G,\mc{P})$ be relatively hyperbolic, let $H<G$ be fully relatively quasiconvex, and let $F\subset G$ be finite.  For all sufficiently long $H$--fillings $\phi\co G\to G(N_1,\ldots , N_m)$ of $G$:
  \begin{enumerate}
  \item \cite[Proposition 4.3]{agm1}\label{frq} $\phi(H)$ is fully relatively quasiconvex.
  \item \cite[Proposition 4.4]{agm1}\label{inducedfilling} $\phi(H)$ is isomorphic to the induced filling (see Definition \ref{def:inducedfilling}).  More precisely if $\phi_H\co H\to H(K_1,\ldots,K_n)$ is the induced filling map, then
$(\ker\phi)\cap H = \ker \phi_H$.
  \item \cite[Proposition 4.5]{agm1}\label{gnotinH} $\phi(F)\cap \phi(H) = \phi(F\cap H)$.
  \end{enumerate}
\end{theorem}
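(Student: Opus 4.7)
The plan is to derive the three conclusions from the corresponding propositions of \cite{agm1}, adapting the arguments to allow torsion in $G$ via the modified notion of $H$--filling in Definition \ref{def:hfilling} and the technique of \cite[Appendix B]{MM-P}. Throughout, the main geometric tool is the cusped space construction $X = X(G,\mc{P})$ from \cite{rhds}, which is $\delta$--hyperbolic, together with the filled cusped space $\bar{X}$ on which the filling $\bar{G}$ acts. The hypothesis that $\phi$ is an $H$--filling is essential: it guarantees that the collection of normal subgroups being killed is compatible with $H$, so that the induced filling $\phi_H\co H\to H(K_1,\ldots,K_n)$ is defined and passes coherently through all geometric arguments.

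For part \eqref{frq}, I would take a $\lambda$--quasi-isometrically embedded $H$--cocompact subspace $Y\subseteq X$ (which exists by relative quasiconvexity, using Theorem \ref{relqcthenrelhyp}) and show that its image $\bar{Y}\subseteq\bar{X}$ is $\phi(H)$--cocompact and quasiconvex, with constants depending only on $\lambda$ and on the filling length. Fullness of the resulting quasiconvexity comes directly from fullness for $H$ together with the identification of peripheral subgroups of $\bar{G}$ as $\phi(P_i)$'s provided by Theorem \ref{thm:RHDF}: intersections of $\phi(H)$ with conjugates of $\phi(P_i)$ lift to intersections of $H$ with conjugates of $P_i$, which are controlled by the fullness of $H$.

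For part \eqref{inducedfilling}, the containment $\ker\phi_H \subseteq (\ker\phi)\cap H$ is immediate from the definitions. The reverse is the heart of the argument: given $h\in H\cap \ker\phi$, one lifts a van Kampen diagram witnessing $h=1$ in $\bar{G}$ into the filled cusped space, uses $\delta$--hyperbolicity and the geometry of filled horoballs to decompose the diagram, and then rearranges it so that each peripheral cell corresponds to an $H$--conjugate of some $K_j$. The $H$--filling condition enters decisively at this rearrangement step, because it is exactly what ensures the conjugating element may be chosen in $H$. Part \eqref{gnotinH} then follows from \eqref{inducedfilling} together with Theorem \ref{thm:RHDF}: for any $f\in F$ with $\phi(f)=\phi(h)$ for some $h\in H$, the element $fh^{-1}$ lies in $\ker\phi$; choosing the filling sufficiently long (relative to an enlargement of $F$ containing enough $H$--coset representatives) forces $fh^{-1}\in H$ and therefore $f\in F\cap H$.

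The main obstacle will be in adapting the diagram-rearrangement argument for part \eqref{inducedfilling} to allow torsion. In \cite{agm1} the torsion-free hypothesis is used implicitly when choosing coset representatives and lifting peripheral relators, and the purpose of Definition \ref{def:hfilling} (as opposed to the narrower notion used in \cite{agm1}) is precisely to make the combinatorial arguments survive this extension; the details are carried out in \cite[Appendix B]{MM-P}. Once this is done, parts \eqref{frq} and \eqref{gnotinH} follow without substantial additional effort.
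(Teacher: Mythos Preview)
The paper does not give a proof of this theorem at all: it is stated as a summary of results from \cite{agm1}, with the remark immediately preceding the theorem noting that the torsion-free hypothesis in \cite{agm1} can be removed as in \cite[Appendix B]{MM-P} (cf.\ \cite[Appendix]{VH}) provided one uses Definition~\ref{def:hfilling}. Your proposal is therefore not to be compared against a proof in the paper, but against the cited sources, and in that respect it is an accurate sketch of how those arguments run: the cusped-space model, quasiconvexity of an $H$--cocompact subspace descending to the filled space for \eqref{frq}, the van Kampen diagram rearrangement for \eqref{inducedfilling}, and the finite-set separation argument for \eqref{gnotinH} are indeed the mechanisms used in \cite{agm1} and \cite{MM-P}.

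One small caution on your sketch of \eqref{gnotinH}: the argument in \cite[Proposition 4.5]{agm1} is not quite the coset-representative trick you describe (enlarging $F$ by $H$--coset representatives), but rather a direct geometric argument in the filled cusped space showing that if $\phi(f)\in\phi(H)$ then a geodesic from $f$ to $H$ in the cusped space has controlled length, forcing $f\in H$ once the filling is long enough relative to $F$. Your version would also work, but it is not literally what is done there.
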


The next statement we need is immediate from the discussion preceding Corollary A.46 in \cite{VH}.  Note that this discussion does not rely in any way on the Malnormal Special Quotient Theorem, even though the main result of that appendix does.  Also note that in the statements of the Appendix to \cite{VH}, the assumption is made that the peripheral structure $\mc{P}$ is the structure induced by the quasiconvex group $H$.  This assumption is not used until we argue that height \emph{strictly} decreases after filling.  For the rest (including the following statement) it is only important that $H$ is fully quasiconvex in $(G,\mc{P})$. 
\begin{theorem} [Height reduction] \label{t:height reduction}
Suppose that $(G,\mc{P})$ is relatively hyperbolic.  Suppose that $H \le G$ is fully quasi-convex of height $k$.  For all sufficiently long $H$-fillings $G \to \bar{G}$, the image $\bar{H}$ of $H$ in $\bar{G}$ has height at most $k$.
\end{theorem}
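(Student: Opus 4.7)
\emph{Plan of proof.} We argue by contradiction. Suppose, for some sufficiently long $H$-filling $\phi\co G\to\bar G$, that $\bar H:=\phi(H)$ has height at least $k+1$. Then there exist $k+1$ distinct cosets $\bar g_1\bar H,\ldots,\bar g_{k+1}\bar H$ of $\bar H$ in $\bar G$ such that
\[
\bar K \;:=\; \bigcap_{i=1}^{k+1}\bar g_i\bar H\bar g_i^{-1}
\]
is infinite. Lift each $\bar g_i$ to $g_i\in G$; since $\phi(g_iH)=\bar g_i\bar H$, the cosets $g_iH$ are automatically distinct in $G$, so by the height--$k$ hypothesis $K:=\bigcap_i g_iHg_i^{-1}$ is finite. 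Pick an infinite-order element $\bar x\in\bar K$.

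By Theorem~\ref{thm:RHDF}, for sufficiently long fillings the pair $(\bar G,\bar{\mc P})$ with $\bar{\mc P}=\{\phi(P_1),\ldots,\phi(P_m)\}$ is relatively hyperbolic, so $\bar x$ is either loxodromic in $\bar G$ or conjugate into some $\phi(P_j)$.  In the loxodromic case, standard long-filling control on translation lengths lets us lift a sufficiently high power of $\bar x$ to an infinite-order element $x\in G$ which lies simultaneously in each $g_iHg_i^{-1}$, contradicting the finiteness of $K$.

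In the parabolic case, write $\bar x\in\bar w\phi(P_j)\bar w^{-1}$ for some $\bar w\in\bar G$ and some $j$, and lift $\bar w$ to $w\in G$.  By Theorem~\ref{qcfilling}.\eqref{frq}, $\bar H$ is fully relatively quasiconvex in $\bar G$; since $\bar g_i^{-1}\bar x\bar g_i$ has infinite order and lies in $\bar H\cap\bar g_i^{-1}\bar w\phi(P_j)\bar w^{-1}\bar g_i$, each such intersection has finite index in the corresponding conjugate of $\phi(P_j)$.  The $H$-filling hypothesis (Definition~\ref{def:hfilling}) ensures that filling kernels of peripherals which already intersect $H$ infinitely lie inside $H$; combined with the induced-filling description of $\bar H$ (Theorem~\ref{qcfilling}.\eqref{inducedfilling}), this forces the analogous inclusions already in $G$: namely, $H\cap g_i^{-1}wP_jw^{-1}g_i$ has finite index in $g_i^{-1}wP_jw^{-1}g_i$ for every $i$.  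Intersecting over $i$, we conclude that $K$ contains a finite-index subgroup of $wP_jw^{-1}$, hence is infinite, a contradiction.

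The main obstacle is the parabolic case: one must trace carefully through the induced-filling structure on $\bar H$ to ensure that a finite-index peripheral inclusion in $\bar G$ reflects back to a finite-index peripheral inclusion in $G$.  The $H$-filling condition is exactly what makes this reflection valid; without it, new (infinite) intersections of $\bar H$ with conjugates of $\bar{\mc P}$ could be created by the filling, and the pullback argument would fail.
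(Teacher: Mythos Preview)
The paper does not prove this theorem itself; it defers to the discussion preceding Corollary~A.46 in the appendix to \cite{VH}, where the argument is geometric (one lifts coarse intersections of cosets via the near-isometry of cusped spaces for long fillings).  Your algebraic approach has real gaps in both cases.  In the loxodromic case you assert one can ``lift a sufficiently high power of $\bar x$ to an infinite-order element $x\in G$ which lies simultaneously in each $g_iHg_i^{-1}$,'' but all you actually have is, for each $i$ separately, a preimage $y_i\in g_iHg_i^{-1}$ of $\bar x$; these $y_i$ differ by arbitrary elements of $\ker\phi$, and you give no mechanism to make them coincide.  Control on translation lengths does not address this: producing a single element in the intersection $K$ is precisely the hard step, and it is not achieved by lifting one coset at a time.

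In the parabolic case, the $H$-filling condition and the induced-filling description run in the wrong direction for your argument.  The $H$-filling hypothesis says something about peripheral conjugates that \emph{already} meet $H$ infinitely in $G$, whereas you need to deduce from an infinite intersection $\bar H\cap \bar g_i^{-1}\bar w\,\phi(P_j)\,\bar w^{-1}\bar g_i$ in $\bar G$ that the intersection $H\cap g_i^{-1}wP_jw^{-1}g_i$ in $G$ was already infinite, for your specific chosen lifts $g_i$ and $w$.  Filling can create new infinite intersections between $\bar H$ and peripheral conjugates; ruling this out requires knowing that the induced peripheral structure on $\bar H$ is exactly the image of the peripheral structure on $H$, which is true for sufficiently long fillings but is itself a nontrivial fact you neither cite nor prove.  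Even granting it, the conclusion would only hold for \emph{some} lift of $\bar g_i^{-1}\bar w$, and reconciling these lifts across all $i$ while keeping the cosets $g_iH$ pairwise distinct is a further, unaddressed difficulty.
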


\section{Induced splittings of quotients} \label{sec:inducedsplittings}
In this section we prove Theorem \ref{thm:hierarchydescends}, which shows that (with appropriate assumptions) nice hierarchies of $(G,\mc{P})$ descend to nice hierarchies of Dehn fillings of $(G,\mc{P})$.

\subsection{Definition of the filled hierarchy}
Let $(G,\mc{P})$ be relatively hyperbolic.  Let $\mc{H}$ be a quasiconvex fully $\mc{P}$--elliptic hierarchy of $G$.  In this setting, every vertex or edge group of $\mc{H}$ is fully relatively quasiconvex.  Let $\pi\co (G,\mc{P})\to (\bar{G},\bar{\mc{P}})$ be a Dehn filling of $(G,\mc{P})$.  In this subsection we describe a (not necessarily faithful) hierarchy on $(\bar{G},\bar{\mc{P}})$.
We call this hierarchy $\bar{\mc H}$
the \emph{filled hierarchy.}

Each of the groups in the filled hierarchy is the induced filling (in
the sense of Definition \ref{def:hfilling}) of
some group in the hierarchy $\mc{H}$.

At level $0$, the filled hierarchy $\bar{\mc H}$ consists of the degenerate graph
of groups structure on $\bar{G}$.  Suppose we have defined the filled
hierarchy down to level $i$ less than the length of the hierarchy $\mc{H}$, and let $\bar{A}\in \bar{\mc{H}}$ be
a vertex group at level $i$.  The group $\bar{A}$ is the induced
filling of a vertex group $A$ at level $i$ of $\mc{H}$.
This group
$A$ comes equipped with a graph of groups structure $\alpha\co
\pi_1(\Gamma,\mc{A},a_0)\to A$, from which we must build a graph of
groups structure on $\bar{A}$.  This graph of groups structure
has the exact same underlying graph $\Gamma$ and base vertex $a_0\in\Gamma$, and differs only in the assignment $\bar{\mc{A}}$ of
groups and homomorphisms, which is defined as follows: If $x$ is a
vertex or edge of $\Gamma$, then $\bar{\mc A}_x$ is defined to be the
induced filling of $\mc{A}_x$.  
Write $\pi_x\co \mc{A}_x\to \bar{\mc A}_x$ for the induced filling map.  
Let $e$ be an edge of $\Gamma$ with $t(e)=v$, and let
$\phi_e\co \mc{A}_e\to \mc{A}_v$ be the homomorphism coming from the graph of groups.  We have to fill in the square:
\cd{\mc{A}_e\ar[r]^{\pi_e}\ar[d]_{\phi_e} & \bar{\mc{A}}_{e}\ar@{-->}[d]\\
\mc{A}_{v} \ar[r]_{\pi_{v}} & \bar{\mc A}_{v} }
The necessary condition $\ker\pi_e \subseteq \ker (\pi_{v}\circ\phi_e)$ holds, so we get an induced map $\bar{\phi}_e\co \bar{\mc A}_e\to \bar{\mc A}_{v}$.  These induced maps complete the definition of the graph of groups $(\Gamma,\bar{\mc A})$. 

We emphasize that the edge maps for this graph of groups may not be injective.  In our applications (eg Corollary \ref{c:filledhierarchy}) we must establish injectivity.

To complete the construction of  $\bar{\mc{H}}$ we need an isomorphism $\bar{\alpha}\co \pi_1(\Gamma,\bar{\mc A},a_0)\to \bar{A}$.
Coming from $\mc{H}$ we have an isomorphism $\alpha\co \pi_1(\Gamma,\mc{A},a_0)\to A$.  
Also, the induced fillings $\pi_x\co \mc{A}_x\to \bar{\mc A}_x$ commute with the edge maps, so they
suffice to define a surjection $\pi_\Gamma\co \pi_1(\Gamma,\mc{A},a_0)\to \pi_1(\Gamma,\bar{\mc A},a_0)$.
To define $\bar{\alpha}$ we need to complete the square
\cdlabel{alphabar}{
\pi_1(\Gamma,\mc{A},a_0)\ar[r]^{\pi_{\Gamma}}\ar[d]_\alpha & \pi_1(\Gamma,\bar{\mc A},a_0)\ar@{-->}[d]\\
A\ar[r]_{\pi_A} & \bar{A},
}
where the map at the bottom is the induced filling map of $A$.
\begin{lemma}\label{lem:splittingoffilling}
  The diagram \eqref{alphabar} can be filled in with an isomorphism $\bar{\alpha}$.
\end{lemma}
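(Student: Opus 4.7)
The plan is to prove the equality of normal subgroups
\[
\ker \pi_\Gamma \;=\; \alpha^{-1}(\ker \pi_A)
\]
inside $\pi_1(\Gamma,\mc{A},a_0)$. Once this equality is established, the universal property of $\pi_\Gamma$ produces a map $\bar\alpha$ completing the square (using the forward inclusion), this $\bar\alpha$ is automatically surjective since $\pi_A\circ\alpha$ is, and it is injective by the reverse inclusion. So everything reduces to these two inclusions.

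For the forward inclusion $\ker\pi_\Gamma \subseteq \alpha^{-1}(\ker\pi_A)$, I would use the standard Bass--Serre description of $\ker\pi_\Gamma$ as the normal closure in $\pi_1(\Gamma,\mc{A},a_0)$ of $\bigcup_v \iota_v(\ker\pi_v)$, where $v$ ranges over vertices of $\Gamma$ and $\iota_v$ is an embedding of $\mc{A}_v$ into the fundamental group. The kernel $\ker\pi_v$ is normally generated in $\mc{A}_v$ by the filling kernels attached to the peripheral subgroups of $\mc{A}_v$, and by the recipe in Definition \ref{def:inducedfilling} each such filling kernel is an $A$--conjugate $h\,K_j\,h^{-1}$ of some filling kernel $K_j\lhd D_j\in\mc{D}_A$ of $A$. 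Each such conjugate lies in $\ker\pi_A$, so pulling back via $\alpha$ and taking the normal closure gives the inclusion.

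The reverse inclusion is the substantive step, and this is where the $\mc{P}$--ellipticity of the hierarchy is used. Since $\ker\pi_A$ is the normal closure in $A$ of $\bigcup_j K_j$, it suffices to show that $\alpha^{-1}(K_j)\subseteq \ker\pi_\Gamma$ for each $j$. By $\mc{P}$--ellipticity, the peripheral subgroup $D_j$ is $A$--conjugate into some vertex group: pick $h_j\in A$ with $h_j^{-1}D_jh_j\subseteq \mc{A}_{v_j}$. The compatibility of induced fillings under the nested inclusion $\mc{A}_{v_j}<A$ then identifies $h_j^{-1}K_jh_j$ as a filling kernel for the induced filling $\pi_{v_j}$, so $h_j^{-1}K_jh_j\subseteq \ker\pi_{v_j}\subseteq \alpha(\ker\pi_\Gamma)$. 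Conjugating back yields $K_j\subseteq\alpha(\ker\pi_\Gamma)$, and closing normally gives the desired inclusion. The main piece of bookkeeping to be careful about is that a filling kernel $K_j$ of $A$, once conjugated so that its associated peripheral lies inside $\mc{A}_{v_j}$, really is a filling kernel for the induced filling of $\mc{A}_{v_j}$, and not some strictly smaller subgroup; this is exactly what the functorial behavior in Definition \ref{def:inducedfilling} provides, under the normalizing conventions of Remark \ref{rk:modifyD} applied to both $\mc{D}_A$ and the induced peripheral structure on $\mc{A}_{v_j}$.
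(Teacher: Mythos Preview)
Your proposal is correct and follows essentially the same route as the paper: both reduce to the kernel equality $\ker\pi_\Gamma=\alpha^{-1}(\ker\pi_A)$ and verify the two inclusions by expressing each kernel as a normal closure of induced filling kernels and transporting these between the vertex-group level and the level of $A$. One refinement worth making explicit: where you invoke ``$\mc{P}$--ellipticity'' in the reverse inclusion, the paper uses the \emph{fully} $\mc{P}$--elliptic hypothesis---this is what guarantees (inductively down the hierarchy) that each $D_j\in\mc{D}_A$ is an entire $G$--conjugate $P^{g}$ rather than a proper intersection $A\cap P^{g}$, so that the definition of $\mc{P}$--ellipticity applies to conjugate $D_j$ into a vertex group, and so that the bookkeeping you flag at the end (matching $h_j^{-1}K_jh_j$ with a filling kernel of $\pi_{v_j}$) goes through cleanly.
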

\begin{proof}
  To complete the diagram with an isomorphism $\bar\alpha$
  it is necessary only to show that $\ker \pi_\Gamma=\ker(\pi_A\circ\alpha)$.

  We first show $\ker \pi_\Gamma\subseteq\ker(\pi_A\circ\alpha)$.
Let $k\in \ker \pi_\Gamma$.  
Then there is an expression 
\[k = \prod_i {k_i}^{g_i}\] 
where each $g_i\in \pi_1(\Gamma,\mc{A},a_0)$, and each $k_i$ lies in the kernel of some $\pi_{v_i}$ where $v_i$ is a vertex of $\Gamma$.  (We assume we have made some choice of maximal tree in $\Gamma$ so we can regard $\mc{A}_v$ as a subgroup of $\pi_1(\Gamma,\mc{A},a_0)$.)
To show $\ker \pi_\Gamma\subseteq \ker(\pi_A\circ\alpha)$ it therefore suffices to show $\ker \pi_v\subseteq \ker \pi_A\circ\alpha$ for each $v$.
We therefore assume without loss of generality that $k \in \ker\pi_v$ for some vertex $v$.  

The hierarchy is assumed quasiconvex and fully $\mc{P}$--elliptic. Thus $\mc{A}_v$ is fully relatively quasiconvex in $(G,\mc{P})$.
Even better, there is an induced peripheral structure $\mc{D}_v=\{D_1,\ldots,D_n\}$ on $\mc{A}_v$ so that for each $i$, there is an infinite $P_{j_i}\in \mc{P}$ and a $g_i\in G$ so that $D_i = (P_{j_i})^{g_i}$.  
Since $\pi_v \co \mc{A}_v\to \bar{\mc A}_v$ is the filling induced by $\pi$, the element $k$ has an expression
\[ k = \prod_\beta {n_\beta}^{a_\beta} \]
with each $n_\beta$ in some induced filling kernel $N_\beta\subseteq D_\beta$ and each $a_\beta\in \mc{A}_v$.  It suffices then to show each of these $n_\beta$ is in $\ker \pi_A\circ\alpha$.  But this is clear, since the conjugate $(P_{j_\beta})^{g_\beta}$ must be conjugate in $A$ to some element of the induced peripheral structure on $A$, and so $n_\beta$ is conjugate into some filling kernel of the induced filling $\pi_A$ on $A$.

Conversely, if $k\in \ker(\pi_A\circ\alpha)$, then $k = \prod_{i=1}^l \alpha^{-1}(k_i^{g_i})$ where each $k_i$ is in some filling kernel $K_{j_i}\lhd D_{j_i}$ for the induced filling $\pi_A$.
But since the hierarchy is fully $\mc{P}$--elliptic, 
the subgroup $\alpha^{-1}(D_{j_i})$ is conjugate into some vertex group $\mc{A}_v$
of $(\Gamma,\mc{A})$.  It follows that $\alpha^{-1}(k_i)$ is conjugate into $\ker \pi_v$ for some $v\in \Gamma$, so $\alpha^{-1}(k_i)$ is in $\ker \pi_\Gamma$.
\end{proof}

\subsection{Proof of Theorem \ref{thm:hierarchydescends}} \label{sec:hierdescends}

The first lemma is a straightforward application of the definitions.
\begin{lemma}
  Let $G$ be hyperbolic, and $\mc{P}$ be a family of subgroups so that $(G,\mc{P})$ is relatively hyperbolic.  Let $\mc{H}$ be a quasiconvex fully $\mc{P}$--elliptic hierarchy of $G$, and let $A<G$ be an edge or vertex group of $\mc{H}$.  Then $A$ is fully relatively quasiconvex in $(G,\mc{P})$, and every filling of $(G,\mc{P})$ is an $A$--filling.
\end{lemma}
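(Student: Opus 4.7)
I would prove a single strengthened claim that yields both conclusions: for every edge or vertex group $A$ of $\mc{H}$ and every $G$-conjugate $P'$ of some $P\in\mc{P}$, either $P'\cap A$ is finite or else $P'\subseteq A$. This is stronger than the fullness requirement, which only demands finite index. From this claim, fullness of the quasiconvexity is immediate, and every filling is an $A$-filling: writing $P' = gPg^{-1}$ and supposing $P'\cap A$ is infinite, the claim yields $gNg^{-1}\subseteq gPg^{-1}=P'\subseteq A$ for any filling kernel $N\lhd P$. The separate assertion that $A$ is relatively quasiconvex then follows from Proposition~\ref{prop:QCisRQC} once we know $A$ is quasiconvex in $G$, which is obtained by iterating the lemma that vertex groups of a graph of groups with quasi-isometrically embedded edge groups are themselves quasi-isometrically embedded.

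I would establish the strong claim by induction on the level $i$ of $A$ in $\mc{H}$. The base case $i=0$ is vacuous since $A=G$. For the inductive step, if $A$ is an edge group at level $i$, then the claim is exactly the fully $\mc{P}$-elliptic hypothesis applied to $A$. The real work is the vertex group case.

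So suppose $A$ is a vertex group at level $i$, appearing in the graph of groups decomposition of a vertex group $H$ at level $i-1$, and suppose $P'\cap A$ is infinite. Applying the inductive hypothesis to $H$, the infiniteness of $P'\cap H\supseteq P'\cap A$ gives $P'\subseteq H$. Let $T$ be the Bass--Serre tree of the splitting of $H$, so $P'$ acts on $T$ and $A$ equals the stabilizer of some vertex $v$. By the $\mc{P}$-ellipticity of $\mc{H}$ applied to $H$ and to $P'\subseteq H$, the group $P'$ is $H$-conjugate into a vertex group of the splitting, i.e.\ $\mathrm{Fix}_T(P')$ is nonempty. If $v\in\mathrm{Fix}_T(P')$ we are done. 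Otherwise let $v'\in\mathrm{Fix}_T(P')$ be the vertex closest to $v$ and let $e$ be the first edge of the geodesic from $v'$ to $v$. Then $P'\cap A\subseteq\mathrm{Stab}(v')\cap\mathrm{Stab}(v)\subseteq\mathrm{Stab}(e)$ is infinite, and $\mathrm{Stab}(e)$ is an edge group appearing in $\mc{H}$, so the fully $\mc{P}$-elliptic hypothesis forces $P'\subseteq\mathrm{Stab}(e)$. But then $P'$ also fixes the endpoint of $e$ opposite $v'$, contradicting the choice of $v'$ as closest to $v$. Hence $v\in\mathrm{Fix}_T(P')$ and $P'\subseteq A$.

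The main obstacle is this Bass--Serre tree argument in the vertex group case: $\mc{P}$-ellipticity alone only supplies \emph{some} fixed vertex for $P'$, not the one we want, and one needs the edge-level fully $\mc{P}$-elliptic hypothesis to propagate the fixed set across successive edges of $T$ until it reaches $v$. Once the strong claim is proved, the deduction of both halves of the lemma (fullness, relative quasiconvexity, and the $A$-filling property) is essentially bookkeeping.
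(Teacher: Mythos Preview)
Your proof is correct and supplies the details that the paper omits; the paper simply asserts that the lemma ``is a straightforward application of the definitions'' and gives no argument at all. Your induction on the level, together with the Bass--Serre tree propagation argument in the vertex-group case, is exactly the natural way to unpack those definitions, and your strengthened claim (that $P'\cap A$ infinite forces $P'\subseteq A$, not merely finite index) is the right thing to prove.

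One small imprecision worth tightening: when you write ``$\mathrm{Stab}(e)$ is an edge group appearing in $\mc{H}$'', strictly speaking $\mathrm{Stab}(e)$ is only an $H$-conjugate of an edge group of the splitting of $H$. This does not affect the argument, since the fully $\mc{P}$-elliptic condition is invariant under $G$-conjugation: if $\mathrm{Stab}(e)=hEh^{-1}$ for an edge group $E$ and $h\in H\subseteq G$, then $h^{-1}P'h$ is still a $G$-conjugate of an element of $\mc{P}$, and $(h^{-1}P'h)\cap E$ infinite forces $h^{-1}P'h\subseteq E$, hence $P'\subseteq\mathrm{Stab}(e)$. You might state this explicitly.
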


\begin{lemma}\label{quasiconvexedges}
  Let $G$ be hyperbolic, and $\mc{P}$ be a family of subgroups so that $(G,\mc{P})$ is relatively hyperbolic.  Let $\mc{H}$ be a quasiconvex fully $\mc{P}$--elliptic hierarchy of $G$, and let $A$ be an edge or vertex group of $\mc{H}$.  Then for all sufficiently long fillings \[ \phi \co (G,\mc{P}) \to (\bar{G},\bar{\mc{P}}), \] 
  the following hold whenever $A$ is a vertex or edge group of $\mc{H}$:
  \begin{enumerate}
    \item\label{num:frq} The subgroup $\bar{A}=\phi(A)$ is fully relatively quasiconvex in $(\bar{G},\bar{\mc{P}})$.
    \item\label{num:qc} If $\bar{G}$ is hyperbolic, then $\bar{A}$ is quasiconvex in $\bar{G}$.
    \item\label{num:induced} The subgroup $\bar{A}$ is isomorphic to the induced filling of $A$.
    \item\label{num:height} The height of $\bar{A}$ in $\bar{G}$ is at most the height of $A$ in $G$.
  \end{enumerate}
\end{lemma}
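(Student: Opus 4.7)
The plan is to apply the quasiconvex Dehn filling machinery of Section \ref{s:RHDF} uniformly across the finitely many edge and vertex groups of $\mc{H}$. The preceding lemma already tells us that every such $A$ is fully relatively quasiconvex in $(G,\mc{P})$ and that every filling of $(G,\mc{P})$ is automatically an $A$--filling. Since $\mc{H}$ has only finitely many edge and vertex groups, one may choose a single finite set $B\subset G\smallsetminus\{1\}$ that simultaneously realises the ``sufficiently long'' hypotheses of Theorems \ref{qcfilling} and \ref{t:height reduction} for every such $A$; from here on we restrict to fillings whose kernel avoids $B$. With this uniform choice, items \eqref{num:frq}, \eqref{num:induced}, and \eqref{num:height} follow by direct citation: \eqref{num:frq} is Theorem \ref{qcfilling}.\eqref{frq}, \eqref{num:induced} is Theorem \ref{qcfilling}.\eqref{inducedfilling}, and \eqref{num:height} is Theorem \ref{t:height reduction}.

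For \eqref{num:qc}, assume in addition that $\bar{G}$ is hyperbolic. By Theorem \ref{thm:RHDF} the pair $(\bar{G},\bar{\mc{P}})$ is relatively hyperbolic, and Theorem \ref{thm:when hyp is RH} then forces each $\bar{P}\in\bar{\mc{P}}$ to be (almost malnormal and) quasiconvex in $\bar{G}$. Since $\bar{A}$ is fully relatively quasiconvex by \eqref{num:frq}, each intersection $\bar{A}\cap \bar{P}^g$ is either finite or of finite index in $\bar{P}^g$, and in either case is quasiconvex in $\bar{G}$. By Theorem \ref{relqcthenrelhyp} (and Remark \ref{rk:modifyD}) the induced peripheral structure of $\bar{A}$ may be taken to consist of representatives of the $\bar{A}$--conjugacy classes of such infinite intersections, so every peripheral subgroup of $\bar{A}$ is quasiconvex in $\bar{G}$. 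We then invoke the standard principle (see e.g.\ \cite{HruskaQC}) that a relatively quasiconvex subgroup of a relatively hyperbolic group all of whose induced peripheral subgroups are quasiconvex in the ambient hyperbolic group is itself quasiconvex; this yields \eqref{num:qc}.

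The main obstacle is really only this last equivalence of relative and absolute quasiconvexity for \eqref{num:qc}; everything else is bookkeeping layered on top of theorems already established in Sections \ref{s:hierarchies} and \ref{s:RHDF}. One minor care point is that the ``sufficiently long'' sets from the various invocations of Theorems \ref{qcfilling} and \ref{t:height reduction} must be intersected to give the single uniform $B$, but this is legitimate precisely because $\mc{H}$ has finitely many edge and vertex groups.
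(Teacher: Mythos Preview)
Your proof is correct and matches the paper's almost exactly: parts \eqref{num:frq}, \eqref{num:induced}, \eqref{num:height} are handled identically, by citing Theorem \ref{qcfilling}.\eqref{frq}, Theorem \ref{qcfilling}.\eqref{inducedfilling}, and Theorem \ref{t:height reduction} and using finiteness of the collection of edge and vertex groups. For \eqref{num:qc} the paper takes a slightly shorter path than yours---it simply cites \cite[Theorem 10.5]{HruskaQC} to conclude directly that a (fully) relatively quasiconvex subgroup of $(\bar{G},\bar{\mc{P}})$ is undistorted in $\bar{G}$, and then \cite[Corollary III.$\Gamma$.3.6]{bridhaef:book} to pass from undistorted to quasiconvex in the hyperbolic group $\bar{G}$; your detour through the induced peripheral structure of $\bar{A}$ is unnecessary (the hypothesis needed in Hruska's theorem concerns $\bar{\mc{P}}$, not the peripherals of $\bar{A}$), but it does no harm.
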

\begin{proof}
  As there are only finitely many edge and vertex groups occurring in $\mc{H}$, statements \eqref{num:frq}, \eqref{num:induced}, and \eqref{num:height} follow from \ref{qcfilling}.\eqref{frq}, \ref{qcfilling}.\eqref{inducedfilling} (wih $F=\emptyset$), and \ref{t:height reduction}, respectively.  The statement \eqref{num:qc} follows from \eqref{num:frq} by \cite[Theorem 10.5]{HruskaQC} (which implies that in this case $\bar{A}$ is undistorted in $\bar{G}$) and \cite[Corollary III.$\Gamma$.3.6]{bridhaef:book}.
\end{proof}

In particular, Lemma \ref{quasiconvexedges}.\eqref{num:induced} has the immediate consequence that the filled hierarchy is faithful:
\begin{corollary}\label{c:filledhierarchy}
  Let $G$ be hyperbolic, and $\mc{P}$ be a family of subgroups so that $(G,\mc{P})$ is relatively hyperbolic.  Let $\mc{H}$ be a quasiconvex fully $\mc{P}$--elliptic hierarchy of $G$.  Then for all sufficiently long fillings  \[ \pi \co (G,\mc{P}) \to (\bar{G},\bar{\mc{P}}), \]
  the filled hierarchy $\bar{\mc H}$ on $\bar{G}$ is faithful.
\end{corollary}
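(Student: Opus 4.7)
The plan is to show that every edge-to-vertex map occurring in any graph of groups making up $\bar{\mc{H}}$ is injective; by definition this makes $\bar{\mc{H}}$ faithful. Since $\mc{H}$ has finite length and each level is a finite graph of groups, only finitely many edge and vertex groups $A$ of $\mc{H}$ appear in total. Each such $A$ is fully relatively quasiconvex in $(G,\mc{P})$, using that $\mc{H}$ is quasiconvex and fully $\mc{P}$-elliptic. I would therefore apply Lemma \ref{quasiconvexedges} uniformly to this finite collection; this is legitimate for all sufficiently long fillings because finitely many ``sufficiently long'' conditions can be combined. The essential output is conclusion \eqref{num:induced}: for each such $A$, the induced filling map $\pi_A \co A \to \bar{\mc{A}}$ satisfies $\bar{\mc{A}} \cong \phi(A)$ via the natural map, equivalently $\ker \pi_A = A \cap \ker \phi$.

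Next I would check that each induced edge-to-vertex map $\bar{\phi}_e \co \bar{\mc{A}}_e \to \bar{\mc{A}}_{t(e)}$ in $\bar{\mc{H}}$ is injective. The original $\phi_e \co \mc{A}_e \to \mc{A}_{t(e)}$ is injective because $\mc{H}$ is faithful (this is built into the definition of a quasiconvex hierarchy). Suppose $\pi_e(a) \in \bar{\mc{A}}_e$ lies in $\ker \bar{\phi}_e$. Commutativity of the defining square forces $\pi_{t(e)}(\phi_e(a)) = 1$, i.e., $\phi_e(a) \in \ker \pi_{t(e)} = \mc{A}_{t(e)} \cap \ker \phi$. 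Identifying $\mc{A}_e$ with its image in $\mc{A}_{t(e)}$ under $\phi_e$, this places $a$ in $\mc{A}_e \cap \ker \phi$, which by the previous paragraph equals $\ker \pi_e$. Hence $\pi_e(a) = 1$ and $\bar{\phi}_e$ is injective. Running this at every edge of every level of $\bar{\mc{H}}$ yields faithfulness of the filled hierarchy.

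I do not expect a serious obstacle here: once Lemma \ref{quasiconvexedges} is applied uniformly, the injectivity of each $\bar{\phi}_e$ reduces to a one-line diagram chase exploiting the fact that intersections with $\ker \phi$ are compatible with the inclusions $\phi_e$. The only bookkeeping point is that ``sufficiently long'' must be taken once across the finite list of all edge and vertex groups of $\mc{H}$, rather than level by level, so that Lemma \ref{quasiconvexedges}.\eqref{num:induced} holds simultaneously for every $A$ appearing anywhere in $\mc{H}$.
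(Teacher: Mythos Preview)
Your proposal is correct and follows the same approach as the paper. The paper's proof is literally the one sentence ``In particular, Lemma \ref{quasiconvexedges}.\eqref{num:induced} has the immediate consequence that the filled hierarchy is faithful''; your argument simply unpacks this citation by writing out the diagram chase showing $\ker(\pi_v\circ\phi_e)\subseteq\ker\pi_e$ from the identity $\ker\pi_x = \mc{A}_x\cap\ker\pi$ for each edge and vertex group.
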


\begin{proof}[Proof of Theorem \ref{thm:hierarchydescends}]
As in the statement, suppose that $(G,\mc{P})$ is a relatively hyperbolic pair with $G$ hyperbolic, and suppose $G$ is equipped with a malnormal  quasiconvex fully $\mc{P}$--elliptic hierarchy $\mc{H}$ terminating in $\mc{P}$.  
Let 
\[ \pi \co (G,\mc{P}) \to (\bar{G},\bar{\mc P}) \]
be a filling which is sufficiently long with respect to Lemma \ref{quasiconvexedges} and Corollary \ref{c:filledhierarchy}. By Corollary \ref{c:filledhierarchy}, the quotient $\bar{G}$ inherits a hierarchy $\bar{\mc H}$ with the same underlying graphs and with each of the edge or vertex groups equal to the image under $\pi$ of a corresponding edge or vertex group of $\mc{H}$.  In particular, all these edge and vertex groups are quasiconvex, by Lemma \ref{quasiconvexedges}.\eqref{num:qc}, so $\bar{\mc H}$ is a quasiconvex hierarchy.  By \ref{quasiconvexedges}.\eqref{num:height}, the edge groups of $\bar{\mc H}$ are all malnormal in $\bar{G}$, so $\bar{\mc H}$ is a malnormal hierarchy.  The construction of the filled hierarchy ensures the terminal groups are the induced fillings of the terminal groups of $\mc{H}$.  Since the terminal groups of $\mc{H}$ are in $\mc{P}$, the terminal groups of $\bar{\mc H}$ are in $\bar{\mc P}$.
\end{proof}

\section{Proof of the main theorem}\label{sec:mainthm}

Before we prove the main theorem, we introduce some terminology.

\begin{definition} \label{def:equivchosen}
Suppose that $(G',\mc{P'}) \dotnorm (G,\mc{P})$, and that $\{ N_j' \lhd P_j' \mid P_j'\in\mc{P'} \}$ is a collection of filling kernels.

We say that $\{ N_j' \}$ is {\em equivariantly chosen }if (i) whenever $gP_j'g^{-1}$
  and $hP_k'h^{-1}$ both lie in $P_i$, then $gN_j'g^{-1}=hN_k'h^{-1}$; and (ii) Each such $gN_j'g^{-1}$ is normal in the $P_i$ which contains it.
  
  An {\em equivariant filling} of $(G',\mc{P'})$ is a filling with an equivariantly chosen set of filling kernels.
\end{definition}

The following observation is the reason for considering equivariant fillings.

\begin{proposition} \label{filling of original G}
An equivariant filling $(G',\mc{P'}) \to (\bar{G}',\bar{\mc{P}'})$ determines a filling $(G,\mc{P}) \to (\bar{G},\bar{\mc{P}})$, so that $(\bar{G}',\bar{\mc P}')\dotnorm (\bar{G}, \bar{\mc P})$.
\end{proposition}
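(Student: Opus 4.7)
The plan is to use the equivariance hypothesis to transfer the filling kernels from $\mc{P}'$ up to $\mc{P}$, and then verify that the two quotients interact as expected. First, for each $P_i \in \mc{P}$, pick some $P_j' \in \mc{P}'$ and some $g \in G$ with $g P_j' g^{-1} \le P_i$ (such a pair exists because the elements of $\mc{P}'$ are conjugate to finite-index subgroups of elements of $\mc{P}$). Set $N_i := g N_j' g^{-1}$. Equivariance condition (i) guarantees this is independent of the choice of $(j,g)$, and condition (ii) guarantees $N_i \lhd P_i$, so we obtain a genuine filling $\pi \co G \to \bar{G} := G(N_1,\dots,N_m)$ with $\bar{\mc{P}} = \{P_i/N_i\}$.

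Next I would prove the key identification $K := \ker(\pi) = \ker(\pi') =: K'$, viewing both as subgroups of $G$. By the construction, each $N_i$ is a $G$-conjugate of some $N_j'$, and conversely each $N_j'$ is a $G$-conjugate of some $N_i$, so $K = \llangle \bigcup_i N_i \rrangle_G = \llangle \bigcup_j N_j' \rrangle_G$. Since every $N_j'$ lies in $G'$ and $G' \lhd G$, we get $K \subseteq G'$. For the harder inclusion $K \subseteq K'$, it suffices to check that for every $g \in G$ and $n \in N_j'$, the element $g n g^{-1}$ lies in $K'$. Writing $g P_j' g^{-1} = g(t_j^{-1} P_{i(j)} t_j \cap G')g^{-1}$, I would choose $f \in G'$ conjugating this subgroup to some $P_k' \in \mc{P}'$; equivariance (i) then forces $f(g N_j' g^{-1})f^{-1} = N_k'$, so $gng^{-1}$ is a $G'$-conjugate of an element of $N_k'$ and hence lies in $K'$. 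The reverse inclusion $K' \subseteq K$ is immediate. This step -- getting $K \cap G' \subseteq K'$ -- is the main obstacle, and it is exactly what the equivariance hypothesis was formulated to handle.

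Once $K = K'$ is established, the rest is essentially bookkeeping. We have $\bar{G}' = G'/K' = G'/K$, and the inclusion $G' \hookrightarrow G$ descends to an inclusion $\bar{G}' \hookrightarrow \bar{G}$ which is normal of index $[G:G']$. For the peripheral structures: since $K \subseteq G'$, a standard quotient computation gives $\pi(a P_i a^{-1}) \cap \pi(G') = \pi(a P_i a^{-1} \cap G')$ for every $a \in G$. Because $\mc{P}'$ was by definition a set of $G'$-conjugacy class representatives of the subgroups $a P_i a^{-1} \cap G'$, applying $\pi$ shows that $\bar{\mc{P}}'$ is a set of $\bar{G}'$-conjugacy class representatives of the subgroups $\bar{a} \bar{P}_i \bar{a}^{-1} \cap \bar{G}'$. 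This is exactly the induced peripheral structure from $(\bar{G},\bar{\mc{P}})$ in the sense of Notation \ref{subpair}, yielding $(\bar{G}', \bar{\mc{P}}') \dotnorm (\bar{G}, \bar{\mc{P}})$ as required.
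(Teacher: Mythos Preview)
The paper does not actually supply a proof of this proposition; it is stated as an ``observation'' immediately before Proposition~\ref{filling of original G} and left to the reader. Your argument therefore fills in exactly what the authors omitted, and the approach you take is the natural one and is correct.

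A couple of small remarks on the write-up. In the verification that $(fg)N_j'(fg)^{-1}=N_k'$, you should spell out how to get both conjugates inside a single $P_i$ before invoking condition~(i): since $P_k'$ is by construction equal to $cP_lc^{-1}\cap G'$ for some $c\in G$ and $P_l\in\mc{P}$, conjugating by $c^{-1}$ puts both $P_k'$ and $(fg)P_j'(fg)^{-1}=P_k'$ inside $P_l$, and then (i) applies. Also, in the last paragraph, strictly speaking two distinct elements of $\mc{P}'$ could have $\bar{G}'$--conjugate images after filling, so $\bar{\mc{P}}'$ might over-represent conjugacy classes; this is harmless for the way the proposition is used (one only needs that $\bar{G}'\dotnorm\bar{G}$ and that the images of the $P_j'$ exhaust the induced peripheral structure up to conjugacy), but it is worth noting.
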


We now apply Wise's Malnormal Quasiconvex Hierarchy Theorem (Theorem \ref{MQH}), together with our  Theorems \ref{thm:virtualhierarchy} and \ref{thm:hierarchydescends}, to establish the main result.
\begin{proof}[Proof of Theorem \ref{msqt+}]
  Let $(G,\mc{P})$ satisfy the hypotheses of Theorem \ref{msqt+}.
  Theorem \ref{thm:virtualhierarchy} implies that there is a subgroup
  $(G',\mc{P}')\dotnorm (G,\mc{P})$ with a malnormal quasiconvex fully $\mc{P}'$-elliptic hierarchy terminating in $\mc{P}'$.

  According to Theorem \ref{thm:hierarchydescends}, any sufficiently long filling $(\bar{G}',\bar{\mc{P}}')$ of $(G',\mc{P}')$ has a quasiconvex malnormal fully $\bar{\mc{P}}'$-elliptic hierarchy terminating in $\bar{\mc{P}}'$.  Since $G$ is virtually special, it is residually finite, so there exist peripherally finite fillings of $(G',\mc{P'})$ which are sufficiently long for the conclusions of Theorem \ref{thm:hierarchydescends} to hold.
Let $G'(K_1,\ldots,K_M)$ be some such peripherally finite filling, which we may assume is also sufficiently long for the conclusions of Theorem \ref{thm:RHDF} to hold.  
We then modify it to be equivariant with respect to $G$, by setting
\[ K_i' = \bigcap\left\{K_j^g\mid g\in G,\#( K_j^g\cap P_i)=\infty\right\}.\]
We note that these modified filling kernels are contained in the old ones, so the modified filling $G'(K_1',\ldots,K_M')$ will still be sufficiently long for the conclusions of Theorems \ref{thm:hierarchydescends} and \ref{thm:RHDF} to hold.

The filling $G'(K_1',\ldots,K_M')$ is still peripherally finite, and by Proposition \ref{filling of original G} it moreover determines a filling of $G$.  For $P_j\in \mc{P}$, define $\dot{P}_j$ to be equal to $(K_i')^g$ for some (any) $i$, $g$ so that $(K_i')^g\subset P_j$.  The equivariance ensures there is no ambiguity in this definition.

Now consider any filling $G \to G(N_1,\ldots,N_m)$ so that for each $i$ we have (i) $N_i\lhd P_i$; (ii)  $N_i<\dot{P}_i$; and (iii) $P_i/N_i$ is virtually special and hyperbolic.  There is an induced equivariant filling
\[ G' \longrightarrow G'(N_1',\ldots, N_M') \]
so that $N_j'< K_j'$ and $N_j'\lhd P_j'$ for each $j$.  This filling
is sufficiently long that the conclusion of Theorem \ref{thm:hierarchydescends} holds.  In particular the
filling $G'(N_1',\ldots, N_M')$ has a malnormal quasiconvex hierarchy
terminating in $\bar{\mc{P}}' = \{ P_j'/N_j'\}$.  By Theorem \ref{thm:RHDF}, the pair $(G'(N_1',\ldots , N_M'), \bar{\mc{P}}')$ is relatively hyperbolic; since the groups in $\bar{\mc P}'$ are hyperbolic, so is the filling.  Thus $G'(N_1', \ldots, N_M')$ is a hyperbolic group with a malnormal quasiconvex hierarchy terminating in a collection of hyperbolic, virtually special groups.
By Theorem \ref{MQH}, $G'(N_1', \ldots, N_M')$ is virtually special.

By Proposition \ref{filling of original G}, 
$G'(N_1',\ldots, N_M')\dotnorm G(N_1,\ldots,N_m)$, so the filling $G(N_1,\ldots,N_m)$ of the original group is also virtually special.
\end{proof}

\section{Quasi-convex virtual hierarchies} \label{s:QCH}

In \cite{Wise}, Wise proves a theorem about groups with {\em virtual} hierarchies, which are slight generalizations of hierarchies allowing finite-index subgroups along the way.  We recall the definition and statement of Wise's Quasi-convex Hierarchy Theorem.

\begin{definition} \cite[Definition 11.5]{Wise}
Let $\mc{QVH}$ denote the smallest class of hyperbolic groups that is closed under the following operations.
\begin{enumerate}
\item $\{1 \} \in \QVH$.
\item If $G = A \ast_B C$ and $A,C \in \QVH$ and $B$ is finitely generated and quasi-isometrically embedded in $G$ then $G \in \QVH$.
\item If $G = A \ast_B$ and $A \in QVH$ and $B$ is finitely generated and quasi-isometrically embedded then $G \in \QVH$.
\item If $H < G$ with $|G:H| < \infty$ and $H \in \QVH$ then $G \in \QVH$.
\end{enumerate}
\end{definition}

\begin{theorem} \cite[Theorem 13.3]{Wise} \label{t:QVH}
A hyperbolic group is in $\QVH$ if and only if it is virtually (compact) special.
\end{theorem}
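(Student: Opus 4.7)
The plan is to prove both directions of the equivalence by induction on appropriate complexity.

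For the easier direction (virtually compact special implies $\QVH$), let $G_0 \dotnorm G$ be the fundamental group of a compact special cube complex $X_0$. Using Lemma \ref{nontrivialsplittings} (and passing to a finite cover if needed), we may assume every hyperplane of $X_0$ yields a nontrivial one-edge splitting of $G_0$. Hyperplanes in a special cube complex are embedded, two-sided, and have quasi-convex fundamental group, which is quasi-isometrically embedded since $G_0$ is hyperbolic. Cutting along a hyperplane produces a one-edge graph-of-groups decomposition whose vertex complexes are compact special cube complexes with strictly fewer hyperplanes. Induct on the number of hyperplanes to place $G_0$ in $\QVH$, and conclude $G \in \QVH$ by closure under finite-index extensions.

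For the harder direction ($\QVH$ implies virtually special), induct on the $\QVH$ construction. The trivial group and the finite-index closure cases are immediate. The substantive case is a one-edge splitting $G = A \ast_B C$ (the HNN case is parallel), where by the inductive hypothesis $A, C$ are virtually special hyperbolic and $B$ is quasi-isometrically embedded. Then $G$ is hyperbolic by the Bestvina--Feighn combination theorem, and $B$ is quasi-convex in each of $A, C$. The plan is to construct a Dehn filling $G \to \bar G$ that places $\bar G$ in the setting of Theorem \ref{MQH}, and then transfer virtual specialness back to $G$.

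To construct the filling, take a finite almost malnormal collection $\mc{P}_A$ of quasi-convex subgroups of $A$ whose elements contain, with finite index, all maximal infinite intersections of conjugates of $B$ with $A$; this exists by the finite height of $B$. Define $\mc{P}_C$ analogously. By Theorem \ref{thm:when hyp is RH}, both pairs $(A, \mc{P}_A)$ and $(C, \mc{P}_C)$ are relatively hyperbolic. Apply Theorem \ref{msqt+} to each, producing subgroups $\dot{P}_i \dotnorm P_i$. Choose peripherally finite filling kernels $\{N^A_i \lhd P^A_i\}$ and $\{N^C_j \lhd P^C_j\}$ contained in the respective $\dot{P}$'s, arranged so that their intersections with conjugates of $B$ agree (possible by first passing to a common finite-index normal subgroup of $B$ and then intersecting to equivariantize, as in the proof of Theorem \ref{msqt+}). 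By Theorem \ref{msqt+} the filled groups $\bar A, \bar C$ are virtually special hyperbolic, and the compatibility yields an induced filling of $G$ of the form $\bar G = \bar A \ast_{\bar B} \bar C$ where $\bar B$ is finite. Hence $\bar G$ admits a malnormal quasi-convex one-step hierarchy terminating in virtually special groups, and Theorem \ref{MQH} gives that $\bar G$ is virtually special.

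The main obstacle is descending virtual specialness from $\bar G$ back to $G$. Two ingredients are needed. First, cubulate $G$: the virtually special $A$ and $C$ each act properly cocompactly on CAT$(0)$ cube complexes, and since $B$ is quasi-convex in both, Sageev's construction applied to the codimension-one walls inherited from these actions (together with combination arguments in the spirit of Hsu--Wise) produces a proper cocompact action of $G$ on a CAT$(0)$ cube complex. Second, show every quasi-convex subgroup of $G$ is separable: given quasi-convex $H < G$ and $g \in G \smallsetminus H$, arrange the filling above to be an $H$-filling, sufficiently long for Theorems \ref{thm:RHDF} and \ref{qcfilling}. Then the image $\bar H$ is quasi-convex in $\bar G$ and the image of $g$ lies outside $\bar H$; since $\bar G$ is virtually special, Theorem \ref{qcthensep} provides a finite-index subgroup of $\bar G$ containing $\bar H$ but not the image of $g$, which pulls back to the required finite-index subgroup of $G$ separating $H$ from $g$. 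By \cite[Theorems 1.3--1.4]{HW08}, $G$ is virtually special, completing the inductive step.
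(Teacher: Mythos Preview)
Your approach has a genuine circularity in the ``descend virtual specialness back to $G$'' step. You propose to cubulate $G$ via ``Sageev's construction \ldots\ in the spirit of Hsu--Wise,'' but the Hsu--Wise cubulation theorem \cite[Main Theorem 8.1]{HsuWise15} requires the edge group to be \emph{malnormal}, which is precisely what you do not have. Without a cube complex for $G$, you cannot invoke \cite[Theorems 1.3--1.4]{HW08} to conclude virtual specialness from separability of quasiconvex subgroups. A secondary but related gap: you never define a relatively hyperbolic structure $(G,\mc{P})$ on $G$ itself, only structures $\mc{P}_A$, $\mc{P}_C$ on the vertex groups; so when you invoke Theorems \ref{thm:RHDF} and \ref{qcfilling} for the separability argument (``arrange the filling above to be an $H$-filling''), there is no ambient pair to which these theorems apply.

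The paper's argument avoids both problems by a different organization. It puts a single peripheral structure $\mc{P}$ on $G$, namely the one \emph{induced by the edge group $H$} (Section 9.1), and observes that the vertex groups are fully relatively quasiconvex in $(G,\mc{P})$ (Lemma \ref{vertexgroupsarefull}). The MSQT is applied to the vertex groups with this induced structure, and Lemma \ref{lem:splittingoffilling} guarantees the splitting descends. The induction is on the \emph{height} of $H$ in $G$, which strictly decreases under filling (Theorem \ref{t:strictheightreduce}); this gives separability of $H$ alone, not of all quasiconvex subgroups. Separability of $H$ plus virtual torsion-freeness (Proposition \ref{p:qvh is vtf}) yields virtual malnormality of $H$ (Proposition \ref{p:qc+sepimpliesvm}), and then a finite-index subgroup of $G$ has a \emph{malnormal} quasiconvex splitting, to which Theorem \ref{MQH} applies directly --- the cubulation happens inside MQH, where malnormality is now available. (Minor note: $G$ is hyperbolic by the definition of $\QVH$, so Bestvina--Feighn is not needed; and your $\bar B$ finite is not malnormal, so even your base case needs the torsion-free reduction as in the paper.)
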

This is stated as \cite[Theorem 2.9]{VH} (with the extra assumption that the group is torsion-free).  Other than the Malnormal Special Quotient Theorem, it is the result from \cite{Wise} that is used in \cite{VH}.  It is used in \cite[Theorem 3.1]{VH}.

In this section we finish by showing how to deduce Wise's Quasiconvex
Hierarchy Theorem from the Malnormal Special Quotient Theorem and some
Dehn filling results from the Appendix of \cite{VH}.

\subsection{Induced peripheral structure from a quasiconvex subgroup}
We first briefly recall a construction from \cite{agm1,VH}.  Let $G$
be a hyperbolic group, and let $H$ be a quasiconvex subgroup.  We describe peripheral structures $\mc{D}$ on $H$ and $\mc{P}$ on $G$ so that
\begin{enumerate}
\item $(H,\mc{D})$ and $(G,\mc{P})$ are relatively hyperbolic;
\item $H$ is fully relatively quasiconvex in $(G,\mc{P})$ and $\mc{D}$ satisfies the conditions given in Remark \ref{rk:modifyD}; and
\item every element of $\mc{P}$ is commensurable in $G$ with at least one element of $\mc{D}$.
\end{enumerate}
We refer to \cite{agm1,VH} for proofs of the above facts.

By \cite[Corollary 3.5]{agm1}, there are only finitely many
$H$--conjugacy classes of minimal infinite subgroups of the form
\begin{equation}\label{Hintersection}
H\cap g_2Hg_2^{-1}\cap\cdots\cap g_jHg_j^{-1}.
\end{equation} 
The \emph{malnormal core} $\mc{D}$ of $H$ is formed as follows:
\begin{enumerate}
\item Let $\mc{D}''$ be the set of minimal infinite subgroups of the form \eqref{Hintersection}.
\item Form $\mc{D}'$ from $\mc{D}''$ by replacing each $D\in \mc{D}''$ with its $H$--commensurator.
\item Form $\mc{D}$ from $\mc{D}'$ by choosing one representative of each $H$--conjugacy class.
\end{enumerate} 
The \emph{induced peripheral structure} $\mc{P}$ in $G$ is then formed from $\mc{D}$ via two similar steps:
\begin{enumerate}
\item Form $\mc{P}'$ from $\mc{D}$ by replacing each $D\in \mc{D}$ with its $G$--commensurator.
\item Form $\mc{P}$ from $\mc{P}'$ by choosing one representative of each $G$--conjugacy class.
\end{enumerate}

In this section, we use the following refinement of Theorem \ref{t:height reduction} from \cite{VH}.  Note that the proof of this result is elementary and does not use the Malnormal Special Quotient Theorem or indeed any results about cube complexes.
\begin{theorem}\label{t:strictheightreduce}
  \cite[Theorem A.47]{VH}  Let $H$ be a quasiconvex subgroup of the hyperbolic group $G$, let $\mc{D}$ be the malnormal core of $H$ in $G$ and let $\mc{P}$ be the peripheral structure induced on $G$ by $H$.  For all sufficiently long peripherally finite fillings $\phi\co G\to \bar{G} = G(N_1,\ldots,N_m)$, the height of $\phi(H)$ in $\bar{G}$ is strictly less than that of $H$ in $G$.
\end{theorem}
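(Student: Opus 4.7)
The plan is to promote Theorem \ref{t:height reduction} to strict inequality by arguing that a height-$k$ intersection of conjugates of $H$ in $G$ is commensurable with a $G$-conjugate of an element of $\mc{P}$, and hence has finite image under a peripherally finite filling. Set $k := \mathrm{height}(H,G)$. Theorem \ref{t:height reduction} already guarantees $\mathrm{height}(\bar H, \bar G)\le k$ for all sufficiently long fillings; I aim to rule out equality by contradiction. So suppose $\bar g_1,\ldots,\bar g_k \in \bar G$ yield distinct cosets $\bar g_i\bar H$ with $\bar L := \bigcap_i \bar g_i\bar H\bar g_i^{-1}$ infinite.

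First I would lift the witness back to $G$: choose $g_i \in \phi^{-1}(\bar g_i)$ and invoke Theorem \ref{qcfilling}.\eqref{gnotinH} on a finite set containing the coset-comparison elements $g_j^{-1}g_i$. For long enough fillings this forces the cosets $g_i H$ in $G$ to remain distinct, because $g_j^{-1}g_i \in H$ would imply $\bar g_j^{-1}\bar g_i \in \bar H$. A mild uniformization step is required since the lifts depend on the filling; one arranges a sequence of fillings of increasing length together with a diagonal choice, or equivalently works with finite control sets that can be arranged to accommodate the witnesses.

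The structural heart of the argument is that the lifted intersection $L := \bigcap_i g_i H g_i^{-1}$, if infinite, is commensurable in $G$ with some $G$-conjugate $P^g$ of an element $P\in \mc{P}$. This is immediate from the construction of $\mc{P}$ via the malnormal core $\mc{D}$: $L$ is contained in some minimal infinite intersection of conjugates of $H$, whose $H$-commensurator is an element of $\mc{D}$, whose $G$-commensurator is precisely the relevant $P\in\mc{P}$. Peripheral finiteness of the filling then forces $\phi(P)$ to be finite, so $\phi(L)$ is finite.

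The main obstacle is the last step: concluding that $\bar L$ itself is finite, rather than just $\phi(L)\subseteq \bar L$. Since filling does not commute with intersection, one cannot simply identify $\bar L$ with $\phi(L)$. I would close this gap using Theorem \ref{qcfilling}.\eqref{inducedfilling} applied to each quasi-convex conjugate $g_iHg_i^{-1}$: the intersection $\ker\phi\cap g_iHg_i^{-1}$ is exactly the induced filling kernel, which is normally generated by filling data on the peripheral subgroups of $g_iHg_i^{-1}$. Because these peripheral subgroups are commensurable with elements of $\mc{P}$ and hence have finite image, one argues that an arbitrary element of $\bar L$ lifts to $G$ modulo pieces lying in these finite images, so that $\bar L$ differs from $\phi(L)$ by a finite set and is itself finite. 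This contradicts the choice of the $\bar g_i$ and forces $\mathrm{height}(\bar H, \bar G) < k$, as required.
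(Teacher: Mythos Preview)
The paper does not itself prove Theorem \ref{t:strictheightreduce}; it is quoted from \cite[Theorem A.47]{VH} with the remark that the proof there is elementary and independent of cubical input.  So there is no in-paper argument to compare against, and I will simply assess your proposal on its own terms.

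Your strategic idea is the right one and matches what makes the induced peripheral structure special: a maximal (height-$k$) intersection of conjugates of $H$ is, by the very construction of $\mc{D}$ and $\mc{P}$, commensurable with a $G$-conjugate of some $P\in\mc{P}$, and therefore has finite image under any peripherally finite filling.  The difficulty is entirely in the two transfer steps you flag, and neither is handled by what you wrote.

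\textbf{The uniformization is a genuine gap, not a mild one.}  Theorem \ref{qcfilling}.\eqref{gnotinH} requires the finite set $F$ to be fixed \emph{before} the filling is chosen, while your $g_j^{-1}g_i$ are lifts of witnesses that only exist \emph{after} the filling is chosen.  A diagonal argument over a sequence of fillings establishes the conclusion for a single filling, not ``for all sufficiently long fillings''; and there is no finite control set that accommodates all possible witnesses across all fillings.  What the argument in \cite{VH} actually does is work in the other direction: it analyses (in the cusped space for $(G,\mc{P})$) how infinite intersections of conjugates of $\bar H$ in $\bar G$ must arise as images of infinite intersections of conjugates of $H$ in $G$, using geometric control that is uniform over long fillings.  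One does not attempt to lift arbitrary witnesses.

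\textbf{The step from $\phi(L)$ finite to $\bar L$ finite is also a real gap.}  Intersection does not commute with quotients, so $\bigcap_i \bar g_i\bar H\bar g_i^{-1}$ can strictly contain $\phi\bigl(\bigcap_i g_iHg_i^{-1}\bigr)$; your proposed use of \ref{qcfilling}.\eqref{inducedfilling} only identifies $\ker\phi\cap g_iHg_i^{-1}$, which says nothing about elements of $\bar L$ that lie in each $\bar g_i\bar H\bar g_i^{-1}$ but whose preimages lie in \emph{different} cosets of $\ker\phi$ inside the various $g_iHg_i^{-1}$.  (The same issue arises when $L$ is finite, a case you do not treat.)  Again, the actual argument in \cite{VH} bypasses this by establishing directly that infinite $\bar H$-intersections in $\bar G$ are images of infinite $H$-intersections in $G$, rather than by chasing a fixed lift.
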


\subsection{Induced peripheral structure from a one-edge splitting}
In this subsection we suppose that $G$ is a hyperbolic group, and that $G$ has a one-edge splitting with edge group $H<G$.  
That is, either $G = A\ast_H B$ or $G = A\ast_\phi$, where $\phi\co H\to H'$ is an isomorphism of two subgroups of $G$.  We suppose further that the edge group $H$ is quasiconvex.

To the splitting corresponds a Bass-Serre tree on which $G$ acts, the edges of which are in one-to-one correspondence with left cosets of $H$ in $G$.  Let $e$ be the edge corresponding to the trivial coset.  Let $\mc{T}$ be the collection of maximal subtrees of $T$ with infinite stabilizer, and let $\mc{T}_e = \{ T\in \mc{T} \mid e\subseteq T\}$.  Because the height of $H$ in $G$ is finite, each tree in $\mc{T}$ is finite.
\begin{observation}
  With the above notation:
  \begin{enumerate}
  \item Each $D\in \mc{D}$ preserves some tree in $\mc{T}_e$; this gives a one-to-one correspondence of $\mc{D}$ with the orbit space $H \backslash \mc{T}_e$.
  \item Each $P\in \mc{P}$ also preserves some tree in $\mc{T}_e\subset \mc{T}$, giving a one-to-one correspondence of $\mc{P}$ with the orbit space $G \backslash \mc{T}$.
  \item Since each tree $T\in \mc{T}$ is finite, each conjugate $g Pg^{-1}$ with $P\in \mc{P}$ has a finite index subgroup which is equal to the stabilizer of a tree in $\mc{T}$.
  \end{enumerate}
\end{observation}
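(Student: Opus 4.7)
The plan is to translate the algebraic constructions behind $\mc{D}$ and $\mc{P}$ into geometric statements about subtrees of the Bass--Serre tree $T$. The key observation is that for any finite set of edges $S = \{e, g_2 e, \ldots, g_j e\}$ of $T$, the pointwise stabilizer of $S$ coincides with the pointwise stabilizer of the convex hull of $S$ in $T$ (since a tree automorphism fixing two edges must fix the unique geodesic between them). Hence each minimal infinite intersection $D'' = H \cap g_2 H g_2^{-1} \cap \cdots \cap g_j H g_j^{-1}$ is the pointwise $G$-stabilizer of a finite subtree $T_0 \subseteq T$ containing $e$, and the minimality of $D''$ under further intersection translates into maximality of $T_0$ among finite subtrees containing $e$ with infinite pointwise stabilizer. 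Thus $T_0 \in \mc{T}_e$ and $D'' = \mathrm{Stab}_G^{\mathrm{pt}}(T_0)$.

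For Part (1), I would identify $D'$ (the $H$-commensurator of $D''$) with the setwise $H$-stabilizer $\mathrm{Stab}_H^{\mathrm{set}}(T_0)$. One inclusion is immediate: any $h \in \mathrm{Stab}_H^{\mathrm{set}}(T_0)$ conjugates $D''$ to itself. For the converse, suppose $h \in H$ with $h T_0 \neq T_0$. Since $h$ fixes $e$, the translate $h T_0$ also belongs to $\mc{T}_e$, and the convex hull of $T_0 \cup h T_0$ is a finite subtree containing $e$ that strictly contains $T_0$. Its pointwise stabilizer is exactly $D'' \cap h D'' h^{-1}$, which by maximality of $T_0$ in $\mc{T}_e$ must be finite, contradicting commensurability of $D''$ and $h D'' h^{-1}$. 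Hence $D' = \mathrm{Stab}_H^{\mathrm{set}}(T_0)$. The bijection $\mc{D} \leftrightarrow H \backslash \mc{T}_e$ then follows because two trees $T_0, T_0' \in \mc{T}_e$ yield $H$-conjugate setwise stabilizers if and only if they lie in a common $H$-orbit.

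For Part (2), the same argument with $G$ in place of $H$ shows that the $G$-commensurator of $D''$ equals $\mathrm{Stab}_G^{\mathrm{set}}(T_0)$. The one subtlety is that for a general $g \in G$ the translate $g T_0$ need not contain $e$ and may even be disjoint from $T_0$; this is handled by replacing $T_0 \cup g T_0$ with its geodesic join in $T$, whose pointwise stabilizer is still $D'' \cap g D'' g^{-1}$ by the same uniqueness-of-geodesics argument. The bijection $\mc{P} \leftrightarrow G \backslash \mc{T}$ follows because a one-edge splitting produces a single $G$-orbit of edges in $T$, so every tree in $\mc{T}$ has a $G$-translate containing $e$ and hence lying in $\mc{T}_e$.

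Part (3) is then nearly immediate: writing $P = \mathrm{Stab}_G^{\mathrm{set}}(T_0)$, any conjugate $g P g^{-1} = \mathrm{Stab}_G^{\mathrm{set}}(g T_0)$ contains the pointwise stabilizer $\mathrm{Stab}_G^{\mathrm{pt}}(g T_0)$ with finite index (the quotient embeds in the finite symmetric group on the edges of $g T_0$), and this pointwise stabilizer is by definition the stabilizer of the tree $g T_0 \in \mc{T}$. The main obstacle I expect is the maximality argument in Part (2) when $g T_0$ and $T_0$ are disjoint, where one must correctly set up the geodesic-join construction and verify that pointwise stabilizers in trees are controlled by convex hulls; once that is done, the rest of the observation is a fairly direct bookkeeping exercise.
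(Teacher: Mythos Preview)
Your argument is correct and complete; the paper states this as an Observation without proof, so there is no approach to compare against. One small imprecision worth tightening: when you associate to a minimal infinite intersection $D''$ the subtree $T_0$, you should take $T_0 = \mathrm{Fix}(D'')$ (the full fixed subtree of $D''$ in $T$) rather than merely the convex hull of the particular edges $\{e, g_2 e, \ldots, g_j e\}$ appearing in your chosen expression for $D''$. The convex hull could in principle be a proper subtree of $\mathrm{Fix}(D'')$ (if $D''$ happens to fix additional edges not in that hull), and it is $\mathrm{Fix}(D'')$ that is genuinely maximal in $\mc{T}_e$; with this adjustment your minimality-to-maximality translation goes through cleanly, and the rest of your argument is unaffected.
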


\begin{lemma}\label{vertexgroupsarefull}
  Let $G$ be hyperbolic, with a one-edge splitting over a quasiconvex subgroup $H$, and with $\mc{D}$, $\mc{P}$ as above.  The vertex groups of the splitting are fully relatively quasiconvex in $(G,\mc{P})$.  
\end{lemma}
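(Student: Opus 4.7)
The plan is to verify the two conditions defining full relative quasiconvexity of each vertex group $V$: (i) relative quasiconvexity in $(G, \mc{P})$, and (ii) the dichotomy that for every $P \in \mc{P}$ and $g \in G$, the intersection $V \cap P^g$ is either finite or of finite index in $P^g$.

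For (i), I would invoke the standard fact that vertex groups of a one-edge splitting of a hyperbolic group over a quasiconvex edge group are themselves quasiconvex in the ambient hyperbolic group. Once $V$ is quasiconvex in $G$, Proposition \ref{prop:QCisRQC} yields relative quasiconvexity of $V$ in $(G, \mc{P})$ immediately.

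For (ii), I would analyze $V \cap P^g$ via the Bass--Serre tree $T$ of the splitting, using the observation recorded just before the lemma: up to a finite-index subgroup, $P^g$ is the setwise stabilizer of a maximal infinite-stabilizer subtree $T' = g T_P \in \mc{T}$. Let $v \in T$ be the vertex stabilized by $V$. If $v \in T'$, then the pointwise stabilizer $\mathrm{Stab}_{\mathrm{pt}}(T')$ is contained in $V$ and has finite index in $\mathrm{Stab}_{\mathrm{set}}(T')$ (since $T'$ is a finite tree), which in turn has finite index in $P^g$; chaining these inclusions shows that $V \cap P^g$ has finite index in $P^g$. If instead $v \notin T'$ and $V \cap P^g$ were infinite, then the convex hull $T''$ of $\{v\} \cup T'$ in $T$ would strictly contain $T'$ and be setwise stabilized by $V \cap P^g$, furnishing a strictly larger subtree than $T'$ with infinite stabilizer --- contradicting the maximality of $T'$ as an element of $\mc{T}$. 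Thus $V \cap P^g$ must be finite.

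The main substantive obstacle I anticipate is keeping the ``up to finite index'' bookkeeping tight enough that the dichotomy is established for $V \cap P^g$ itself, rather than only for some finite-index subgroup. Once that is in place, the combinatorial conclusion in case (ii) is forced by the maximality built into the definition of $\mc{T}$, and case (i) rests on a standard fact about splittings of hyperbolic groups together with Proposition \ref{prop:QCisRQC}.
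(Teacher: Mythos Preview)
Your proposal is correct and follows essentially the same approach as the paper: the paper's proof likewise passes to the finite-index subgroup $P'\le P^g$ stabilizing the tree $T'\in\mc{T}$, observes that $A\cap P'$ is infinite and stabilizes $T'\cup\{v\}$, and invokes maximality of $T'$ to force $v\in T'$ (hence $P'\le A$). Your version is slightly more explicit in separating out the relative-quasiconvexity part (i) and in case-splitting on whether $v\in T'$; the only wrinkle to tidy is exactly the one you flag---replace $V\cap P^g$ by $V\cap P'$ (still infinite) so that the subgroup genuinely stabilizes $T'$, and then the maximality contradiction goes through cleanly.
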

\begin{proof}
  Let $P$ be a conjugate of an element of $\mc{P}$ which intersects a vertex group $A$ in an infinite subgroup.  A finite index subgroup $P'<P$ is the stabilizer of some tree $T\in \mc{T}$.  The vertex group $A$ is the stabilizer of some vertex $v$ in the Bass-Serre tree.  Since $A\cap P$ is infinite, so also is $A\cap P'$.  This infinite subgroup $A\cap P'$ therefore stabilizes all of $T\cup \{v\}$.  It follows that $v\in T$, since $T$ is a maximal tree with infinite stabilizer.  Thus $P'$ fixes $v$, and so $P'< A$.
\end{proof}

\subsection{Virtual torsion-freeness}
In this subsection we show that a group with a quasiconvex one-edge splitting and virtually special vertex groups is virtually torsion-free.

\begin{lemma} \label{l:vtf}
Suppose that $(\Gamma,\mc{G})$ is a (faithful) finite graph of groups with finite edge groups and virtually torsion-free vertex groups.  Then $\pi_1(\Gamma,\mc{G})$ is virtually torsion-free.
\end{lemma}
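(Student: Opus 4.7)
The plan is to construct a surjection $\pi\co G\onto \bar{G}$, where $G = \pi_1(\Gamma,\mc{G})$ and $\bar{G}$ is the fundamental group of a finite faithful graph of \emph{finite} groups, and then to pull back a torsion-free finite-index subgroup of $\bar{G}$ under $\pi$.

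First I would, for each vertex $v$ of $\Gamma$, choose a normal, torsion-free, finite-index subgroup $K_v\lhd \mc{G}(v)$ (the normal core of any torsion-free finite-index subgroup works). Setting $Q_v := \mc{G}(v)/K_v$, I would build a new graph of groups $(\Gamma,\mc{Q})$ on the same underlying graph, with vertex groups $Q_v$, edge groups $\mc{Q}(e):=\mc{G}(e)$ (still finite), and edge homomorphisms obtained by composing the original injection $\phi_e\co \mc{G}(e)\into \mc{G}(t(e))$ with the quotient $\mc{G}(t(e))\onto Q_{t(e)}$. Since each $\mc{G}(e)$ is finite and each $K_v$ is torsion-free, these compositions remain injective, so $(\Gamma,\mc{Q})$ is faithful. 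The vertex maps $\mc{G}(v)\onto Q_v$ together with the identity maps on edges assemble into a morphism of graphs of groups and induce a surjection $\pi\co G\onto \bar{G} := \pi_1(\Gamma,\mc{Q})$.

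Next I would invoke the classical fact (Karrass--Pietrowski--Solitar, or directly from Bass--Serre theory on the Bass--Serre tree of $\bar{G}$) that the fundamental group of a finite faithful graph of finite groups is virtually free; in particular $\bar{G}$ has a torsion-free finite-index subgroup $\bar{G}_0$. Setting $G_0 := \pi^{-1}(\bar{G}_0)$, which has finite index in $G$, it remains to check that $G_0$ is torsion-free.

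This last verification is the main (though still routine) point, and it is where the Bass--Serre torsion theorem enters: if $g\in G_0$ has finite order, then $g$ fixes a vertex of the Bass--Serre tree for $G$, so up to conjugation $g\in \mc{G}(v)^h$ for some vertex $v$ and some $h\in G$. The image $\pi(g)\in \bar{G}_0$ has finite order in the torsion-free group $\bar{G}_0$, forcing $\pi(g)=1$; hence $g$ lies in $\ker(\pi)\cap \mc{G}(v)^h = K_v^h$, which is torsion-free, so $g=1$. The only (minor) obstacle to watch for is confirming that the edge maps of $(\Gamma,\mc{Q})$ really are injective so that one has a genuine graph-of-groups morphism and a well-defined quotient $\bar{G}$; this is handled by the observation that finite subgroups meet torsion-free subgroups trivially.
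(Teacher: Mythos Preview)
Your proof is correct and follows essentially the same route as the paper's: pass to the quotient graph of groups with finite vertex groups $\mc{G}(v)/K_v$, use that a finite graph of finite groups is virtually free, and pull back a torsion-free finite-index subgroup. The only difference is cosmetic: where the paper asserts in one line that the quotient map $q$ has torsion-free kernel (because the vertex quotient maps do), you spell this out via the Bass--Serre torsion theorem, which is exactly what justifies that line.
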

\begin{proof}
  Each vertex group $\mc{G}_v$ contains some finite index normal $N_v$ which is torsion-free.  We can form a new graph of groups $(\Gamma,\mc{G}')$ with the same underlying graph of groups $\Gamma$ and the same edge groups, but with $\mc{G}'_v = \mc{G}_v/N_v$.  Since the kernels of the maps $\mc{G}_v\to \mc{G}'_v$ are torsion-free, each edge inclusion map $\phi_e\co \mc{G}_e\to \mc{G}_{t(e)}$ induces a well defined monomorphism $\phi_e'\co \mc{G}'(e) \to \mc{G}'_{t(e)}$.

  Because the vertex groups of $(\Gamma, \mc{G})$ are quotients of the vertex groups of $(\Gamma,\mc{G}')$ and the
  edge-inclusion maps of $(\Gamma, \mc{G}')$ are induced by those of $(\Gamma,\mc{G})$, there is an induced surjection
\[ q\co \pi_1(\Gamma,\mc{G})\to \pi_1(\Gamma,\mc{G}').\]
Since the maps $\mc{G}_v\to \mc{G}_v'$ have torsion-free kernel, so does $q$.  But $\pi_1(\Gamma,\mc{G}')$ is a graph of finite groups, so it is virtually free.  Let $F\dotnorm \pi_1(\Gamma,\mc{G}')$ be free.  Then $q^{-1}(F)\dotnorm \pi_1(\Gamma,\mc{G})$ is torsion-free.
\end{proof}

\begin{proposition} \label{p:qvh is vtf}
Suppose the hyperbolic group $G$ admits a one-edge splitting with edge group equal to the quasiconvex subgroup $H$.  Suppose the vertex groups of this splitting are virtually special.  Then $G$ is virtually torsion-free.
\end{proposition}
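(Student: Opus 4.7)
The plan is to proceed by induction on the height of $H$ in $G$. For the base case of height $0$, $H$ is finite, so $G$ is the fundamental group of a finite graph of groups with finite edge group and virtually (torsion-free) special vertex groups---virtually special implies linear, hence virtually torsion-free---and Lemma \ref{l:vtf} applies directly.

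For the inductive step with $\mathrm{height}(H) = n>0$, I would first equip $G$ with the peripheral structure $\mc P$ induced by $H$ as in Section \ref{s:QCH}, so that $(G,\mc P)$ is relatively hyperbolic and $H$ is fully relatively quasiconvex. A Bass--Serre tree analysis (as in the previous subsection) shows that each $P\in\mc P$ has a finite-index subgroup lying inside a conjugate of $A$ or $B$; since $A$ and $B$ are virtually special and hence virtually torsion-free and residually finite, so is each peripheral. The vertex groups are fully relatively quasiconvex by Lemma \ref{vertexgroupsarefull}, and I would present the one-edge splitting as a quasiconvex fully $\mc P$-elliptic hierarchy of length $1$: the edge group $H$ is QI-embedded, and any $G$-conjugate of a peripheral meeting $H$ infinitely must stabilize the base edge of the Bass--Serre tree by maximality of its stabilized subtree, and hence lie in $H$.

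Next, using residual finiteness and virtual torsion-freeness of each $P_i$, I would choose torsion-free, finite-index, normal filling kernels $N_i\lhd P_i$ determining a peripherally finite filling $\phi\co G\to\bar G$ that is sufficiently long for Theorem \ref{thm:RHDF}, Theorem \ref{t:strictheightreduce}, Lemma \ref{quasiconvexedges}, Corollary \ref{c:filledhierarchy}, and the Malnormal Special Quotient Theorem \ref{msqt} applied via induced fillings to $A$ and $B$. Then $\bar G$ is hyperbolic (the $P_i/N_i$ are finite, hence hyperbolic); by Corollary \ref{c:filledhierarchy} the splitting descends faithfully to $\bar G=\bar A*_{\bar H}\bar B$ (or HNN), with $\bar H$ quasiconvex in $\bar G$ by Lemma \ref{quasiconvexedges}, $\bar A,\bar B$ virtually special by MSQT, and $\bar H$ of strictly smaller height in $\bar G$ by Theorem \ref{t:strictheightreduce}. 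Induction then gives that $\bar G$ is virtually torsion-free.

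To conclude, I would show that $\ker\phi$ is torsion-free: each torsion element of $G$ is (by Bass--Serre theory) conjugate into a vertex group, and by Theorem \ref{qcfilling}.\eqref{inducedfilling} the kernel $\ker\phi$ intersects each vertex group in the kernel of its induced filling there---which, for a sufficiently long filling, is a normal closure of torsion-free subgroups inside a relatively hyperbolic group and is itself torsion-free by the standard Osin description of Dehn-filling kernels as free products of conjugates of the filling kernels. Pulling back a torsion-free finite-index subgroup of $\bar G$ through $\phi$ then yields a torsion-free finite-index subgroup of $G$. The main obstacle will be arranging all the ``sufficiently long'' conditions simultaneously---in particular, ensuring the induced fillings on $A$ and $B$ fall inside the finite-index subgroups $\dot{P}$ provided by the MSQT, and verifying fully $\mc P$-ellipticity in the precise sense required by Corollary \ref{c:filledhierarchy}; the residual finiteness of the peripherals makes these coordinations manageable.
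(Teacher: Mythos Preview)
Your proposal is correct and follows essentially the same inductive strategy as the paper: induction on the height of $H$, base case via Lemma \ref{l:vtf}, and for the inductive step equipping $G$ with the peripheral structure $\mc{P}$ induced by $H$, using Lemma \ref{vertexgroupsarefull} for full relative quasiconvexity of the vertex groups, applying the MSQT to make the induced fillings of the vertex groups virtually special, invoking Theorem \ref{t:strictheightreduce} for strict height reduction, passing the splitting to $\bar G$ (the paper does this via Lemma \ref{lem:splittingoffilling}, which is the same content as your use of Corollary \ref{c:filledhierarchy} for a length-$1$ hierarchy), and then pulling back a torsion-free finite-index subgroup of $\bar G$.

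The one place you diverge is in showing $\ker\phi$ is torsion-free. You route this through Bass--Serre theory plus the DGO/Osin structure theorem describing the Dehn-filling kernel as a free product of conjugates of the (torsion-free) $N_i$. This works, but it imports nontrivial machinery and requires checking that the induced fillings on the vertex groups are themselves long enough for that description to hold. The paper's argument is much lighter: a hyperbolic group has only finitely many conjugacy classes of torsion elements, so one simply includes a representative of each nontrivial class in the finite set $F$ of Theorem \ref{thm:RHDF}(3); any sufficiently long filling is then injective on $F$, and since $\ker\phi$ is normal this already forces $\ker\phi$ to be torsion-free. You might prefer this shortcut---it removes the need to choose the $N_i$ torsion-free and avoids the structure theorem altogether.
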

\begin{proof}
We argue by induction on the height $n$ of $H$ in $G$.  If $n=0$, then the result follows from Lemma \ref{l:vtf}, since virtually special groups are virtually torsion-free.

If $n>0$, then let $\mc{D}$ be the malnormal core of $H$, and let $\mc{P}$ be the peripheral structure on $G$ induced by $H$.  By Lemma \ref{vertexgroupsarefull}, the vertex groups of the splitting over $H$ are fully relatively quasiconvex.  
The elements of $\mc{P}$ are virtually quasiconvex subgroups of vertex groups, so they are virtually special.  Therefore there exist arbitrarily long peripherally finite fillings of $(G,\mc{P})$.  Since the vertex groups are fully relatively quasiconvex, there are even arbitrarily long peripherally finite fillings which are both $H$--fillings and $A$--fillings for every vertex group $A$.  

By the Malnormal Special Quotient Theorem (Corollary \ref{msqt}), there are also arbitrarily long peripherally finite such fillings so that the induced fillings of the vertex groups are themselves virtually special.
Choose some such peripherally finite filling $\phi\co G\to \bar{G}=G(N_1,\ldots,N_m)$, long enough so that:
\begin{enumerate}
\item $\phi$ has no nontrivial torsion elements in its kernel (this is possible because there are only finitely many conjugacy classes of torsion elements in a hyperbolic group),
\item $\phi$ restricts to the induced filling on $H$ and on each vertex group (see  \ref{qcfilling}.\eqref{inducedfilling} with $F=\emptyset$),
\item $\bar{G}$ is hyperbolic and $\bar{H} = \phi(H)$ is quasiconvex in $\bar{G}$ (see \ref{qcfilling}.\eqref{frq} with $F=\emptyset$),
\item the height of $\phi(H)$ in $\bar{G}$ is strictly less than the height of $H$ in $G$ (Theorem \ref{t:strictheightreduce}).
\end{enumerate}

Because the filling $\phi$ is an $H$--filling as well as an $A$--filling for each vertex group $A$, Lemma \ref{lem:splittingoffilling} implies that the (hyperbolic) quotient $\bar{G}$ has a $1$--edge splitting over $\bar{H}$.  Also, $\bar{H}$ is quasiconvex, and has height smaller than that of $H$.  Induction implies that $\bar{G}$ is virtually torsion-free.  Let $\bar{G}_0\dotsub\bar{G}$ be torsion-free, and let $G_0 = \phi^{-1}(\bar{G}_0)$.  The group $G_0$ is a torsion-free finite index subgroup of $G$.
\end{proof}

\subsection{Proof of Theorem \ref{t:QVH}}
We now prove Theorem \ref{t:QVH}.

\begin{proof}
We have already noted in Example \ref{ex:Hierarchy for cube} that if $G$ is hyperbolic and special then it admits a particular nice hierarchy and it is easy to see that this means any hyperbolic virtually special group is in $\QVH$.  Thus, it is the converse that we must prove -- any hyperbolic group in $\QVH$ is virtually special.

Certainly we know that $\{1 \}$ is (virtually) special.  Also, if $H$ is virtually special and $H \dotsub G$ then $G$ is clearly also virtually special.  Theorefore, it suffices to prove that if $G$ admits a one-edge splitting
with virtually special vertex group(s) and quasi-convex edge group $H$ then $G$ is virtually special.

We induct on the height of $H$ in $G$.  If the height is $0$, then $H$ is finite.  Proposition \ref{p:qvh is vtf} implies that there is some $G_0\dotnorm G$ which is torsion free.  But then $G_0$ is a graph of virtually special groups with trivial edge groups.  
In particular $G_0$ has a malnormal quasiconvex hierarchy terminating in virtually special groups.  By Theorem \ref{MQH}, $G_0$ (and hence $G$) is virtually special.

Now suppose the height of $H$ is positive.  The following is the key claim:
\begin{claim*}
  $H$ is separable in $G$.
\end{claim*}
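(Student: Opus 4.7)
Given $g \in G \smallsetminus H$, we need to find $G_0 \dotsub G$ containing $H$ but not $g$. The strategy is to Dehn fill $G$ to reduce the height of $H$, invoke the outer inductive hypothesis on the quotient $\bar G$ to conclude it is virtually special, then use QCERF to separate $\phi(g)$ from $\phi(H)$ and pull back.

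Let $\mc{D}$ be the malnormal core of $H$ in $G$ and $\mc{P}$ the induced peripheral structure on $G$, so $(G,\mc{P})$ is relatively hyperbolic. By Lemma \ref{vertexgroupsarefull} each vertex group $A$ of the splitting is fully relatively quasiconvex, so inherits a peripheral structure $\mc{P}_A$ with $(A,\mc{P}_A)$ relatively hyperbolic. Each element of $\mc{P}$ or $\mc{P}_A$ is quasiconvex in $G$ and virtually contained in some vertex group via a Bass--Serre argument; combined with the fact that a quasiconvex subgroup of a virtually special hyperbolic group is itself virtually special (using combinatorial convex hulls), these peripherals are all hyperbolic and virtually special. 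In particular, the MSQT (Corollary \ref{msqt}) applies to each $(A,\mc{P}_A)$, providing finite-index normal subgroups inside its peripherals within which fillings are ``safe''.

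Next, I will pick a peripherally finite filling $\phi\co G \to \bar G = G(N_1,\ldots,N_m)$ with $N_i \dotnorm P_i$ satisfying three families of cofinite constraints simultaneously: (a) each $N_i$ lies in the normal core in $P_i$ of $P_i \cap H$ and of $P_i \cap A$ for every vertex group $A$, so that $\phi$ is an $H$--filling and an $A$--filling for every $A$; (b) each $N_i$ lies in the finite-index subgroups dictated by the MSQT applied to every $(A,\mc{P}_A)$, so that each induced filling $\bar A$ is virtually special; (c) $\phi$ is long enough for the conclusions of Theorem \ref{thm:RHDF} (so $\bar G$ is hyperbolic), Theorem \ref{qcfilling} with $F=\{g\}$ (so $\bar H := \phi(H)$ is quasiconvex in $\bar G$, is the induced filling of $H$, and $\phi(g) \notin \bar H$), and Theorem \ref{t:strictheightreduce} (so the height of $\bar H$ in $\bar G$ is strictly less than that of $H$ in $G$). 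Every $P_i$ is virtually special hence residually finite, and all the constraints are cofinite conditions on each $N_i \dotnorm P_i$, so compatible $N_i$ exist.

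Under these choices, Lemma \ref{lem:splittingoffilling} yields a one-edge splitting of $\bar G$ over $\bar H$ with vertex groups the induced fillings $\bar A$, each virtually special. The outer inductive hypothesis applied to $\bar G$ (which has smaller edge-group height) then gives that $\bar G$ is virtually special, so QCERF (Theorem \ref{qcthensep}) makes $\bar H$ separable in $\bar G$. Choose $\bar G_0 \dotsub \bar G$ with $\bar H \subseteq \bar G_0$ and $\phi(g) \notin \bar G_0$; the preimage $G_0 := \phi^{-1}(\bar G_0) \dotsub G$ is the required subgroup. The main obstacle is the bookkeeping in the choice of filling: one must juggle the MSQT-safety condition across all vertex groups alongside the $H$- and $A$-filling conditions and the various ``sufficiently long'' requirements, which is tractable precisely because each individual constraint is cofinite in the normal-subgroup lattice of $P_i$.
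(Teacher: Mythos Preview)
Your proposal is correct and follows essentially the same approach as the paper: Dehn fill along the peripheral structure induced by $H$, using the MSQT to make the induced fillings of the vertex groups virtually special, and Theorem~\ref{t:strictheightreduce} to strictly reduce height, then separate in the quotient and pull back. The only organizational difference is that the paper inducts directly on the separability claim (obtaining a finite quotient of $\bar G$ separating $\phi(g)$ from $\bar H$ by the inner inductive hypothesis), whereas you invoke the outer inductive hypothesis on height to conclude $\bar G$ is virtually special and then apply QCERF (Theorem~\ref{qcthensep}); both routes are valid.
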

Suppose the claim has been proved.  
By Proposition \ref{p:qvh is vtf}, $G$ is virtually torsion-free.
By Proposition \ref{p:qc+sepimpliesvm}, separable quasiconvex subgroups of $G$ are virtually malnormal.  Therefore, there is a finite-index (torsion-free) normal subgroup $G_0 < G$ so that  for any $G$-conjugate $H'$ of $H$, the subgroup $H' \cap G_0$ is malnormal in $G_0$.
The induced graph of groups for the hyperbolic group $G_0$ has malnormal and quasiconvex edge groups, and so $G_0$ (and hence $G$) is virtually special by Theorem \ref{MQH}.

\begin{proof}[Proof of claim]
  This argument follows the same structure as that for Proposition \ref{p:qvh is vtf}, as we induct on the height of $H$.  If $H$ is finite (height $0$) then we have already shown that $G$ is virtually special, so in particular $H$ is separable.  

  Now suppose that the 
  height of $H$ is $k>0$.  We wish to show that $H$ is separable, so choose $g\notin H$.
  We again consider the malnormal core $\mc{D}$ of $H$ and the induced peripheral structure $\mc{P}$ on $G$, and choose a peripherally finite filling $\phi\co G\to \bar{G} = G(N_1,\ldots,N_m)$ whose induced fillings on the vertex groups are virtually special (by Corollary \ref{msqt}), and so that furthermore:
  \begin{enumerate}
  \item $\phi$ restricts to the induced filling on $H$ and on the vertex groups (\ref{qcfilling}.\eqref{inducedfilling});
  \item $\bar{G}$ is hyperbolic and $\bar{H} = \phi(H)$ is quasiconvex in $\bar{G}$ (\ref{qcfilling}.\eqref{frq});
  \item the height of $\bar{H}$ in $\bar{G}$ is strictly less than $k$ (Theorem \ref{t:strictheightreduce}); and
  \item $\phi(g)\notin \bar{H}$ (\ref{qcfilling}.\eqref{gnotinH} with $F = \{g\}$).
  \end{enumerate}
As before (again using the results of Section \ref{sec:inducedsplittings} and in particular Lemma \ref{lem:splittingoffilling}), the quotient $\bar{G}$ has a 1--edge splitting over $\bar{H}$, and $\bar{H}$ is quasiconvex and has smaller height than $H$.  The vertex groups are virtually special.  By induction there is a finite quotient $\pi\co \bar{G}\to Q$ so that $\pi(\phi(g))\notin \pi(\bar{H})$.  But then $\pi\circ \phi$ gives a map to a finite quotient separating $g$ from $H$.
\end{proof}

\end{proof}

\section{Acknowledgments}
We thank the referee for helpful comments and corrections.

\small
\def\cprime{$'$}
\providecommand{\bysame}{\leavevmode\hbox to3em{\hrulefill}\thinspace}
\providecommand{\MR}{\relax\ifhmode\unskip\space\fi MR }
\providecommand{\MRhref}[2]{%
  \href{http://www.ams.org/mathscinet-getitem?mr=#1}{#2}
}
\providecommand{\href}[2]{#2}

\end{document}